\title{Edge-to-edge Tilings of the Sphere by Angle Congruent Pentagons}
\author{
Robert Barish\thanks{Research was supported in part by MEXT/JSPS Kakenhi grant 25K21149.} \\
Institute of Medical Science, University of Tokyo \\
Hoi Ping Luk\thanks{Research was supported in part by the funding of Academic Career in Pilsen 2024 \& 2025 under Plze\v{n}sk\'y kraj a Z\'apado\v{c}esk\'a univerzita v Plzni.} \\ 
Katedra matematiky, Z\'apado\v{c}esk\'a univerzita v Plzni \\
Min Yan\thanks{Research was supported by NSFC-RGC Joint Research Scheme N-HKUST607/23 and Hong Kong RGC General Research Fund 16310925.} \\ 
Hong Kong University of Science and Technology}
\newcommand{\ssq}{\tikz \draw (0.05,0.05) -- ++(-0.1,0) -- ++(0,-0.1) -- ++(0.1,0) -- cycle;}
\newcommand*\circled[1]{\tikz[baseline=(char.base)]{
		\node[shape=circle,draw,inner sep=0.5pt] (char) {#1};}}
\newcommand{\mc}{\mathcal}
\newtheorem{theorem}{Theorem}
\theoremstyle{definition}
\newtheorem*{definition*}{Definition}
\theoremstyle{remark}
\newtheorem*{subcase*}{Subcase}
\newtheorem*{family*}{Family}
\newtheorem*{case*}{Case}
\numberwithin{equation}{section}
\colorlet{HBlue}{teal!75!blue!95!white}
\colorlet{HOrange}{orange!80!black}
\begin{document}
\maketitle

\begin{abstract}
Congruent polygons are congruent in angles as well as in edge lengths. We concentrate on the angle aspect, and show how tilings of the sphere by congruent pentagons can be determined by the angle information only. We also show how the features of tilings are changed under reductions, i.e., by ignoring the difference among the angles.
\end{abstract}

\section{Introduction}

Two polygons on a surface of constant curvature are {\em geometrically congruent} if there exists a distance preserving transformation between them (i.e., an \textit{isometry}) that respects the metric of the space. In \cite{so,ua}, Sommerville, Ueno and Agaoka classified edge-to-edge tilings of the sphere by geometrically congruent triangles. In \cite{awy,ay,cly1,cly2,gsy,wy1,wy2}, Akama, Luk, Wang, Yan, et. al. completely classified edge-to-edge tilings of the sphere by geometrically congruent polygons.

If we ignore the edge length information in geometric congruence, then we get angle congruence. Two polygons are {\em angle congruent} if they have the same angle values, and the angles are arranged in the same way. We may similarly define {\em edge congruent} polygons. In this context, the geometric congruence can be regarded as the combination of angle congruence and edge congruence. Indeed, in the first paper on the edge-to-edge tilings of the sphere by geometrically congruent pentagons \cite{gsy}, Gao, Shi and Yan separately classified the angle congruent tilings and edge congruent tilings with the smallest number of tiles, i.e., the dodecahedron. They then obtained geometrically congruent tilings by combining the two classifications.


In this work, we investigate tilings based on the angle information alone. In other words, we consider tilings where we neglect constraints based on edge lengths, requiring only that tiles be angle congruent. 

To get a sense of angle congruence, we may consider spherical ``rectangles'' that are quadrilaterals on the left of Figure \ref{rectangle}, with all four angles being $\theta$. Dividing the rectangle by the diagonal, we get two congruent triangles. The angles of the triangle are $\theta,\rho,\theta-\rho$. For all $\rho$ in some range, we always have the triangle and the corresponding rectangle. All these rectangles are angle congruent but not congruent.

\begin{figure}[h!]
\centering
\begin{tikzpicture}[>=latex]

\foreach \x in {1,-1}
\foreach \y in {1,-1}
\draw[xscale=\x, yscale=\y]
	(0,1.2) to[out=0, in=160] (1.2,1) to[out=-70, in=90] (1.4,0);

\draw[dashed]
	(1.2,1) -- (-1.2,-1);
	
\node at (-1.1,0.85) {\scriptsize $\theta$};
\node at (1.1,-0.85) {\scriptsize $\theta$};
\node at (0.9,0.9) {\scriptsize $\rho$};
\node[rotate=40] at (-0.95,-0.6) {\scriptsize $\theta\!-\!\rho$};

\begin{scope}[xshift=4cm]

\draw
	(-1.2,1) to[out=20, in=180] (0,1.2) to[out=0, in=160] (1.2,1) to[out=-70, in=70] (0.6,-1) to[out=200, in=-20] (-1.2,-1) to[out=110, in=250] (-1.2,1);

\node at (-1.1,0.85) {\scriptsize $\theta$};
\node at (-1.1,-0.85) {\scriptsize $\theta$};
\node at (1.1,0.85) {\scriptsize $\theta$};
\node at (0.5,-0.85) {\scriptsize $\theta$};

\end{scope}

\end{tikzpicture}
\caption{Angle congruent rectangles.}
\label{rectangle}
\end{figure}

In the discussion above, we implicitly assume all edges are straight (i.e., great arc in the sphere). In fact, since we are only concerned with angles, we will no longer assume edges to be straight in this paper. This means we allow the rectangle on the right of Figure \ref{rectangle}, which is angle congruent to the rectangle on the left, and has non-straight edges. In fact, in the 3d renderings of tilings in this paper, we always make the angles to be faithful. However, it is often impossible to have all the edges to be straight in the renderings.

Another problem we investigate in this paper is the change of tilings when some distinct angles are regarded as the same. We already observed such phenomenon in the classification of tilings by geometrically congruent polygons. For example, in \cite{wy2}, Wang and Yan showed that, if the angles satisfy certain linear equalities, then the tiling can be modified by flipping parts of the pentagonal subdivision tilings. To properly address the phenomenon, we introduce an even higher level of abstraction, where we assign labels to the corners of the tiles, and require that all tiles are {\em corner congruent} in the sense of having the same arrangements of labels for angles. In this setting, two corners of the same angle value may have different label, and two corners of different angle value may have the same label. 

There are three overlapping themes for the current paper. Each concerns our ability to determine pentagonal tilings of the sphere under more stringent constraints from tilings obtained under either seemingly weaker constraints or more limited information about nature of individual tiles (e.g., where we may not have explicit angle values).

The first theme of this paper concerns the extent to which geometrically congruent tilings are determined by angle congruence information alone. Here, somewhat surprisingly, our Theorem \ref{5Athm} says that the geometrically congruent tilings obtained by subdividing the Platonic solids -- the pentagonal subdivision tilings first introduced in \cite{wy1} -- are completely determined by angle congruence constraints.

To elaborate, the angle information in a tiling is given by the {\em anglewise vertex combination} (AVC), which is all the possible angle combinations at vertices in the tiling. The angle congruent tilings are constructed from these AVCs. For example, the angle congruent tilings in Theorem \ref{5Athm} are constructed from the following AVCs that first appeared for geometrically congruent tilings in \cite{wy1}
\begin{align*}
\text{AVC(5A24)}
&=\{
24\alpha\beta\gamma\delta\epsilon \colon 
24\alpha\beta\gamma,\,
8\delta^3,\,
6\epsilon^4 \};  \\
\text{AVC(5A60)}
&=\{
60\alpha\beta\gamma\delta\epsilon \colon 
60\alpha\beta\gamma,\,
20\delta^3,\,
12\epsilon^5 \}. 
\end{align*}
We also prove in Theorem \ref{5Athm} that there is no angle congruent tiling for the AVC
\[
\text{AVC(5A36)}
=\{
36\alpha\beta\gamma\delta\epsilon \colon 
36\alpha\beta\gamma,\,
8\delta^3,\,
12\delta\epsilon^3 \}.
\]

In Theorem \ref{EMTthm}, we prove the angle congruent tilings for the following AVC are the earth map tilings and their rotation modifications
\[
\text{AVC(EMT)}
=\{f\alpha\beta\gamma\delta\epsilon \colon f\alpha\beta\gamma,
\tfrac{1}{2}(f-4+2y_2)\delta\epsilon^2,
(4-2y_2)\delta^{\frac{f+4}{8}}\epsilon,
y_2\delta^{\frac{f}{4}}\}
\]
The geometrically congruent tilings for the AVC appeared in \cite[Proposition 30]{cly2}, and our theorem shows that the tilings are also completely determined by angle congruence constraints.

The choice of the four specific AVCs in the two theorems is related to the second theme of the paper, which concerns understanding the specific angle information required to determine tilings. 

For the four AVCs in Theorems \ref{5Athm} and \ref{EMTthm}, the five angles $\alpha,\beta,\gamma,\delta,\epsilon$ are assumed to be distinct (say they have distinct values). When some angles become equal, we get {\em reductions} of the AVCs. The tilings in the two theorems still give tilings for the reduced AVCs. The interesting question is whether there are additional tilings for the reduced AVCs. If there are no additional tilings, then even less information about the angles -- ignoring the difference between the angles that become equal -- still determines the tiling.

Declaring angles becoming equal introduces equalities. This usually requires the original AVC to allow flexibility in the angle values. In fact, the four AVCs only require $\alpha,\beta,\gamma$ to satisfy $\alpha+\beta+\gamma=2\pi$. Therefore, the AVCs allow continuous and free choice of two angle values. In the language of \cite{rao}, these are AVCs of dimension 2. Luk and Yan \cite{ly1} studied all the possible AVCs for edge-to-edge tilings of the sphere by angle congruent pentagons, and they showed in \cite[Proposition 5.1]{ly1} that there are no AVCs of dimension $>2$, and the four AVCs are all AVCs of dimension 2 that have at least 16 tiles. Therefore, we only study reductions of AVC(5A24), AVC(5A36), AVC(5A60). We will have another paper about earth map tilings. 

Throughout this work, we consider a comprehensive list of reductions of AVC(5A24) and AVC(5A60), that are related by reductions in Figure \ref{Fig-reduction}. We find almost all the tilings for these reductions. The comparison of these tilings shows what features of tilings are related to the distinction among angles.

Since Theorem \ref{5Athm} shows there is no tiling for AVC(5A36), we consider several special reductions of the AVC, and find that there are tilings for the reduced AVCs. This pinpoints the specific reason for no tiling for AVC(5A36).

Finally, the construction of tilings in this paper only uses the angle combinations in the AVCs, and never uses the actual angle values. This leads to the third theme of the paper, that angle congruent pentagonal tilings of the sphere -- and by our first theme, geometrically congruent pentagonal tilings of subdivided Platonic solids -- can be understood though a purely combinatorial lens, where we require only that the individual tiles are corner congruent.

\medskip

\noindent{\bf Acknowledgement}: We would like to thank Dr. Yixi Liao for his help in the 3d renderings of the tilings.

\section{Tilings with Two Free Angles}
\label{2free}

For brevity in the future statements, by {\em tilings} we mean edge-to-edge tilings of the sphere by angle congruent pentagons.

In the first theorem (Theorem \ref{5Athm}), we will show that tilings for AVC(5A24) and AVC(5A60) are pentagonal subdivision tilings. Such tilings, which are by geometrically congruent pentagons, first appeared in \cite{wy1}. If we remove the edge length information, then we get the pentagonal subdivision tilings in this paper, in which all tiles are angle congruent. The study of the purely combinatorial aspect of the pentagonal subdivision appeared in \cite{yan2}.

The pentagonal subdivision (see \cite[Section 3]{wy1}) is a general construction that converts any edge-to-edge tiling of an orientable surface into a pentagonal tiling. If the construction is applied to Platonic solids in a uniform way, then we get tilings of the sphere by congruent pentagons. The pentagonal subdivision of the tetrahedron $P_4$ is the dodecahedron $PP_4=P_{12}$. The pentagonal subdivision $PP_6$ of the cube $P_6$ in Figure \ref{Subfig-psubdivision-tiling}, and the pentagonal subdivision $PP_8$ of the octahedron $P_8$ (dual of $P_6$) in Figure \ref{Subfig-5AfigB-f24}, are the same. The pentagonal subdivision $PP_{12}$ of the dodecahedron $P_{12}$, and the pentagonal subdivision $PP_{20}$ of the icosahedron $P_{20}$ (dual of $P_{12}$) in Figure \ref{Subfig-5AfigB-f60}, are also the same.

\begin{figure}[h!]
\begin{subfigure}[b]{0.5\linewidth} 
\centering
\begin{tikzpicture}[>=latex,scale=1]

\foreach \a in {0,...,3}
{
\begin{scope}[rotate=90*\a, thick]
	
\draw
	(67.5:0.65) -- (90:0.4)	
	(15:1) -- (45:1)
	(60:1.3) -- (90:1.3);

\draw[blue]
	(0,0) -- (0:0.4)
	(-22.5:0.65) -- (-15:1) -- (15:1)
	(-45:1) -- (-15:1) -- (0:1.3)
	(60:1.3) -- (60:1.6);

\draw[red]
	(0:0.4) -- (22.5:0.65) -- (67.5:0.65)
	(15:1) -- (22.5:0.65)
	(0:1.3) -- (30:1.3) -- (60:1.3)
	(30:1.3) -- (45:1);
	
\fill
	(22.5:0.65) circle (0.05)
	(30:1.3) circle (0.05);
	
\end{scope}
}

\foreach \a in {0,...,3}
\filldraw[fill=white, rotate=90*\a]
	(0,0) circle (0.05)
	(-15:1) circle (0.05)
	(60:1.6) circle (0.05);

\end{tikzpicture}
\caption{$PP_6=PP_8$}
\label{Subfig-5AfigB-f24}
\end{subfigure}
\begin{subfigure}[b]{0.4\linewidth} 
\centering
\begin{tikzpicture}[>=latex,scale=1]

\foreach \a in {0,...,4}
{
\begin{scope}[rotate=72*\a, thick]

\draw
	(72:0.7) -- (90:0.45) 
	(30:1.1) -- (54:1.1)
	(-24:1.6) -- (-9:1.4) 
	(6:1.9) -- (-18:1.9) 
	(21:1.4) -- (12:1.6)
	(-54:2.2) -- (-30:2.2);

\draw[blue]
	(0,0) -- (18:0.45)
	(0:0.7) -- (6:1.1) 
	(-18:1.1) -- (6:1.1) -- (30:1.1)
	(-9:1.4) -- (6:1.1) -- (21:1.4)
	(12:1.6) -- (30:1.9) -- (48:1.6)
	(54:1.9) -- (30:1.9) -- (6:1.9)
	(42:2.2) -- (30:1.9)
	(18:2.2) -- (18:2.5);

\draw[red]
	(18:0.45) -- (36:0.7) -- (72:0.7)
	(30:1.1) -- (36:0.7)
	(21:1.4) -- (39:1.4) -- (54:1.1)
	(39:1.4) -- (48:1.6)
	(-18:1.9) -- (0:1.6) -- (12:1.6)
	(-9:1.4) -- (0:1.6)
	(-30:2.2) -- (-6:2.2) -- (18:2.2)
	(-6:2.2) -- (6:1.9);	
	
\fill
	(36:0.7) circle (0.05)
	(39:1.4) circle (0.05)
	(0:1.6) circle (0.05)
	(-6:2.2) circle (0.05);
	
\end{scope}
}

\foreach \a in {0,...,4}
\filldraw[fill=white, rotate=72*\a]
	(0,0) circle (0.05)
	(6:1.1) circle (0.05)
	(30:1.9) circle (0.05)
	(18:2.5) circle (0.05);
	
\end{tikzpicture}
\caption{$PP_{12}=PP_{20}$}
\label{Subfig-5AfigB-f60}
\end{subfigure}
\caption{Pentagonal subdivision tilings $PP_6,PP_8,PP_{12},PP_{20}$.}
\label{5AfigB}
\end{figure}

Figure \ref{5AfigD} gives 3d renderings of the pentagonal subdivision tilings.

\begin{figure}[h!]
\centering
\begin{subfigure}[b]{0.3\linewidth}
\centering
\begin{tikzpicture}[>=latex,scale=1]

\pgftext{
	\includegraphics[scale=0.07]{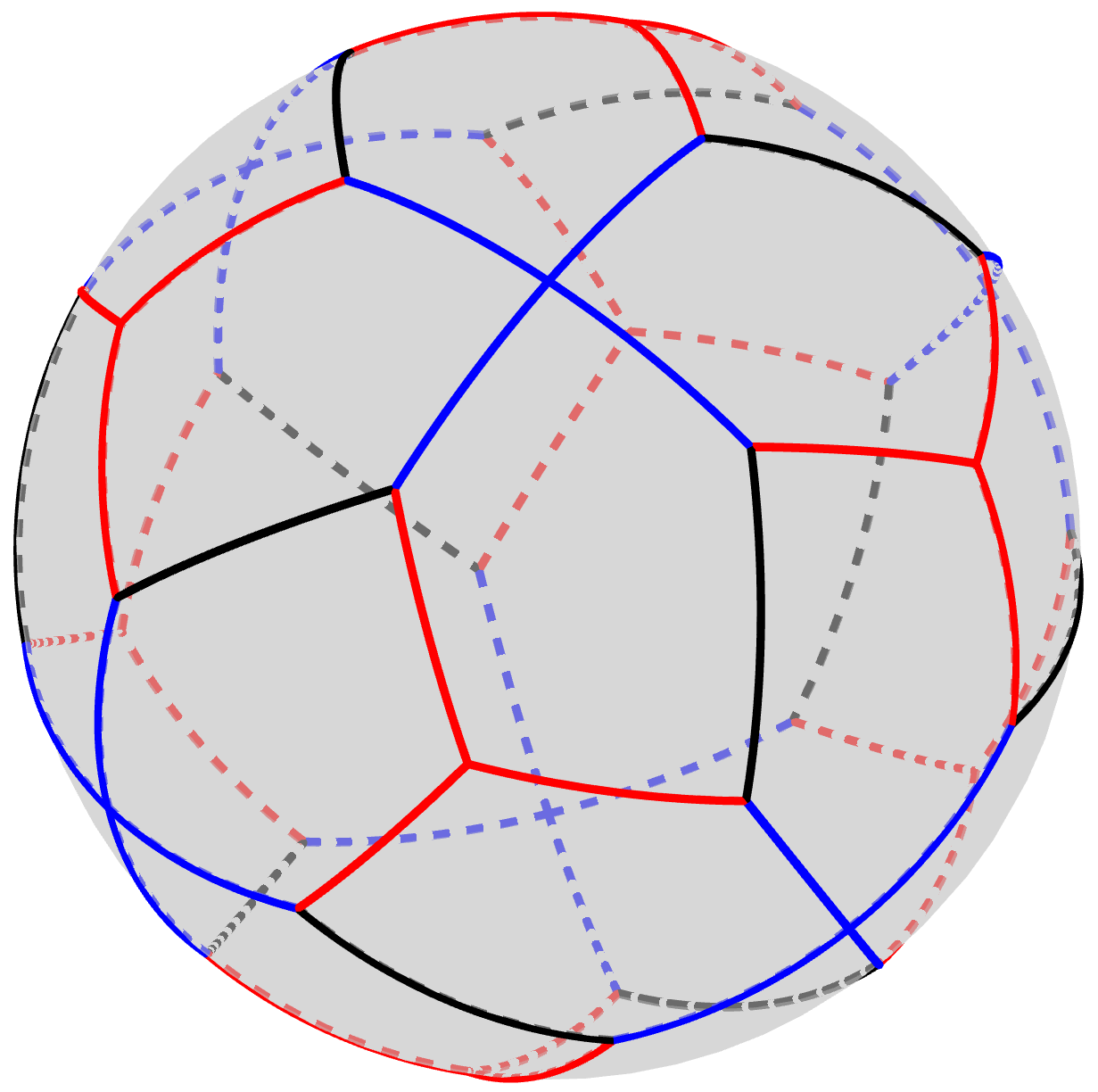}
	};

\end{tikzpicture}	
\caption{$PP_6=PP_8$}
\label{5AfigDa}
\end{subfigure}
\begin{subfigure}[b]{0.3\linewidth}
\centering
\begin{tikzpicture}[>=latex,scale=1]

\pgftext{
	\includegraphics[scale=0.072]{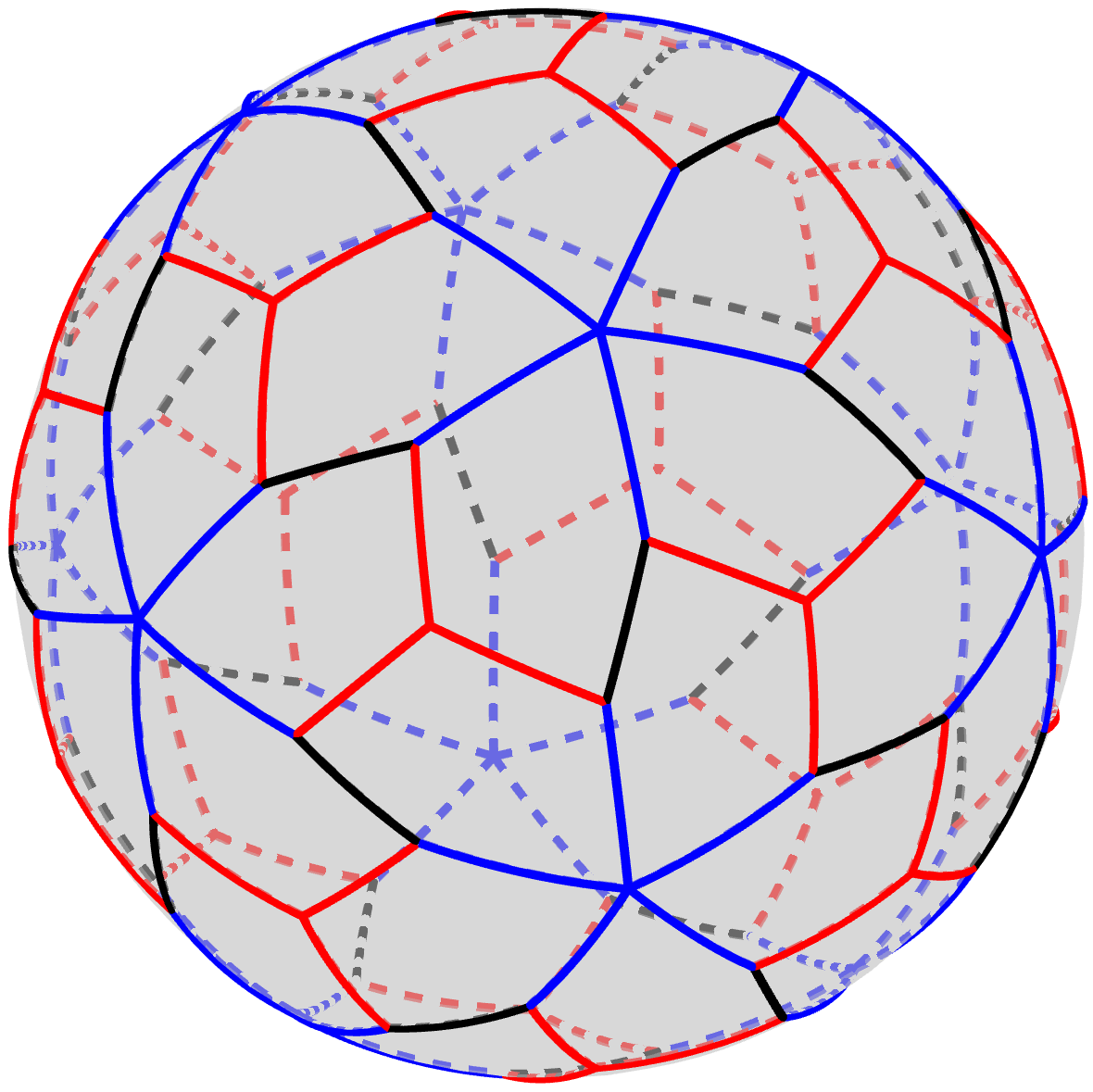}
	};

\end{tikzpicture}
\caption{$PP_{12}=PP_{20}$}
\label{5AfigDb}
\end{subfigure}
\caption{3d renderings of pentagonal subdivision tilings.}
\label{5AfigD}
\end{figure}

\begin{theorem}\label{5Athm} 
The tilings for {\rm AVC(5A24)} and {\rm AVC(5A60)} are the pentagonal subdivisions of the octahedron $PP_8$ and the icosahedron $PP_{20}$. Moreover, there is no tiling for {\rm AVC(5A36)}.
\end{theorem}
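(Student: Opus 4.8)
The plan is to treat each AVC separately, since the three AVCs have quite different vertex structures. In all cases the strategy is the standard one for classifying spherical pentagonal tilings: start from the list of admissible vertices (the AVC), use the counting/angle-sum constraints to locate a ``rich'' vertex to seed a tile, then propagate the tiling one tile at a time, at each stage deducing which edges and angles are forced by the AVC. The key numerical input is that every AVC here forces $\alpha+\beta+\gamma=2\pi$, and the vertex $\alpha\beta\gamma$ appears with full multiplicity $f$ (equal to the number of tiles), so essentially every tile contributes exactly one such vertex; meanwhile $\delta$ and $\epsilon$ only appear in the pure vertices $\delta^3,\epsilon^4$ (resp.\ $\delta^3,\epsilon^5$, or $\delta^3,\delta\epsilon^3$). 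This rigidity is what forces the tiling to be a pentagonal subdivision in the first two cases.

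For AVC(5A24) and AVC(5A60), I would first record the angle-sum and area relations forced by the AVC, then argue that around a $\delta^3$ vertex the three tiles are arranged so that their $\delta$-corners meet, and that following the edges out of such a configuration the adjacent vertices must be of type $\alpha\beta\gamma$ or $\epsilon^k$; a short propagation argument shows the local picture is exactly the pentagonal-subdivision neighborhood of a face center. Since the AVC rules out any ``mixed'' vertex involving $\delta$ or $\epsilon$ together with $\alpha,\beta,\gamma$, the combinatorial choices at each step are extremely limited, and the tiling closes up uniquely to $PP_8$ (resp.\ $PP_{20}$). Here I would lean on the combinatorial study of pentagonal subdivisions in \cite{yan2} and the original geometric classification in \cite{wy1}: the point is just to check that dropping edge-length data does not create new branches in the propagation.

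For the non-existence claim for AVC(5A36), the approach is a contradiction argument driven by the vertex $12\delta\epsilon^3$. From $36\alpha\beta\gamma$ and $8\delta^3$ and $12\delta\epsilon^3$ one reads off the total angle count of each symbol: $\delta$ occurs $24+12=36$ times in all, and $\epsilon$ occurs $36$ times as well, all of the $\epsilon$'s living in $\delta\epsilon^3$ vertices. So every $\epsilon$-corner of every tile sits at a $\delta\epsilon^3$ vertex, and every such vertex uses three distinct tiles contributing their $\epsilon$ and one tile contributing its $\delta$. Counting tile-corners of type $\delta$: each tile has exactly one $\delta$, giving $36$ $\delta$-corners, of which $3\times 8=24$ are consumed by the $\delta^3$ vertices and $12$ by the $\delta\epsilon^3$ vertices --- consistent. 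The contradiction must instead come from the local combinatorics: I would take a tile, look at the two edges bounding its $\epsilon$-corner, follow them to the neighboring tiles forced by the $\delta\epsilon^3$ vertex, and track how the $\delta^3$ vertices must then be placed; the claim is that the forced adjacencies cannot be completed consistently around the sphere (e.g.\ one is driven to a vertex combination outside the AVC, or to an Euler-characteristic/parity violation).

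The main obstacle I anticipate is precisely this last local analysis for AVC(5A36): unlike the other two AVCs, where the pentagonal-subdivision structure is already known and one only needs to verify that relaxing to angle congruence adds nothing, here one must show an \emph{absence} of tilings, which requires carefully enumerating the possible edge-length assignments (or, combinatorially, the possible corner arrangements) compatible with the pentagon's angle pattern $\alpha\beta\gamma\delta\epsilon$ and showing each leads to a blocked configuration. I expect this to be a finite but somewhat delicate case check, organized by how the edges at a $\delta\epsilon^3$ vertex match up with the edges at the neighboring $\delta^3$ and $\alpha\beta\gamma$ vertices; the bookkeeping of which pentagon edge is adjacent to which is where the real work lies, and it is the step I would expect to expand into the bulk of the written proof.
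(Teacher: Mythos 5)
Your overall strategy matches the paper's: fix the pentagon's corner arrangement, seed the tiling at a vertex that is completely forced by the AVC ($\delta^3$ for the first two AVCs, $\delta\epsilon^3$ for the third), and propagate. For AVC(5A24) and AVC(5A60) your outline is essentially the paper's argument --- the neighborhood $N(\delta^3)$ is rigid, every tiling is a union of such neighborhoods since $\delta\cdots=\delta^3$, and $\epsilon\cdots=\epsilon^4/\epsilon^5$ dictates the gluing --- but you skip the necessary first step of pinning down the pentagon itself. One must rule out the arrangement in which $\delta$ and $\epsilon$ are adjacent, which the paper does by examining the adjacent angle deduction of two consecutive $\delta$'s at $\delta^3$ and observing that every case forces a vertex $\beta\epsilon\cdots$, $\beta^2\cdots$ or $\epsilon^2\cdots$ outside the AVC. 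Without this step the ``extremely limited combinatorial choices'' you invoke are not yet established. (A minor slip: with $f$ vertices of type $\alpha\beta\gamma$, each tile generically touches three such vertices, one through each of its $\alpha$, $\beta$, $\gamma$ corners, not one.)

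The genuine gap is the non-existence claim for AVC(5A36). Your global count correctly comes out consistent, so, as you say, the obstruction must be local --- but your proposal stops at the assertion that a contradiction ``must'' arise from propagation, without producing it; this is the entire content of that half of the theorem. The paper's proof starts from the forced AAD of $\delta\epsilon^3$, builds a specific configuration of thirteen tiles by repeatedly applying $\alpha\beta\gamma$, $\delta^3$, $\delta\epsilon^3$ together with the non-adjacency of certain angle pairs in the pentagon, and finds at the final step that a tile would need $\gamma$ and $\delta$ adjacent, contradicting the arrangement fixed at the outset. Neither an Euler-characteristic nor a parity argument is involved: the contradiction is a clash between a forced local corner arrangement and the pentagon's fixed cyclic order of corners. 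Note also that ``edge-length assignments'' play no role anywhere in this theorem --- the whole analysis is in terms of corner arrangements only. As written, your proposal establishes the positive half only modulo the pentagon-arrangement lemma, and does not establish the negative half at all.
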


\begin{proof}
The AVCs are symmetric with respect to the permutations of $\alpha, \beta, \gamma$. Up to permutations, the angles can be arranged in the pentagon in two ways, given by Figure \ref{Subfig-5AfigA-T1} ($\delta, \epsilon$ adjacent) and Figure \ref{Subfig-5AfigA-T2} ($\delta, \epsilon$ non-adjacent).

\begin{figure}[h!]
\centering
\begin{subfigure}[b]{0.12\linewidth}
\centering

\begin{tikzpicture}[>=latex,scale=1]
\raisebox{1.75 ex}{
\foreach \a in {0,...,4}
\draw[rotate=72*\a]
	(18:0.6) -- (90:0.6)
;
	
\node at (90:0.4) {\scriptsize $\alpha$};
\node at (162:0.4) {\scriptsize $\beta$};
\node at (18:0.4) {\scriptsize $\gamma$};
\node at (234:0.4) {\scriptsize $\delta$};
\node at (-54:0.4) {\scriptsize $\epsilon$};
}
\end{tikzpicture}

\caption{}
\label{Subfig-5AfigA-T1}
\end{subfigure}
\begin{subfigure}[b]{0.12\linewidth}
\centering
\begin{tikzpicture}[>=latex,scale=1]
\raisebox{1.75 ex}{
\foreach \a in {0,...,4}
\draw[rotate=72*\a]
	(18:0.6) -- (90:0.6)
;

\node at (90:0.4) {\scriptsize $\alpha$};
\node at (162:0.4) {\scriptsize $\delta$};
\node at (18:0.4) {\scriptsize $\epsilon$};
\node at (234:0.4) {\scriptsize $\beta$};
\node at (-54:0.4) {\scriptsize $\gamma$};
}
\end{tikzpicture}
\caption{}
\label{Subfig-5AfigA-T2}
\end{subfigure}
\begin{subfigure}[b]{0.5\linewidth}
\centering
\begin{tikzpicture}[>=latex,scale=1]
\begin{scope}[shift={(4.3cm,-0.2cm)}]

\foreach \a in {1,-1}
\foreach \b in {0,1,2}
\draw[xshift=2.2*\b cm, xscale=\a]
	(0,0) -- (0,0.5) -- (0.4,0.8) -- (0.8,0.5) -- (0.8,0) -- (0,0);

\foreach \b in {0,1,2}
\fill (2.5*\b,0) circle (0.05);

\foreach \b in {0,2}
{
\begin{scope}[xshift=2.2*\b cm]

\node at (-0.65,0.15) {\scriptsize $\beta$};
\node at (-0.15,0.15) {\scriptsize $\delta$};
\node at (-0.65,0.45) {\scriptsize $\alpha$};
\node at (-0.15,0.45) {\scriptsize $\epsilon$};
\node at (-0.4,0.6) {\scriptsize $\gamma$};

\end{scope}
}

\foreach \b in {0,1}
{
\begin{scope}[xshift=2.2*\b cm]

\node at (0.65,0.15) {\scriptsize $\epsilon$};
\node at (0.15,0.15) {\scriptsize $\delta$};
\node at (0.65,0.45) {\scriptsize $\gamma$};
\node at (0.15,0.45) {\scriptsize $\beta$};
\node at (0.4,0.6) {\scriptsize $\alpha$};

\end{scope}
}

\begin{scope}[xshift=2.2 cm]

\node at (-0.65,0.15) {\scriptsize $\epsilon$};
\node at (-0.15,0.15) {\scriptsize $\delta$};
\node at (-0.65,0.45) {\scriptsize $\gamma$};
\node at (-0.15,0.45) {\scriptsize $\beta$};
\node at (-0.4,0.6) {\scriptsize $\alpha$};

\end{scope}

\begin{scope}[xshift=4.4 cm]

\node at (0.65,0.15) {\scriptsize $\beta$};
\node at (0.15,0.15) {\scriptsize $\delta$};
\node at (0.65,0.45) {\scriptsize $\alpha$};
\node at (0.15,0.45) {\scriptsize $\epsilon$};
\node at (0.4,0.6) {\scriptsize $\gamma$};

\end{scope}

\node at (0,-0.35) {\footnotesize $|^{\beta}\delta^{\epsilon}|^{\beta}\delta^{\epsilon}|$};

\node at (2.2,-0.35) {\footnotesize $|^{\epsilon}\delta^{\beta}|^{\beta}\delta^{\epsilon}|$};

\node at (4.4,-0.35) {\footnotesize $|^{\beta}\delta^{\epsilon}|^{\epsilon}\delta^{\beta}|$};

\end{scope}
\end{tikzpicture}
\caption{}
\label{Subfig-5AfigA-dede}
\end{subfigure}
\begin{subfigure}[b]{0.22\linewidth}
\centering
\begin{tikzpicture}[>=latex,scale=1]

\foreach \a in {0,1,2}
{
\begin{scope}[rotate=120*\a, thick]

\draw
	(0:0.693) -- (60:0.693);

\draw[blue]
	(-30:1.2) -- (0.693,0)
	(210:1.2) -- (-0.693,0)
	(90:1.2) -- ++(30:0.3)
	(-30:1.2) -- ++(30:0.3);

\draw[red]
	(0,0) -- (0.693,0)
	(-0.693,0) -- ++(150:0.3);

\fill
	(0,0) circle (0.05);

\node at (60:0.18) {\scriptsize $\delta$};
\node at (90:0.95) {\scriptsize $\epsilon$};
\node at (185:0.56) {\scriptsize $\gamma$};
\node at (107:0.65) {\scriptsize $\alpha$};
\node at (138:0.47) {\scriptsize $\beta$};

\node[gray] at (75:1.15) {\scriptsize $\delta/\epsilon$};
\node at (100:1.15) {\scriptsize $\epsilon$};
\node at (5:0.82) {\scriptsize $\gamma$};
\node at (63:0.88) {\scriptsize $\alpha$};
\node at (45:0.78) {\scriptsize $\beta$};

\end{scope}
}

\foreach \a in {0,1,2}
\filldraw[fill=white, rotate=120*\a]
	(90:1.2) circle (0.05)
;

\end{tikzpicture}	
\caption{$N(\delta^3)$}
\label{Subfig-5AfigB-N}
\end{subfigure}
\caption{Angle arrangements for $\alpha\beta\gamma\delta\epsilon$, AADs for $|\delta|\delta|$ and neighborhood $N(\delta^3)$ of the vertex $\delta^3$.}
\label{5AfigA}
\end{figure}

We use the notations for {\em adjacent angle deduction} (or AAD) introduced in \cite{wy1}. Consider two consecutive $\delta$ angles, denoted $|\delta|\delta|$, at the vertex $\delta^3$. If the pentagon is the one in Figure \ref{Subfig-5AfigA-T1}, then Figure \ref{Subfig-5AfigA-dede} shows all the ways the angles in the two tiles containing the two $\delta$ angles can be arranged. In Figure \ref{Subfig-5AfigA-dede}, the AAD notations $|^{\beta}\delta^{\epsilon}|^{\beta}\delta^{\epsilon}|$, $|^{\epsilon}\delta^{\beta}|^{\beta}\delta^{\epsilon}|$, $|^{\beta}\delta^{\epsilon}|^{\epsilon}\delta^{\beta}|$ indicate the positions of the angles $\beta$, $\epsilon$ adjacent to $\delta$. Both the pictures and the AADs show respectively that $\beta\epsilon\cdots$, $\beta^2\cdots$, or $\epsilon^2\cdots$ must be a vertex in the tiling. Since AVC(5A24), AVC(5A36) and AVC(5A60) do not contain such vertices, we get a contradiction.

Therefore, the pentagon must be the one in Figure \ref{Subfig-5AfigA-T2}, where $\delta, \epsilon$ are non-adjacent. Again since $\alpha^2\cdots, \beta^2\cdots$ are not in the AVCs, the AAD of $\delta^3$ is $|^{\alpha}\delta^{\beta}|^{\alpha}\delta^{\beta}|^{\alpha}\delta^{\beta}|$. This determines a subtiling $N(\delta^3)$ consisting of the three tiles around $\delta^3$ (denoted by $\bullet$) in Figure \ref{Subfig-5AfigB-N} (the color matches the color in Figure \ref{5AfigB}). In general, if $X$ is part of a tiling, then we will use $N(X)$ to denote all the tiles touching $X$.

By the AVC, there are three vertices $\alpha\beta\cdots=\alpha\beta\gamma$, and three other vertices $\gamma\cdots=\alpha\beta\gamma$ along the boundary of $N(\delta^3)$. Since $\alpha, \gamma$ are non-adjacent in the pentagon, we also know the locations of the angles just outside $N(\delta^3)$. 

Since $\delta\cdots=\delta^3$, any tiling is a union of $N(\delta^3)$. Moreover, in AVC(5A24) and AVC(5A60), by $\epsilon\cdots=\epsilon^4 / \epsilon^5$, the angle $\delta/\epsilon$ in Figure \ref{Subfig-5AfigB-N} is $\epsilon$. This determines how the $N(\delta^3)$ are glued together.  The result is the pentagonal subdivision of the octahedron in Figure \ref{Subfig-5AfigB-f24} or icosahedron in Figure \ref{Subfig-5AfigB-f60}. 
  
Next we turn to AVC(5A36). The AVC implies the AAD of $\delta\epsilon^3$ is $|^{\alpha}\delta^{\beta}|^{\alpha}\epsilon^{\gamma}|^{\alpha}\epsilon^{\gamma}|^{\alpha}\epsilon^{\gamma}|$. This determines (the angle arrangements of the tiles) \circled{1}, \circled{2}, \circled{3}, \circled{4} in Figure \ref{5AfigC}. Then $\alpha_2\gamma_1\cdots=\alpha_3\gamma_2\cdots=\alpha\beta\gamma$ determines $\beta_5, \beta_6$. Then $\beta_1\cdots=\beta_2\cdots=\alpha\beta\gamma$, and $\alpha, \beta$ non-adjacent determine $\alpha_7, \gamma_5, \alpha_8, \gamma_6$. Then $\beta_5, \gamma_5, \beta_6, \gamma_6$ determine \circled{5}, \circled{6}.

\begin{figure}[h!]
\centering
\begin{tikzpicture}[>=latex,scale=1]

\draw
	(0,-0.5) -- (0,0.5) -- (0.4,0.8) -- (0.8,0.5) -- (0.8,-0.5) -- (0.4,-0.8) -- (0,-0.5) -- (-0.4,-0.8) -- (-0.8,-0.5) -- (-0.8,0.5) -- (-0.4,0.8) -- (0,0.5)
	(0.8,0) -- (-0.8,0) 
	(-0.4,0.8) -- (-0.4,1.3) -- (-1.4,1.3) -- (-1.4,-1.3)
	(-0.8,-0.5) -- (-1.4,-0.5)
	(-0.8,0.5) -- (-2.6,0.5) -- (-2.6,-1.3) -- (1.4,-1.3)
	(-2,0.5) -- (-2,-1.9) -- (1.4,-1.9) -- (1.4,-0.5) -- (0.8,-0.5)
	(0.4,-0.8) -- (0.4,-1.3)
	(-0.4,-0.8) -- (-0.4,-1.9);

\node at (0.15,0.15) {\scriptsize $\delta$};
\node at (0.65,0.15) {\scriptsize $\alpha$};
\node at (0.65,0.45) {\scriptsize $\epsilon$};
\node at (0.4,0.6) {\scriptsize $\gamma$};
\node at (0.15,0.45) {\scriptsize $\beta$};

\node at (-0.65,0.15) {\scriptsize $\gamma$};
\node at (-0.15,0.45) {\scriptsize $\alpha$};
\node at (-0.4,0.6) {\scriptsize $\delta$};
\node at (-0.15,0.15) {\scriptsize $\epsilon$};
\node at (-0.65,0.45) {\scriptsize $\beta$};

\node at (-0.65,-0.15) {\scriptsize $\alpha$};
\node at (-0.15,-0.45) {\scriptsize $\gamma$};
\node at (-0.4,-0.6) {\scriptsize $\beta$};
\node at (-0.15,-0.15) {\scriptsize $\epsilon$};
\node at (-0.65,-0.45) {\scriptsize $\delta$};

\node at (0.65,-0.15) {\scriptsize $\gamma$};
\node at (0.15,-0.15) {\scriptsize $\epsilon$};
\node at (0.15,-0.45) {\scriptsize $\alpha$};
\node at (0.4,-0.6) {\scriptsize $\delta$};
\node at (0.65,-0.45) {\scriptsize $\beta$};

\node at (-0.85,0.65) {\scriptsize $\alpha$};
\node at (-0.55,0.85) {\scriptsize $\delta$};
\node at (-0.55,1.15) {\scriptsize $\beta$};
\node at (-1.25,1.15) {\scriptsize $\gamma$};
\node at (-1.25,0.65) {\scriptsize $\epsilon$};

\node at (-0.85,-0.65) {\scriptsize $\delta$};
\node at (-0.55,-0.85) {\scriptsize $\alpha$};
\node at (-0.55,-1.15) {\scriptsize $\epsilon$};
\node at (-1.25,-1.15) {\scriptsize $\gamma$};
\node at (-1.25,-0.65) {\scriptsize $\beta$};

\node at (0.85,-0.65) {\scriptsize $\alpha$};
\node at (0.55,-0.85) {\scriptsize $\delta$};
\node at (0.55,-1.15) {\scriptsize $\beta$};
\node at (1.25,-1.15) {\scriptsize $\gamma$};
\node at (1.25,-0.65) {\scriptsize $\epsilon$};

\node at (0,-0.7) {\scriptsize $\beta$};
\node at (0.25,-0.85) {\scriptsize $\delta$};
\node at (-0.25,-0.85) {\scriptsize $\gamma$};
\node at (0.25,-1.15) {\scriptsize $\alpha$};
\node at (-0.25,-1.15) {\scriptsize $\epsilon$};

\node at (0,0.7) {\scriptsize $\gamma$};
\node at (-0.25,0.85) {\scriptsize $\delta$};

\node at (-1.25,0.35) {\scriptsize $\epsilon$};
\node at (-1.25,-0.35) {\scriptsize $\alpha$};
\node at (-0.95,0.35) {\scriptsize $\gamma$};
\node at (-0.95,-0.35) {\scriptsize $\delta$};
\node at (-0.95,0) {\scriptsize $\beta$};

\node at (1.25,-1.45) {\scriptsize $\beta$};
\node at (0.4,-1.45) {\scriptsize $\gamma$};
\node at (-0.25,-1.45) {\scriptsize $\epsilon$};
\node at (1.25,-1.75) {\scriptsize $\delta$};
\node at (-0.25,-1.75) {\scriptsize $\alpha$};

\node at (-0.4,-2.05) {\scriptsize $\gamma$};

\node at (-1.85,-1.45) {\scriptsize $\epsilon$};
\node at (-1.4,-1.45) {\scriptsize $\alpha$};
\node at (-0.55,-1.45) {\scriptsize $\delta$};
\node at (-1.85,-1.75) {\scriptsize $\gamma$};
\node at (-0.55,-1.75) {\scriptsize $\beta$};

\node at (-2.15,-1.45) {\scriptsize $\epsilon$};

\node at (-1.55,-1.15) {\scriptsize $\beta$};
\node at (-1.55,0.35) {\scriptsize $\epsilon$};
\node at (-1.55,-0.5) {\scriptsize $\gamma$};
\node at (-1.85,-1.15) {\scriptsize $\delta$};
\node at (-1.85,0.35) {\scriptsize $\alpha$};

\node at (-2,0.65) {\scriptsize $\beta$};
\node at (-1.55,0.65) {\scriptsize $\delta$};

\node at (-2.15,0.35) {\scriptsize $\gamma$};
\node at (-2.45,0.35) {\scriptsize $\beta$};
\node at (-2.45,-0.5) {\scriptsize $\delta$};
\node at (-2.15,-1.15) {\scriptsize $\epsilon$};
\node at (-2.45,-1.15) {\scriptsize $\alpha$};

\node[inner sep=0.5,draw,shape=circle] at (-0.4,-0.3) {\scriptsize $2$};
\node[inner sep=0.5,draw,shape=circle] at (-0.4,0.3) {\scriptsize $1$};
\node[inner sep=0.5,draw,shape=circle] at (0.4,0.3) {\scriptsize $4$};
\node[inner sep=0.5,draw,shape=circle] at (0.4,-0.3) {\scriptsize $3$};
\node[inner sep=0.5,draw,shape=circle] at (-1.2,0) {\scriptsize $5$};
\node[inner sep=0.5,draw,shape=circle] at (0,-1) {\scriptsize $6$};
\node[inner sep=0.5,draw,shape=circle] at (-1,-0.9) {\scriptsize $8$};
\node[inner sep=0.5,draw,shape=circle] at (1,-0.9) {\scriptsize $9$};
\node[inner sep=0,draw,shape=circle] at (0.8,-1.6) {\scriptsize $11$};
\node[inner sep=0,draw,shape=circle] at (-1,-1.6) {\scriptsize $12$};
\node[inner sep=0,draw,shape=circle] at (-1.7,0) {\scriptsize $10$};
\node[inner sep=0,draw,shape=circle] at (-2.3,0) {\scriptsize $13$};
\node[inner sep=0.5,draw,shape=circle] at (-1,0.9) {\scriptsize $7$};

\end{tikzpicture}
\caption{No tiling for AVC(5A36).}
\label{5AfigC}
\end{figure}

Then the AAD $|^{\alpha}\delta^{\beta}|^{\alpha}\delta^{\beta}|^{\alpha}\delta^{\beta}|$ of $\delta_2\delta_5\cdots=\delta_3\delta_6\cdots=\delta^3$ and $\circled{2}, \circled{3}$ determine two $N(\delta^3)$, including \circled{8}, \circled{9}. Then $\alpha_5\beta_8\cdots=\epsilon_6\epsilon_8\cdots=\gamma_8\cdots=\gamma_9\cdots=\alpha\beta\gamma$, and $\alpha, \gamma$ non-adjacent determine \circled{\footnotesize 10}, \circled{\footnotesize 11}, and $\alpha_{12}$. Then $\epsilon_6\epsilon_8\epsilon_{11}\cdots=\delta\epsilon^3$ and $\alpha_{12}$ determine $\circled{\footnotesize 12}$. Then the AAD $|^{\alpha}\delta^{\beta}|^{\alpha}\epsilon^{\gamma}|^{\alpha}\epsilon^{\gamma}|^{\alpha}\epsilon^{\gamma}|$ of $\delta_{10}\epsilon_{12}\cdots=\delta\epsilon^3$ and $\circled{\footnotesize 10}, \circled{\footnotesize 12}$ determine $\circled{\footnotesize 13}$. Then $\alpha_{10}\gamma_{13}\cdots=\alpha\beta\gamma$, and $\epsilon_5\epsilon_{10}\cdots=\delta\epsilon^3$, and $\beta, \epsilon$ non-adjacent determine $\epsilon_7$. Then $\alpha_7, \epsilon_7$ determine $\circled{7}$. Then $\delta_1\delta_7\cdots=\delta^3$ and $\alpha_1\beta_4\cdots=\alpha\beta\gamma$ imply $\gamma, \delta$ adjacent. This is a contradiction.
\end{proof}

The tilings for AVC(5A24) and AVC(5A60) constructed in the proof of Theorem \ref{5Athm} are obtained by glueing copies of the basic pieces $N(\delta^3)$ in Figure \ref{Subfig-5AfigB-N} together. For AVC(5A24), the pieces $N(\delta^3)$ are the triangular faces in the octahedron $P_8$ outlined by the green lines in Figure \ref{2D24-8p}. These green lines correspond to the blue and black lines in Figure \ref{Subfig-5AfigB-f24}. The pentagonal subdivision $PP_8$ means each triangular face of $P_8$ is further divided into three pentagons by the black lines in Figure \ref{2D24-8p}	(corresponding to red lines in Figure \ref{Subfig-5AfigB-f24}). Figure \ref{2D60-20p} similarly describes the pentagonal subdivision $PP_{20}$ of the triangular faces of the icosahedron $P_{20}$.

\begin{figure}[h!]
\centering
\begin{subfigure}[b]{0.22\linewidth}
\centering
\begin{tikzpicture}[>=latex,scale=1]

\pgftext{
	\includegraphics[scale=0.077]{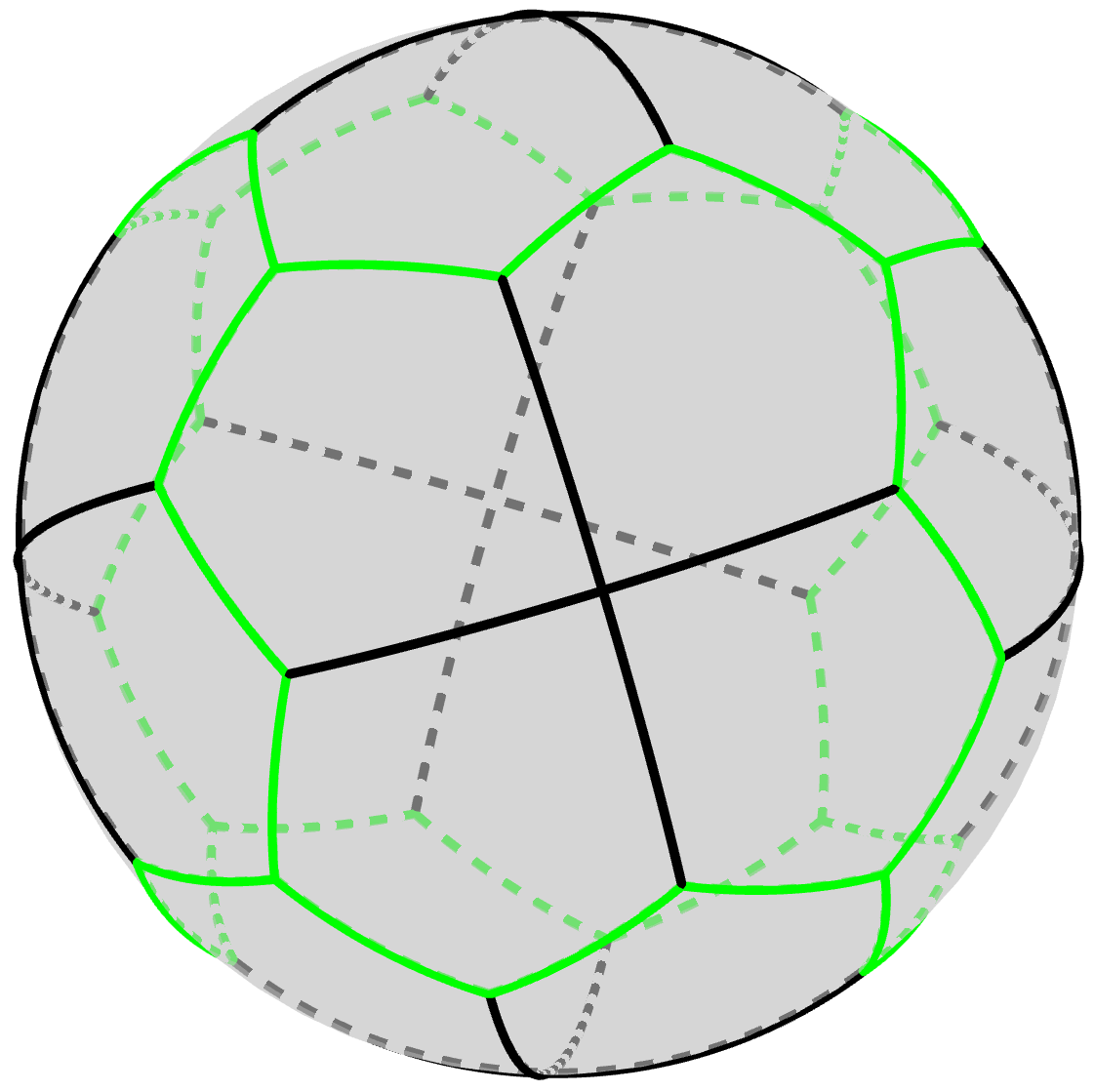}
	};
	
\end{tikzpicture}	
\caption{$PP_6$}
\label{2D24-6p}
\end{subfigure}
\begin{subfigure}[b]{0.22\linewidth}
\centering
\begin{tikzpicture}[>=latex,scale=1]

\pgftext{
	\includegraphics[scale=0.066]{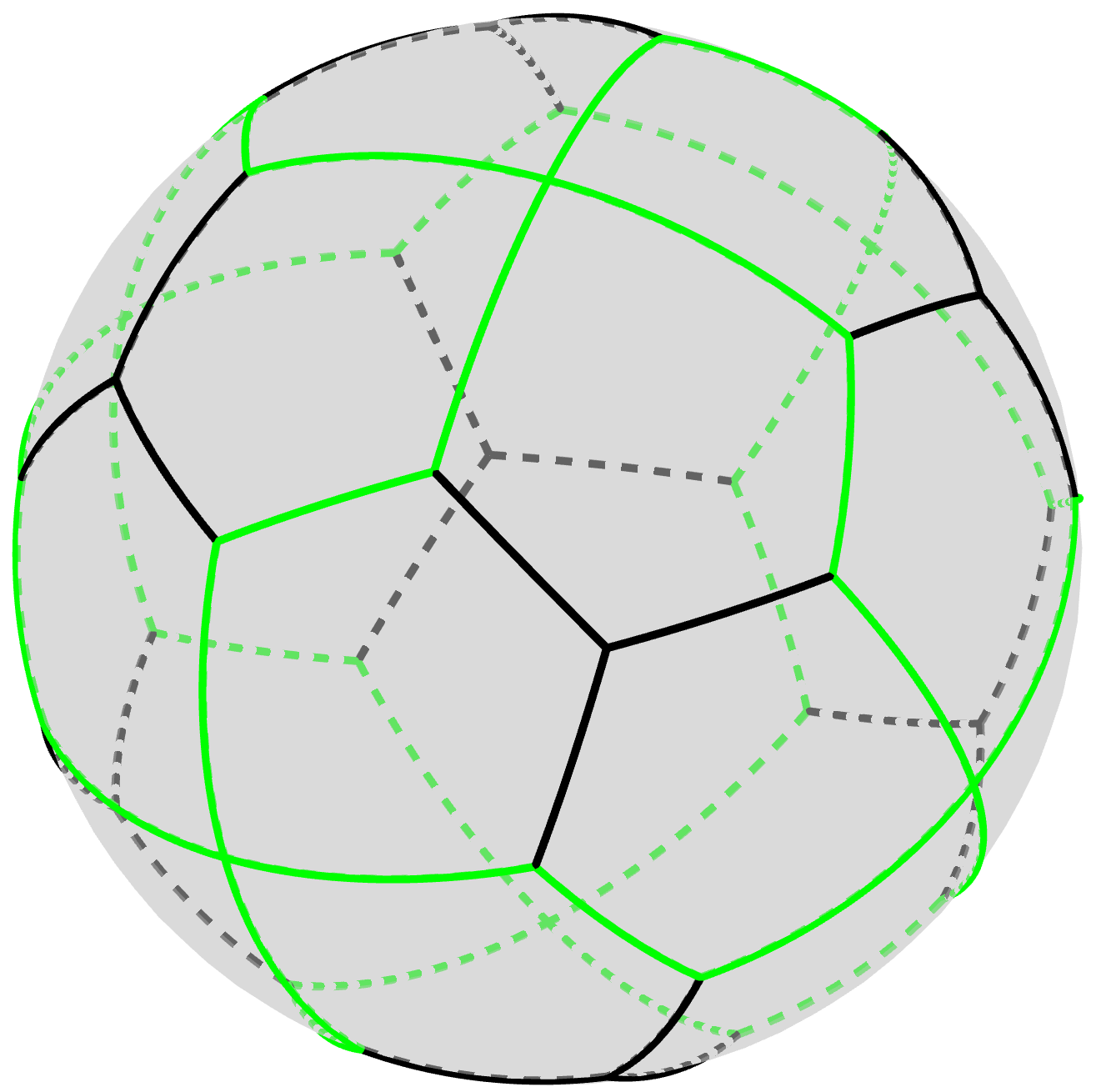}
	};
	
\end{tikzpicture}
\caption{$PP_8$}
\label{2D24-8p}	
\end{subfigure}
\begin{subfigure}[b]{0.22\linewidth}
\centering
\begin{tikzpicture}[>=latex,scale=1]

\pgftext{
	\includegraphics[scale=0.066]{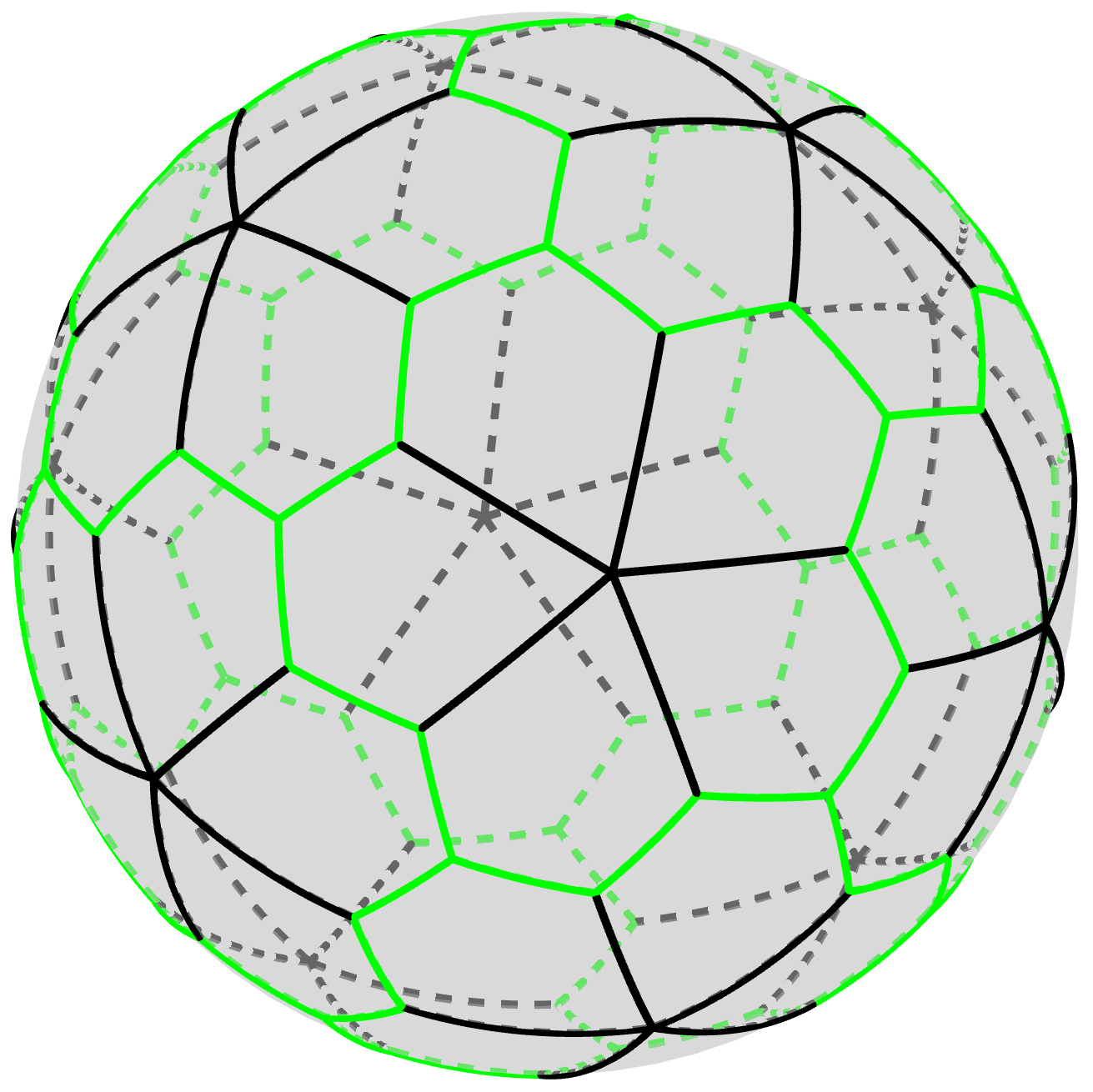}
	};
	
\end{tikzpicture}
\caption{$PP_{12}$}
\label{2D60-12p}	
\end{subfigure}
\begin{subfigure}[b]{0.22\linewidth}
\centering
\begin{tikzpicture}[>=latex,scale=1]

\pgftext{
	\includegraphics[scale=0.068]{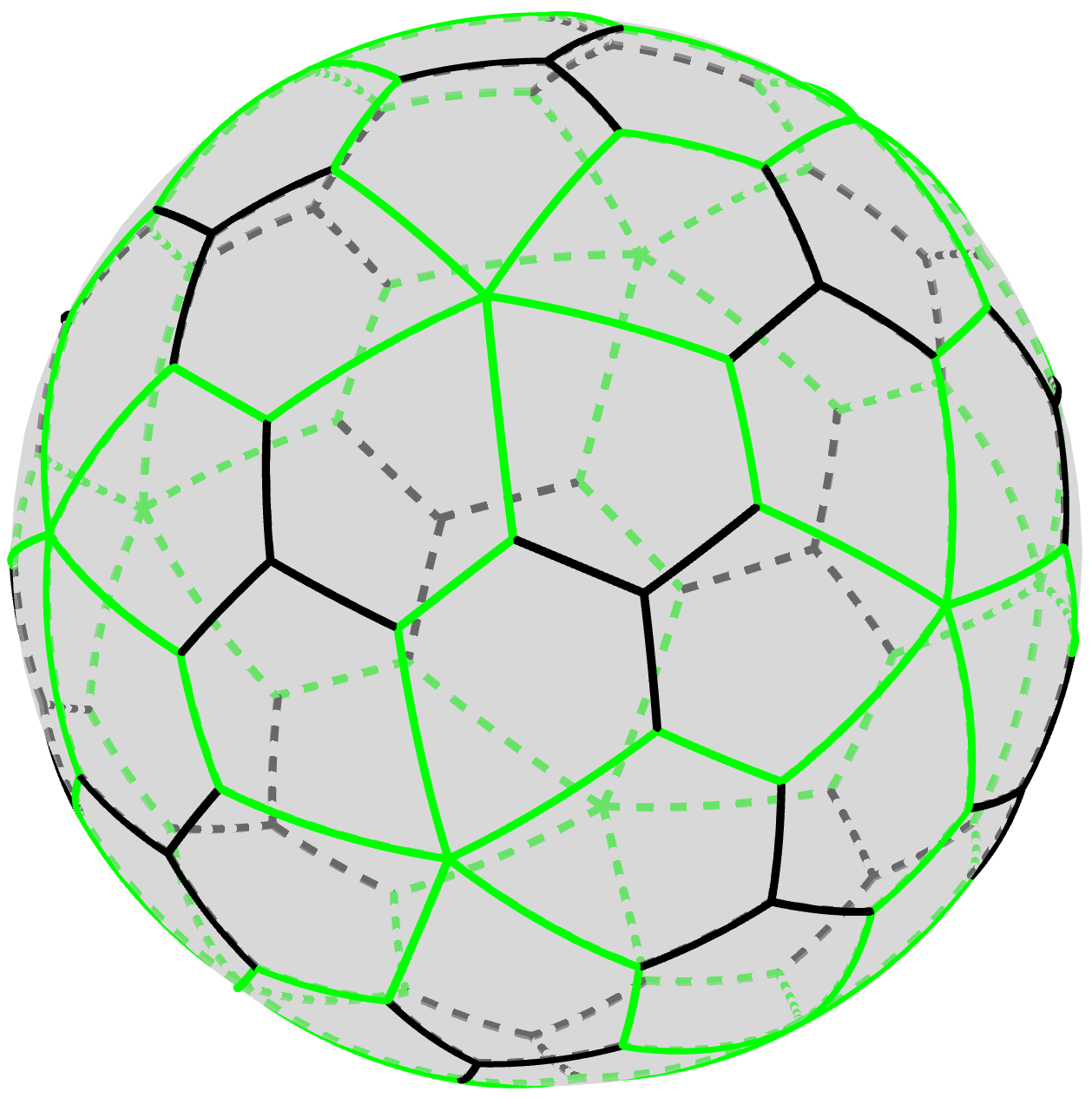}
	};
	
\end{tikzpicture}
\caption{$PP_{20}$}
\label{2D60-20p}	
\end{subfigure}
\caption{Two viewpoints of the same tiling.}
\label{psubdivision2}
\end{figure}

The tilings can also equivalently constructed as the pentagonal subdivisions $PP_6$ and $PP_{12}$ of the cube $P_6$ and the dodecahedron $P_{12}$. The tiling $PP_6$ in Figure \ref{psubdivision} is the same as the tiling $PP_8$ in Figure \ref{Subfig-5AfigB-f24}. Each square face of the cube $P_6$ is further divided into four pentagons according to the neighborhood $N(\epsilon^4)$ of the vertex $\epsilon^4$ in Figure \ref{Subfig-psubdivision-N}. The boundary of the square face $N(\epsilon^4)$ is given by the red and black lines. Figure \ref{2D24-6p} gives the 3d picture of $PP_6$, where the square face is outlined by green lines (and the black lines correspond to blue lines in Figure \ref{Subfig-5AfigB-f24}). Similarly, Figure \ref{2D60-12p} show the division of the pentagonal faces of the dodecahedron $P_{12}$ into five pentagons, as the neighborhood $N(\epsilon^5)$ of the vertex $\epsilon^5$. The result is the pentagonal subdivision tiling $PP_{12}$ and is the same as $PP_{20}$.

\begin{figure}[h!]

\begin{subfigure}[b]{0.45\linewidth}
\centering
\begin{tikzpicture}[>=latex]

\foreach \a in {0,1,2,3}
{
\begin{scope}[rotate=90*\a, thick]

\draw
	(-0.4,1.2) -- (0.4,1.2);

\draw[blue]
	(0,0) -- (1.2,0.4)
	(0.4,1.2) -- ++(0,0.4);

\draw[red]
	(0.4,1.2) -- (1.2,1.2) -- (1.2,0.4)
	(1.2,1.2) -- (1.6,1.6);

\fill (1.2,1.2) circle (0.05);

\begin{scope}[font=\scriptsize]

\node at (1.15,1.35) {$\delta$};
\node at (1.35,1.15) {$\delta$};

\node at (0.55,-1.05) {$\alpha$};
\node at (0.15,-1.05) {$\gamma$};
\node at (1.05,-1.05) {$\delta$};
\node at (-0.4,-1.05) {$\beta$};

\node at (0.4,-1.35) {$\beta$};
\node at (-0.55,-1.35) {$\alpha$};
\node at (-0.25,-1.35) {$\gamma$};

\node at (0.2,-0.1) {$\epsilon$};

\end{scope}

\end{scope}
}

\filldraw[fill=white]
	(0,0) circle (0.06);

\end{tikzpicture}
\caption{$N(\epsilon^4)$}
\label{Subfig-psubdivision-N}
\end{subfigure}
\begin{subfigure}[b]{0.45\linewidth}
\centering
\begin{tikzpicture}[>=latex]


\foreach \a in {0,...,3}
{
\begin{scope}[rotate=90*\a, thick]

\draw
	(0.5,0.9) -- (0,0.6)
	(0.9,0.5) -- (1.4,0.5) 
	(-0.5,1.4) -- (-1.4,1.4)
	(2,-0.5) -- (2,-2);

\draw[blue]
	(0,0) -- (0.6,0)
	(0.5,0.9) -- (0.5,2)
	(1.4,1.4) -- (-0.5,1.4)
	(2,2) -- (2.4,2.4);

\draw[red]
	(0.6,0) -- (0.9,0.5) -- (0.5,0.9)
	(2,1.4) -- (1.4,1.4)
	(2,-0.5) -- (2,2)
	;
	
\fill
	(0.9,0.5) circle (0.06)
	(2,1.4) circle (0.06);

\begin{scope}[font=\scriptsize]

\node at (0.15,0.15) {$\epsilon$};
\node at (0.65,1.25) {$\epsilon$};
\node at (0.35,1.25) {$\epsilon$};
\node at (0.65,1.55) {$\epsilon$};
\node at (0.35,1.55) {$\epsilon$};
\node at (0,2.4) {$\epsilon$};

\node at (-0.35,1) {$\delta$};
\node at (0.7,0.45) {$\delta$};
\node at (0.95,0.65) {$\delta$};
\node at (1.85,1.25) {$\delta$};
\node at (-1.4,2.15) {$\delta$};
\node at (1.85,1.55) {$\delta$};

\node at (0.5,0.15) {$\alpha$};
\node at (2.15,1.95) {$\alpha$};
\node at (0.65,0.95) {$\alpha$};
\node at (-0.35,1.25) {$\alpha$};
\node at (0.35,1.85) {$\alpha$};
\node at (1.4,1.55) {$\alpha$};

\node at (0,0.8) {$\beta$};
\node at (1.25,0.65) {$\beta$};
\node at (0.45,0.7) {$\beta$};
\node at (1.55,1.25) {$\beta$};
\node at (0.5,2.15) {$\beta$};
\node at (1.85,1.85) {$\beta$};

\node at (0.15,0.5) {$\gamma$};
\node at (1.95,2.15) {$\gamma$};
\node at (0.35,1) {$\gamma$};
\node at (1.25,1.25) {$\gamma$};
\node at (-0.5,1.55) {$\gamma$};
\node at (0.65,1.85) {$\gamma$};

\end{scope}

\end{scope}
}

\foreach \a in {0,...,3}
\filldraw[fill=white, rotate=90*\a] 
	(0.5,1.4) circle (0.05)
	(0,0) circle (0.05)
	(2.4,2.4) circle (0.05);

\end{tikzpicture}
\caption{}
\label{Subfig-psubdivision-tiling}
\end{subfigure}
\caption{Pentagonal subdivision $PP_6$ of the cube $P_6$.}
\label{psubdivision}
\end{figure}

Although it makes no difference whether we use $PP_8$ or $PP_6$ in Theorem \ref{5Athm}, in the future theorems, the tilings may sometimes be better described in terms of the triangular faces of $P_8$, and sometimes in terms of the square faces of $P_6$. We will use the more relevant one among $PP_8$ and $PP_6$ (and the same among $PP_{20}$ and $PP_{12}$) in the statements of theorems.

In the second theorem, we will show that tilings for AVC(EMT) are earth map tilings and their rotation modifications. The earth map tilings by geometrically congruent pentagons first appeared in \cite{cly2}, and denoted $E_{\pentagon}2$ and $RE_{\pentagon}2$ in the earlier paper. If we remove the edge length information, then we get the pentagonal subdivision tilings in this paper, in which all tiles are angle congruent. The study of the purely combinatorial aspect of the earth map tilings appeared in \cite{yan}.

\begin{theorem}\label{EMTthm}
The tilings for {\rm AVC(EMT)} are the earth map tilings and their rotation modifications.
\end{theorem}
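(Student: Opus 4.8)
The plan is to mirror the proof of Theorem~\ref{5Athm}: first pin down the cyclic arrangement of $\alpha,\beta,\gamma,\delta,\epsilon$ in the pentagon, then use the high-degree $\delta$-vertices to anchor a local subtiling, then propagate it outward to recover the earth map layering, and finally classify how the pieces are glued together to get exactly the rotation modifications. The governing observation is that in AVC(EMT) the angles $\alpha,\beta,\gamma$ occur only at the degree-$3$ vertex $\alpha\beta\gamma$ -- so $\alpha+\beta+\gamma=2\pi$, any $\alpha$, $\beta$ or $\gamma$ at a vertex forces that whole vertex to be $\alpha\beta\gamma$, and none of $\alpha^2\cdots,\beta^2\cdots,\gamma^2\cdots$ is a vertex -- while $\epsilon$ occurs only at $\delta\epsilon^2$ and $\delta^{(f+4)/8}\epsilon$, and $\delta$ only at those two together with $\delta^{f/4}$. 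For the arrangement step I would run AAD deductions at $\delta\epsilon^2$ and, when it exists, at $\delta^{f/4}$, over the finitely many candidate placements of the five angles. Since $\alpha,\beta,\gamma$ cannot be mutually non-adjacent in a pentagon, exactly one pair of them is adjacent, and for each disallowed adjacency the AAD produces a vertex outside AVC(EMT) (of shape $\alpha^2\cdots$ or $\epsilon^2\cdots$ with a second non-$\delta$ neighbour), just as in Figure~\ref{Subfig-5AfigA-dede}. Up to the $\{\alpha,\beta,\gamma\}$-symmetry this should leave a single arrangement, the one realised by the geometric earth map tilings of \cite{cly2}.

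Next, anchoring and propagation. When $y_2\ge 1$ there is a vertex $\delta^{f/4}$; since no $\alpha^2\cdots,\beta^2\cdots,\gamma^2\cdots$ is a vertex, its AAD is forced to be uniform, $|^{x}\delta^{y}|\cdots|^{x}\delta^{y}|$ around the whole vertex, which determines a fan $N(\delta^{f/4})$ of $f/4$ tiles with a completely determined boundary -- a polar cap -- just as $N(\delta^3)$ was pinned down in Theorem~\ref{5Athm}. When $y_2=0$ there is no such vertex, and instead the four vertices $\delta^{(f+4)/8}\epsilon$ play the role of ``defective poles''; I would show their local pictures are forced the same way and that this configuration is what produces a rotation modification from the outset. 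Working outward from a cap, I would alternate two forcing moves: (i) any $\alpha$, $\beta$ or $\gamma$ on the current boundary completes to $\alpha\beta\gamma$, fixing a neighbouring tile; and (ii) every $\epsilon$ on the boundary must be absorbed into a $\delta\epsilon^2$ or $\delta^{(f+4)/8}\epsilon$ vertex, which together with the non-adjacency of $\alpha$ and $\gamma$ fixes the next ring of tiles. This grows the tiling as a union of $N(\delta^3)$-type blocks stacked into columns, reproducing the earth map tiling $E_{\pentagon}2$ with layers cap/equatorial band/cap and with the tile count $f$ splitting consistently with the vertex counts in AVC(EMT).

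For the final step, the only remaining freedom is how the two halves match along the equatorial boundary: both boundaries are cyclic strings of $\delta$'s and $\epsilon$'s meeting in $\delta\epsilon^2$-vertices, and matching them with a cyclic offset produces precisely the rotation modifications $RE_{\pentagon}2$. An offset outside the admissible range either recreates a forbidden vertex or changes the numbers of $\delta^{f/4}$ and $\delta^{(f+4)/8}\epsilon$ vertices -- which is exactly the bookkeeping role of the parameter $y_2\in\{0,1,2\}$ and of the divisibility constraints ($8\mid f+4$ when $y_2<2$, and only $4\mid f$ otherwise). After treating a small number of degenerate cases where a cap wraps too far, this yields the stated classification.

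The rigidity arguments in the first two steps are essentially parallel to Theorem~\ref{5Athm}, so I expect the main obstacle to be the last step together with the $y_2=0$ part of the anchoring step: determining exactly which equatorial offsets are realizable, matching them to the value of $y_2$, and ruling out ``hybrid'' tilings that are neither an earth map tiling nor one of its rotation modifications. This needs careful case-by-case tracking of the $\delta$/$\epsilon$ pattern along the seam, which is where the real work lies.
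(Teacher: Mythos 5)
Your proposal follows essentially the same route as the paper: rule out the $\delta,\epsilon$-adjacent arrangement by AAD, use the $\delta$-heavy vertices ($\delta^{f/4}$ when it exists, otherwise $\delta^{(f+4)/8}\epsilon$) to force half earth map tilings, and classify the admissible matchings of the two halves as either the identity (earth map tiling) or a rotation (rotation modification). The one substantive imprecision is your description of the equatorial seam as a cyclic string of $\delta$'s and $\epsilon$'s meeting at $\delta\epsilon^2$ — the boundary between the two halves also carries $\alpha\beta\gamma$ vertices, and it is the resulting three-fold periodicity of the boundary angle pattern that restricts the offsets to $0^\circ$, $120^\circ$, $240^\circ$ — but you correctly flag this seam analysis as the place where the remaining work lies, and it is exactly the case analysis the paper carries out.
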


Figure \ref{emtC} gives 3d rendering of the earth map tiling and its rotation modification. The green line divides the earth map tiling into two hemispheres (denoted ${\mc H}$ in the proof below), and the modification rotates one hemisphere.

We remark that AVC(EMT) assumes $f=0$ mod $4$. Moreover, the rotation modification happens only for $f=4$ mod $8$. 

\begin{figure}[h!]
\centering
\begin{tikzpicture}[>=latex,xscale=-1]

\begin{scope}[shift={(-0.03,0.03)}]

\pgftext{
	\includegraphics[scale=0.096]{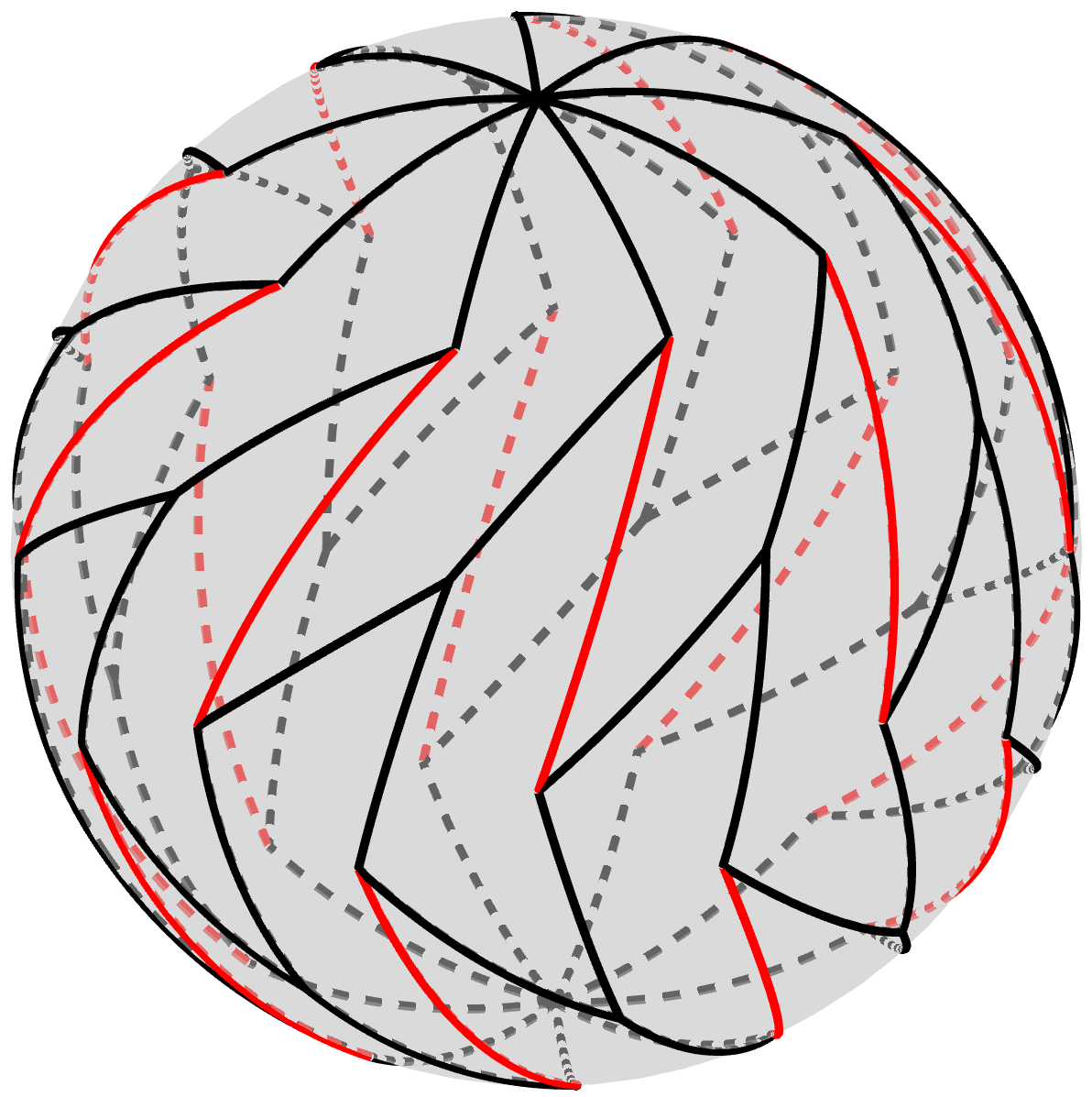}
	};
		
\end{scope}

\begin{scope}[shift={(-5.02,0.02)}]

\pgftext{
	\includegraphics[scale=0.089]{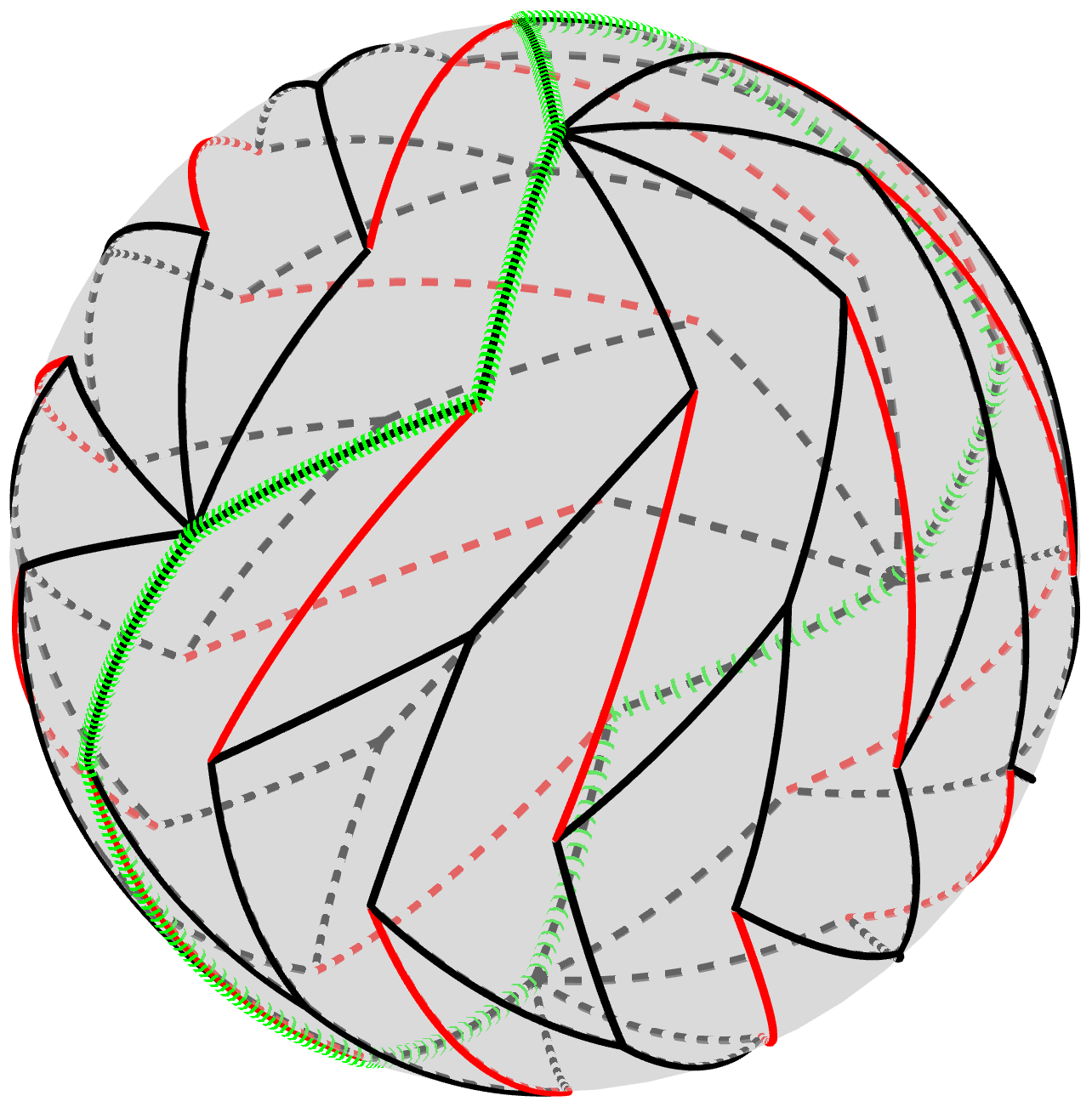}
	};
		
\end{scope}
	
\end{tikzpicture}
\caption{Earth map tiling for $f=36$,  and its rotation modification (the right of green line is rotated).}
\label{emtC}
\end{figure}

\begin{proof}
The AVC is symmetric with respect to the permutation of $\alpha, \beta, \gamma$. Similar to the proof of Theorem \ref{5Athm}, we only need to consider two possible angle arrangements of the pentagon, and the one in Figure \ref{Subfig-5AfigA-T1} does not result in tilings. Therefore, we may assume the pentagon is that of Figure \ref{Subfig-5AfigA-T2}, where $\delta, \epsilon$ are non-adjacent.

Since $\alpha^2\cdots, \beta^2\cdots$ are not vertices, the AAD of consecutive $|\delta|\delta|$ at a vertex is $|^{\alpha}\delta^{\beta}|^{\alpha}\delta^{\beta}|$. This determines $\circled{1}, \circled{2}$ in Figure \ref{Subfig-emtA-timezone}. By $\alpha_2\beta_1\cdots=\alpha\beta\gamma$, and $\epsilon\cdots=\delta\epsilon^2, \delta^{\frac{f+4}{8}}\epsilon, \delta^{\frac{f}{4}}$, and $\gamma, \delta$ non-adjacent, we determine $\circled{3}$. Then by $\alpha_3\cdots=\alpha\beta\gamma$, and $\epsilon_2\epsilon_3\cdots=\delta\epsilon^2$, and $\gamma, \delta$ non-adjacent, we determine $\circled{4}, \gamma_5$. Then by $\delta_3\cdots=\delta\epsilon^2, \delta^{\frac{f+4}{8}}\epsilon, \delta^{\frac{f}{4}}$, and $\gamma, \delta$ non-adjacent, we determine $\circled{5}$. Similarly, by $\alpha_4\gamma_2\cdots=\alpha\beta\gamma$, and $\epsilon_4\cdots=\delta\epsilon^2, \delta^{\frac{f+4}{8}}\epsilon, \delta^{\frac{f}{4}}$, and $\beta, \epsilon$ non-adjacent, we determine $\circled{6}$.

\begin{figure}[h!]
\begin{subfigure}{0.45\linewidth}
\centering
\begin{tikzpicture}[>=latex,scale=1]

\foreach \a in {0,1,2}
\draw[gray!70, line width=2, xshift=1.6*\a cm]
	(-0.8,-0.7) -- (-0.8,-0.3) -- (-0.4,0) -- (-0.4,0.5) -- (-1.2,0.5) -- (-1.6,0.8) -- (-1.6,1.2);

\foreach \a in {0,1}
{
\begin{scope}[xshift=1.6*\a cm]

\draw
	(-0.8,-0.7) -- (-0.8,-0.3) -- (-0.4,0) -- (-0.4,0.5)
	(-0.4,0.5) -- (-1.2,0.5) -- (-1.6,0.8) -- (-1.6,1.2) 
	(-0.4,0) -- (0.4,0) -- (0.4,0.5) -- (0,0.8) -- (0,1.2)
	(0.4,0.5) -- (1.2,0.5) -- (1.2,0) -- (0.8,-0.3) -- (0.8,-0.7)
	;

\draw[red]
	(-0.4,0.5) -- (0,0.8)
	(0.4,0) -- (0.8,-0.3);

\begin{scope}[font=\scriptsize]

\node at (0,0.6) {$\gamma$};
\node at (-0.25,0.45) {$\beta$};
\node at (0.25,0.45) {$\epsilon$};
\node at (-0.25,0.15) {$\delta$};
\node at (0.25,0.15) {$\alpha$};

\node at (-0.15,0.85) {$\beta$};
\node at (-1.45,0.85) {$\alpha$};
\node at (-0.45,0.65) {$\gamma$};
\node at (-1.15,0.65) {$\epsilon$};
\node at (-0.8,1.1) {$\delta$};

\end{scope}

\end{scope}
}

\begin{scope}[font=\scriptsize]

\node at (0.8,-0.1) {$\gamma$};
\node at (0.55,0.05) {$\beta$};
\node at (1.05,0.05) {$\epsilon$};
\node at (0.55,0.35) {$\delta$};
\node at (1.05,0.35) {$\alpha$};

\node at (0.65,-0.35) {$\beta$};
\node at (-0.65,-0.35) {$\alpha$};
\node at (0.35,-0.15) {$\gamma$};
\node at (-0.35,-0.15) {$\epsilon$};
\node at (0,-0.6) {$\delta$};

\node[inner sep=0.5,draw,shape=circle] at (-0.8,0.8) {$1$};
\node[inner sep=0.5,draw,shape=circle] at (0.8,0.8) {$2$};
\node[inner sep=0.5,draw,shape=circle] at (0,0.3) {$3$};
\node[inner sep=0.5,draw,shape=circle] at (0.8,0.2) {$4$};
\node[inner sep=0.5,draw,shape=circle] at (0,-0.3) {$5$};
\node[inner sep=0.5,draw,shape=circle] at (1.6,0.3) {$6$};

\end{scope}

\end{tikzpicture}
\caption{}
\label{Subfig-emtA-timezone}
\end{subfigure}
\begin{subfigure}{0.45\linewidth}
\centering
\begin{tikzpicture}[>=latex,scale=1]

\draw[green!70, line width=2]
	(-0.5,0.7) -- (-0.5,0.4) -- (-0.25,0.2) -- (0.25,0.2) -- (0.25,-0.2) -- (0,-0.4) -- (0,-0.7)
	(3.5,0.7) -- (3.5,0.4) -- (3.75,0.2) -- (3.75,-0.2) -- (3.25,-0.2) -- (3,-0.4) -- (3,-0.7);
	
\node at (3.5,-0.7) {\scriptsize ${\mc H}$};

\draw[<->]
	(-0.5,0.8) -- node[fill=white] {\scriptsize $\delta^{q+1}$} ++(4,0);

\draw[<->]
	(0,-0.8) -- node[fill=white] {\scriptsize $\delta^q$} ++(3,0);

\foreach \a in {0,...,3}
{
\begin{scope}[xshift=\a cm]

\draw
	(-0.5,0.7) -- (-0.5,0.4) -- (-0.25,0.2) -- (0.25,0.2) -- (0.25,-0.2) -- (0.75,-0.2) -- (0.75,0.2) -- (0.5,0.4) -- (0.5,0.7)
	(0.25,-0.2) -- (0,-0.4)	
	;

\draw[red]
	(0.25,0.2) -- (0.5,0.4);

\end{scope}
}
			
\foreach \a in {0,1,2}
{
\begin{scope}[xshift=\a cm]

\draw
	(0,-0.7) -- (0,-0.4) -- (0.25,-0.2) -- (0.75,-0.2)
	(0.75,-0.2) -- (1,-0.4) -- (1,-0.7);

\draw[red]
	(0.75,-0.2) -- (1,-0.4);
	
\end{scope}
}

\end{tikzpicture}
\caption{}
\label{Subfig-emtA-half}
\end{subfigure}
\caption{Time zone, and half earth map tiling.}
\label{emtA}
\end{figure}

If $\delta^{\frac{f}{4}}$ is a vertex, then we may apply the argument in Figure \ref{Subfig-emtA-timezone} to all $\frac{f}{4}$ consecutive $|\delta|\delta|$ in $\delta^{\frac{f}{4}}$. We obtain the earth map tiling by repeating the time zone consisting of $\circled{1}, \circled{3}, \circled{4}, \circled{5}$. Together with the two unlabeled tiles, Figure \ref{Subfig-emtA-timezone} shows two consecutive time zones.

Next, assuming $\delta^{\frac{f}{4}}$ is not a vertex gives $y_2=0$, and we update the AVC
\[
\text{AVC(EMT)}
=\{(8q+4)\alpha\beta\gamma\delta\epsilon\colon (8q+4)\alpha\beta\gamma,
4q\delta\epsilon^2,
4\delta^{q+1}\epsilon\}, \quad
q=\tfrac{f-4}{8}.
\]
Applying Figure \ref{Subfig-emtA-timezone} to the $q$ consecutive $|\delta|\delta|$ in the $\delta^{q+1}$ part of the vertex $\delta^{q+1}\epsilon$ gives the {\em half earth map tiling} ${\mc H}$ in Figure \ref{Subfig-emtA-half}, with $\delta^{q+1}$ and $\delta^q$ at the two ends. Moreover, we have angles $\alpha, \epsilon, \gamma|\beta, \delta|\epsilon, \alpha$ along the left boundary of ${\mc H}$ (left side of $\circled{1}, \circled{3}, \circled{5}$), and $\beta|\gamma, \epsilon, \alpha, \delta|\epsilon, \gamma|\beta$ along the right boundary of ${\mc H}$ (right side of $\circled{2}, \circled{6}, \circled{4}, \circled{5}$).

Figure \ref{emtB} shows the half earth map tiling ${\mc H}$ obtained from the $\delta^{q+1}$ part of the vertex $\delta^{q+1}\epsilon$ (indicated by $\bullet$) as the outside of the disk. Figure \ref{emtA} shows ${\mc H}$ inside out -- the angles on the left of ${\mc H}$ are along the right outside of the disk in Figure \ref{emtB}, and the angles on the right of ${\mc H}$ are along the left outside of the disk.

\begin{figure}[h!]
\begin{subfigure}[b]{0.244\linewidth}
\centering
\begin{tikzpicture}[>=latex, font=\scriptsize]


\foreach \a in {0,...,11}
\draw[green!70, line width=2, rotate=30*\a]
	(0:1.3) -- (30:1.3)
	;
		
\foreach \a in {0,...,11}
\draw[rotate=30*\a]
	(0:1.3) -- (30:1.3);

\draw
	(-30:1.3) -- (-30:1.6)
	(210:1.3) -- (210:1.6);
	
\draw[red]
	(120:1.3) -- (120:1.6)
	(240:1.3) -- (240:1.6)
	(0:1.3) -- (0:1.6);
	
\fill (0,1.3) circle (0.05);

\node at (110:1.45) {$\beta_1$};
\node at (127:1.45) {$\gamma_1$};
\node at (150:1.45) {$\epsilon_1$};
\node at (180:1.45) {$\alpha$};
\node at (204:1.45) {$\delta$};
\node at (217:1.45) {$\epsilon$};
\node at (245:1.45) {$\beta$};
\node at (235:1.45) {$\gamma$};

\node at (60:1.45) {$\alpha$};
\node at (30:1.45) {$\epsilon$};
\node at (-7:1.45) {$\beta$};
\node at (6:1.45) {$\gamma$};
\node at (-24:1.45) {$\delta$};
\node at (-35:1.45) {$\epsilon$};
\node at (-60:1.45) {$\alpha$};

\node at (90:1.45) {$\delta^{q+1}$};
\node at (-90:1.45) {$\delta^q$};

\node at (-50:1.8) {${\mc H}$};

\begin{scope}[rotate=240]

\draw
	(30:1.3) -- (30:0.9)
	(150:1.3) -- (150:0.9);

\draw[red]
	(180:1.3) -- (180:0.9)
	(60:1.3) -- (60:0.9)
	(-60:1.3) -- (-60:0.9);

\node at (120:1.1) {$\alpha$};
\node at (142:1.1) {$\epsilon$};
\node at (158:1.1) {$\delta$};
\node at (173:1.1) {$\beta$};
\node at (188:1.1) {$\gamma$};
\node at (210:1.1) {$\epsilon$};
\node at (240:1.1) {$\alpha$};

\node at (69:1.1) {$\beta$};
\node at (51:1.1) {$\gamma$};
\node at (37:1.1) {$\epsilon$};
\node at (22:1.1) {$\delta$};
\node at (0:1.1) {$\alpha$};
\node at (-30:1.1) {$\epsilon$};
\node at (-69:1.1) {$\beta$};
\node at (-52:1.1) {$\gamma$};

\node at (-95:0.95) {$\delta^{q+1}$};
\node at (90:1.1) {$\delta^q$};

\end{scope}

\end{tikzpicture}
\caption{}
\label{Subfig-emtB-1}
\end{subfigure}
\begin{subfigure}[b]{0.244\linewidth}
\centering
\begin{tikzpicture}[>=latex, font=\scriptsize]


\foreach \a in {0,...,11}
\draw[green!70, line width=2, rotate=30*\a]
	(0:1.3) -- (30:1.3)
	;
	
\foreach \a in {0,...,11}
\draw[rotate=30*\a]
	(0:1.3) -- (30:1.3);

\draw
	(-30:1.3) -- (-30:1.6)
	(210:1.3) -- (210:1.6);
	
\draw[red]
	(120:1.3) -- (120:1.6)
	(240:1.3) -- (240:1.6)
	(0:1.3) -- (0:1.6);
	
\fill (0,1.3) circle (0.05);

\node at (113:1.45) {$\beta$};
\node at (127:1.45) {$\gamma$};
\node at (150:1.45) {$\epsilon_1$};
\node at (180:1.45) {$\alpha_1$};
\node at (202:1.45) {$\delta_1$};
\node at (217:1.45) {$\epsilon$};
\node at (245:1.45) {$\beta$};
\node at (235:1.45) {$\gamma$};

\node at (60:1.45) {$\alpha$};
\node at (30:1.45) {$\epsilon_2$};
\node at (-7:1.45) {$\beta_1$};
\node at (6:1.45) {$\gamma_1$};
\node at (-24:1.45) {$\delta$};
\node at (-35:1.45) {$\epsilon$};
\node at (-60:1.45) {$\alpha$};

\node at (90:1.45) {$\delta^{q+1}$};
\node at (-90:1.45) {$\delta^q$};

\node at (-50:1.8) {${\mc H}$};

\begin{scope}[rotate=120]

\draw
	(30:1.3) -- (30:0.9)
	(150:1.3) -- (150:0.9);

\draw[red]
	(180:1.3) -- (180:0.9)
	(60:1.3) -- (60:0.9)
	(-60:1.3) -- (-60:0.9);

\node at (120:1.1) {$\alpha$};
\node at (142:1.1) {$\epsilon$};
\node at (158:1.1) {$\delta$};
\node at (171:1.1) {$\beta$};
\node at (188:1.1) {$\gamma$};
\node at (210:1.1) {$\epsilon$};
\node at (240:1.1) {$\alpha$};

\node at (69:1.1) {$\beta$};
\node at (53:1.1) {$\gamma$};
\node at (37:1.1) {$\epsilon$};
\node at (22:1.1) {$\delta$};
\node at (0:1.1) {$\alpha$};
\node at (-30:1.1) {$\epsilon$};
\node at (-69:1.1) {$\beta$};
\node at (-52:1.1) {$\gamma$};

\node at (-90:0.95) {$\delta^{q+1}$};
\node at (90:1.05) {$\delta^q$};
	
\end{scope}

\end{tikzpicture}
\caption{}
\label{Subfig-emtB-2}
\end{subfigure}
\begin{subfigure}[b]{0.244\linewidth}
\centering
\begin{tikzpicture}[>=latex, font=\scriptsize]


\foreach \a in {0,...,11}
\draw[green!70, line width=2, rotate=30*\a]
	(0:1.3) -- (30:1.3)
	;
	
\foreach \a in {0,...,11}
\draw[rotate=30*\a]
	(0:1.3) -- (30:1.3);

\draw
	(-30:1.3) -- (-30:1.6)
	(210:1.3) -- (210:1.6);
	
\draw[red]
	(120:1.3) -- (120:1.6)
	(240:1.3) -- (240:1.6)
	(0:1.3) -- (0:1.6);
	
\fill (0,1.3) circle (0.05);

\node at (110:1.45) {$\beta_1$};
\node at (127:1.45) {$\gamma_1$};
\node at (150:1.45) {$\epsilon_1$};
\node at (180:1.45) {$\alpha$};
\node at (204:1.45) {$\delta$};
\node at (217:1.45) {$\epsilon$};
\node at (245:1.45) {$\beta$};
\node at (235:1.45) {$\gamma$};

\node at (60:1.45) {$\alpha$};
\node at (30:1.45) {$\epsilon$};
\node at (-7:1.45) {$\beta$};
\node at (6:1.45) {$\gamma$};
\node at (-24:1.45) {$\delta$};
\node at (-35:1.45) {$\epsilon$};
\node at (-60:1.45) {$\alpha$};

\node at (90:1.45) {$\delta^{q+1}$};
\node at (-90:1.45) {$\delta^q$};

\node at (-50:1.8) {${\mc H}$};

\node at (0,0) {${\mc H}$};

\draw
	(30:1.3) -- (30:0.9)
	(150:1.3) -- (150:0.9);

\draw[red]
	(180:1.3) -- (180:0.9)
	(60:1.3) -- (60:0.9)
	(-60:1.3) -- (-60:0.9);
	
\node at (120:1.1) {$\alpha$};
\node at (142:1.1) {$\epsilon$};
\node at (158:1.1) {$\delta$};
\node at (173:1.1) {$\beta$};
\node at (188:1.1) {$\gamma$};
\node at (210:1.1) {$\epsilon$};
\node at (240:1.1) {$\alpha$};

\node at (67:1.1) {$\beta$};
\node at (53:1.1) {$\gamma$};
\node at (37:1.1) {$\epsilon$};
\node at (22:1.1) {$\delta$};
\node at (0:1.1) {$\alpha$};
\node at (-30:1.1) {$\epsilon$};
\node at (-69:1.1) {$\beta$};
\node at (-52:1.1) {$\gamma$};

\node at (-90:1.05) {$\delta^{q+1}$};
\node at (90:1.1) {$\delta^q$};

\draw[gray, ->]
	(-90:0.6) arc (-90:150:0.6);
\draw[gray, ->]
	(-90:0.8) arc (-90:30:0.8);

\end{tikzpicture}
\caption{}
\label{Subfig-emtB-3}
\end{subfigure}
\begin{subfigure}[b]{0.244\linewidth}
\centering
\begin{tikzpicture}[>=latex]


\foreach \a in {0,...,11}
\draw[green!70, line width=2, rotate=30*\a]
	(0:1.3) -- (30:1.3)
	;

\foreach \a in {0,...,11}
\draw[rotate=30*\a]
	(0:1.3) -- (30:1.3);

\foreach \a in {0,1,2}
{
\begin{scope}[rotate=120*\a, font=\scriptsize]

\node at (30:1.45) {$\epsilon$};
\node at (60:1.45) {$\alpha$};
\node at (90:1.45) {$\bar{\epsilon}$};
\node at (120:1.45) {$\bar{\alpha}$};

\node at (30:1.1) {$\bar{\epsilon}$};
\node at (60:1.1) {$\bar{\alpha}$};
\node at (90:1.1) {$\epsilon$};
\node at (120:1.1) {$\alpha$};

\end{scope}
}

\end{tikzpicture}
\caption{}
\label{Subfig-emtB-4}
\end{subfigure}
\caption{Earth map tiling, and rotation modification.}
\label{emtB}
\end{figure}

Keeping in mind $\epsilon_1\cdots=\delta\epsilon^2, \delta^{q+1}\epsilon$, the tilings in Figure \ref{emtB} are determined as follows. 

In Figure \ref{Subfig-emtB-1}, we have $\epsilon_1\cdots=\delta^{q+1}\epsilon$. The $\delta^{q+1}$ part of it induces another half earth map tiling that fills the inside disk. By $\beta_1\gamma_1\cdots=\alpha\beta\gamma$, the angles along the boundary of the interior half earth map tiling are given as indicated. 

In Figures \ref{Subfig-emtB-2}, \ref{Subfig-emtB-3}, we have $\epsilon_1\cdots=\delta\epsilon^2$, and the two pictures show the two possible locations of the inside $\delta, \epsilon$ at the vertex. In Figure \ref{Subfig-emtB-2}, by $\alpha_1\cdots=\alpha\beta\gamma$ and $\beta, \epsilon$ non-adjacent, we determine the $\beta, \gamma$ at $\alpha_1\cdots$. Then by $\delta_1\cdots=\delta\epsilon^2, \delta^{q+1}\epsilon$, and $\beta, \epsilon$ non-adjacent, we get $\delta_1\cdots=\delta^{q+1}\epsilon$. The $\delta^{q+1}$ part of the vertex determines a half earth map tiling, ending with $\delta^q$ at $\epsilon_2\cdots$. Then the $\delta^{q+1}$ part of $\epsilon_2\cdots=\delta^q\epsilon\cdots=\delta^{q+1}\epsilon$ determines a half earth map tiling that fills the inside disk. By $\beta_1\gamma_1\cdots=\alpha\beta\gamma$, the angles along the boundary of the interior half earth map tilings are given as indicated. 

In Figure \ref{Subfig-emtB-3}, the locations of the inside $\delta, \epsilon$ at $\epsilon_1\cdots$ are different from those in Figure \ref{Subfig-emtB-2}. By the inside $\epsilon$ and $\beta_1\gamma_1\cdots=\alpha\beta\gamma$, we determine a tile in the inner disk. The $\delta$ angle of the tile is at $\bullet$. Then the vertex $\bullet$ is $\delta\delta^{q+1}\cdots=\delta^{2q+1}=\delta^{\frac{f}{4}}$. This gives the earth map tiling, which is the union of two half earth map tilings ${\mc H}$ in the picture.

The tilings in Figures \ref{Subfig-emtB-1}, \ref{Subfig-emtB-2} are obtained by rotating the inside half of the earth map tiling in Figure \ref{Subfig-emtB-3} by $240^{\circ}$ and $120^{\circ}$. They are actually the same, given by the rotation modification of the earth map tiling. 
\end{proof}

Figure \ref{Subfig-emtB-4} gives an interpretation of the rotation modification. We indicate the angle sum values along the two sides of the boundary between the two half earth map tilings, with $\bar{\alpha}=2\pi-\alpha$ and $\bar{\epsilon}=2\pi-\epsilon$. Then it is clear that rotating one half earth map tiling by $120^{\circ}$ or $240^{\circ}$ still gives a tiling. We also note that, among three outside $\epsilon$, two are actually the angle $\epsilon$ of the pentagon, and one is the $\delta^q$ (of value $\epsilon$) part of a vertex. What we proved is that one of the three $\epsilon$ angles must be matched with an interior $\delta^{q+1}$ (of value $\bar{\epsilon}$). If the two angles $\epsilon$ are matched with $\delta^{q+1}$, then we get rotation modifications. If $\delta^p$ is matched with $\delta^{q+1}$, then we get the earth map tiling.

\section{Tiling Reductions}
\label{Sec-Reductions}

The tilings in Section \ref{2free} allow free continuous choice of two angles. By choosing some angles to be of equal value (a concept that will later be generalized), we get reductions of the tilings. In this section, we examine whether the AVC for a reduction admits new tilings other than the standard ones in Section \ref{2free}.

We first consider the most extreme reduction $\alpha=\beta=\gamma=\delta$ of AVC(5A24) and AVC(5A60), which become the following:
\begin{align*}
\text{AVC(2D24)}
&=\{24\alpha^4\beta \colon 32\alpha^3,6\beta^4\}, \;
\alpha=\tfrac{2}{3}\pi, \;
\beta=\tfrac{1}{2}\pi;   \\
\text{AVC(2D60)}
&=\{60\alpha^4\beta \colon 80\alpha^3,12\beta^5\}, \;
\alpha=\tfrac{2}{3}\pi, \;
\beta=\tfrac{2}{5}\pi.
\end{align*}
The reduction may be interpreted as changing $\alpha, \beta, \gamma, \delta, \epsilon$ to $\alpha, \alpha, \alpha, \alpha, \beta$. Applying the change to the pentagonal subdivisions in Theorem \ref{5Athm} gives the 2D reductions of the pentagonal subdivision tilings for AVC(2D24) and AVC(2D60). In this reduction tiling, $\beta$ appears only at $\circ=\beta^4/\beta^5$ in Figure \ref{5AfigB}, and $\alpha$ appears at all other vertices. Figures \ref{2D24-6p} and \ref{2D60-12p} give the 3d renderings of the tilings, with all $\beta$ concentrated at the meeting places of four or five black lines, and all the remaining angles are $\alpha$. It turns out that they are the only tilings for AVC(2D24) and AVC(2D60).

\begin{theorem}\label{2Dthm}
The tilings for {\rm AVC(2D24)} and {\rm AVC(2D60)} are the {\rm 2D} reductions of $PP_6$ and $PP_{12}$.
\end{theorem}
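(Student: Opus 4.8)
The plan is to re-run the strategy behind the proof of Theorem \ref{5Athm}, being careful that the four formerly distinct angles are now the single angle $\alpha$ (the old $\epsilon$ becoming $\beta$), which removes several of the contradictions used there. Since the pentagon carries the angles $\alpha,\alpha,\alpha,\alpha,\beta$ and these five values admit a unique cyclic arrangement, the pentagon is forced to be the reduction of Figure \ref{Subfig-5AfigA-T2}: labelling its corners $1,\dots,5$ cyclically, corner $1$ carries $\beta$ and its two neighbours, corners $2$ and $5$, carry $\alpha$. The AVC tells us the only vertices are $\alpha^3$ and $\beta^k$, with $k=4$ for $f=24$ and $k=5$ for $f=60$. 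There is only one $\beta$-corner, so no pentagon edge joins two $\beta$-corners, hence no tiling edge has both endpoints at $\beta^k$ vertices; moreover every tile has its unique $\beta$-corner at a $\beta^k$ vertex and its four $\alpha$-corners at $\alpha^3$ vertices. Consequently the $f$ tiles partition into $f/k$ ``flowers'' $N(\beta^k)$ (six for $f=24$, twelve for $f=60$), and any tiling is the edge-to-edge union of these flowers --- the analogue of the step ``any tiling is a union of $N(\delta^3)$'' in Theorem \ref{5Athm}.

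First I would pin down a single flower. Around a $\beta^k$ vertex sit $k$ petals, and the $k$ radial edges there end at $\alpha^3$ vertices by the previous paragraph, so each such outer end receives a third tile; the angle equation forces the other two angles at that vertex to be $\alpha$ as well, so that third tile shows an $\alpha$-corner flanked by two $\alpha$'s, i.e.\ a corner with AAD $|^\alpha\alpha^\alpha|$ (corner $3$ or $4$). Chasing the AAD --- corner $1$ always has AAD $|^\alpha\beta^\alpha|$, corners $3,4$ have $|^\alpha\alpha^\alpha|$, corners $2,5$ have $|^\alpha\alpha^\beta|$ --- around the centre and along each radial edge then pins down the angle arrangement of the whole flower and, at each radial outer vertex, the corner of the third tile. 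The outcome is the standard flower $N(\epsilon^4)$ of Figure \ref{Subfig-psubdivision-N} for $f=24$ (resp.\ $N(\epsilon^5)$ for $f=60$). Since the flowers already account for all $f$ tiles, every ``outer'' tile of a flower is in turn a petal of a neighbouring flower, so the flowers form a coarse tiling of the sphere by $f/k$ cells, each bounded by the standard $3k$-gon boundary of a flower.

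The hard part will be showing that this coarse tiling can only be the one underlying $PP_6$ (resp.\ $PP_{12}$). Every $\alpha^3$ vertex lies on the boundary of exactly two or exactly three flowers --- it cannot be interior to one, since the only interior vertex of a flower is its $\beta^k$ centre --- and an Euler count (there are $\tfrac{4f}{3}$ vertices $\alpha^3$ and $\tfrac{3f}{2}$ flower-boundary edges) forces exactly $8$ vertices where three flowers meet and $24$ where only two do when $f=24$ (resp.\ $20$ and $60$ when $f=60$); contracting the latter turns the coarse tiling into a $3$-valent spherical tiling with $6$ faces (resp.\ $12$). It remains to show this $3$-valent tiling has all faces quadrilateral (resp.\ pentagonal), whence it is the cube (resp.\ the dodecahedron), the unique such tiling; equivalently one feeds the standard flower forward one surrounding layer at a time, checks that every newly created vertex is again $\alpha^3$ or $\beta^k$, and verifies the incidences close up. I expect this step to be the longest and most delicate: besides ruling out unequal face sizes in the contracted tiling, one must exclude degenerate incidences (two flowers meeting along two contracted edges, or a flower contracting below a quadrilateral/pentagon). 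Once a flower together with its first surrounding layer is shown to embed in the standard tiling, the rest is forced, and the tiling is exactly the $2$D reduction of $PP_6$ or $PP_{12}$, with no other possibility.
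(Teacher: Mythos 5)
Your setup is sound and is largely a repackaging of what the paper does implicitly: the decomposition of the tiling into flowers $N(\beta^4/\beta^5)$, the uniqueness of each flower's internal structure, and the counts ($f$ radial-end vertices, $f/3$ triple points where three flowers meet) are all correct, and Figure \ref{2DfigA} of the paper is exactly such a flower together with its forced second layer. The problem is that the decisive step --- showing the flowers can only assemble in the pattern of the cube, resp.\ the dodecahedron --- is not actually carried out. The Euler count you invoke only shows that the contracted $3$-valent map has $6$ (resp.\ $12$) faces, $8$ (resp.\ $20$) vertices and $12$ (resp.\ $30$) edges, i.e.\ that the \emph{average} face size is $4$ (resp.\ $5$). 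It does not force every face to have that size: already with $6$ faces and $8$ vertices there is a second combinatorial type of $3$-valent polyhedron (face sizes $3,3,4,4,5,5$, dual to the tetrahedron stacked on two faces), in addition to the non-polyhedral maps with doubled adjacencies that you yourself note would have to be excluded. So the route through the contracted map does not close as described, and you explicitly defer the remaining work rather than doing it.

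The alternative you mention in passing --- feeding the flower forward one surrounding layer at a time --- is exactly the paper's proof, but you do not execute it, and the one mechanism that makes it deterministic is absent from your sketch. The outer tile at a radial-end vertex has three of its four $\alpha$ corners determined by the adjacent $\alpha^3$ vertices, which leaves \emph{two} mirror-image positions for its $\beta$; a priori this choice recurs at every radial-end vertex of every flower, which is precisely the face-size ambiguity in your contracted map. The paper fixes the first choice WLOG and then observes that the new vertex $\beta^4/\beta^5$ it creates determines a further petal supplying the \emph{fourth} $\alpha$ of the outer tile at the next radial-end vertex, so that tile's $\beta$ is forced; iterating this around the flower and outward, layer by layer, produces new $\beta^4/\beta^5$ vertices only in the standard positions and yields the 2D reduction of $PP_6$ or $PP_{12}$ with no further choices. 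Without this step your argument stops exactly where the real work begins; you should either carry out the two-layer deduction of Figure \ref{2DfigA} explicitly, or replace the Euler count by an argument that genuinely pins down the size of each face of the contracted map.
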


\begin{proof}
Figure \ref{2DfigA} illustrates three tiles $\circled{1}, \circled{2}, \circled{3}$ containing three consecutive $|\beta|\beta|\beta|$ at $\beta^4$ or $\beta^5$. The angles in the tiles are arranged as indicated. By $\alpha\cdots=\alpha^3$, we determine three $\alpha$ angles in $\circled{4}, \circled{6}$, and two $\alpha$ angles in $\circled{5}, \circled{7}$. Without loss of generality, we may assume $\beta_4$ is located as indicated. By $\beta_4\cdots=\beta^4/\beta^5$, we determine $\circled{4}, \circled{5}$. Then by $\alpha\cdots=\alpha^3$ and three existing $\alpha$ angles of $\circled{6}$, we know the four $\alpha$ angles in $\circled{6}$, which determines $\circled{6}$. Then by $\beta_6\cdots=\beta^4/\beta^5$, we determine $\circled{7}$.

\begin{figure}[htp]
\centering
\begin{tikzpicture}[>=latex, font=\scriptsize]

\foreach \a in {0,...,3}
\draw[green!70, line width=2, rotate=90*\a]
	(0.9,0.5) -- (0.5,0.9) -- (0,0.6) -- (-0.5,0.9) -- (-0.5,1.4) -- (-1.4,1.4) -- (-1.4,2) -- (2,2) -- (2,1.4);

\foreach \a in {0,...,3}
{
\begin{scope}[gray!70, rotate=90*\a]

\draw
	(0,0) -- (0.6,0) -- (0.9,0.5) -- (0.5,0.9) -- (0,0.6)
	(0.9,-0.5) -- (2,-0.5)
	(0.9,0.5) -- (1.4,0.5)
	(2,1.4) -- (-1.4,1.4)
	(-2,2) -- (2,2) -- (2.4,2.4);

\fill
	(0.9,0.5) circle (0.05);

\node at (0.15,0.15) {$\beta$};
\node at (0.65,1.25) {$\beta$};
\node at (0.35,1.25) {$\beta$};
\node at (0.65,1.55) {$\beta$};
\node at (0.35,1.55) {$\beta$};
\node at (0,2.4) {$\beta$};

\node at (-0.35,1) {$\alpha$};
\node at (0.7,0.45) {$\alpha$};
\node at (0.95,0.65) {$\alpha$};
\node at (1.85,1.25) {$\alpha$};
\node at (-1.4,2.15) {$\alpha$};
\node at (1.85,1.55) {$\alpha$};

\node at (0.5,0.15) {$\alpha$};
\node at (2.15,1.95) {$\alpha$};
\node at (0.65,0.95) {$\alpha$};
\node at (-0.35,1.25) {$\alpha$};
\node at (0.35,1.85) {$\alpha$};
\node at (1.4,1.55) {$\alpha$};

\node at (0,0.8) {$\alpha$};
\node at (1.25,0.65) {$\alpha$};
\node at (0.45,0.7) {$\alpha$};
\node at (1.55,1.25) {$\alpha$};
\node at (0.5,2.15) {$\alpha$};
\node at (1.85,1.85) {$\alpha$};

\node at (0.15,0.5) {$\alpha$};
\node at (1.95,2.15) {$\alpha$};
\node at (0.35,1) {$\alpha$};
\node at (1.25,1.25) {$\alpha$};
\node at (-0.5,1.55) {$\alpha$};
\node at (0.65,1.85) {$\alpha$};

\end{scope}
}

\foreach \a in {0,...,3}
\filldraw[fill=white, gray!70, rotate=90*\a] 
	(0.5,1.4) circle (0.05)
	(0,0) circle (0.05)
	(2.4,2.4) circle (0.05);

\foreach \a in {-1,0}
{
\begin{scope}[rotate=90*\a]

\draw
	(-0.5,0.9) -- (-0.5,1.4) -- (1.4,1.4) -- (1.4,0.5) -- (0.9,0.5)
	(0.5,0.9) -- (0.5,1.4);

\fill
	(0,0) circle (0.05)
	(0.5,1.4) circle (0.05);

\node at (0.35,1) {$\alpha$};
\node at (-0.35,1) {$\alpha$};
\node at (0,0.8) {$\alpha$};
\node at (-0.35,1.25) {$\alpha$};
\node at (0.35,1.25) {$\beta$};

\node at (0.65,0.95) {$\alpha$};
\node at (0.95,0.65) {$\alpha$};
\node at (1.25,1.25) {$\alpha$};
\node at (1.25,0.65) {$\alpha$};
\node at (0.65,1.25) {$\beta$};

\end{scope}
}

\foreach \a in {-1,0,1}
{
\begin{scope}[rotate=90*\a]

\draw
	(0,0) -- (0.6,0) -- (0.9,0.5) -- (0.5,0.9) -- (0,0.6) -- (0,0);
	
\node at (0.5,0.15) {$\alpha$};
\node at (0.15,0.5) {$\alpha$};
\node at (0.45,0.7) {$\alpha$};
\node at (0.7,0.45) {$\alpha$};
\node at (0.15,0.15) {$\beta$};

\end{scope}
}

\node[inner sep=0.5,draw,shape=circle] at (-0.43,0.43) {$1$};
\node[inner sep=0.5,draw,shape=circle] at (0.43,0.43) {$2$};
\node[inner sep=0.5,draw,shape=circle] at (0.43,-0.43) {$3$};
\node[inner sep=0.5,draw,shape=circle] at (0,1.15) {$4$};
\node[inner sep=0.5,draw,shape=circle] at (1,1) {$5$};
\node[inner sep=0.5,draw,shape=circle] at (1.15,0) {$6$};
\node[inner sep=0.5,draw,shape=circle] at (1,-1) {$7$};

\end{tikzpicture}
\caption{Tiling for AVC(2D24).}
\label{2DfigA}
\end{figure}

The process of obtaining $\beta_6\beta_7\cdots=\beta^4/\beta^5$ from $\beta_4\beta_5\cdots=\beta^4/\beta^5$ can be repeated for all $|\beta|\beta|\beta|$ in the initial $\beta_1\beta_2\beta_3\cdots=\beta^4/\beta^5$, and we determine two layers of tiles. Moreover, we obtain four $\beta^4$ or five $\beta^5$ around the boundary of the second layer. Then the argument that started with the initial $\beta^4/\beta^5$ can be repeated at the new $\beta^4/\beta^5$ along the boundary, until obtaining the 2D reductions of $PP_6$ and $PP_{12}$. The gray part of Figure \ref{2DfigA} is the 2D reduction of $PP_6$. 
\end{proof}

We remark that this proof -- as well as all subsequent proofs -- do not use specific angle values. Therefore, the conclusion of Theorem \ref{2Dthm} is also valid for any AVC that can be reduced to AVC(2D24/60). For example, by ignoring the distinction between $\alpha, \beta, \gamma, \delta$ in AVC(5A24/60), the proof of Theorem \ref{2Dthm} is still valid in the context of Theorem \ref{5Athm}. What remains to be investigated for Theorem \ref{5Athm} is the way $\alpha, \beta, \gamma, \delta$ are arranged in each tile.

The observation leads to the natural question about how much the distinction of angles contributes to the features of tilings. Although the angle value is the most common way of distinguishing angles, there are many other ways of distinguishing angles (even with the same value), such as lengths of edges bounding the angles in \cite{cly2,wy1,wy2}. However, in the context of this work, we may also treat distinct angles to be the same, by ignoring the distinction and assigning the same label. 

Therefore, we abandon the usual convention that $\alpha=\beta$ means that $\alpha$ and $\beta$ have the same value. Then $\alpha, \beta, \gamma, \dots$ are merely symbols for the angles (or less confusingly, corners) of the pentagon, and all tiles are congruent in the sense that the corners of all tiles are labeled in the same way. 

Now {\em reduction} means corners with distinct labels are reduced to the same label. We introduce the new notation $\alpha\doteq\beta$ to mean that the corners $\alpha$ and $\beta$ are not distinguished. In the reverse direction, {\em splitting} means corners with the same labels (say $\alpha, \alpha$) are changed to distinct labels (say $\alpha, \beta$), and we need to care about the distinction in constructing the tilings.

In the rest of the section, we discuss an extensive list of the reductions of AVC(5A24/60). In Section \ref{36tiles}, we consider some reductions of AVC(5A36). 

\medskip

\noindent{\bf Four distinct angles}

\medskip

By assuming two angles are equal in AVC(5A24/60), we get AVCs with four distinct angles. Given the symmetry between $\alpha, \beta, \gamma$ in AVC(5A24/60), up to permutations, Figure \ref{two_equal} lists all the reductions.

\begin{figure}[h!]
\centering
\begin{tikzpicture}[>=latex]

\begin{scope}[xshift=-3cm]

\foreach \a in {0,...,4}
\draw[rotate=72*\a]
	(18:0.6) -- (90:0.6);
	
\node at (90:0.4) {\scriptsize $\alpha$};
\node at (162:0.4) {\scriptsize $\delta$};
\node at (18:0.4) {\scriptsize $\epsilon$};
\node at (234:0.4) {\scriptsize $\beta$};
\node at (-54:0.4) {\scriptsize $\gamma$};

\end{scope}

\draw[very thick, ->]
	(-2,0) -- ++(1,0);

\foreach \a in {0,1,2}
\foreach \b in {-1,0,1}
\foreach \x in {0,...,4}
\draw[shift={(2*\a cm, -1.4*\b cm)}, rotate=72*\x]
	(18:0.6) -- (90:0.6);

\foreach \a in {0,1,2}
{
\begin{scope}[xshift=2*\a cm]

\node[yshift=1.4cm] at (162:0.4) {\scriptsize $\gamma$};
\node[yshift=1.4cm] at (18:0.4) {\scriptsize $\delta$};

\node at (162:0.4) {\scriptsize $\alpha$};
\node at (18:0.4) {\scriptsize $\delta$};

\node[yshift=-1.4cm] at (162:0.4) {\scriptsize $\delta$};
\node[yshift=-1.4cm] at (18:0.4) {\scriptsize $\alpha$};

\end{scope}
}


\begin{scope}[yshift=1.4cm]

\node at (90:0.4) {\scriptsize $\beta$};
\node at (234:0.4) {\scriptsize $\alpha$};
\node at (-54:0.4) {\scriptsize $\alpha$};
\node at (0,0) {\scriptsize $4{\rm A}_1$};

\begin{scope}[xshift=2 cm]

\node at (90:0.4) {\scriptsize $\alpha$};
\node at (234:0.4) {\scriptsize $\beta$};
\node at (-54:0.4) {\scriptsize $\alpha$};
\node at (0,0) {\scriptsize $4{\rm A}_2$};

\end{scope}

\begin{scope}[xshift=4 cm]

\node at (90:0.4) {\scriptsize $\alpha$};
\node at (234:0.4) {\scriptsize $\alpha$};
\node at (-54:0.4) {\scriptsize $\beta$};
\node at (0,0) {\scriptsize $4{\rm A}_3$};

\end{scope}

\end{scope}


\node at (90:0.4) {\scriptsize $\alpha$};
\node at (234:0.4) {\scriptsize $\gamma$};
\node at (-54:0.4) {\scriptsize $\beta$};
\node at (0,0) {\scriptsize $4{\rm D}_1$};

\begin{scope}[xshift=2 cm]

\node at (90:0.4) {\scriptsize $\beta$};
\node at (234:0.4) {\scriptsize $\alpha$};
\node at (-54:0.4) {\scriptsize $\gamma$};
\node at (0,0) {\scriptsize $4{\rm D}_2$};

\end{scope}

\begin{scope}[xshift=4 cm]

\node at (90:0.4) {\scriptsize $\beta$};
\node at (234:0.4) {\scriptsize $\gamma$};
\node at (-54:0.4) {\scriptsize $\alpha$};
\node at (0,0) {\scriptsize $4{\rm D}_3$};

\end{scope}


\begin{scope}[yshift=-1.4cm]

\node at (90:0.4) {\scriptsize $\alpha$};
\node at (234:0.4) {\scriptsize $\beta$};
\node at (-54:0.4) {\scriptsize $\gamma$};
\node at (0,0) {\scriptsize $4{\rm E}_1$};

\begin{scope}[xshift=2 cm]

\node at (90:0.4) {\scriptsize $\beta$};
\node at (234:0.4) {\scriptsize $\alpha$};
\node at (-54:0.4) {\scriptsize $\gamma$};
\node at (0,0) {\scriptsize $4{\rm E}_2$};

\end{scope}

\begin{scope}[xshift=4 cm]

\node at (90:0.4) {\scriptsize $\beta$};
\node at (234:0.4) {\scriptsize $\gamma$};
\node at (-54:0.4) {\scriptsize $\alpha$};
\node at (0,0) {\scriptsize $4{\rm E}_3$};

\end{scope}

\end{scope}

\end{tikzpicture}
\caption{Reductions to four distinct angles.}
\label{two_equal}
\end{figure}

The first row assumes two of $\alpha, \beta, \gamma$ are equal. We have the corresponding changes of labels:
\begin{description}
\item[${\rm 4A_1}$]  $\beta\doteq\gamma$: $\alpha, \beta, \gamma, \delta, \epsilon\to \beta, \alpha, \alpha, \gamma, \delta$.
\item[${\rm 4A_2}$]  $\alpha\doteq\gamma$: $\alpha, \beta, \gamma, \delta, \epsilon\to \alpha, \beta, \alpha, \gamma, \delta$.
\item[${\rm 4A_3}$] $\alpha\doteq\beta$: $\alpha, \beta, \gamma, \delta, \epsilon\to \alpha, \alpha, \beta, \gamma, \delta$.
\end{description}
The change of labels in the second reduction ${\rm 4A_2}$ is obtained as follows 
\[
\alpha, \beta, \gamma, \delta, \epsilon
\to \alpha, \beta, \alpha, \delta, \epsilon 
\to \alpha, \beta, \alpha, \gamma, \delta.
\]
The first $\to$ simply implements $\alpha\doteq\gamma$. The second $\to$ relabels the remaining $\beta, \delta, \epsilon$ in (tighter) lexicographic order. The change for ${\rm 4A_3}$ follows the same logic. The change for ${\rm 4A_1}$ is 
\[
\alpha, \beta, \gamma, \delta, \epsilon
\to \alpha, \beta, \beta, \delta, \epsilon 
\to \alpha, \beta, \beta, \gamma, \delta 
\to \beta, \alpha, \alpha, \gamma, \delta.
\]
The first two $\to$ follow the same logic as before, and the third $\to$ exchanges $\alpha$ and $\beta$.

With careful relabeling of the symbols (especially the further change for ${\rm 4A_1}$), all three reductions reduce AVC(5A24/60) to
\begin{align*}
{\text{AVC(4A24)}}
&=\{24\alpha^2\beta\gamma\delta
\colon 
24\alpha^2\beta,
8\gamma^3,
6\delta^4 \};   \\
\text{AVC(4A60)}
&=\{
60\alpha^2\beta\gamma\delta 
\colon 
60\alpha^2\beta,
20\gamma^3,
12\delta^5 \}.
\end{align*}
Even though the angle sum equations give $\alpha+2\beta=2\pi$, $\gamma=\tfrac{2}{3}\pi$, and $\delta=\tfrac{1}{2}\pi$ or $\tfrac{2}{5}\pi$, the angle values are irrelevant to our discussion, and are not assumed.

The second row of Figure \ref{two_equal} assumes that one of $\alpha, \beta, \gamma$ equals $\delta$. We have the corresponding changes of labels:
\begin{description}
\item[${\rm 4D_1}$] $\alpha\doteq\delta$: $\alpha, \beta, \gamma, \delta, \epsilon\to \alpha, \gamma, \beta, \alpha, \delta$.
\item[${\rm 4D_2}$] $\beta\doteq\delta$: $\alpha, \beta, \gamma, \delta, \epsilon\to \beta, \alpha, \gamma, \alpha, \delta$.
\item[${\rm 4D_3}$] $\gamma\doteq\delta$: $\alpha, \beta, \gamma, \delta, \epsilon\to \beta, \gamma, \alpha, \alpha, \delta$.
\end{description}
They reduce AVC(5A24/60) to
\begin{align*}
\text{AVC(4D24)}
&=\{
24\alpha^2\beta\gamma\delta 
\colon 
24\alpha\beta\gamma,
8\alpha^3,
6\delta^4 \}; \\
\text{AVC(4D60)}
&=\{
60\alpha^2\beta\gamma\delta 
\colon 
60\alpha\beta\gamma,
20\alpha^3,
12\delta^5 \}.
\end{align*}

The third row of Figure \ref{two_equal} assumes one of $\alpha, \beta, \gamma$ equals $\epsilon$. We have the corresponding changes of labels:
\begin{description}
\item[${\rm 4E_1}$] $\alpha\doteq\epsilon$: $\alpha, \beta, \gamma, \delta, \epsilon\to \alpha, \beta, \gamma, \delta, \alpha$.
\item[${\rm 4E_2}$] $\beta\doteq\epsilon$: $\alpha, \beta, \gamma, \delta, \epsilon\to \beta, \alpha, \gamma, \delta, \alpha$.
\item[${\rm 4E_3}$] $\gamma\doteq\epsilon$: $\alpha, \beta, \gamma, \delta, \epsilon\to \beta, \gamma, \alpha, \delta, \alpha$.
\end{description}
They reduce AVC(5A24/60) to 
\begin{align*}
\text{AVC(4E24)}
&=\{
24\alpha^2\beta\gamma\delta 
\colon 
24\alpha\beta\gamma,
8\delta^3,
6\alpha^4 \};  \\
\text{AVC(4E60)}
&=\{
60\alpha^2\beta\gamma\delta 
\colon 
60\alpha\beta\gamma,
20\delta^3,
12\alpha^5 \}.
\end{align*}

There are six possible angle arrangements for the pentagon $\alpha^2\beta\gamma\delta$, as shown in Figure \ref{2abcd_arrangement}. Since AVC(4D24/60) and AVC(4E24/60) are symmetric with respect to the exchange of $\beta, \gamma$, for tilings with these AVCs, we only need to consider the first four angle arrangements in Figure \ref{2abcd_arrangement}.

\begin{figure}[htp]
\centering
\begin{subfigure}[b]{0.15\linewidth}
\centering
\begin{tikzpicture}

\foreach \a in {0,...,4}{
\draw[rotate=72*\a]
	(18:0.6) -- (90:0.6);
}

\node[] at (90:0.4) {\scriptsize $\delta$};

\node at (162:0.4) {\scriptsize $\alpha$};
\node at (18:0.4) {\scriptsize $\alpha$};
\node at (234:0.4) {\scriptsize $\beta$};
\node at (-54:0.4) {\scriptsize $\gamma$};

\end{tikzpicture}
\caption{}
\label{Subfig-2abcd_arrangement-1}
\end{subfigure}
\begin{subfigure}[b]{0.15\linewidth}
\centering
\begin{tikzpicture}

\foreach \a in {0,...,4}{
\draw[rotate=72*\a]
	(18:0.6) -- (90:0.6);
}

\node[] at (90:0.4) {\scriptsize $\delta$};

\begin{scope}

\node at (162:0.4) {\scriptsize $\beta$};
\node at (18:0.4) {\scriptsize $\gamma$};
\node at (234:0.4) {\scriptsize $\alpha$};
\node at (-54:0.4) {\scriptsize $\alpha$};

\end{scope}

\end{tikzpicture}
\caption{}
\label{Subfig-2abcd_arrangement-2}
\end{subfigure}
\begin{subfigure}[b]{0.15\linewidth}
\centering
\begin{tikzpicture}

\foreach \a in {0,...,4}{
\draw[rotate=72*\a]
	(18:0.6) -- (90:0.6);
}

\node[] at (90:0.4) {\scriptsize $\delta$};

\begin{scope}

\node at (162:0.4) {\scriptsize $\alpha$};
\node at (18:0.4) {\scriptsize $\beta$};
\node at (234:0.4) {\scriptsize $\gamma$};
\node at (-54:0.4) {\scriptsize $\alpha$};

\end{scope}

\end{tikzpicture}
\caption{}
\label{Subfig-2abcd_arrangement-3}
\end{subfigure}
\begin{subfigure}[b]{0.15\linewidth}
\centering
\begin{tikzpicture}

\foreach \a in {0,...,4}{
\draw[rotate=72*\a]
	(18:0.6) -- (90:0.6);
}

\node[] at (90:0.4) {\scriptsize $\delta$};

\begin{scope}

\node at (162:0.4) {\scriptsize $\alpha$};
\node at (18:0.4) {\scriptsize $\beta$};
\node at (234:0.4) {\scriptsize $\alpha$};
\node at (-54:0.4) {\scriptsize $\gamma$};

\end{scope}

\end{tikzpicture}
\caption{}
\label{Subfig-2abcd_arrangement-4}
\end{subfigure}
\begin{subfigure}[b]{0.15\linewidth}
\centering
\begin{tikzpicture}

\foreach \a in {0,...,4}{
\draw[rotate=72*\a]
	(18:0.6) -- (90:0.6);
}

\node[] at (90:0.4) {\scriptsize $\delta$};

\begin{scope}

\node at (162:0.4) {\scriptsize $\alpha$};
\node at (18:0.4) {\scriptsize $\gamma$};
\node at (234:0.4) {\scriptsize $\beta$};
\node at (-54:0.4) {\scriptsize $\alpha$};

\end{scope}

\end{tikzpicture}
\caption{}
\label{Subfig-2abcd_arrangement-5}
\end{subfigure}
\begin{subfigure}[b]{0.15\linewidth}
\centering
\begin{tikzpicture}

\foreach \a in {0,...,4}{
\draw[rotate=72*\a]
	(18:0.6) -- (90:0.6);
}

\node[] at (90:0.4) {\scriptsize $\delta$};

\begin{scope}

\node at (162:0.4) {\scriptsize $\alpha$};
\node at (18:0.4) {\scriptsize $\gamma$};
\node at (234:0.4) {\scriptsize $\alpha$};
\node at (-54:0.4) {\scriptsize $\beta$};

\end{scope}

\end{tikzpicture}
\caption{}
\label{Subfig-2abcd_arrangement-6}
\end{subfigure}
\caption{Angle arrangements for $\alpha^2\beta\gamma\delta$.}
\label{2abcd_arrangement}
\end{figure}

\medskip

\noindent{\bf Three distinct angles}

\medskip

We get three distinct angles by assuming three angles are equal, or assuming two pairs of angles are equal. 

If $\alpha, \beta, \gamma$ are equal, then we get
\begin{description}
\item[${\rm 3A}$] $\alpha\doteq\beta\doteq\gamma$: $\alpha, \beta, \gamma, \delta, \epsilon\to \alpha, \alpha, \alpha, \beta, \gamma$.
\end{description}
It reduces AVC(5A24/60) to
\begin{align*}
\text{AVC(3A24)}
&=\{
24\alpha^3\beta\gamma 
\colon 
24\alpha^3,
8\beta^3,
6\gamma^4 \};   \\
\text{AVC(3A60)}
&=\{
60\alpha^3\beta\gamma 
\colon 
60\alpha^3,
20\beta^3,
12\gamma^5 \}. 
\end{align*}

If two of $\alpha, \beta, \gamma$ equal $\delta$, then we get
\begin{description}
\item[${\rm 3B_1}$]  $\beta\doteq\gamma\doteq\delta$: $\alpha, \beta, \gamma, \delta, \epsilon\to \beta, \alpha, \alpha, \alpha, \gamma$.
\item[${\rm 3B_2}$]  $\alpha\doteq\gamma\doteq\delta$: $\alpha, \beta, \gamma, \delta, \epsilon\to \alpha, \beta, \alpha, \alpha, \gamma$.
\item[${\rm 3B_3}$] $\alpha\doteq\beta\doteq\delta$: $\alpha, \beta, \gamma, \delta, \epsilon\to \alpha, \alpha, \beta, \alpha, \gamma$.
\end{description}
They reduce AVC(5A24/60) to
\begin{align*}
\text{AVC(3B24)}
&=\{
24\alpha^3\beta\gamma 
\colon 
24\alpha^2\beta,
8\alpha^3,
6\gamma^4 \}; \\
\text{AVC(3B60)}
&=\{
60\alpha^3\beta\gamma 
\colon 
60\alpha^2\beta,
20\alpha^3,
12\gamma^5 \}. 
\end{align*}

If two of $\alpha, \beta, \gamma$ equal $\epsilon$, then we get
\begin{description}
\item[${\rm 3C_1}$] $\beta\doteq\gamma\doteq\epsilon$: $\alpha, \beta, \gamma, \delta, \epsilon\to \beta, \alpha, \alpha, \gamma, \alpha$.
\item[${\rm 3C_2}$] $\alpha\doteq\gamma\doteq\epsilon$: $\alpha, \beta, \gamma, \delta, \epsilon\to \alpha, \beta, \alpha, \gamma, \alpha$.
\item[${\rm 3C_3}$] $\alpha\doteq\beta\doteq\epsilon$: $\alpha, \beta, \gamma, \delta, \epsilon\to \alpha, \alpha, \beta, \gamma, \alpha$.
\end{description}
They reduce AVC(5A24/60) to
\begin{align*}
\text{AVC(3C24)}
&=\{
24\alpha^3\beta\gamma \colon 
24\alpha^2\beta,
8\gamma^3,
6\alpha^4 \};   \\
\text{AVC(3C60)}
&=\{
60\alpha^3\beta\gamma \colon 
60\alpha^2\beta,
20\gamma^3,
12\alpha^5 \}. 
\end{align*}

There are two possible angle arrangements for the pentagon $\alpha^3\beta\gamma$, as shown in Figure \ref{3abc_arrangement}. 

\begin{figure}[h!]
\centering
\begin{subfigure}[b]{0.15\linewidth}
\centering
\begin{tikzpicture}

\foreach \a in {0,...,4}
\draw[rotate=72*\a]
	(18:0.6) -- (90:0.6);
	
\node at (90:0.4) {\scriptsize $\alpha$};
\node at (162:0.4) {\scriptsize $\alpha$};
\node at (18:0.4) {\scriptsize $\alpha$};
\node at (234:0.4) {\scriptsize $\beta$};
\node at (-54:0.4) {\scriptsize $\gamma$};

\end{tikzpicture}
\caption{}
\label{Subfig-3abc_arrangement-1}
\end{subfigure}
\begin{subfigure}[b]{0.15\linewidth}
\centering
\begin{tikzpicture}

\raisebox{0.5ex}{
\foreach \a in {0,...,4}
\draw[rotate=72*\a]
	(18:0.6) -- (90:0.6);

\node at (90:0.4) {\scriptsize $\alpha$};
\node at (162:0.4) {\scriptsize $\beta$};
\node at (18:0.4) {\scriptsize $\gamma$};
\node at (234:0.4) {\scriptsize $\alpha$};
\node at (-54:0.4) {\scriptsize $\alpha$};
}
\end{tikzpicture}
\caption{}
\label{Subfig-3abc_arrangement-2}
\end{subfigure}
\caption{Angle arrangements for $\alpha^3\beta\gamma$.}
\label{3abc_arrangement}
\end{figure}

If two of $\alpha, \beta, \gamma$ are equal, and the third equals $\delta$, then we get:
\begin{description}
\item[${\rm 3D_1}$]  $\beta\doteq\gamma$ and $\alpha\doteq\delta$: $\alpha, \beta, \gamma, \delta, \epsilon\to \beta, \alpha, \alpha, \beta, \gamma$.
\item[${\rm 3D_2}$]  $\alpha\doteq\gamma$ and $\beta\doteq\delta$: $\alpha, \beta, \gamma, \delta, \epsilon\to \alpha, \beta, \alpha, \beta, \gamma$.
\item[${\rm 3D_3}$] $\alpha\doteq\beta$ and $\gamma\doteq\delta$: $\alpha, \beta, \gamma, \delta, \epsilon\to \alpha, \alpha, \beta, \beta, \gamma$.
\end{description}
They reduce AVC(5A24/60) to
\begin{align*}
\text{AVC(3D24)}
&=\{
24\alpha^2\beta^2\gamma 
\colon 
24\alpha^2\beta,
8\beta^3,
6\gamma^4 \};  \\
\text{AVC(3D60)} 
&=\{
60\alpha^2\beta^2\gamma 
\colon 
60\alpha^2\beta,
20\beta^3,
12\gamma^5 \}. 
\end{align*}

If two of $\alpha, \beta, \gamma$ are equal, and the third equals $\epsilon$, then we get 
\begin{description}
\item[${\rm 3E_1}$]  $\beta\doteq\gamma$ and $\alpha\doteq\epsilon$: $\alpha, \beta, \gamma, \delta, \epsilon\to \beta, \alpha, \alpha, \gamma, \beta$.
\item[${\rm 3E_2}$]  $\alpha\doteq\gamma$ and $\beta\doteq\epsilon$: $\alpha, \beta, \gamma, \delta, \epsilon\to \alpha, \beta, \alpha, \gamma, \beta$.
\item[${\rm 3E_3}$] $\alpha\doteq\beta$ and $\gamma\doteq\epsilon$: $\alpha, \beta, \gamma, \delta, \epsilon\to \alpha, \alpha, \beta, \gamma, \beta$.
\end{description}
They reduce AVC(5A24/60) to
\begin{align*}
\text{AVC(3E24)}
&=\{
24\alpha^2\beta^2\gamma 
\colon 
24\alpha^2\beta,
8\gamma^3,
6\beta^4 \}; \\
\text{AVC(3E60)}
&=\{
60\alpha^2\beta^2\gamma 
\colon 
60\alpha^2\beta,
20\gamma^3,
12\beta^5 \}.
\end{align*}

There are four possible angle arrangements for the pentagon $\alpha^2\beta^2\gamma$, given by Figure \ref{2a2bc_arrangement}. 

\begin{figure}[h!]
\centering
\begin{subfigure}[b]{0.15\linewidth}
\centering
\begin{tikzpicture}

\foreach \a in {0,...,4}
\draw[rotate=72*\a]
	(18:0.6) -- (90:0.6);

\node[] at (90:0.4) {\scriptsize $\gamma$};

\node at (162:0.4) {\scriptsize $\alpha$};
\node at (18:0.4) {\scriptsize $\alpha$};
\node at (234:0.4) {\scriptsize $\beta$};
\node at (-54:0.4) {\scriptsize $\beta$};

\end{tikzpicture}
\caption{}
\label{Subfig-2a2bc_arrangement-1}
\end{subfigure}
\begin{subfigure}[b]{0.15\linewidth}
\centering
\begin{tikzpicture}

\foreach \a in {0,...,4}
\draw[rotate=72*\a]
	(18:0.6) -- (90:0.6);

\node[] at (90:0.4) {\scriptsize $\gamma$};

\node at (162:0.4) {\scriptsize $\beta$};
\node at (18:0.4) {\scriptsize $\beta$};
\node at (234:0.4) {\scriptsize $\alpha$};
\node at (-54:0.4) {\scriptsize $\alpha$};

\end{tikzpicture}
\caption{}
\label{Subfig-2a2bc_arrangement-2}
\end{subfigure}
\begin{subfigure}[b]{0.15\linewidth}
\centering
\begin{tikzpicture}

\foreach \a in {0,...,4}
\draw[rotate=72*\a]
	(18:0.6) -- (90:0.6);

\node[] at (90:0.4) {\scriptsize $\gamma$};

\node at (162:0.4) {\scriptsize $\alpha$};
\node at (18:0.4) {\scriptsize $\beta$};
\node at (234:0.4) {\scriptsize $\beta$};
\node at (-54:0.4) {\scriptsize $\alpha$};

\end{tikzpicture}
\caption{}
\label{Subfig-2a2bc_arrangement-3}
\end{subfigure}
\begin{subfigure}[b]{0.15\linewidth}
\centering
\begin{tikzpicture}

\foreach \a in {0,...,4}
\draw[rotate=72*\a]
	(18:0.6) -- (90:0.6);

\node[] at (90:0.4) {\scriptsize $\gamma$};

\node at (162:0.4) {\scriptsize $\alpha$};
\node at (18:0.4) {\scriptsize $\beta$};
\node at (234:0.4) {\scriptsize $\alpha$};
\node at (-54:0.4) {\scriptsize $\beta$};

\end{tikzpicture}
\caption{}
\label{Subfig-2a2bc_arrangement-4}
\end{subfigure}
\caption{Angle arrangements for $\alpha^2\beta^2\gamma$.}
\label{2a2bc_arrangement}
\end{figure}

The various reductions of 5A are also related by reductions. For example, the reduction $\alpha,\beta,\gamma,\delta\to \alpha,\alpha,\beta,\gamma$ from 4A to 3A is compatible with the reductions from 5A to 4A and from 5A to 3A. In fact, the compatibility uniquely determines the reduction from 4A to 3A.

\begin{figure}[htp]
\centering
\begin{tikzpicture}[>=latex]

\node at (0,0) { 5A};
\node at (2,1) { 4A};
\node at (2,0) { 4E};
\node at (2,-1) { 4D};
\node at (4,2) { 3A};
\node at (4,1) { 3E};
\node at (4,0) { 3D};
\node at (4,-1) { 3C};
\node at (4,-2) { 3B};
\node at (6,0) { 2D};

\draw[->]
	(0.5,0) -- ++(1,0);
\draw[->]
	(0.5,0.3) -- ++(1,0.5);
\draw[->]
	(0.5,-0.3) -- ++(1,-0.5);

\draw[->]
	(2.5,1.3) -- ++(1,0.5);
\draw[->]
	(2.5,1) -- ++(1,0);
\draw[->]
	(2.5,0.7) -- ++(1,-0.5);
	
\draw[->]
	(2.5,-0.3) -- ++(1,-0.5);
\draw[->]
	(2.5,0.3) -- ++(1,0.5);
\draw[->]
	(2.5,-1.3) -- ++(1,-0.5);
\draw[->]
	(2.5,-0.7) -- ++(1,0.5);

\draw[->]
	(4.5,1.8) -- ++(1,-1.5);
\draw[->]
	(4.5,0) -- ++(1,0);
\draw[->]
	(4.5,-1.8) -- ++(1,1.5);
							
\end{tikzpicture}
\caption{Reduction relations for the reductions of AVC(5A24/60).}
\label{Fig-reduction}
\end{figure}

Figure \ref{Fig-reduction} shows all the reduction relations.  The following are the details of the reductions used in the proofs of the later theorems
\begin{itemize}
\item $\text{3A}\to \text{2D}$ (Theorem \ref{3Athm}): $\alpha,\alpha,\alpha,\beta,\gamma\to \alpha,\alpha,\alpha,\alpha,\beta$. 
\item $\text{3B}\to \text{2D}$ (Theorem \ref{3Bthm}): $\alpha,\alpha,\alpha,\beta,\gamma\to \alpha,\alpha,\alpha,\alpha,\beta$. 
\item $\text{3D}\to \text{2D}$ (Theorem \ref{3Dthm}): $\alpha,\alpha,\beta,\beta,\gamma\to \alpha,\alpha,\alpha,\alpha,\beta$. 
\item $\text{4A}\to \text{3A}$ (Theorem \ref{4Athm}): $\alpha,\alpha,\beta,\gamma,\delta\to \alpha,\alpha,\alpha,\beta,\gamma$.
\item $\text{4D}\to \text{3D}$ (Theorem \ref{4Dthm}): $\alpha,\alpha,\beta,\gamma,\delta\to \beta,\beta,\alpha,\alpha,\gamma$.
\item $\text{4E}\to \text{3E}$ (Theorem \ref{4Ethm}): $\alpha,\alpha,\beta,\gamma,\delta\to \beta,\beta,\alpha,\alpha,\gamma$.
\end{itemize}
The splitting is the reverse process. For example, to get 4D tilings from 3D tilings, we need to change $\gamma$ to $\delta$, change two $\beta$ to two $\alpha$, and change two $\alpha$ to one $\beta$ and one $\gamma$.

\section{Tilings with Three Distinct Angles}

\subsection{Tilings Reducible to 2D}

According to Figure \ref{Fig-reduction}, the reductions 3A, 3B, 3D can be further reduced to 2D. We obtain tilings for the three reductions by splitting the tiling in Theorem \ref{2Dthm}.

\begin{theorem}\label{3Athm}
The tilings for 
\begin{align*}
\text{\rm AVC(3A24)}
&=\{
24\alpha^3\beta\gamma 
\colon 
24\alpha^3,
8\beta^3,
6\gamma^4 \};   \\
\text{\rm AVC(3A60)}
&=\{
60\alpha^3\beta\gamma 
\colon 
60\alpha^3,
20\beta^3,
12\gamma^5 \},
\end{align*}
are the {\rm 3A} reductions of $PP_6$ and $PP_{12}$.
\end{theorem}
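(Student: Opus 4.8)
The plan is to construct every tiling for AVC(3A24)/AVC(3A60) by \emph{splitting} the tilings for AVC(2D24)/AVC(2D60) classified in Theorem~\ref{2Dthm}, as announced just before the theorem. The reduction $\text{3A}\to\text{2D}$ of Figure~\ref{Fig-reduction}, namely $\alpha,\alpha,\alpha,\beta,\gamma\to\alpha,\alpha,\alpha,\alpha,\beta$, identifies the $3$A-corners $\alpha$ and $\beta$ and renames the $3$A-corner $\gamma$ to the $2$D-corner $\beta$. So, starting from any tiling $T$ for AVC(3A24) (the $24$-tile case; the $60$-tile case is word for word the same, with $PP_{12}$ and degree-$5$ vertices in place of $PP_6$ and degree-$4$ vertices), I apply this relabeling to every tile. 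This changes no edge or vertex of the cell complex, so the result $\bar T$ is still an edge-to-edge tiling by a single pentagon of type $\alpha^4\beta$, and the vertices $24\alpha^3,\,8\beta^3,\,6\gamma^4$ of $T$ become $32\alpha^3,\,6\beta^4$, i.e.\ $\bar T$ realizes AVC(2D24). By Theorem~\ref{2Dthm}, $\bar T$ is the $2$D reduction of $PP_6$. Therefore the cell complex underlying $T$ is $PP_6$, and the distinguished ($2$D-$\beta$, equivalently $3$A-$\gamma$) corner of every tile of $T$ lies at a degree-$4$ vertex; these degree-$4$ vertices are exactly the $\gamma^4$ vertices of $T$.

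It remains to recover the finer $3$A-labeling: in each tile, which of the four corners other than the $\gamma$-corner is the $\beta$-corner? I name the corners of a tile of $T$ a \emph{former $\alpha$}, $\dots$, \emph{former $\epsilon$} according to the standard pentagon of $PP_6$ in Figure~\ref{Subfig-psubdivision-tiling} (so the $3$A-$\gamma$ corner is the former $\epsilon$). The position of the $3$A-$\beta$ relative to the fixed $\gamma$-corner determines which arrangement of $\alpha^3\beta\gamma$ in Figure~\ref{3abc_arrangement} the pentagon is: adjacent to the $\gamma$-corner (Figure~\ref{Subfig-3abc_arrangement-1}) or at combinatorial distance $2$ from it (Figure~\ref{Subfig-3abc_arrangement-2}); all tiles being congruent, the whole tiling lies in one of these two cases. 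I will use the structure of $PP_6$ (Figures~\ref{Subfig-5AfigB-f24} and~\ref{Subfig-psubdivision-tiling}, with $N(\delta^3)$ as in Figure~\ref{Subfig-5AfigB-N} and $N(\epsilon^4)$ as in Figure~\ref{Subfig-psubdivision-N}): every non-former-$\epsilon$ corner lies at a degree-$3$ vertex, and each degree-$3$ vertex is of exactly one of two kinds -- a $\delta^3$-vertex carrying three former $\delta$'s, or an $\alpha\beta\gamma$-vertex carrying one former $\alpha$, one former $\beta$, one former $\gamma$; and relative to an $\epsilon$-corner, the former $\alpha$ and former $\gamma$ are the two distance-$1$ neighbours, while the former $\delta$ and former $\beta$ are the two distance-$2$ neighbours.

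Now the case analysis. If the pentagon is Figure~\ref{Subfig-3abc_arrangement-1}, then in every tile the $3$A-$\beta$ corner is a distance-$1$ neighbour of the $\gamma$-corner, hence a former $\alpha$ or former $\gamma$, hence sits at an $\alpha\beta\gamma$-vertex whose remaining corner is a former $\beta$ -- never a $3$A-$\beta$ in this arrangement. So no vertex acquires three $\beta$-corners: $\beta^3$ never appears, contradicting AVC(3A24) (the $24$ $\beta$-corners must form $\beta^3$ vertices). Hence the pentagon is Figure~\ref{Subfig-3abc_arrangement-2}, so in every tile the $3$A-$\beta$ corner is a distance-$2$ neighbour of its $\gamma$-corner, i.e.\ the tile's former $\delta$ or former $\beta$. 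Since only $\alpha^3$ and $\beta^3$ are allowed among degree-$3$ vertices, all three corners at such a vertex carry the same label; at an $\alpha\beta\gamma$-vertex two corners are distance-$1$ neighbours of $\epsilon$-corners and hence forced to be $3$A-$\alpha$, which forces the third -- a former $\beta$ of some tile -- to be $3$A-$\alpha$ as well, i.e.\ that tile's $\beta$-corner is its former $\delta$, not its former $\beta$. As every tile has its former-$\beta$ corner at some $\alpha\beta\gamma$-vertex, \emph{every} tile must place the $3$A-$\beta$ at its former $\delta$. This labeling is precisely the $3$A reduction of $PP_6$, and a direct check confirms its vertices are $24\alpha^3,\,8\beta^3,\,6\gamma^4$, so it is the unique tiling; the $60$-tile case is identical and yields the $3$A reduction of $PP_{12}$.

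The only real work is the bookkeeping in the last two paragraphs -- tracking which combinatorial positions of $PP_6$/$PP_{12}$ are the former $\alpha,\dots,\epsilon$, and checking that the two-type rigidity of degree-$3$ vertices in the pentagonal subdivision, together with the vertex list of the AVC, leaves only one legal splitting. I do not expect any analytic difficulty: once Theorem~\ref{2Dthm} fixes the cell complex, the refinement is pinned down by purely local label-consistency at the vertices.
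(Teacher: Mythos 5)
Your proposal is correct and follows essentially the same route as the paper: reduce to AVC(2D24/60), invoke Theorem \ref{2Dthm} to identify the underlying complex as $PP_6$ or $PP_{12}$ with $\gamma$ at the degree-$4$ vertices, and then propagate $\alpha\cdots=\alpha^3$ through the $\alpha\beta\gamma$-type vertices to force $\beta$ into the former-$\delta$ position. The only cosmetic difference is in excluding the arrangement with $\beta,\gamma$ adjacent: the paper rules it out locally by showing every AAD of $\beta^3$ creates a forbidden vertex $\alpha\gamma\cdots$, whereas you rule it out globally by observing that in $PP_6$ the candidate positions for $\beta$ can never assemble into a $\beta^3$ vertex -- both arguments are valid.
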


\begin{proof}
The pentagon has two possible angle arrangements, given by Figure \ref{3abc_arrangement}. If $\beta, \gamma$ are adjacent, as in Figure \ref{Subfig-3abc_arrangement-1}, then the angles around the vertex $\beta^3$ is one of the two cases $|^{\alpha}\beta^{\gamma}|^{\alpha}\beta^{\gamma}|^{\alpha}\beta^{\gamma}|$ and $|^{\alpha}\beta^{\gamma}|^{\alpha}\beta^{\gamma}|^{\gamma}\beta^{\alpha}|$ as in Figure \ref{Subfig-3AFigA-be3}. In both cases, we get a vertex $\alpha\gamma\cdots$. As this $\alpha\gamma\cdots$ is not in AVC(3A24/60), we get a contradiction.

\begin{figure}[htp]
\centering
\begin{subfigure}[t]{0.25\linewidth}
\centering
\begin{tikzpicture}[>=latex]


\foreach \a in {0,1,2}
\foreach \b in {1,-1}
{
\begin{scope}[yshift=0.3cm+1.3*\b cm, rotate=120*\a]

\draw
	(0,0) -- (90:0.6) -- (50:1) -- (10:1) -- (-30:0.6)
	;

\node at (0,-0.2) {\scriptsize $\beta$};

\end{scope}
}

\foreach \a in {0,1,2}
{
\begin{scope}[yshift=1.6 cm, rotate=120*\a]

\node at (0.33,-0.38) {\scriptsize $\alpha$};
\node at (-0.33,-0.38) {\scriptsize $\gamma$};

\end{scope}
}

\node at (0,0.3) {\footnotesize $|^{\alpha}\beta^{\gamma}|^{\alpha}\beta^{\gamma}|^{\alpha}\beta^{\gamma}|$};

\begin{scope}[yshift=-1 cm]

\node at (-0.33,-0.38) {\scriptsize $\alpha$};
\node at (0.33,-0.38) {\scriptsize $\gamma$};

\node at (-0.5,-0.1) {\scriptsize $\alpha$};
\node at (0.5,-0.1) {\scriptsize $\gamma$};

\node at (0.18,0.47) {\scriptsize $\alpha$};
\node at (-0.18,0.47) {\scriptsize $\gamma$};

\node at (0,-1.3) {\footnotesize $|^{\alpha}\beta^{\gamma}|^{\alpha}\beta^{\gamma}|^{\gamma}\beta^{\alpha}|$};

\end{scope}

\end{tikzpicture}
\caption{}
\label{Subfig-3AFigA-be3}
\end{subfigure}
\begin{subfigure}[t]{0.5\linewidth}
\centering
\begin{tikzpicture}[>=latex]


\begin{scope}[xshift=4.5cm]

\foreach \a in {0,...,3}
{
\begin{scope}[rotate=90*\a]

\draw
	(0,0) -- (0.6,0) -- (0.9,0.5) -- (0.5,0.9) -- (0,0.6)
	(0.9,-0.5) -- (2,-0.5)
	(0.9,0.5) -- (1.4,0.5)
	(2,1.4) -- (-1.4,1.4)
	(-2,2) -- (2,2) -- (2.4,2.4);

\fill
	(-0.5,0.9) circle (0.05)
	(2,1.4) circle (0.05);
	
\node at (0.15,0.15) {\scriptsize $\gamma$};
\node at (0.65,1.25) {\scriptsize $\gamma$};
\node at (0.35,1.25) {\scriptsize $\gamma$};
\node at (0.65,1.55) {\scriptsize $\gamma$};
\node at (0.35,1.55) {\scriptsize $\gamma$};
\node at (0,2.4) {\scriptsize $\gamma$};

\node at (0.5,0.15) {\scriptsize $\alpha$};
\node at (0.15,0.5) {\scriptsize $\alpha$};
\node at (0.35,1) {\scriptsize $\alpha$};
\node at (0,0.8) {\scriptsize $\alpha$};
\node at (0.65,0.95) {\scriptsize $\alpha$};
\node at (1.25,0.65) {\scriptsize $\alpha$};
\node at (-0.35,1.25) {\scriptsize $\alpha$};
\node at (1.25,1.25) {\scriptsize $\alpha$};
\node at (-0.5,1.55) {\scriptsize $\alpha$};
\node at (1.4,1.55) {\scriptsize $\alpha$};
\node at (0.35,1.85) {\scriptsize $\alpha$};
\node at (0.65,1.85) {\scriptsize $\alpha$};
\node at (1.95,2.15) {\scriptsize $\alpha$};
\node at (2.15,1.95) {\scriptsize $\alpha$};
\node at (0.45,0.7) {\scriptsize $\alpha$};
\node at (1.55,1.25) {\scriptsize $\alpha$};
\node at (2.15,-0.5) {\scriptsize $\alpha$};

\node at (0.7,0.45) {\scriptsize $\beta$};
\node at (-0.35,1) {\scriptsize $\beta$};
\node at (0.95,0.65) {\scriptsize $\beta$};
\node at (1.85,1.25) {\scriptsize $\beta$};
\node at (1.85,1.55) {\scriptsize $\beta$};
\node at (2.15,1.4) {\scriptsize $\beta$};

\end{scope}
}

\foreach \a in {0,...,3}
{
\filldraw[fill=white, rotate=90*\a] 
	(0.5,1.4) circle (0.05)
	(0,0) circle (0.05)
	(2.4,2.4) circle (0.05);
}

\end{scope}

\end{tikzpicture}
\caption{}
\label{Subfig-3AFigA-f24}
\end{subfigure}
\caption{Tiling for AVC(3A24).}
\label{3AFigA}
\end{figure}

Therefore, $\beta, \gamma$ are non-adjacent in the pentagon, which means $\gamma$ is adjacent to two $\alpha$, as in Figure \ref{Subfig-3abc_arrangement-2}. As explained at the end of Section \ref{Sec-Reductions}, the splitting from AVC(2D24/60) to AVC(3A24/60) means that, in the 2D tilings in Theorem \ref{2Dthm}, we change $\beta$ to $\gamma$, and change one of the four $\alpha$ angles to $\beta$. Moreover, we need to keep $\beta, \gamma$ non-adjacent in all the tiles.

For AVC(3A24), we change all the $\beta$ angles in Figure \ref{2DfigA} to get all the $\gamma$ angles in Figure \ref{Subfig-3AFigA-f24}. Then we use $\gamma$ adjacent to two $\alpha$, and $\alpha\cdots=\alpha^3$ to get all the $\alpha$ angles in the figure. Then we know the $\gamma$ and three $\alpha$ angles for each tile. Therefore, the remaining angles are $\beta$, yielding the 3A reduction of the $PP_6$ in Figure \ref{Subfig-3AFigA-f24}. The same argument applies to AVC(3A60).
\end{proof}

\begin{theorem}\label{3Bthm}
The tilings for
\begin{align*}
\text{\rm AVC(3B24)}
&=\{
24\alpha^3\beta\gamma 
\colon 
24\alpha^2\beta,
8\alpha^3,
6\gamma^4 \}; \\
\text{\rm AVC(3B60)}
&=\{
60\alpha^3\beta\gamma 
\colon 
60\alpha^2\beta,
20\alpha^3,
12\gamma^5 \},
\end{align*}
are the $3{\rm B}_1$, $3{\rm B}_2$, $3{\rm B}_3$ reductions of $PP_6$ and $PP_{12}$, and the following modifications:
\begin{enumerate}
\item If $\beta, \gamma$ are adjacent in the pentagon, then each $N(\gamma^4/\gamma^5)$ can be independently oriented.
\item If $\beta, \gamma$ are non-adjacent, then each vertex $\bullet$ can be independently configured like any in Figure \ref{Subfig-3BFigB-2}.
\end{enumerate}
\end{theorem}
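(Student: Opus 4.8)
\medskip

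The plan is to mirror the structure of the proof of Theorem \ref{3Athm}, splitting the two cases according to whether $\beta$ and $\gamma$ are adjacent in the pentagon, and in each case tracking exactly how much freedom the splitting from AVC(2D24/60) allows. First I would record that the pentagon $\alpha^3\beta\gamma$ has only the two angle arrangements of Figure \ref{3abc_arrangement}, and that the splitting from AVC(2D24/60) to AVC(3B24/60) requires: change $\beta$ (in the 2D pentagon) to $\gamma$, and change exactly one of the four $\alpha$ corners to $\beta$; the new constraint is that the vertex $\alpha^3$ of AVC(2D) must split so that every vertex of the resulting tiling is one of $\alpha^2\beta$, $\alpha^3$, or $\gamma^{4/5}$. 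In particular, at each of the old $\alpha^3$ vertices, either all three incident corners stay $\alpha$ (giving $\alpha^3$) or exactly one becomes $\beta$ (giving $\alpha^2\beta$) — never two or three $\beta$'s. This is the combinatorial heart of the argument: the choice of which $\alpha$-corner of each tile becomes $\beta$ is not free but is coupled across the tiling through the vertices.

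For the case $\beta,\gamma$ adjacent (Figure \ref{Subfig-3abc_arrangement-1}), I would argue as follows. In the 2D tiling, $\gamma$ (the old $\beta$) lives only at the $\gamma^{4/5}$ vertices, and each such vertex is surrounded by the neighborhood $N(\gamma^4)$ or $N(\gamma^5)$ of Figure \ref{Subfig-psubdivision-N}. Since $\beta$ must be adjacent to $\gamma$ in every tile, the corner of each tile that becomes $\beta$ must be one of the two corners neighboring its $\gamma$ corner — i.e., the $\beta$-labels are forced to cluster around the $\gamma^{4/5}$ vertices, one per tile incident to that vertex. Around a single $\gamma^{4/5}$ vertex there are then two consistent global choices (the two cyclic orientations of assigning "$\beta$ on the clockwise side of $\gamma$" versus "on the counterclockwise side"), and I would check that these choices at distinct $\gamma^{4/5}$ vertices are independent: the tiles not touching any $\gamma^{4/5}$ vertex have all corners $\alpha$ except none — wait, every tile touches exactly one $\gamma^{4/5}$ vertex in $PP_6$/$PP_{12}$, so the orientation of each $N(\gamma^{4/5})$ is a free binary choice, and any combination yields a valid tiling because the remaining vertices only ever see $\alpha$'s and at most one $\beta$ from the single adjacent $N(\gamma^{4/5})$. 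This is conclusion (1): each $N(\gamma^4/\gamma^5)$ can be independently oriented.

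For the case $\beta,\gamma$ non-adjacent (Figure \ref{Subfig-3abc_arrangement-2}), $\gamma$ sits between two $\alpha$'s, so the $\beta$ corner of each tile is the unique corner opposite (not adjacent to) $\gamma$ — hence $\beta$ is \emph{forced} within each tile once we know which corner is $\gamma$, and the only freedom is which of the old $\alpha$-corners plays $\gamma$ versus which plays the "$\beta$ role". Actually the $\gamma$ positions are pinned down by the $\gamma^{4/5}$ vertices exactly as in the 2D tiling; so the tiles are entirely determined except at the vertices where the old $\alpha^3$ must be resolved. Here I would examine a generic interior vertex $\bullet$ (an old $\alpha^3$): the three tiles around it each contribute one of their three $\alpha$-type corners, and the non-adjacency constraint plus the requirement that the vertex be $\alpha^3$ or $\alpha^2\beta$ leaves precisely the list of local configurations drawn in Figure \ref{Subfig-3BFigB-2}. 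I would then verify independence: changing the configuration at one such $\bullet$ only relabels corners of the three tiles around it, and since each of those tiles has its other corners ($\gamma$ and the remaining $\alpha$'s/$\beta$) compatible with whatever its other vertices demand, no global obstruction arises. This gives conclusion (2).

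\medskip

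The main obstacle I anticipate is the independence/consistency verification in both cases — showing that the local freedoms genuinely glue to global tilings with no hidden long-range constraint. Concretely, one must confirm that in $PP_6$ and $PP_{12}$ each tile is incident to exactly one $\gamma^{4/5}$ vertex (so the case-(1) orientations decouple) and that the $\alpha^3$-vertices form an independent set in the appropriate sense (so the case-(2) choices decouple), and that every resulting corner-labeling uses only vertices from AVC(3B24/60). The enumeration of the finitely many local pictures in Figure \ref{Subfig-3BFigB-2} is routine once the non-adjacency constraint is imposed; the delicate point is ruling out that some combination of local choices forces a forbidden vertex such as $\beta^2\cdots$ or $\alpha\beta\gamma$ somewhere far away. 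I would handle this by the same propagation bookkeeping used in Theorems \ref{5Athm} and \ref{2Dthm}: start from one $\gamma^{4/5}$ (resp. one $\bullet$), propagate the forced labels outward layer by layer, and observe that the only undetermined data at each stage is exactly one independent binary (resp. multi-way) choice per new special vertex encountered.
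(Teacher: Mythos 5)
Your overall strategy is the same as the paper's: use Theorem \ref{2Dthm} via the reduction $3\mathrm{B}\to 2\mathrm{D}$ to fix the underlying tiling and the positions of $\gamma$, and then classify the splittings according to whether $\beta,\gamma$ are adjacent in the pentagon. Your adjacent case is essentially the paper's argument (the absence of $\beta^2\cdots$ forces a coherent orientation of each $N(\gamma^4/\gamma^5)$, and the orientations decouple), though the reason the orientations decouple is not that each tile meets only one $\gamma^{4/5}$ vertex, but that at every other vertex the two corners that could become $\beta$ belong to tiles of the \emph{same} $N(\gamma^4/\gamma^5)$, while the third corner there is never a candidate.

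The non-adjacent case has a genuine gap. You assert that $\beta$ must be ``the unique corner opposite (not adjacent to) $\gamma$,'' but a corner of a pentagon has \emph{two} non-adjacent corners, so each tile has two candidate positions for $\beta$; this binary choice per tile is exactly the freedom that produces the modifications in conclusion (2), which your claim of forcedness would make vacuous. Your subsequent retraction does not supply the missing structure. What is needed is: the two candidates are the endpoints of one specific edge of each tile (the thick gray edge in Figure \ref{Subfig-3BFigB-1}, joining the old $\delta$ and old $\beta$ corners of $PP_6$ or $PP_{12}$); one endpoint lies at an old $\delta^3$ vertex --- these $8$ (resp.\ $20$) vertices are the $\bullet$, not arbitrary old $\alpha^3$ vertices --- where all three incident corners are candidates; the other endpoint lies at a vertex whose remaining two corners are never candidates and are always $\alpha$. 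Since the only vertex of the AVC containing $\beta$ is $\alpha^2\beta$, the sole constraint is ``at most one $\beta$ at each $\bullet$,'' and because each tile meets exactly one $\bullet$, the configurations of Figure \ref{Subfig-3BFigB-2} can be chosen independently. You never identify which vertices the $\bullet$ are, nor carry out this localization, and you explicitly defer the independence check to unspecified ``propagation bookkeeping''; since that check is precisely the content of conclusion (2), the proposal as written does not establish it.
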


The tilings for AVC(3B24) are shown in Figure \ref{3BFigA} ($\beta, \gamma$ adjacent) and Figure \ref{Subfig-3BFigB-1} ($\beta, \gamma$ non-adjacent). In the first tiling, each $N(\gamma^4)$ has an orientation given by $\alpha\to\beta$ along the boundary of $N(\gamma^4)$, and the orientations can be independently chosen. Figures \ref{3BFigAb} and \ref{3BFigAc} shows the 3d pictures, in which the front $N(\gamma^4)$ have different orientations, and the other $N(\gamma^4)$ have the same orientations. In the second tiling, the angles around $\bullet$ vertices can be any in Figure \ref{Subfig-3BFigB-2}. This means either all three $\alpha$ at $\bullet$, or two $\alpha$ and one $\beta$ at $\bullet$.

Figure \ref{3BFigA} shows the $3{\rm B}_1$, $3{\rm B}_3$ reductions of $PP_6$, and their modifications. The orientation modifications are in one-to-one correspondence with the assignment of $(+)$ and $(-)$ to the faces of the cube and the dodecahedron (treating mirror image assignments as distinct). The total numbers of assignments are $10$ for $P_6$ and $96$ for $P_{12}$. 

Figure \ref{Subfig-3BFigB-1} shows the $3{\rm B}_2$ reduction of $PP_6$, and their modifications. The total number of such tilings is $2836$. The reduction of $PP_{12}$ is similar, however, counting the total number of tilings in this case is computationally non-trivial and we leave it as an open problem.

\begin{proof}
The 2D tilings in Theorem \ref{2Dthm} are the unions of $N(\beta^4/\beta^5)$. As explained at the end of Section \ref{Sec-Reductions}, the splitting from AVC(2D24/60) to AVC(3B24/60) means changing $\beta$ to $\gamma$, and changing one of the four $\alpha$ angles to $\beta$. We apply the splitting to each $N(\beta^4)$.

The pentagon has two possible angle arrangements, given by Figure \ref{3abc_arrangement}. If $\beta, \gamma$ are adjacent, as in Figure \ref{Subfig-3abc_arrangement-1}, by $\beta^2\cdots$ not in AVC(3B24), the AAD of $\gamma^4$ is $|^{\alpha}\gamma^{\beta}|^{\alpha}\gamma^{\beta}|^{\alpha}\gamma^{\beta}|^{\alpha}\gamma^{\beta}|$. This means that $N(\gamma^4)$ is given by the center four tiles in Figure \ref{3BFigAa}, or their flips. Equivalently, $N(\gamma^4)$ can take any of the two orientations given by the direction of $\alpha\to\beta$. Therefore, the tiling is determined by the independent choices of the orientations in the six instances of $N(\gamma^4)$. Figures \ref{3BFigAb} and \ref{3BFigAc} are 3d pictures of two versions of the tiling, in which the small red triangles indicate all the $\beta$ angles. The only difference between the two versions is the orientation of the front $N(\gamma^4)$.

\begin{figure}[htp]
\centering
\begin{subfigure}[b]{0.4\linewidth}
\centering
\begin{tikzpicture}[>=latex,scale=1]

\foreach \a in {0,...,3}
\draw[green!70, line width=2, rotate=90*\a]
	(0.9,0.5) -- (0.5,0.9) -- (0,0.6) -- (-0.5,0.9) -- (-0.5,1.4) -- (-1.4,1.4) -- (-1.4,2) -- (2,2) -- (2,1.4);
	
\draw[->]
	(210:0.35) arc (210:-30:0.35);

\foreach \a in {0,...,3}
{
\begin{scope}[
rotate=90*\a]

\draw
	(0,0) -- (0.6,0) -- (0.9,0.5) -- (0.5,0.9) -- (0,0.6)
	(0.9,-0.5) -- (2,-0.5)
	(0.9,0.5) -- (1.4,0.5)
	(2,1.4) -- (-1.4,1.4)
	(-2,2) -- (2,2) -- (2.4,2.4);

\node at (0.15,0.15) {\scriptsize $\gamma$};
\node at (0.65,1.25) {\scriptsize $\gamma$};
\node at (0.35,1.25) {\scriptsize $\gamma$};
\node at (0.65,1.55) {\scriptsize $\gamma$};
\node at (0.35,1.55) {\scriptsize $\gamma$};
\node at (0,2.4) {\scriptsize $\gamma$};
	
\end{scope}

\begin{scope}[rotate=90*\a]

\draw[shift={(2.4cm,2.4cm)}, ->]
	(180:0.3) arc (180:270:0.3);
	
\node at (-0.35,1) {\scriptsize $\alpha$};
\node at (0,0.8) {\scriptsize $\alpha$};
\node at (0.95,0.65) {\scriptsize $\alpha$};
\node at (1.25,0.65) {\scriptsize $\alpha$};

\node at (0.5,0.15) {\scriptsize $\alpha$};
\node at (0.15,0.5) {\scriptsize $\beta$};
\node at (0.45,0.7) {\scriptsize $\alpha$};
\node at (0.7,0.45) {\scriptsize $\alpha$};

\node at (1.95,2.15) {\scriptsize $\alpha$};
\node at (0.5,2.15) {\scriptsize $\alpha$};
\node at (-1.4,2.15) {\scriptsize $\alpha$};
\node at (2.15,1.95) {\scriptsize $\beta$};

\node at (1.85,1.85) {\scriptsize $\alpha$};
\node at (1.85,1.55) {\scriptsize $\alpha$};
\node at (1.85,1.25) {\scriptsize $\alpha$};
\node at (1.55,1.25) {\scriptsize $\alpha$};

\end{scope}
}

\foreach \a in {0,2}
{
\begin{scope}[rotate=90*\a]

\draw[shift={(0.5cm,1.4cm)}, ->]
	(-30:0.35) arc (-30:210:0.35);

\draw[shift={(1.4cm,-0.5cm)}, ->]
	(-120:0.35) arc (-120:120:0.35);

\node at (0.35,1) {\scriptsize $\alpha$};
\node at (0.65,0.95) {\scriptsize $\beta$};
\node at (-0.35,1.25) {\scriptsize $\beta$};
\node at (1.25,1.25) {\scriptsize $\alpha$};
\node at (-0.5,1.55) {\scriptsize $\alpha$};
\node at (1.4,1.55) {\scriptsize $\beta$};
\node at (0.35,1.85) {\scriptsize $\beta$};
\node at (0.65,1.85) {\scriptsize $\alpha$};

\node at (1,-0.35) {\scriptsize $\alpha$};
\node at (0.95,-0.65) {\scriptsize $\beta$};
\node at (1.25,0.35) {\scriptsize $\beta$};
\node at (1.25,-1.25) {\scriptsize $\alpha$};
\node at (1.55,0.5) {\scriptsize $\alpha$};
\node at (1.55,-1.4) {\scriptsize $\beta$};
\node at (1.85,-0.35) {\scriptsize $\beta$};
\node at (1.85,-0.65) {\scriptsize $\alpha$};

\end{scope}
}

\end{tikzpicture}	
\caption{}
\label{3BFigAa}
\end{subfigure}
\begin{subfigure}[b]{0.25\linewidth}
\centering
\begin{tikzpicture}[>=latex,scale=1]

\raisebox{1.1cm}{

\pgftext{
	\includegraphics[scale=0.071]{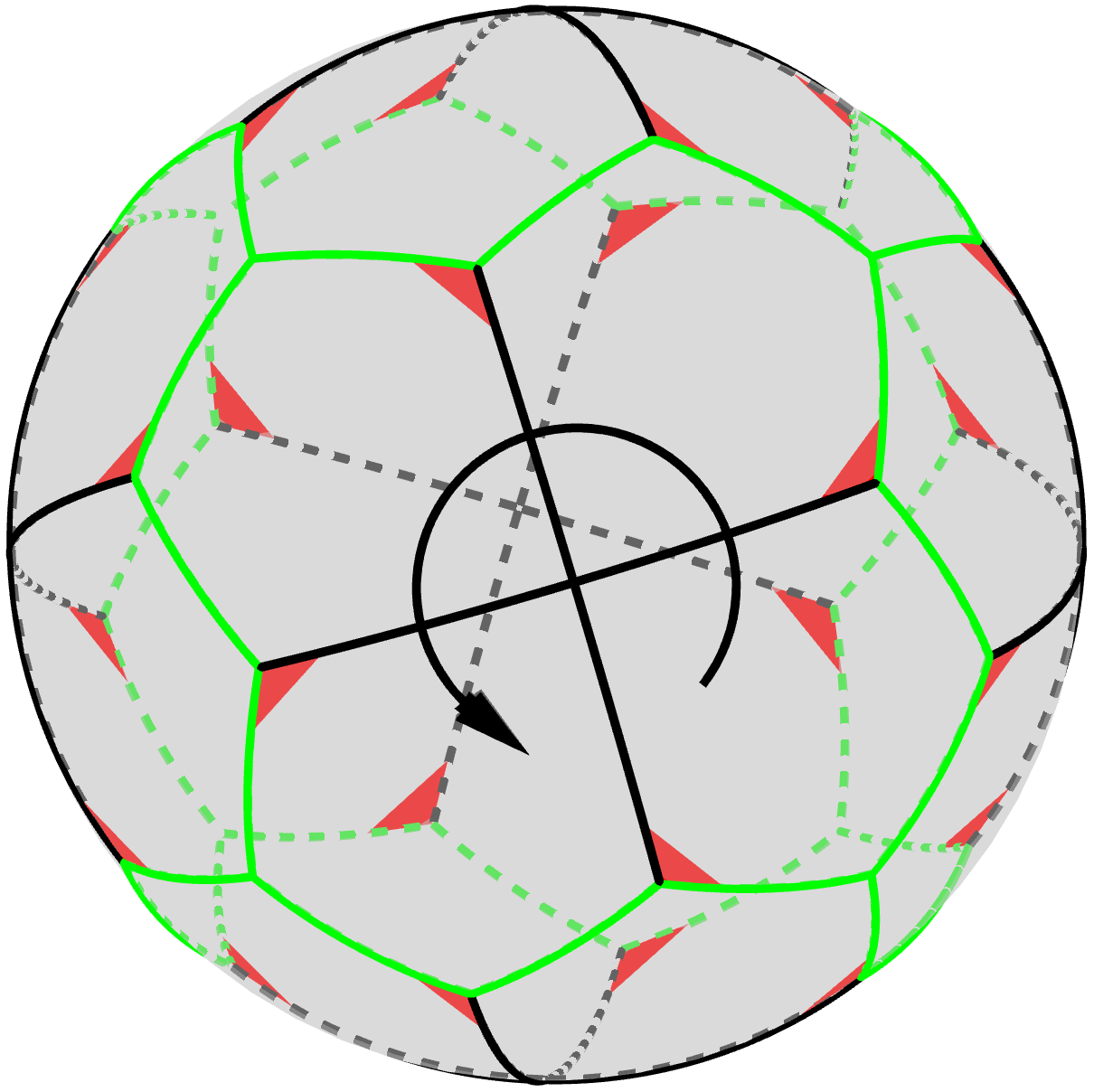}
	};
	
}

\end{tikzpicture}	
\caption{}
\label{3BFigAb}
\end{subfigure}
\begin{subfigure}[b]{0.25\linewidth}
\centering
\begin{tikzpicture}[>=latex,scale=1]

\raisebox{1.1cm}{

\pgftext{
	\includegraphics[scale=0.064]{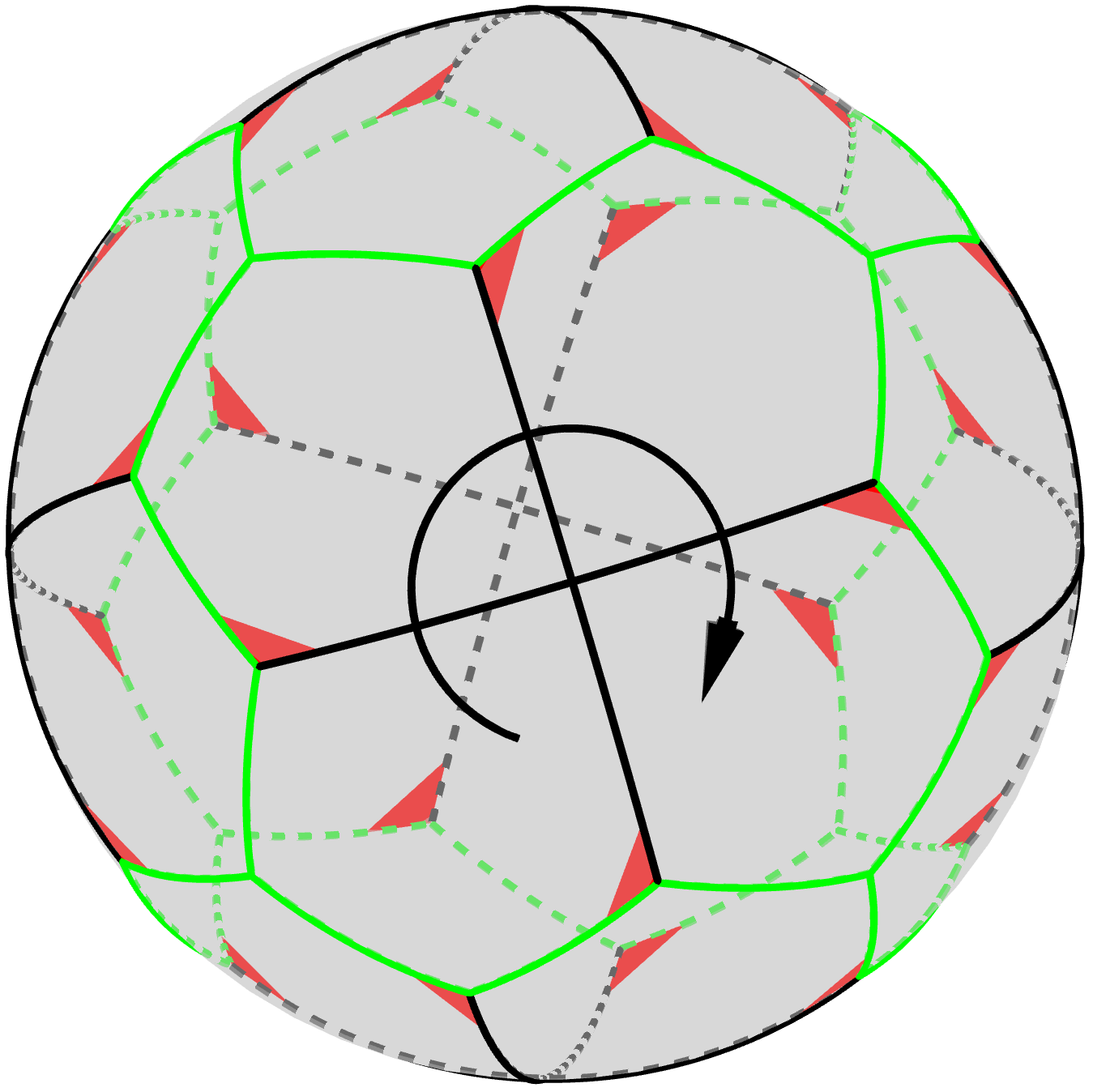}
	};

}

\end{tikzpicture}	
\caption{}
\label{3BFigAc}
\end{subfigure}
\caption{Tiling for AVC(3B24), $\beta, \gamma$ adjacent. In the 3d pictures, the red corners are $\beta$, and $\gamma$ are concentrated at the meeting places of four black lines.}
\label{3BFigA}
\end{figure}

If $\beta, \gamma$ are non-adjacent, as in Figure \ref{Subfig-3abc_arrangement-2}, then $\gamma$ is adjacent to two $\alpha$. Therefore, $N(\gamma^4)$ is given by the center four tiles in Figure \ref{Subfig-3BFigB-1}, with one $\alpha$ and one $\beta$ at the ends of each thick gray line. It remains to assign $\beta$ to the ends of the thick gray lines. Since the only vertex involving $\beta$ is $\alpha^2\beta$, the only constraint is that there are at most one $\beta$ at each vertex. This means the three $\beta$ angles around each $\bullet$ vertex must be like those in Figure \ref{Subfig-3BFigB-2}. 

\begin{figure}[htp]
\centering
\begin{subfigure}[t]{0.4\linewidth}
\centering
\begin{tikzpicture}[>=latex]

\foreach \a in {0,...,3}
{
\begin{scope}[rotate=90*\a]

\draw
	(0,0) -- (0.6,0) -- (0.9,0.5) -- (0.5,0.9) -- (0,0.6)
	(0.9,-0.5) -- (2,-0.5)
	(0.9,0.5) -- (1.4,0.5)
	(2,1.4) -- (-1.4,1.4)
	(-2,2) -- (2,2) -- (2.4,2.4);

\node at (0.15,0.15) {\scriptsize $\gamma$};
\node at (0.65,1.25) {\scriptsize $\gamma$};
\node at (0.35,1.25) {\scriptsize $\gamma$};
\node at (0.65,1.55) {\scriptsize $\gamma$};
\node at (0.35,1.55) {\scriptsize $\gamma$};
\node at (0,2.4) {\scriptsize $\gamma$};
	
\end{scope}

}

\foreach \a in {0,...,3}
{
\begin{scope}[rotate=90*\a]

\fill
	(-0.5,0.9) circle (0.05)
	(2,1.4) circle (0.05);
	
\draw[gray, very thick]
	(-1.35,1.5) -- ++(0,0.4)
	(-1.3,2.05) -- ++(1.8,0)
	(-1.5,1.95) -- ++(-0.4,0)
	(-0.55,1.3) -- ++(0,-0.3)
	(0.82,0.5) -- (0.5,0.82)
	(0.9,0.4) -- (0.67,0);

\node at (0.5,0.15) {\scriptsize $\alpha$};
\node at (0.15,0.5) {\scriptsize $\alpha$};
\node at (0.35,1) {\scriptsize $\alpha$};
\node at (0.65,0.95) {\scriptsize $\alpha$};
\node at (-0.35,1.25) {\scriptsize $\alpha$};
\node at (1.25,1.25) {\scriptsize $\alpha$};
\node at (-0.5,1.55) {\scriptsize $\alpha$};
\node at (1.4,1.55) {\scriptsize $\alpha$};
\node at (0.35,1.85) {\scriptsize $\alpha$};
\node at (0.65,1.85) {\scriptsize $\alpha$};
\node at (1.95,2.15) {\scriptsize $\alpha$};
\node at (2.15,1.95) {\scriptsize $\alpha$};
	
\end{scope}
}

\foreach \a in {0,...,3}
\filldraw[fill=white, 
rotate=90*\a] 
	(0.5,1.4) circle (0.05)
	(0,0) circle (0.05)
	(2.4,2.4) circle (0.05);

\end{tikzpicture}
\caption{}
\label{Subfig-3BFigB-1}
\end{subfigure}
\begin{subfigure}[t]{0.4\linewidth}
\centering
\begin{tikzpicture}[>=latex]

\raisebox{10ex}{

\foreach \a in {0,1,2}
\foreach \b in {1,-1}
{
\begin{scope}[xshift=\b cm, rotate=120*\a]

\draw
	(0,0) -- (0,0.7);
	
\draw[gray, very thick]
	(-0.05,0.1) -- (-0.05,0.7);
\fill
	(0,0) circle (0.07);

\end{scope}
}

\foreach \a in {0,1,2}
{
\begin{scope}[xshift=-1 cm, rotate=120*\a]

\node at (-0.2,0.6) {\scriptsize $\beta$};
\node at (-0.2,0.12) {\scriptsize $\alpha$};

\end{scope}
}

\begin{scope}[xshift=1 cm]

\node at (-0.2,0.12) {\scriptsize $\beta$};
\node at (-0.2,0.6) {\scriptsize $\alpha$};
\node at (0.2,0.12) {\scriptsize $\alpha$};
\node at (-0.43,-0.48) {\scriptsize $\beta$};
\node at (0.57,-0.1) {\scriptsize $\beta$};
\node at (0,-0.2) {\scriptsize $\alpha$};

\end{scope}

}

\end{tikzpicture}
\caption{}
\label{Subfig-3BFigB-2}
\end{subfigure}
\caption{Tiling for AVC(3B24), $\beta, \gamma$ non-adjacent.}
\label{3BFigB}
\end{figure}

The discussion of the splitting from AVC(2D60) to AVC(3B60) is similar.  
\end{proof}

\begin{theorem}\label{3Dthm}
The tilings for 
\begin{align*}
\text{\rm AVC(3D24)}
&=\{
24\alpha^2\beta^2\gamma 
\colon 
24\alpha^2\beta,
8\beta^3,
6\gamma^4 \},  \\
\text{\rm AVC(3D60)} 
&=\{
60\alpha^2\beta^2\gamma 
\colon 
60\alpha^2\beta,
20\beta^3,
12\gamma^5 \}, 
\end{align*}
are the $3{\rm D}_1$, $3{\rm D}_2$, $3{\rm D}_3$ reductions of $PP_6$ and $PP_{12}$, and the two tilings in Figures \ref{Subfig-3DFigE-T1} and \ref{3DFigF}.
\end{theorem}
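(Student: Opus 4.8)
The plan is to combine the reduction $3\mathrm D\to 2\mathrm D$ with the uniqueness in Theorem \ref{2Dthm}. Identifying $\alpha=\beta$ turns any tiling for AVC(3D24/60) into a tiling for AVC(2D24/60), which by Theorem \ref{2Dthm} is the $2\mathrm D$ reduction of $PP_6$ (for $f=24$) or $PP_{12}$ (for $f=60$). So every tiling in question is recovered from this single fixed tiling by the splitting of Section \ref{Sec-Reductions}: the $\gamma$-corner of each tile is the unique former $\beta$-corner of the $2\mathrm D$ pentagon, hence lies at one of the $6$ (resp. $12$) former $\beta^4$ (resp. $\beta^5$) vertices, now $\gamma^4$ (resp. $\gamma^5$); and the four remaining corners of each tile — all equal to $\alpha$ in the $2\mathrm D$ tiling — must be relabelled by exactly two $\alpha$'s and two $\beta$'s so as to (i) realise one and the same angle arrangement from Figure \ref{2a2bc_arrangement} in every tile, and (ii) make every vertex that was $\alpha^3$ in the $2\mathrm D$ tiling carry corner labels $\{\alpha,\alpha,\beta\}$ or $\{\beta,\beta,\beta\}$, these being the only trivalent vertices of AVC(3D24/60). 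A short count — each tile has two $\beta$-corners, each $\beta^3$ uses three and each $\alpha^2\beta$ uses one — shows that once (ii) holds the numbers of $\alpha^2\beta$ and $\beta^3$ vertices are automatically the $24,8$ (resp. $60,20$) prescribed by the AVC. So there is nothing numerical left, and the whole problem becomes a combinatorial labelling question on the two fixed subdivided Platonic tilings.

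First I would rule out the one angle arrangement not coming from any of $3\mathrm D_1,3\mathrm D_2,3\mathrm D_3$, namely the one in which $\gamma$ lies between the two $\beta$'s of the pentagon. For such a pentagon the two edges at the $\gamma$-corner run to $\beta$-corners; so in $N(\gamma^4)$ (resp. $N(\gamma^5)$), each tile contributes a $\beta$ to each of the two vertices adjacent to the central $\gamma$-vertex, and two consecutive tiles deposit two $\beta$'s at their common such vertex, forcing it by (ii) to be $\beta^3$. These vertices lie interior to the faces of the cube (resp. dodecahedron) and are pairwise distinct across the $6$ (resp. $12$) faces, so the tiling would contain at least $24$ (resp. $60$) vertices $\beta^3$, contradicting the AVC. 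Hence this arrangement admits no tiling.

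For the three arrangements realised by $3\mathrm D_1,3\mathrm D_2,3\mathrm D_3$ I would run the same local-to-global propagation as in Theorems \ref{3Athm}--\ref{3Bthm}. In each case one first lists, using that $\beta$ occurs only at $\alpha^2\beta$ and $\beta^3$, the possible placements of the $\beta$-labels inside $N(\gamma^4)/N(\gamma^5)$; then constraint (ii) at the surrounding former-$\alpha^3$ vertices is used to push the labelling from one face of the underlying Platonic solid to its neighbours, and around the whole sphere. For at least one of these arrangements the choices at the $\gamma^n$-vertices can be realised in several globally consistent ways, and the crux is to show that each of them either reproduces the corresponding $3\mathrm D_i$ reduction of $PP_6/PP_{12}$ or yields one of the two tilings in Figures \ref{Subfig-3DFigE-T1} and \ref{3DFigF}. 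Conversely, one verifies directly that those two labellings are legitimate tilings; since the construction never refers to angle values (as remarked after Theorem \ref{2Dthm}), this is a purely combinatorial check.

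The main obstacle is this propagation step. Unlike Theorem \ref{3Bthm}, where each tile carried a single $\beta$ and the freedom localised to independent choices in each $N(\gamma^n)$, here each tile carries two $\beta$'s, so a choice made near one $\gamma^n$-vertex fixes the labelling along a whole chain of tiles and thereby restricts the choices at neighbouring $\gamma^n$-vertices. Tracking these interactions all the way around the sphere — and so proving that exactly two globally consistent labellings escape the rigid $PP$ reductions, with no further deviation possible — is where essentially all the work lies; the two preceding steps are routine by comparison.
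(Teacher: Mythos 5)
Your overall strategy coincides with the paper's: pass to AVC(2D24/60) via the reduction $3\mathrm D\to 2\mathrm D$, invoke Theorem \ref{2Dthm} to fix the underlying combinatorial tiling, and then classify the splittings of each tile according to the four arrangements in Figure \ref{2a2bc_arrangement}. Your elimination of the arrangement of Figure \ref{Subfig-2a2bc_arrangement-2} is sound (the paper gets the forbidden vertex $\alpha\beta^2$ directly from the uniqueness of that splitting; your counting variant also works, although the forced $\beta^2\cdots$ vertices sit on the subdivided edges of the cube rather than in the interiors of its faces -- they are still adjacent to exactly one face center each, so your count of $24>8$ stands), and your double count showing that the vertex multiplicities $24,8$ (resp. $60,20$) are automatic is correct.

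The genuine gap is that the entire classification for the arrangements of Figures \ref{Subfig-2a2bc_arrangement-3} and \ref{Subfig-2a2bc_arrangement-4}, where each tile admits two splittings, is announced but not carried out: you call it ``the crux'' and ``where essentially all the work lies'' without supplying an argument, and this is precisely the part of the theorem that produces the two exceptional tilings. The paper resolves it by classifying the possible AADs of $\gamma^4$: only $|^{\alpha}\gamma^{\beta}|^{\beta}\gamma^{\alpha}|^{\alpha}\gamma^{\beta}|^{\beta}\gamma^{\alpha}|$ and $|^{\alpha}\gamma^{\beta}|^{\alpha}\gamma^{\beta}|^{\alpha}\gamma^{\beta}|^{\alpha}\gamma^{\beta}|$ can occur, giving two neighbourhood types $N_{\ssq}(\gamma^4)$ and $N_{\circ}(\gamma^4)$ whose adjacency rules (Figure \ref{3DFigD}) force either the all-$N_{\circ}$ configuration (the $3\mathrm D_3$, resp.\ $3\mathrm D_1$, reduction) or one rigid mixture (Figures \ref{Subfig-3DFigE-T1}, \ref{3DFigF}). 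Crucially, for $f=60$ the odd degree of $\gamma^5$ makes the alternating AAD impossible, so only the reductions survive. This parity observation is the reason the exceptional tilings exist for $24$ but not $60$ tiles; it is absent from your proposal, which consequently cannot even explain why the list of tilings in the theorem is asymmetric between AVC(3D24) and AVC(3D60).
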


The tilings in Figures \ref{Subfig-3DFigE-T1} and \ref{3DFigF} are only for AVC(3D24), in which the pentagons  are the same as the ones for $3{\rm D}_3$ and $3{\rm D}_1$. 

\begin{proof}
The splitting from AVC(2D24/60) to AVC(3D24/60) means changing $\beta$ to $\gamma$, and changing four $\alpha$ angles to two $\alpha$ angles and two $\beta$ angles, doing so according to the arrangements of angles in Figure \ref{2a2bc_arrangement}. 

For the pentagons in Figures \ref{Subfig-2a2bc_arrangement-1} and \ref{Subfig-2a2bc_arrangement-2}, the splittings are unique for each tile. The first splitting gives the $3{\rm D}_2$ reductions of $PP_6$ and $PP_{12}$. The second splitting gives tilings with $\alpha\beta^2$, which is not in AVC(3D24/60). Therefore, the second splitting is not a feasible tiling for AVC(3D24/60).

The splittings according to Figures \ref{Subfig-2a2bc_arrangement-3}, \ref{Subfig-2a2bc_arrangement-4} are not unique. In fact, each 2D tile has two ways of changing four $\alpha$ into two $\alpha$ angles and two $\beta$ angles. To handle the flexibility, we study the splitting that takes $N(\beta^4/\beta^5)$ to $N(\gamma^4/\gamma^5)$. 

Figure \ref{3DFigC} shows all the possibilities for the splitting of a part of Figure \ref{2DfigA} according to the pentagon in Figure \ref{Subfig-2a2bc_arrangement-3}. It shows that, around a vertex $\gamma^4$, the angle arrangements of $\circled{1}, \circled{2}$ and the choice of $\gamma_4$ determine the other tiles. Specifically, when $\gamma_1|\gamma_2$ is $\gamma^{\alpha}|^{\alpha}\gamma, \gamma^{\beta}|^{\beta}\gamma, \gamma^{\alpha}|^{\beta}\gamma, \gamma^{\beta}|^{\alpha}\gamma$, we find that the corresponding $\gamma_2|\gamma_3$ is $\gamma^{\beta}|^{\beta}\gamma, \gamma^{\alpha}|^{\alpha}\gamma, \gamma^{\alpha}|^{\beta}\gamma, \gamma^{\beta}|^{\alpha}\gamma$, respectively. Moreover, the AAD of $\gamma_1|\gamma_2$ determines the AADs of $\gamma_4|\gamma_5$ and $\gamma_6|\gamma_7$.

\begin{figure}[htp]
\centering
\begin{subfigure}{0.225\linewidth} 
\centering
\begin{tikzpicture}[>=latex]

\draw[green!70, line width=2]
	(-0.9,0.5) -- (-0.5,0.9) -- (0,0.6) -- (0.5,0.9) -- (0.9,0.5) -- (0.6,0) -- (0.9,-0.5) -- (0.5,-0.9) -- (0,-0.6)
	(0.5,-0.9) -- (0.5,-1.4)
	(-0.5,1.4) -- (-0.5,0.9)
	(0.9,0.5) -- (1.4,0.5);

\begin{scope}[]

\foreach \a in {-1,0,1}
{
\begin{scope}[rotate=90*\a]

\draw
	(0,0) -- (0.6,0) -- (0.9,0.5) -- (0.5,0.9) -- (0,0.6) -- (0,0);

\node at (0.15,0.15) {\scriptsize $\gamma$};

\end{scope}
}
	
\foreach \a in {-1,0}
{
\begin{scope}[rotate=90*\a]

\draw
	(-0.5,0.9) -- (-0.5,1.4) -- (1.4,1.4) -- (1.4,0.5) -- (0.9,0.5)
	(0.5,0.9) -- (0.5,1.4);

\node at (0.35,1.25) {\scriptsize $\gamma$};
\node at (0.65,1.25) {\scriptsize $\gamma$};
	
\end{scope}
}

\end{scope}

\filldraw[fill=white]
	(0.05,0.05) -- ++(-0.1,0) -- ++(0,-0.1) -- ++(0.1,0) -- cycle
	(0.55,1.45) -- ++(-0.1,0) -- ++(0,-0.1) -- ++(0.1,0) -- cycle
	(1.4,-0.5) circle (0.05)
;	

\node at (0.35,1) {\scriptsize $\alpha$};
\node at (-0.35,1) {\scriptsize $\alpha$};
\node at (0,0.8) {\scriptsize $\beta$};
\node at (-0.35,1.25) {\scriptsize $\beta$};

\node at (1,0.35) {\scriptsize $\alpha$};
\node at (1,-0.35) {\scriptsize $\alpha$};
\node at (0.8,0) {\scriptsize $\beta$};
\node at (1.25,0.35) {\scriptsize $\beta$};

\node at (-0.5,0.15) {\scriptsize $\beta$};
\node at (-0.15,0.5) {\scriptsize $\alpha$};
\node at (-0.5,0.7) {\scriptsize $\beta$};
\node at (-0.7,0.45) {\scriptsize $\alpha$};

\node at (0.5,-0.15) {\scriptsize $\beta$};
\node at (0.15,-0.5) {\scriptsize $\alpha$};
\node at (0.5,-0.7) {\scriptsize $\beta$};
\node at (0.7,-0.45) {\scriptsize $\alpha$};

\node at (0.65,0.95) {\scriptsize $\alpha$};
\node at (0.95,0.65) {\scriptsize $\beta$};
\node at (1.25,1.25) {\scriptsize $\beta$};
\node at (1.25,0.65) {\scriptsize $\alpha$};

\node at (0.5,0.15) {\scriptsize $\beta$};
\node at (0.15,0.5) {\scriptsize $\alpha$};
\node at (0.45,0.7) {\scriptsize $\beta$};
\node at (0.7,0.45) {\scriptsize $\alpha$};

\node at (0.65,-0.95) {\scriptsize $\alpha$};
\node at (0.95,-0.65) {\scriptsize $\beta$};
\node at (1.25,-1.25) {\scriptsize $\alpha$};
\node at (0.65,-1.25) {\scriptsize $\beta$};

\node[inner sep=0.5,draw,shape=circle] at (-0.43,0.43) {\scriptsize $1$};
\node[inner sep=0.5,draw,shape=circle] at (0.43,0.43) {\scriptsize $2$};
\node[inner sep=0.5,draw,shape=circle] at (0.43,-0.43) {\scriptsize $3$};
\node[inner sep=0.5,draw,shape=circle] at (0,1.15) {\scriptsize $4$};
\node[inner sep=0.5,draw,shape=circle] at (1,1) {\scriptsize $5$};
\node[inner sep=0.5,draw,shape=circle] at (1.15,0) {\scriptsize $6$};
\node[inner sep=0.5,draw,shape=circle] at (1,-1) {\scriptsize $7$};
\end{tikzpicture}
\caption{}
\label{Subfig-3DFigC-1}
\end{subfigure}
\begin{subfigure}{0.225\linewidth} 
\centering
\begin{tikzpicture}[>=latex]

\draw[green!70, line width=2]
	(-0.9,0.5) -- (-0.5,0.9) -- (0,0.6) -- (0.5,0.9) -- (0.9,0.5) -- (0.6,0) -- (0.9,-0.5) -- (0.5,-0.9) -- (0,-0.6)
	(0.5,-0.9) -- (0.5,-1.4)
	(-0.5,1.4) -- (-0.5,0.9)
	(0.9,0.5) -- (1.4,0.5);

\begin{scope}[]

\foreach \a in {-1,0,1}
{
\begin{scope}[rotate=90*\a]

\draw
	(0,0) -- (0.6,0) -- (0.9,0.5) -- (0.5,0.9) -- (0,0.6) -- (0,0);

\node at (0.15,0.15) {\scriptsize $\gamma$};

\end{scope}
}
	
\foreach \a in {-1,0}
{
\begin{scope}[rotate=90*\a]

\draw
	(-0.5,0.9) -- (-0.5,1.4) -- (1.4,1.4) -- (1.4,0.5) -- (0.9,0.5)
	(0.5,0.9) -- (0.5,1.4);

\node at (0.35,1.25) {\scriptsize $\gamma$};
\node at (0.65,1.25) {\scriptsize $\gamma$};
	
\end{scope}
}

\end{scope}

\foreach \a in {0}
{
\filldraw[fill=white]
	(0.05,0.05) -- ++(-0.1,0) -- ++(0,-0.1) -- ++(0.1,0) -- cycle
	(1.45,-0.45) -- ++(-0.1,0) -- ++(0,-0.1) -- ++(0.1,0) -- cycle
	(0.5,1.4) circle (0.05)
;	
}

\node at (0.35,1) {\scriptsize $\alpha$};
\node at (-0.35,1) {\scriptsize $\alpha$};
\node at (0,0.8) {\scriptsize $\beta$};
\node at (-0.35,1.25) {\scriptsize $\beta$};

\node at (1,0.35) {\scriptsize $\alpha$};
\node at (1,-0.35) {\scriptsize $\alpha$};
\node at (0.8,0) {\scriptsize $\beta$};
\node at (1.25,0.35) {\scriptsize $\beta$};

\node at (-0.5,0.15) {\scriptsize $\alpha$};
\node at (-0.15,0.5) {\scriptsize $\beta$};
\node at (-0.5,0.7) {\scriptsize $\alpha$};
\node at (-0.7,0.45) {\scriptsize $\beta$};

\node at (0.5,-0.15) {\scriptsize $\alpha$};
\node at (0.15,-0.5) {\scriptsize $\beta$};
\node at (0.5,-0.7) {\scriptsize $\alpha$};
\node at (0.7,-0.45) {\scriptsize $\beta$};

\node at (0.65,0.95) {\scriptsize $\beta$};
\node at (0.95,0.65) {\scriptsize $\alpha$};
\node at (1.25,1.25) {\scriptsize $\alpha$};
\node at (1.25,0.65) {\scriptsize $\beta$};

\node at (0.5,0.15) {\scriptsize $\alpha$};
\node at (0.15,0.5) {\scriptsize $\beta$};
\node at (0.45,0.7) {\scriptsize $\alpha$};
\node at (0.7,0.45) {\scriptsize $\beta$};

\node at (0.65,-0.95) {\scriptsize $\beta$};
\node at (0.95,-0.65) {\scriptsize $\alpha$};
\node at (1.25,-1.25) {\scriptsize $\beta$};
\node at (0.65,-1.25) {\scriptsize $\alpha$};

\end{tikzpicture}
\caption{}
\label{Subfig-3DFigC-2}
\end{subfigure}
\begin{subfigure}{0.225\linewidth} 
\centering
\begin{tikzpicture}[>=latex]

\draw[green!70, line width=2]
	(-0.9,0.5) -- (-0.5,0.9) -- (0,0.6) -- (0.5,0.9) -- (0.9,0.5) -- (0.6,0) -- (0.9,-0.5) -- (0.5,-0.9) -- (0,-0.6)
	(0.5,-0.9) -- (0.5,-1.4)
	(-0.5,1.4) -- (-0.5,0.9)
	(0.9,0.5) -- (1.4,0.5);

\begin{scope}[]

\foreach \a in {-1,0,1}
{
\begin{scope}[rotate=90*\a]

\draw
	(0,0) -- (0.6,0) -- (0.9,0.5) -- (0.5,0.9) -- (0,0.6) -- (0,0);

\node at (0.15,0.15) {\scriptsize $\gamma$};

\end{scope}
}
	
\foreach \a in {-1,0}
{
\begin{scope}[rotate=90*\a]

\draw
	(-0.5,0.9) -- (-0.5,1.4) -- (1.4,1.4) -- (1.4,0.5) -- (0.9,0.5)
	(0.5,0.9) -- (0.5,1.4);

\node at (0.35,1.25) {\scriptsize $\gamma$};
\node at (0.65,1.25) {\scriptsize $\gamma$};
	
\end{scope}
}

\end{scope}

\filldraw[fill=white]
	(0,0) circle (0.05)
	(0.5,1.4) circle (0.05)
	(1.4,-0.5) circle (0.05)
;

\node at (0.35,1) {\scriptsize $\beta$};
\node at (-0.35,1) {\scriptsize $\beta$};
\node at (0,0.8) {\scriptsize $\alpha$};
\node at (-0.35,1.25) {\scriptsize $\alpha$};

\node at (1,0.35) {\scriptsize $\beta$};
\node at (1,-0.35) {\scriptsize $\beta$};
\node at (0.8,0) {\scriptsize $\alpha$};
\node at (1.25,0.35) {\scriptsize $\alpha$};

\node at (-0.5,0.15) {\scriptsize $\beta$};
\node at (-0.15,0.5) {\scriptsize $\alpha$};
\node at (-0.5,0.7) {\scriptsize $\beta$};
\node at (-0.7,0.45) {\scriptsize $\alpha$};

\node at (0.5,-0.15) {\scriptsize $\beta$};
\node at (0.15,-0.5) {\scriptsize $\alpha$};
\node at (0.5,-0.7) {\scriptsize $\beta$};
\node at (0.7,-0.45) {\scriptsize $\alpha$};

\node at (0.65,0.95) {\scriptsize $\alpha$};
\node at (0.95,0.65) {\scriptsize $\beta$};
\node at (1.25,1.25) {\scriptsize $\beta$};
\node at (1.25,0.65) {\scriptsize $\alpha$};

\node at (0.5,0.15) {\scriptsize $\alpha$};
\node at (0.15,0.5) {\scriptsize $\beta$};
\node at (0.45,0.7) {\scriptsize $\alpha$};
\node at (0.7,0.45) {\scriptsize $\beta$};

\node at (0.65,-0.95) {\scriptsize $\beta$};
\node at (0.95,-0.65) {\scriptsize $\alpha$};
\node at (1.25,-1.25) {\scriptsize $\beta$};
\node at (0.65,-1.25) {\scriptsize $\alpha$};

\end{tikzpicture}
\caption{}
\label{Subfig-3DFigC-3}
\end{subfigure}
\begin{subfigure}{0.225\linewidth} 
\centering
\begin{tikzpicture}[>=latex]

\draw[green!70, line width=2]
	(-0.9,0.5) -- (-0.5,0.9) -- (0,0.6) -- (0.5,0.9) -- (0.9,0.5) -- (0.6,0) -- (0.9,-0.5) -- (0.5,-0.9) -- (0,-0.6)
	(0.5,-0.9) -- (0.5,-1.4)
	(-0.5,1.4) -- (-0.5,0.9)
	(0.9,0.5) -- (1.4,0.5);

\begin{scope}[]

\foreach \a in {-1,0,1}
{
\begin{scope}[rotate=90*\a]

\draw
	(0,0) -- (0.6,0) -- (0.9,0.5) -- (0.5,0.9) -- (0,0.6) -- (0,0);

\node at (0.15,0.15) {\scriptsize $\gamma$};

\end{scope}
}
	
\foreach \a in {-1,0}
{
\begin{scope}[rotate=90*\a]

\draw
	(-0.5,0.9) -- (-0.5,1.4) -- (1.4,1.4) -- (1.4,0.5) -- (0.9,0.5)
	(0.5,0.9) -- (0.5,1.4);

\node at (0.35,1.25) {\scriptsize $\gamma$};
\node at (0.65,1.25) {\scriptsize $\gamma$};
	
\end{scope}
}

\end{scope}

\filldraw[fill=white]
	(0.55,1.45) -- ++(-0.1,0) -- ++(0,-0.1) -- ++(0.1,0) -- cycle
	(1.45,-0.45) -- ++(-0.1,0) -- ++(0,-0.1) -- ++(0.1,0) -- cycle
	(0,0) circle (0.05)
;

\node at (0.35,1) {\scriptsize $\beta$};
\node at (-0.35,1) {\scriptsize $\beta$};
\node at (0,0.8) {\scriptsize $\alpha$};
\node at (-0.35,1.25) {\scriptsize $\alpha$};

\node at (1,0.35) {\scriptsize $\beta$};
\node at (1,-0.35) {\scriptsize $\beta$};
\node at (0.8,0) {\scriptsize $\alpha$};
\node at (1.25,0.35) {\scriptsize $\alpha$};

\node at (-0.5,0.15) {\scriptsize $\alpha$};
\node at (-0.15,0.5) {\scriptsize $\beta$};
\node at (-0.5,0.7) {\scriptsize $\alpha$};
\node at (-0.7,0.45) {\scriptsize $\beta$};

\node at (0.5,-0.15) {\scriptsize $\alpha$};
\node at (0.15,-0.5) {\scriptsize $\beta$};
\node at (0.5,-0.7) {\scriptsize $\alpha$};
\node at (0.7,-0.45) {\scriptsize $\beta$};

\node at (0.65,0.95) {\scriptsize $\beta$};
\node at (0.95,0.65) {\scriptsize $\alpha$};
\node at (1.25,1.25) {\scriptsize $\alpha$};
\node at (1.25,0.65) {\scriptsize $\beta$};

\node at (0.5,0.15) {\scriptsize $\beta$};
\node at (0.15,0.5) {\scriptsize $\alpha$};
\node at (0.45,0.7) {\scriptsize $\beta$};
\node at (0.7,0.45) {\scriptsize $\alpha$};

\node at (0.65,-0.95) {\scriptsize $\alpha$};
\node at (0.95,-0.65) {\scriptsize $\beta$};
\node at (1.25,-1.25) {\scriptsize $\alpha$};
\node at (0.65,-1.25) {\scriptsize $\beta$};
\end{tikzpicture}
\caption{}
\label{Subfig-3DFigC-4}
\end{subfigure}

\caption{Splitting of Figure \ref{2DfigA} according to Figure \ref{Subfig-2a2bc_arrangement-3}.}
\label{3DFigC}
\end{figure}

For AVC(3D24), the way $\gamma_1|\gamma_2$ determines $\gamma_2|\gamma_3$ implies that $\gamma^4$ has two possible AADs $|^{\alpha}\gamma^{\beta}|^{\beta}\gamma^{\alpha}|^{\alpha}\gamma^{\beta}|^{\beta}\gamma^{\alpha}|$ or $|^{\alpha}\gamma^{\beta}|^{\alpha}\gamma^{\beta}|^{\alpha}\gamma^{\beta}|^{\alpha}\gamma^{\beta}|$, yielding two possible subtilings: $N_{\ssq}(\gamma^4)$ in Figures \ref{Subfig-3DFigC-1}, \ref{Subfig-3DFigC-2}, and $N_{\circ}(\gamma^4)$ in Figures \ref{Subfig-3DFigC-3}, \ref{Subfig-3DFigC-4}. Moreover, Figures \ref{Subfig-3DFigC-1}, \ref{Subfig-3DFigC-2} imply that one $N_{\ssq}(\gamma^4)$ is surrounded by two $N_{\ssq}(\gamma^4)$ and two $N_{\circ}(\gamma^4)$ (Figure \ref{Subfig-3DFigD-1}). Figures \ref{Subfig-3DFigC-3}, \ref{Subfig-3DFigC-4} imply that one $N_{\circ}(\gamma^4)$ is surrounded either by four $N_{\circ}(\gamma^4)$ (Figure \ref{Subfig-3DFigD-3}), or by four $N_{\ssq}(\gamma^4)$ (Figure \ref{Subfig-3DFigD-2}).

\begin{figure}[htp]
\centering
\begin{subfigure}{0.25\linewidth}
\centering
\begin{tikzpicture}[>=latex]


\foreach \a in {0,1,2,3}
\draw[green!70, line width=2, rotate=90*\a]
	(0.9,0.5) -- (0.5,0.9) -- (0,0.6) -- (-0.5,0.9) -- (-0.5,1.4)
;

\foreach \a in {0,1,2,3}
{
\begin{scope}[rotate=90*\a]

\draw
	(0,0) -- (0.6,0) -- (0.9,0.5) -- (0.5,0.9) -- (0,0.6)
	(0.5,0.9) -- (0.5,1.4)
	(-0.5,0.9) -- (-0.5,1.4)
	(1.4,1.4) -- (1.4,-1.4);

\node at (0.15,0.15) {\scriptsize $\gamma$};
\node at (0.35,1.25) {\scriptsize $\gamma$};
\node at (0.65,1.25) {\scriptsize $\gamma$};

\end{scope}
}

\foreach \a in {0,...,3}
{
\begin{scope}[rotate=90*\a]

\node at (0.35,1) {\scriptsize $\alpha$};
\node at (-0.35,1) {\scriptsize $\alpha$};
\node at (0,0.8) {\scriptsize $\beta$};
\node at (-0.35,1.25) {\scriptsize $\beta$};	

\end{scope}
}

\foreach \a in {0,1}
{
\begin{scope}[rotate=180*\a]

\node at (0.5,0.15) {\scriptsize $\alpha$};
\node at (0.15,0.5) {\scriptsize $\beta$};
\node at (0.5,0.7) {\scriptsize $\alpha$};
\node at (0.7,0.45) {\scriptsize $\beta$};

\node at (0.5,-0.15) {\scriptsize $\alpha$};
\node at (0.15,-0.5) {\scriptsize $\beta$};
\node at (0.5,-0.7) {\scriptsize $\alpha$};
\node at (0.7,-0.45) {\scriptsize $\beta$};

\node at (0.65,0.95) {\scriptsize $\beta$};
\node at (0.95,0.65) {\scriptsize $\alpha$};
\node at (1.25,0.65) {\scriptsize $\beta$};
\node at (1.25,1.25) {\scriptsize $\alpha$};

\node at (-0.65,0.95) {\scriptsize $\beta$};
\node at (-0.95,0.65) {\scriptsize $\alpha$};
\node at (-0.65,1.25) {\scriptsize $\alpha$};
\node at (-1.25,1.25) {\scriptsize $\beta$};

\filldraw[fill=white]
	(0.5,1.4) circle (0.05)
	(1.45,-0.45) -- ++(-0.1,0) -- ++(0,-0.1) -- ++(0.1,0) -- cycle
	(0.05,0.05) -- ++(-0.1,0) -- ++(0,-0.1) -- ++(0.1,0) -- cycle
;
	
\end{scope}
}

\end{tikzpicture}
\caption{}
\label{Subfig-3DFigD-1}
\end{subfigure}
\begin{subfigure}{0.25\linewidth}
\centering
\begin{tikzpicture}[>=latex]


\foreach \a in {0,1,2,3}
\draw[green!70, line width=2, rotate=90*\a]
	(0.9,0.5) -- (0.5,0.9) -- (0,0.6) -- (-0.5,0.9) -- (-0.5,1.4)
;

\foreach \a in {0,1,2,3}
{
\begin{scope}[rotate=90*\a]

\draw
	(0,0) -- (0.6,0) -- (0.9,0.5) -- (0.5,0.9) -- (0,0.6)
	(0.5,0.9) -- (0.5,1.4)
	(-0.5,0.9) -- (-0.5,1.4)
	(1.4,1.4) -- (1.4,-1.4);

\node at (0.15,0.15) {\scriptsize $\gamma$};
\node at (0.35,1.25) {\scriptsize $\gamma$};
\node at (0.65,1.25) {\scriptsize $\gamma$};

\end{scope}
}

\begin{scope}[]

\foreach \a in {0,...,3}
{
\begin{scope}[rotate=90*\a]

\node at (0.5,0.15) {\scriptsize $\alpha$};
\node at (0.15,0.5) {\scriptsize $\beta$};
\node at (0.5,0.7) {\scriptsize $\alpha$};
\node at (0.7,0.45) {\scriptsize $\beta$};

\node at (0.35,1) {\scriptsize $\beta$};
\node at (-0.35,1) {\scriptsize $\beta$};
\node at (0,0.8) {\scriptsize $\alpha$};
\node at (-0.35,1.25) {\scriptsize $\alpha$};

\filldraw[fill=white]
	(0.5,1.4) circle (0.05)
	(0,0) circle (0.05);
	
\end{scope}
}

\foreach \a in {0,1}
{
\begin{scope}[rotate=180*\a]

\node at (0.65,0.95) {\scriptsize $\alpha$};
\node at (0.95,0.65) {\scriptsize $\beta$};
\node at (1.25,0.65) {\scriptsize $\alpha$};
\node at (1.25,1.25) {\scriptsize $\beta$};

\node at (-0.65,0.95) {\scriptsize $\beta$};
\node at (-0.95,0.65) {\scriptsize $\alpha$};
\node at (-0.65,1.25) {\scriptsize $\alpha$};
\node at (-1.25,1.25) {\scriptsize $\beta$};

\end{scope}
}

\end{scope}

\end{tikzpicture}
\caption{}
\label{Subfig-3DFigD-3}
\end{subfigure}
\begin{subfigure}{0.25\linewidth}
\centering
\begin{tikzpicture}[>=latex]


\foreach \a in {0,1,2,3}
\draw[green!70, line width=2, rotate=90*\a]
	(0.9,0.5) -- (0.5,0.9) -- (0,0.6) -- (-0.5,0.9) -- (-0.5,1.4)
;

\foreach \a in {0,1,2,3}
{
\begin{scope}[rotate=90*\a]

\draw
	(0,0) -- (0.6,0) -- (0.9,0.5) -- (0.5,0.9) -- (0,0.6)
	(0.5,0.9) -- (0.5,1.4)
	(-0.5,0.9) -- (-0.5,1.4)
	(1.4,1.4) -- (1.4,-1.4);

\node at (0.15,0.15) {\scriptsize $\gamma$};
\node at (0.35,1.25) {\scriptsize $\gamma$};
\node at (0.65,1.25) {\scriptsize $\gamma$};

\end{scope}
}

\begin{scope}[]

\foreach \a in {0,...,3}
{
\begin{scope}[rotate=90*\a]

\node at (0.5,0.15) {\scriptsize $\beta$};
\node at (0.15,0.5) {\scriptsize $\alpha$};
\node at (0.5,0.7) {\scriptsize $\beta$};
\node at (0.7,0.45) {\scriptsize $\alpha$};

\node at (0.35,1) {\scriptsize $\beta$};
\node at (-0.35,1) {\scriptsize $\beta$};
\node at (0,0.8) {\scriptsize $\alpha$};
\node at (-0.35,1.25) {\scriptsize $\alpha$};

\node at (-0.65,0.95) {\scriptsize $\alpha$};
\node at (-0.95,0.65) {\scriptsize $\beta$};
\node at (-0.65,1.25) {\scriptsize $\beta$};
\node at (-1.25,1.25) {\scriptsize $\alpha$};	

\filldraw[fill=white]
	(1.45,-0.45) -- ++(-0.1,0) -- ++(0,-0.1) -- ++(0.1,0) -- cycle
	(0,0) circle (0.05)
;
	
\end{scope}
}

\end{scope}

\end{tikzpicture}
\caption{}
\label{Subfig-3DFigD-2}
\end{subfigure}
\caption{Relation between neighboring $N(\gamma^4)$. }
\label{3DFigD}
\end{figure}

If Figure \ref{Subfig-3DFigD-3} is part of the tiling, then we cannot have Figure \ref{Subfig-3DFigD-1} or \ref{Subfig-3DFigD-2} in the tiling. Therefore, the tiling consists of six $N_{\circ}(\gamma^4)$. The schematics of the tiling is given by the upper of Figure \ref{Subfig-3DFigE-1}, and the tiling is the $3{\rm D}_3$ reduction of $PP_6$.

\begin{figure}[htp]
\centering
\begin{subfigure}[t]{0.2\linewidth}
\centering
\begin{tikzpicture}[>=latex, scale=0.8]

\foreach \y in {1,-1}
{
\begin{scope}[shift={(0,1.5*\y)}]

\draw
	(0,0) -- (1.6,-0.6) -- (3,0) -- (3,1.6) -- (1.4,2.2) -- (0,1.6) -- (0,0)
	(0,0) -- (0,1.6) -- (1.6,1) -- (1.6,-0.6)
	(3,1.6) -- (1.6,1);

\draw[dashed]
	(0,0) -- (1.4,0.6) -- (3,0)
	(1.4,0.6) -- (1.4,2.2);

\end{scope}
}

\begin{scope}[shift={(0,1.5)}]
	
\filldraw[fill=white]
	(1.5,0) circle (0.07)
	(1.5,1.6) circle (0.07)
	(0.8,0.5) circle (0.07)
	(2.2,1.1) circle (0.07)
	(0.7,1.1) circle (0.07)
	(2.3,0.5) circle (0.07);

\end{scope}

\begin{scope}[shift={(0,-1.5)}]

\filldraw[fill=white]
	(1.5,0) circle (0.07)
	(1.5,1.6) circle (0.07)
	(0.87,0.57) -- ++(-0.14,0) -- ++(0,-0.14) -- ++(0.14,0) -- cycle
	(2.27,1.17) -- ++(-0.14,0) -- ++(0,-0.14) -- ++(0.14,0) -- cycle
	(0.77,1.17) -- ++(-0.14,0) -- ++(0,-0.14) -- ++(0.14,0) -- cycle
	(2.37,0.57) -- ++(-0.14,0) -- ++(0,-0.14) -- ++(0.14,0) -- cycle;
	
\end{scope}	

\end{tikzpicture}
\caption{}
\label{Subfig-3DFigE-1}
\end{subfigure}
\begin{subfigure}[t]{0.38\linewidth}
\centering
\begin{tikzpicture}[>=latex, scale=1]	


\foreach \a in {0,...,3}
{
\begin{scope}[rotate=90*\a]

\draw
	(0,0) -- (0.6,0) -- (0.9,0.5) -- (0.5,0.9) -- (0,0.6)
	(0.9,-0.5) -- (2,-0.5)
	(0.9,0.5) -- (1.4,0.5)
	(2,1.4) -- (-1.4,1.4)
	(-2,2) -- (2,2) -- (2.4,2.4);

\filldraw[fill=white]
	(0,0) circle (0.05);

\node at (0.15,0.15) {\scriptsize $\gamma$};
\node at (0.35,1.25) {\scriptsize $\gamma$};
\node at (0.65,1.25) {\scriptsize $\gamma$};
\node at (0.35,1.55) {\scriptsize $\gamma$};
\node at (0.65,1.55) {\scriptsize $\gamma$};
\node at (0,2.4) {\scriptsize $\gamma$};

\end{scope}
}

\foreach \a in {0,...,3}
\foreach \b in {0,1}
\filldraw[fill=white, 
rotate=90*\a] 
	(0.55,1.45) -- ++(-0.1,0) -- ++(0,-0.1) -- ++(0.1,0) -- cycle
	(2.4,2.4) circle (0.05);


\foreach \a in {0,...,3}
{
\begin{scope}[rotate=90*\a]

\node at (0.5,0.15) {\scriptsize $\beta$};
\node at (0.15,0.5) {\scriptsize $\alpha$};
\node at (0.5,0.7) {\scriptsize $\beta$};
\node at (0.7,0.5) {\scriptsize $\alpha$};

\node at (0.35,1) {\scriptsize $\beta$};
\node at (-0.35,1) {\scriptsize $\beta$};
\node at (0,0.8) {\scriptsize $\alpha$};
\node at (-0.35,1.25) {\scriptsize $\alpha$};

\node at (-0.65,0.95) {\scriptsize $\alpha$};
\node at (-0.95,0.65) {\scriptsize $\beta$};
\node at (-0.65,1.25) {\scriptsize $\beta$};
\node at (-1.25,1.25) {\scriptsize $\alpha$};	

\node at (0.65,1.85) {\scriptsize $\beta$};
\node at (1.4,1.55) {\scriptsize $\alpha$};
\node at (1.85,1.55) {\scriptsize $\beta$};
\node at (1.85,1.85) {\scriptsize $\alpha$};

\node at (-0.5,1.55) {\scriptsize $\alpha$};
\node at (0.35,1.85) {\scriptsize $\beta$};
\node at (-1.25,1.55) {\scriptsize $\beta$};
\node at (-1.25,1.85) {\scriptsize $\alpha$};

\node at (1.95,2.15) {\scriptsize $\alpha$};
\node at (2.15,1.95) {\scriptsize $\beta$};
\node at (0.5,2.15) {\scriptsize $\beta$};
\node at (-1.4,2.15) {\scriptsize $\alpha$};

\end{scope}
}

\end{tikzpicture}
\caption{Figure \ref{Subfig-2a2bc_arrangement-3}}
\label{Subfig-3DFigE-T1}
\end{subfigure}
\begin{subfigure}[t]{0.38\linewidth}
\centering
\begin{tikzpicture}[>=latex, scale=1]

\foreach \a in {0,...,3}
{
\begin{scope}[
rotate=90*\a]

\draw
	(0,0) -- (0.6,0) -- (0.9,0.5) -- (0.5,0.9) -- (0,0.6)
	(0.9,-0.5) -- (2,-0.5)
	(0.9,0.5) -- (1.4,0.5)
	(2,1.4) -- (-1.4,1.4)
	(-2,2) -- (2,2) -- (2.4,2.4);

\node at (0.15,0.15) {\scriptsize $\gamma$};
\node at (0.35,1.25) {\scriptsize $\gamma$};
\node at (0.65,1.25) {\scriptsize $\gamma$};
\node at (0.35,1.55) {\scriptsize $\gamma$};
\node at (0.65,1.55) {\scriptsize $\gamma$};
\node at (0,2.4) {\scriptsize $\gamma$};

\end{scope}
}

\foreach \a in {0,...,3}
\filldraw[fill=white, 
rotate=90*\a] 
	(0.55,1.45) -- ++(-0.1,0) -- ++(0,-0.1) -- ++(0.1,0) -- cycle
	(2.4,2.4) circle (0.05);

\foreach \a in {0,...,3}
{
\begin{scope}[rotate=90*\a]

\node at (0.5,0.15) {\scriptsize $\alpha$};
\node at (0.15,0.5) {\scriptsize $\beta$};
\node at (0.5,0.7) {\scriptsize $\beta$};
\node at (0.7,0.5) {\scriptsize $\alpha$};

\node at (0.35,1) {\scriptsize $\alpha$};
\node at (-0.35,1) {\scriptsize $\beta$};
\node at (0,0.8) {\scriptsize $\alpha$};
\node at (-0.35,1.25) {\scriptsize $\beta$};

\node at (-0.65,0.95) {\scriptsize $\alpha$};
\node at (-0.95,0.65) {\scriptsize $\alpha$};
\node at (-0.65,1.25) {\scriptsize $\beta$};
\node at (-1.25,1.25) {\scriptsize $\beta$};	

\node at (0.65,1.85) {\scriptsize $\alpha$};
\node at (1.4,1.55) {\scriptsize $\beta$};
\node at (1.85,1.55) {\scriptsize $\beta$};
\node at (1.85,1.85) {\scriptsize $\alpha$};

\node at (-0.5,1.55) {\scriptsize $\beta$};
\node at (0.35,1.85) {\scriptsize $\alpha$};
\node at (-1.25,1.55) {\scriptsize $\beta$};
\node at (-1.25,1.85) {\scriptsize $\alpha$};

\node at (1.95,2.15) {\scriptsize $\beta$};
\node at (2.15,1.95) {\scriptsize $\alpha$};
\node at (0.5,2.15) {\scriptsize $\beta$};
\node at (-1.4,2.15) {\scriptsize $\alpha$};

\end{scope}
}

\end{tikzpicture}
\caption{Figure \ref{Subfig-2a2bc_arrangement-4}}
\label{Subfig-3DFigE-3}
\end{subfigure}
\caption{Tilings for AVC(3D24), not pentagonal subdivision.}
\label{3DFigE}
\end{figure}

What remains is the tiling with a mixture of $N_{\circ}(\gamma^4)$ and $N_{\ssq}(\gamma^4)$, glued together according to Figures \ref{Subfig-3DFigD-1} and \ref{Subfig-3DFigD-2}. The tiling is given by Figure \ref{Subfig-3DFigE-T1}, and the schematics of the tiling is given by the lower of Figure \ref{Subfig-3DFigE-1}, and the 3d picture of the tiling is given by Figure \ref{3DFigFc}. 

For AVC(3D60), there is no analogue of $|^{\alpha}\gamma^{\beta}|^{\beta}\gamma^{\alpha}|^{\alpha}\gamma^{\beta}|^{\beta}\gamma^{\alpha}|$ for the AAD of $\gamma^5$. Therefore, the AAD of $\gamma\cdots=\gamma^5$ can only be $|^{\alpha}\gamma^{\beta}|^{\alpha}\gamma^{\beta}|^{\alpha}\gamma^{\beta}|^{\alpha}\gamma^{\beta}|^{\alpha}\gamma^{\beta}|$. This means only the $\gamma^5$ version of Figure \ref{Subfig-3DFigC-3} can happen, and the tiling is the $3{\rm D}_3$ reduction of $PP_{12}$. 

The analysis of splitting according to Figure \ref{Subfig-2a2bc_arrangement-4} is similar. We get the $3{\rm D}_1$ reduction of $PP_{12}$, and the tiling in Figure \ref{Subfig-3DFigE-3}. The schematics of the tilings are also given by Figure \ref{Subfig-3DFigE-1}, and the 3d picture of the tiling in Figure \ref{Subfig-3DFigE-3} is given by Figure \ref{3DFigFd}. 
\end{proof}

\begin{figure}[h!]
\centering
\begin{subfigure}[b]{0.3\linewidth}
\centering
\begin{tikzpicture}[>=latex,scale=1]

\pgftext{
	\includegraphics[scale=0.08]{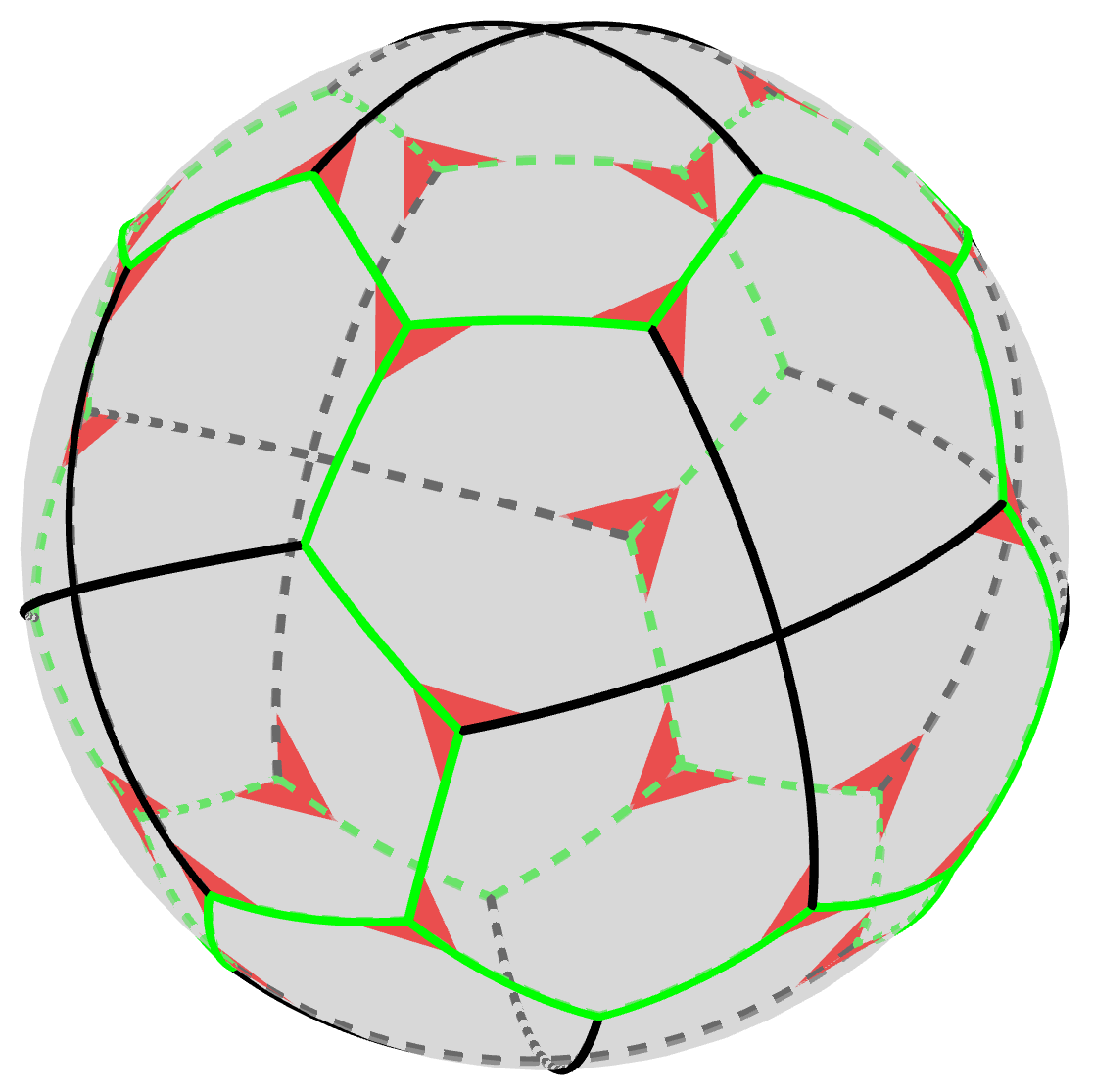}
	};

\end{tikzpicture}	
\caption{}
\label{3DFigFc}	
\end{subfigure}
\begin{subfigure}[b]{0.3\linewidth}
\centering
\begin{tikzpicture}[>=latex,scale=1]

\pgftext{
	\includegraphics[scale=0.077]{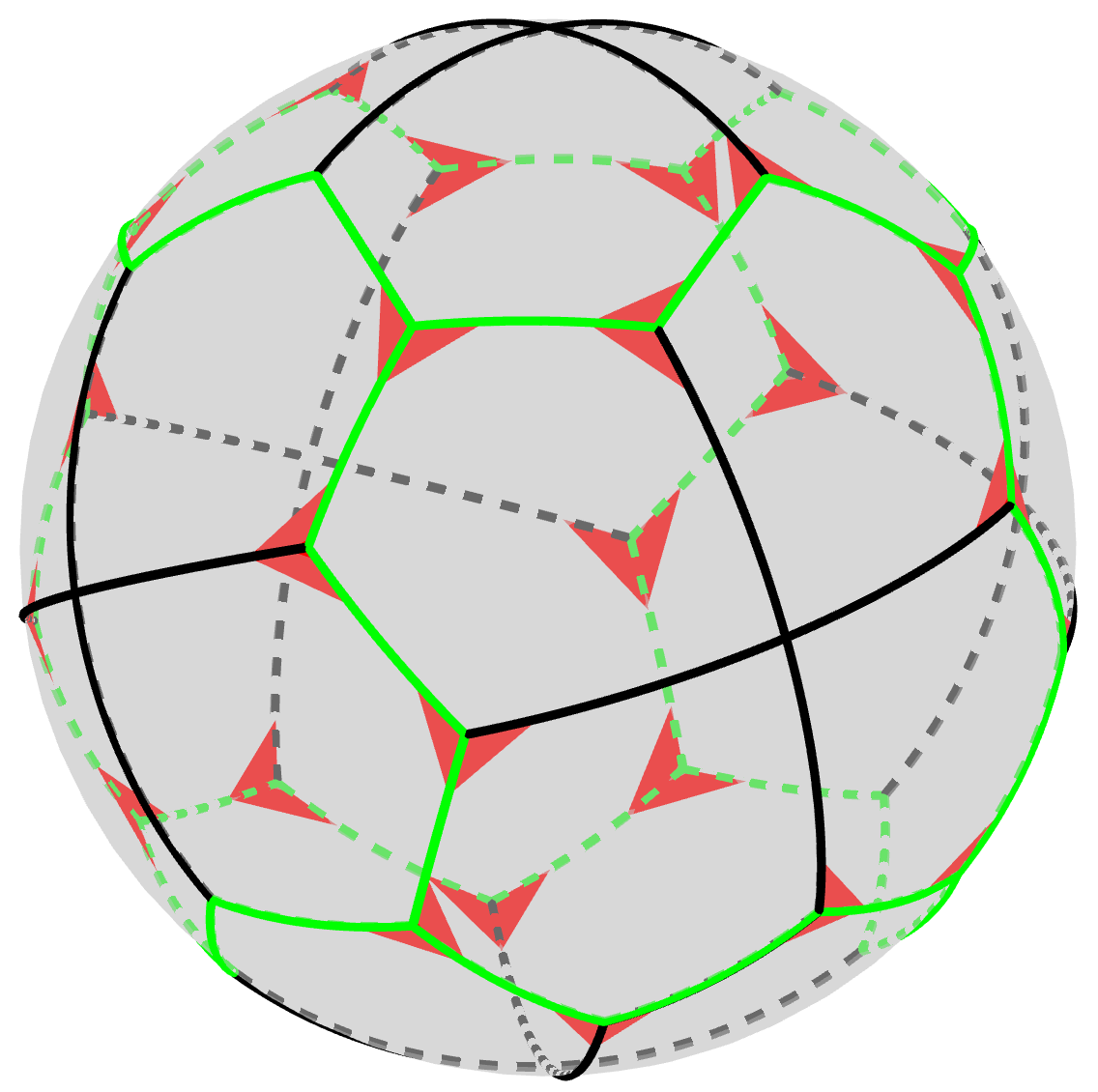}
	};
		
\end{tikzpicture}
\caption{}
\label{3DFigFd}	
\end{subfigure}
\caption{Tilings for AVC(3D24) that are not pentagonal subdivision. The red corners are $\alpha$, and $\gamma$ are concentrated at the meeting places of four black lines.}
\label{3DFigF}
\end{figure}

\subsection{Tilings not Reducible to 2D}

According to Figure \ref{Fig-reduction}, the reductions 3C, 3E cannot be further reduced to 2D. We can no longer use Theorem \ref{2Dthm}, and need to find tilings directly.

\begin{theorem}\label{3Ethm}
The tilings for 
\begin{align*}
\text{\rm AVC(3E24)}
&=\{
24\alpha^2\beta^2\gamma 
\colon 
24\alpha^2\beta,
8\gamma^3,
6\beta^4 \}, \\
\text{\rm AVC(3E60)}
&=\{
60\alpha^2\beta^2\gamma 
\colon 
60\alpha^2\beta,
20\gamma^3,
12\beta^5 \},
\end{align*}
are the $3{\rm E}_1$, $3{\rm E}_2$, $3{\rm E}_3$ reductions of $PP_8$ and $PP_{20}$, and the two tilings in Figures \ref{3EFigD3} and \ref{3EFigE}. 
\end{theorem}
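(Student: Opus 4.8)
\emph{Proof proposal.} The argument will follow the pattern of Theorems \ref{5Athm} and \ref{3Dthm}, but carried out directly since $3$E does not reduce to $2$D. The pentagon is $\alpha^2\beta^2\gamma$ with the four angle arrangements of Figure \ref{2a2bc_arrangement}, and the governing observation is that in both AVC(3E24) and AVC(3E60) the only vertex containing $\gamma$ is $\gamma^3$. Consequently every tile carries its unique $\gamma$ corner at some $\gamma^3$ vertex, so any tiling is a union of the three-tile neighborhoods $N(\gamma^3)$; there are $24/3=8$ of them for AVC(3E24) and $60/3=20$ for AVC(3E60), the face numbers of the octahedron and the icosahedron. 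Along the boundary of each $N(\gamma^3)$ the AVC will force three vertices of the form $\alpha^2\beta$ and leave six vertices carrying a single exposed corner; as in the proof of Theorem \ref{5Athm}, the positions of the corners just outside $N(\gamma^3)$ are then determined, so the pieces patch together with the incidence pattern of $PP_8$/$PP_{20}$.

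First I would pin down the pentagon arrangement. The AAD of $\gamma^3$ is dictated by the two pentagon-neighbors of the $\gamma$ corner. In arrangement \ref{Subfig-2a2bc_arrangement-2}, where $\gamma$ is flanked by the two $\beta$'s, the forced AAD $|^{\beta}\gamma^{\beta}|^{\beta}\gamma^{\beta}|^{\beta}\gamma^{\beta}|$ creates three $\beta^2\cdots$ vertices on the boundary of $N(\gamma^3)$; pushing one further layer and using $\beta^2\cdots=\beta^4/\beta^5$, $\alpha\cdots=\alpha^2\beta$, $\gamma\cdots=\gamma^3$, together with the pentagon non-adjacency relations, I expect to force a forbidden vertex (such as $\alpha\beta^2$ or $\alpha\gamma\cdots$), ruling this arrangement out. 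The remaining arrangements \ref{Subfig-2a2bc_arrangement-1}, \ref{Subfig-2a2bc_arrangement-3}, \ref{Subfig-2a2bc_arrangement-4} are exactly the pentagons of the $3{\rm E}_3$, $3{\rm E}_2$, $3{\rm E}_1$ reductions. For each I would list the admissible $N(\gamma^3)$ together with the forced angles along its boundary, and then read off the compatible gluings; the uniform gluing in each case reproduces the corresponding $3{\rm E}_i$ reduction of $PP_8$/$PP_{20}$.

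The extra tilings should come from a flip freedom at the high-valence vertex $\beta^4$: just as the AAD of $\gamma^4$ in the proof of Theorem \ref{3Dthm} could be taken uniform $|^{\alpha}\gamma^{\beta}|^{\alpha}\gamma^{\beta}|^{\alpha}\gamma^{\beta}|^{\alpha}\gamma^{\beta}|$ or alternating $|^{\alpha}\gamma^{\beta}|^{\beta}\gamma^{\alpha}|^{\alpha}\gamma^{\beta}|^{\beta}\gamma^{\alpha}|$, there will be two local models for the neighborhood of a $\beta^4$ vertex, and tracking which model may border which — the bookkeeping carried out in Figures \ref{Subfig-3DFigD-1}, \ref{Subfig-3DFigD-2}, \ref{Subfig-3DFigD-3} for $3$D — should single out exactly two further tilings, those of Figures \ref{3EFigD3} and \ref{3EFigE}. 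Because the analogous alternating AAD of $\beta^5$ cannot exist (an odd cycle), no such extra tiling arises for AVC(3E60), so there only the $3{\rm E}_i$ reductions of $PP_{20}$ survive. I expect the main obstacle to be precisely this last gluing analysis: showing that the two local models propagate consistently over the whole sphere, enumerating the resulting global assemblies, and confirming that exactly two new tilings appear — together with making the exclusion of arrangement \ref{Subfig-2a2bc_arrangement-2} airtight, which is delicate because $\beta^2\cdots$ legitimately equals $\beta^4/\beta^5$ and the forced configuration must be carried far enough to reach a contradiction.
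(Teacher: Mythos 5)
Your overall framing (four pentagon arrangements, elimination of Figure \ref{Subfig-2a2bc_arrangement-2}, propagation from $\gamma^3$ and $\beta^4/\beta^5$ neighborhoods to recover the $3{\rm E}_i$ reductions) matches the paper's strategy for the routine part of the proof. But the mechanism you propose for the two exceptional tilings is wrong, and it leads you to a conclusion that contradicts the very statement you are proving. You locate the extra freedom at the high-valence vertex $\beta^4$, in analogy with the alternating AAD of $\gamma^4$ in Theorem \ref{3Dthm}, and then argue by parity that $\beta^5$ admits no alternating AAD, so that AVC(3E60) has only the $3{\rm E}_i$ reductions of $PP_{20}$. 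However, Figure \ref{3EFigE} is precisely a non-pentagonal-subdivision tiling for AVC(3E60) (sixty tiles, organized as four copies of a twelve-tile hexagonal patch plus four $N(\gamma^3)$, with the combinatorics of a truncated tetrahedron), so the theorem asserts an exceptional tiling exists for $f=60$. Your parity argument ``proves'' too much.

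The reason the parity argument fails is that the exceptional tilings do not originate at the $\beta^4/\beta^5$ vertex at all. In the paper both exceptional tilings occur for the pentagon of arrangement \ref{Subfig-2a2bc_arrangement-4} (the $3{\rm E}_1$ pentagon), and the dichotomy is whether the degree-three vertex $\alpha^2\beta$ carries the adjacent angle deduction $\alpha^{\gamma}|^{\gamma}\alpha$. If it does, a four-tile configuration $N(\alpha^{\gamma}|^{\gamma}\alpha)$ is forced and propagates to the tilings of Figures \ref{3EFigD3} and \ref{3EFigE} (the $f=24$ and $f=60$ cases respectively, the latter requiring an extra step to resolve a tile between two propagated patches); if it does not, one deduces $\gamma^3=|^{\alpha}\gamma^{\beta}|^{\alpha}\gamma^{\beta}|^{\alpha}\gamma^{\beta}|$, excludes $\beta^{\gamma}|^{\gamma}\beta$, shows $\beta^4/\beta^5$ must be a combination of $|^{\alpha}\beta^{\beta}|$ with no $|^{\alpha}\beta^{\beta}|^{\beta}\beta^{\alpha}|$, and recovers the pentagonal subdivisions. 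Since the branching happens at a degree-three vertex, the parity of the $\beta$-vertex is irrelevant, and no odd-cycle obstruction arises for $f=60$. To repair your proposal you would need to replace the $\beta^4$ flip analysis with a case division on the AAD of $\alpha^2\beta$, and also temper the opening claim that the $N(\gamma^3)$ pieces always ``patch together with the incidence pattern of $PP_8$/$PP_{20}$,'' since the exceptional tilings realize a different combinatorial structure.
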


The two tilings in Figures \ref{3EFigD3} and \ref{3EFigE} use the same pentagon as in the reduction $3{\rm E}_1$, but are not pentagonal subdivisions of Platonic solids. We will discuss the underlying structure and the symmetry of these tilings after the proof.

\begin{proof}
The pentagon $\alpha^2\beta^2\gamma$ has the four possible angle arrangements in Figure \ref{2a2bc_arrangement}. 

\medskip

\noindent {\bf Pentagon in Figure \ref{Subfig-2a2bc_arrangement-2}}

\medskip

Figure \ref{Subfig-3EFigA-1} shows three tiles $\circled{1}, \circled{2}, \circled{3}$ around a vertex $\gamma^3$. Then $\circled{1}, \circled{2}$ share $\beta^2\cdots=\beta^4/\beta^5$, determining one $\beta_4$ in $\circled{4}$. Since $\alpha\cdots=\alpha^2\beta$, the tiles $\circled{1}, \circled{4}$ share an $\alpha^2\beta$. By the one $\beta_4$ already obtained, and two $\beta$ in \circled{4} being non-adjacent, the angles at $\alpha^2\beta$ shared by $\circled{1}, \circled{4}$ are arranged as indicated. For the same reason, we also get $\circled{5}$, and $\circled{1}, \circled{5}$ share a vertex $\alpha\cdots=\alpha^2\beta$ as indicated. Then we find two $\beta$ adjacent in a tile, a contradiction.

\begin{figure}[h!]
\centering
\begin{subfigure}[t]{0.225\linewidth}
\centering
\begin{tikzpicture}[>=latex,scale=1]


\foreach \a in {-1,1}
{
\begin{scope}[xscale=\a]

\draw
	(0,0.8) -- (1.3,0.8) -- (1.6,0.4) -- (1.3,0) -- (0,0) -- (0,-0.5) -- (0.4,-0.8) -- (0.8,-0.5) -- (0.8,0.8);

\node at (0,0.15) {\scriptsize $\gamma$}; 
\node at (0.15,-0.15) {\scriptsize $\gamma$}; 
\node at (0.65,0.15) {\scriptsize $\beta$};
\node at (0.65,0.65) {\scriptsize $\alpha$};
\node at (0.95,0.15) {\scriptsize $\beta$};
\node at (0.95,0.65) {\scriptsize $\alpha$};
\node at (0.8,0.95) {\scriptsize $\beta$};
\node at (0.65,-0.15) {\scriptsize $\beta$};
\node at (0.65,-0.45) {\scriptsize $\alpha$};
\node at (0.15,-0.45) {\scriptsize $\beta$};
\node at (0.4,-0.6) {\scriptsize $\alpha$};

\end{scope}
}

\node[inner sep=0.5,draw,shape=circle] at (0,0.5) {\scriptsize $1$};
\node[inner sep=0.5,draw,shape=circle] at (-0.4,-0.3) {\scriptsize $2$};
\node[inner sep=0.5,draw,shape=circle] at (0.4,-0.3) {\scriptsize $3$};
\node[inner sep=0.5,draw,shape=circle] at (-1.15,0.4) {\scriptsize $4$};
\node[inner sep=0.5,draw,shape=circle] at (1.15,0.4) {\scriptsize $5$};

\end{tikzpicture}
\caption{}
\label{Subfig-3EFigA-1}
\end{subfigure}
\begin{subfigure}[t]{0.225\linewidth}
\centering
\begin{tikzpicture}[>=latex,scale=1]


\begin{scope}[]

\foreach \a in {-1,1}
\draw[xscale=\a]
	(0.4,0.5) -- (0.4,0) -- (0,-0.3) -- (0,-0.8) -- (0.8,-0.8) -- (0.8,-0.3) -- (0.4,0)
	(0.8,-0.3) -- (1.2,0) -- (1.2,0.5) -- (0,0.5);

\filldraw[fill=white]
	(0,-0.8) circle (0.05)
	(0.4,0.5) circle (0.05);
	
\node at (-0.4,-0.2) {\scriptsize $\gamma$};
\node at (-0.15,-0.35) {\scriptsize $\alpha$};
\node at (-0.65,-0.35) {\scriptsize $\alpha$};
\node at (-0.15,-0.65) {\scriptsize $\beta$};
\node at (-0.65,-0.65) {\scriptsize $\beta$};

\node at (0.4,-0.2) {\scriptsize $\alpha$};
\node at (0.15,-0.35) {\scriptsize $\beta$};
\node at (0.65,-0.35) {\scriptsize $\gamma$};
\node at (0.15,-0.65) {\scriptsize $\beta$};
\node at (0.65,-0.65) {\scriptsize $\alpha$};

\node at (-0.55,0.05) {\scriptsize $\gamma$};
\node at (-1.05,0.05) {\scriptsize $\beta$};
\node at (-0.55,0.35) {\scriptsize $\alpha$};
\node at (-1.05,0.35) {\scriptsize $\beta$};
\node at (-0.8,-0.1) {\scriptsize $\alpha$};

\node at (0.55,0.05) {\scriptsize $\alpha$};
\node at (1.05,0.05) {\scriptsize $\alpha$};
\node at (0.55,0.35) {\scriptsize $\beta$};
\node at (1.05,0.35) {\scriptsize $\beta$};
\node at (0.8,-0.1) {\scriptsize $\gamma$};

\node at (-0.25,0.05) {\scriptsize $\gamma$};
\node at (0.25,0.05) {\scriptsize $\beta$};
\node at (-0.25,0.35) {\scriptsize $\alpha$};
\node at (0.25,0.35) {\scriptsize $\beta$};
\node at (0,-0.1) {\scriptsize $\alpha$};

\node at (-0.95,-0.35) {\scriptsize $\beta$};
\node at (0.95,-0.35) {\scriptsize $\gamma$};

\node[inner sep=0.5,draw,shape=circle] at (-0.4,-0.5) {\scriptsize $1$};
\node[inner sep=0.5,draw,shape=circle] at (0.4,-0.5) {\scriptsize $2$};
\node[inner sep=0.5,draw,shape=circle] at (-0.8,0.2) {\scriptsize $3$};
\node[inner sep=0.5,draw,shape=circle] at (0,0.2) {\scriptsize $4$};
\node[inner sep=0.5,draw,shape=circle] at (0.8,0.2) {\scriptsize $5$};

\end{scope}

\end{tikzpicture}
\caption{}
\label{Subfig-3EFigA-2}
\end{subfigure}
\begin{subfigure}[t]{0.225\linewidth}
\centering
\begin{tikzpicture}[>=latex,scale=1]


\begin{scope}[shift={(6.7cm,-0.3cm)}]

\foreach \a in {-1,1}
{
\begin{scope}[xscale=\a]

\draw
	(0,-0.5) -- (0.4,-0.8) -- (0.8,-0.5) -- (0.8,0.5) -- (0.4,0.8) -- (0,0.5) -- (0,0) -- (0.8,0)
	(0.8,-0.5) -- (1.4,-0.5) -- (1.4,1.3) -- (0,1.3)
	(0.8,0.5) -- (1.4,0.5)
	(0.4,0.8) -- (0.4,1.3);

\draw[densely dashed]
	(0,0) -- (0,-0.5);

\node at (0.15,0.15) {\scriptsize $\beta$};
\node at (0.15,-0.15) {\scriptsize $\beta$};
\node at (0.65,0.15) {\scriptsize $\alpha$};
\node at (0.65,-0.15) {\scriptsize $\alpha$};
\node at (0.65,0.45) {\scriptsize $\beta$};
\node at (0.15,0.45) {\scriptsize $\gamma$};
\node at (0.4,0.6) {\scriptsize $\alpha$};

\node at (0,0.7) {\scriptsize $\gamma$};
\node at (0.25,0.85) {\scriptsize $\alpha$};
\node at (0.55,0.85) {\scriptsize $\beta$};
\node at (0.85,0.65) {\scriptsize $\alpha$};
\node at (0.95,0.35) {\scriptsize $\alpha$};
\node at (0.95,0) {\scriptsize $\beta$};

\end{scope}
}

\node[inner sep=0.5,draw,shape=circle] at (-0.4,0.3) {\scriptsize $1$};
\node[inner sep=0.5,draw,shape=circle] at (0.4,0.3) {\scriptsize $2$};
\node[inner sep=0.5,draw,shape=circle] at (0,1) {\scriptsize $3$};
\node[inner sep=0.5,draw,shape=circle] at (-1.2,0) {\scriptsize $4$};
\node[inner sep=0.5,draw,shape=circle] at (-1,0.9) {\scriptsize $5$};

\end{scope}

\end{tikzpicture}
\caption{}
\label{Subfig-3EFigA-3}
\end{subfigure}
\begin{subfigure}[t]{0.225\linewidth}
\centering
\begin{tikzpicture}[>=latex,scale=1]


\begin{scope}[]

\foreach \a in {-1,1}
\draw[xscale=\a]
	(0.4,0.5) -- (0.4,0) -- (0,-0.3) -- (0,-0.8) -- (0.8,-0.8) -- (0.8,-0.3) -- (0.4,0)
	(0.8,-0.3) -- (1.2,0) -- (1.2,0.5) -- (0,0.5);

\filldraw[fill=white]
	(0,-0.8) circle (0.05)
	(0.4,0.5) circle (0.05);
	
\node at (-0.4,-0.2) {\scriptsize $\gamma$};
\node at (-0.15,-0.35) {\scriptsize $\alpha$};
\node at (-0.65,-0.35) {\scriptsize $\beta$};
\node at (-0.15,-0.65) {\scriptsize $\beta$};
\node at (-0.65,-0.65) {\scriptsize $\alpha$};

\node at (0.4,-0.2) {\scriptsize $\beta$};
\node at (0.15,-0.35) {\scriptsize $\alpha$};
\node at (0.65,-0.35) {\scriptsize $\gamma$};
\node at (0.15,-0.65) {\scriptsize $\beta$};
\node at (0.65,-0.65) {\scriptsize $\alpha$};

\node at (-0.55,0.05) {\scriptsize $\gamma$};
\node at (-1.05,0.05) {\scriptsize $\beta$};
\node at (-0.55,0.35) {\scriptsize $\beta$};
\node at (-1.05,0.35) {\scriptsize $\alpha$};
\node at (-0.8,-0.1) {\scriptsize $\alpha$};

\node at (0.55,0.05) {\scriptsize $\alpha$};
\node at (1.05,0.05) {\scriptsize $\beta$};
\node at (0.55,0.35) {\scriptsize $\beta$};
\node at (1.05,0.35) {\scriptsize $\alpha$};
\node at (0.8,-0.1) {\scriptsize $\gamma$};

\node at (-0.25,0.05) {\scriptsize $\gamma$};
\node at (0.25,0.05) {\scriptsize $\alpha$};
\node at (-0.25,0.35) {\scriptsize $\alpha$};
\node at (0.25,0.35) {\scriptsize $\beta$};
\node at (0,-0.1) {\scriptsize $\beta$};

\node at (-0.95,-0.35) {\scriptsize $\alpha$};
\node at (0.95,-0.35) {\scriptsize $\gamma$};

\node[inner sep=0.5,draw,shape=circle] at (-0.4,-0.5) {\scriptsize $1$};
\node[inner sep=0.5,draw,shape=circle] at (0.4,-0.5) {\scriptsize $2$};
\node[inner sep=0.5,draw,shape=circle] at (-0.8,0.2) {\scriptsize $3$};
\node[inner sep=0.5,draw,shape=circle] at (0,0.2) {\scriptsize $4$};
\node[inner sep=0.5,draw,shape=circle] at (0.8,0.2) {\scriptsize $5$};

\end{scope}

\end{tikzpicture}
\caption{}
\label{Subfig-3EFigA-4}
\end{subfigure}
\caption{Tilings for AVC(3E24/60), Figures \ref{Subfig-2a2bc_arrangement-1}, \ref{Subfig-2a2bc_arrangement-2}, \ref{Subfig-2a2bc_arrangement-3}.}
\label{3EFigA} 
\end{figure}

\medskip

\noindent {\bf Pentagon in Figure \ref{Subfig-2a2bc_arrangement-1}}

\medskip

Figure \ref{Subfig-3EFigA-2} shows two of these tiles, $\circled{1}, \circled{2}$, at a vertex $\beta^4/\beta^5$. We may assume that $\circled{1}$ has angles arranged as shown. Then $\gamma_1\cdots=\gamma^3$ determines $\circled{3}, \circled{4}$. Then $\circled{1}, \circled{4}$ share a vertex $\alpha^2\cdots=\alpha^2\beta$ which determines one $\beta_2$. Then we know both $\beta_2$, and determine $\circled{2}$. Then $\gamma_2\cdots=\gamma^3$ determines $\circled{5}$. Hence $\circled{4}, \circled{5}$ share a vertex $\beta^2\cdots=\beta^4/\beta^5$. 

We have just proven that $\circled{1}$ determines $\circled{2}$ and induces a new vertex $\beta^4/\beta^5$ shared by $\circled{4}, \circled{5}$. The same deduction on the other consecutive $|\beta|\beta|$ at the initial vertex $\beta^4/\beta^5$ determines two layers of tiles around the vertex. Moreover, we get  four $\beta^4$ or five $\beta^5$ along the boundary of the two layers. Repeating the deduction on these new $\beta^4/\beta^5$ constructs the $3{\rm E}_3$ reductions of $P_8$ and $P_{20}$.

\medskip

\noindent {\bf Pentagon in Figure \ref{Subfig-2a2bc_arrangement-3}}

\medskip

We note that one $\alpha$ and one $\beta$ are adjacent to $\gamma$, and the other $\alpha$ and the other $\beta$ are non-adjacent to $\gamma$. In case we know which is which, we denote (in the discussion but not in the picture) $\alpha, \beta$ non-adjacent to $\gamma$ by $\alpha', \beta'$.

Consider the pentagon in Figure \ref{Subfig-2a2bc_arrangement-3}. We first prove that the $\beta$ adjacent to $\gamma$ (this is $\beta$, not $\beta'$) cannot be part of $\beta^4/\beta^5$. We denote the property by $\beta\cdots=\alpha^2\beta$.

We prove $\beta\cdots=\alpha^2\beta$ by contradiction. Assume $\beta_1\cdots=\beta^4/\beta^5$ in the tile \circled{1} in Figure \ref{Subfig-3EFigA-3} (the dashed line means the vertex can be $\beta^5$). Then $\gamma_1\cdots=\gamma^3$ and $\beta_1\cdots=\beta^4/\beta^5$ determine $\circled{2}$ and $\gamma_3$. 

Since $\alpha'_1\cdots=\alpha^2\beta$ ($\alpha'_1$ non-adjacent to $\gamma_1$), and $\beta_1\cdots=\beta^4/\beta^5$, and two $\beta$ non-adjacent in a tile, we get one $\beta_4$. Then this $\beta_4$ and two $\beta$ non-adjacent imply $\beta_1'\cdots\ne\beta^4/\beta^5$. Therefore, $\beta_1'\cdots=\alpha^2\beta$. This determines one $\alpha_5$. Then this $\alpha_5$, and $\alpha'_1\cdots=\alpha^2\beta$, and two $\alpha$ non-adjacent determines one $\alpha_3$ next to \circled{1}. 

For the same reason, we get another $\alpha_3$ next to \circled{2}. Now we get two $\alpha$ adjacent to $\gamma$ in a tile. This is a contradiction, and proves $\beta\cdots=\alpha^2\beta$. 

By (the property) $\beta\cdots=\alpha^2\beta$, we know the two $\beta$ at the vertex are actually $\beta'$ (non-adjacent to $\gamma$). Then we may assume the angles of $\circled{1}$ are arranged as indicated. Then $\beta_1\cdots=\alpha^2\beta$ and $\gamma_1\cdots=\gamma^3$ determine $\circled{3}, \gamma_4$. Then $\beta_3\cdots=\alpha^2\beta$ and $\gamma_4$ determine $\circled{4}$. Then $\alpha_1\beta_4\cdots=\alpha^2\beta$ and $\beta_2'$ determine $\circled{2}$. Then $\alpha'_4\beta_2\cdots=\alpha^2\beta$ and $\gamma_2\cdots=\gamma^3$ determine $\circled{5}$. 

Figure \ref{Subfig-3EFigA-4} shows that $\circled{1}$ determines $\circled{2}, \circled{3}, \circled{4}, \circled{5}$. As in the case of Figure \ref{Subfig-3EFigA-2}, the repeated application of this fact gives the $3{\rm E}_2$ reductions of $PP_8$ and $PP_{20}$. 

\medskip

\noindent {\bf Pentagon in Figure \ref{Subfig-2a2bc_arrangement-4}}

\medskip

Similar to the previous case, we may still denote $\alpha, \beta$ non-adjacent to $\gamma$ by $\alpha', \beta'$.

Suppose we have the AAD $\alpha^{\gamma}|^{\gamma}\alpha$ at an $\alpha^2\beta$, indicated by thick lines in Figure \ref{3EFigD}. Then we determine $\circled{1}, \circled{2}$, and one $\beta$ in $\circled{3}$, in Figures \ref{3EFigD} and \ref{3EFigE}. Then $\alpha'_1\cdots=\alpha'_2\cdots=\alpha^2\beta$ imply that the two angles adjacent to the one $\beta$ in $\circled{3}$ are not $\gamma$. Hence we may assume that the angles of $\circled{3}$ are arranged as shown. Then $\alpha'_2\beta_3\cdots=\alpha^2\beta$ and $\gamma_3\cdots=\gamma^3$ determine $\circled{4}$. Moreover, $\alpha'_4\beta'_2\cdots=\alpha^2\beta$ gives one $\alpha$ in $\circled{5}$.

If $\beta_2\cdots=\alpha^2\beta$, then we get the second $\alpha$ in $\circled{5}$ and one $\alpha$ in $\circled{6}$ in Figure \ref{3EFigD2}. The one $\alpha$ in $\circled{6}$ and $\gamma_1\gamma_2\cdots=\gamma^3$ determine $\circled{6}$. Then the two $\alpha$ in $\circled{5}$ imply that $\gamma_5$ is at either $\beta'_4\cdots$ or $\alpha'_6\cdots$, contradicting $\gamma_5\cdots=\gamma^3$. 

Therefore, $\beta_2\cdots=\beta^4$. Combined with the one $\alpha$ in \circled{6} we already know, we determine \circled{6}. In Figure \ref{3EFigD3}, we denote the four tiles $\circled{1},\circled{2},\circled{3},\circled{6}$ deduced from $\alpha^{\gamma}|^{\gamma}\alpha$ by $N(\alpha^{\gamma}|^{\gamma}\alpha)$. We note that $\alpha^{\gamma}|^{\gamma}\alpha$ gives $\circled{1},\circled{2}$ that is vertically symmetric in Figure \ref{3EFigD2}. There are two choices for the angles in \circled{3}, and we just proved the choice of \circled{3} in the picture determines \circled{6} and the whole $N(\alpha^{\gamma}|^{\gamma}\alpha)$. The other choice for \circled{3} is the vertical flip of the one in the picture, and determines the vertical flip of the \circled{6} in the picture. Therefore, $\circled{1},\circled{2},\circled{6}$ also determines $N(\alpha^{\gamma}|^{\gamma}\alpha)$.

\begin{figure}[h!]
\centering
\begin{subfigure}[b]{0.45\linewidth}
\centering
\begin{tikzpicture}[>=latex]

\foreach \a in {0,1,2}
{
\begin{scope}[rotate=120*\a, thick]

\draw
	(0,0) -- (30:0.5) 
	(90:1.4) -- (50:1.4) -- (10:1.4) 
	(30:1.9) -- (-10:1.9) -- (-50:1.9)
	(110:1.9) -- (90:2.4) -- (50:2.4) -- (10:2.4) -- (-30:2.4)
	(10:2.4) -- (10:2.8);

\draw[blue]
	(30:1.9) -- (70:1.9) -- (90:1.4) -- (130:1.4);

\draw[green]
	(90:2.4) -- (50:2.4) -- (10:2.4) -- (-30:2.4)
	(30:0.5) -- (60:0.9) -- (120:0.9) -- (150:0.5);

\draw[red]
	(0:0.9) -- (10:1.4) -- (30:1.9) -- (50:2.4);
		
\fill
	(0,0) circle (0.06)
	(10:2.8) circle (0.06);

\draw[line width=1.5]
	(50:1.4) -- (60:0.9)
	(110:1.9) -- (90:2.4);
	
\foreach \a in {0,1,2}
\filldraw[fill=white, rotate=120*\a]
	(30:1.9) circle (0.05)
	(10:1.4) circle (0.05);

\begin{scope}[font=\tiny]

\node at (90:0.2) {$\gamma$};
\node at (63:0.72) {$\beta$};
\node at (117:0.72) {$\alpha$};
\node at (47:0.43) {$\beta$};
\node at (133:0.43) {$\alpha$};

\node at (115:1) {$\alpha$};
\node at (65:1) {$\alpha$};
\node at (59:1.23) {$\gamma$};
\node at (121:1.23) {$\beta$};
\node at (90:1.25) {$\beta$};

\node at (133:1.23) {$\beta$};
\node at (47:1.23) {$\gamma$};
\node at (130:0.9) {$\beta$};
\node at (50:0.9) {$\alpha$};
\node at (30:0.65) {$\alpha$};

\node at (69:1.75) {$\alpha$};
\node at (31:1.75) {$\beta$};
\node at (80:1.45) {$\alpha$};
\node at (20:1.45) {$\beta$};
\node at (50:1.55) {$\gamma$};

\node at (92:1.54) {$\alpha$};
\node at (128:1.54) {$\beta$};
\node at (80:1.72) {$\alpha$};
\node at (140:1.72) {$\beta$};
\node at (110:1.75) {$\gamma$};

\node at (101:1.95) {$\gamma$};
\node at (39:1.95) {$\beta$};
\node at (70:2) {$\beta$};
\node at (89:2.25) {$\alpha$};
\node at (51:2.25) {$\alpha$};

\node at (100:2.2) {$\alpha$};
\node at (40:2.2) {$\beta$};
\node at (112:2.03) {$\gamma$};
\node at (28:2.03) {$\beta$};
\node at (10:2.27) {$\alpha$};

\node at (13:2.5) {$\alpha$};
\node at (7:2.5) {$\beta$};
\node at (50:2.5) {$\alpha$};
\node at (90:2.5) {$\beta$};
\node at (70:2.6) {$\gamma$};

\end{scope}

\end{scope}
}

\begin{scope}[font=\tiny]

\node[inner sep=0.5,draw,shape=circle] at (90:1) {1};
\node[inner sep=0.5,draw,shape=circle] at (30:1) {2};
\node[inner sep=0.5,draw,shape=circle] at (90:0.5) {3};
\node[inner sep=0.5,draw,shape=circle] at (-30:0.5) {4};
\node[inner sep=0.5,draw,shape=circle] at (-30:1) {5};
\node[inner sep=0.5,draw,shape=circle] at (60:1.55) {6};
\node[inner sep=0.5,draw,shape=circle] at (-90:1) {8};
\node[inner sep=0.5,draw,shape=circle] at (-60:1.55) {9};
\node[inner sep=0.5,draw,shape=circle] at (-10:1.5) {7};
\node[inner sep=0,draw,shape=circle] at (-40:2.05) {11};
\node[inner sep=0,draw,shape=circle] at (10:2) {10};
\node[inner sep=0,draw,shape=circle] at (-40:2.6) {12};

\end{scope}

\end{tikzpicture}
\caption{}
\label{3EFigD1}
\end{subfigure}
\begin{subfigure}[b]{0.3\linewidth}
\centering
\begin{tikzpicture}[>=latex]

\raisebox{7.5ex}{

\begin{scope}[]

\foreach \a in {-1,0}
{
\begin{scope}[rotate=120*\a]

\draw
	(0,0) -- (30:0.5) -- (60:0.9) -- (120:0.9) -- (150:0.5) -- (0,0);

\node at (90:0.2) {\tiny $\gamma$};
\node at (63:0.72) {\tiny $\beta$};
\node at (117:0.72) {\tiny $\alpha$};
\node at (47:0.43) {\tiny $\beta$};
\node at (133:0.43) {\tiny $\alpha$};

\end{scope}
}

\draw
	(0:0.9) -- (10:1.4) -- (50:1.4)
	(120:0.9) -- (130:1.4) -- (90:1.4) -- (50:1.4) -- (60:0.9)
	(90:1.4) -- (70:1.9) -- (10:2.2) -- (10:1.4) -- (50:1.4)
	(-60:0.9) -- (-70:1.4) -- (10:2.2);

\node at (13:1.23) {\tiny $\beta$};
\node at (47:1.23) {\tiny $\gamma$};
\node at (10:0.9) {\tiny $\beta$};
\node at (50:0.9) {\tiny $\alpha$};
\node at (30:0.65) {\tiny $\alpha$};

\node at (115:1) {\tiny $\alpha$};
\node at (65:1) {\tiny $\alpha$};
\node at (59:1.23) {\tiny $\gamma$};
\node at (121:1.23) {\tiny $\beta$};
\node at (90:1.25) {\tiny $\beta$};

\node at (69:1.75) {\tiny $\beta$};
\node at (13:2) {\tiny $\alpha$};
\node at (80:1.45) {\tiny $\beta$};
\node at (14:1.5) {\tiny $\alpha$};
\node at (50:1.55) {\tiny $\gamma$};

\node at (-5:1) {\tiny $\alpha$};
\node at (6:1.4) {\tiny $\alpha$};

\node[inner sep=0.5,draw,shape=circle] at (90:1) {\tiny 1};
\node[inner sep=0.5,draw,shape=circle] at (30:1) {\tiny 2};
\node[inner sep=0.5,draw,shape=circle] at (90:0.5) {\tiny 3};
\node[inner sep=0.5,draw,shape=circle] at (-30:0.5) {\tiny 4};
\node[inner sep=0.5,draw,shape=circle] at (-30:1) {\tiny 5};
\node[inner sep=0.5,draw,shape=circle] at (60:1.55) {\tiny 6};

\fill
	(0,0) circle (0.06);

\draw[line width=1.5]
	(50:1.4) -- (60:0.9);

\end{scope}
}

\end{tikzpicture}
\caption{}
\label{3EFigD2}
\end{subfigure}
\begin{subfigure}[b]{0.2\linewidth}
\centering
\begin{tikzpicture}[>=latex]

\raisebox{9ex}{

\fill (-1,-0.5) circle (0.07);

\foreach \a in {1,-1}
\draw[xscale=\a]
	(0,-0.8) -- (0.4,-0.5) -- (0.4,0.5) -- (0,0.8) 
	(0.4,-0.5) -- (1,-0.5) -- (1,0.5) -- (0.4,0.5)
	(0,0) -- (0.4,0);

\draw[line width=1.5]
	(0.4,0) -- (-0.4,0);

\foreach \y in {1,-1}
{
\begin{scope}[yscale=\y]

\node at (-0.25,0.15) {\scriptsize $\alpha$};
\node at (0.25,0.15) {\scriptsize $\gamma$};
\node at (0.25,0.45) {\scriptsize $\beta$};
\node at (-0.25,0.45) {\scriptsize $\alpha$};
\node at (0,0.6) {\scriptsize $\beta$};

\end{scope}
}

\node at (0.55,0.35) {\scriptsize $\alpha$};
\node at (0.85,0.35) {\scriptsize $\alpha$};
\node at (0.55,0) {\scriptsize $\gamma$};	
\node at (0.55,-0.35) {\scriptsize $\beta$};
\node at (0.85,-0.35) {\scriptsize $\beta$};

\node at (-0.55,0.35) {\scriptsize $\alpha$};
\node at (-0.85,0.35) {\scriptsize $\alpha$};
\node at (-0.55,0) {\scriptsize $\beta$};	
\node at (-0.55,-0.35) {\scriptsize $\beta$};
\node at (-0.85,-0.35) {\scriptsize $\gamma$};

\node[inner sep=0.5,draw,shape=circle] at (0,0.3) {\tiny $1$};
\node[inner sep=0.5,draw,shape=circle] at (0,-0.3) {\tiny $2$};
\node[inner sep=0.5,draw,shape=circle] at (-0.8,0) {\tiny $3$};
\node[inner sep=0.5,draw,shape=circle] at (0.8,0) {\tiny $6$};

\node at (0,-1.2) {$N(\alpha^{\gamma}|^{\gamma}\alpha)$};

}

\end{tikzpicture}
\caption{}
\label{3EFigD3}
\end{subfigure}
\caption{Tiling for AVC(3E24), not pentagonal subdivision.}
\label{3EFigD} 
\end{figure}

Now we concentrate on AVC(3E24). In Figure \ref{3EFigD3}, by $\beta_2\cdots=\beta^4$ and the one $\alpha$ in \circled{5} we already know, we determine $\circled{5}, \circled{6}$ and one $\beta$ in $\circled{7}$. Then $\alpha_5\beta'_4\cdots=\alpha^2\beta$ and $\gamma_5\cdots=\gamma^3$ determine $\circled{8}$ and $\gamma_9$. Then $\circled{5}, \circled{8}, \circled{4}$ is of the same pattern as $\circled{1}, \circled{2}, \circled{3}$, and Therefore, determines one $N(\alpha^{\gamma}|^{\gamma}\alpha)$, including $\circled{9}$. 

The one $\beta$ in $\circled{7}$ we already know and $\alpha_9\beta_5\cdots=\alpha^2\beta$ determine $\circled{7}$. By $\gamma_7\cdots=\gamma^3$, and $\beta'_6\beta_7\cdots=\beta^4$, and $\alpha_7\alpha'_9\cdots=\alpha^2\beta$, we determine $\circled{\footnotesize 10},\circled{\footnotesize 11}$. Then $\circled{\footnotesize 11}, \circled{\footnotesize 10}, \circled{7}$ is of the same pattern as $\circled{1}, \circled{2}, \circled{6}$, and Therefore, determines one $N(\alpha^{\gamma}|^{\gamma}\alpha)$, including $\circled{\footnotesize 12}$. 

We start from $N(\alpha^{\gamma}|^{\gamma}\alpha)$ consisting of $\circled{1}, \circled{2}, \circled{3},\circled{6}$, and deduce two $N(\alpha^{\gamma}|^{\gamma}\alpha)$ consisting of $\circled{5}, \circled{8},\circled{4},\circled{9}$, and of $ \circled{\footnotesize 11},\circled{\footnotesize 10}, \circled{7},\circled{\footnotesize 12}$. We repeat the argument to $\circled{5}, \circled{8},\circled{4},\circled{9}$ in place of $\circled{1}, \circled{2}, \circled{3},\circled{6}$, and then repeat the process again. Then we get the whole tiling in Figure \ref{3EFigD1}. 

A 3d rendering of the tiling for AVC(3E24) that is not the non-pentagonal subdivision is given by Figure \ref{3E24FigB}. Since the tiling cannot be by congruent polygons with straight edges, and we try to keep all the angles to have correct values, some edges must be curved. Moreover, we can see the symmetry of the tiling is $S_3$, the symmetry group  over three elements. 

\begin{figure}[htp]
\centering
\begin{subfigure}[b]{0.4\linewidth}
\centering
\begin{tikzpicture}[>=latex,scale=1]

\pgftext{
	\includegraphics[scale=0.068]{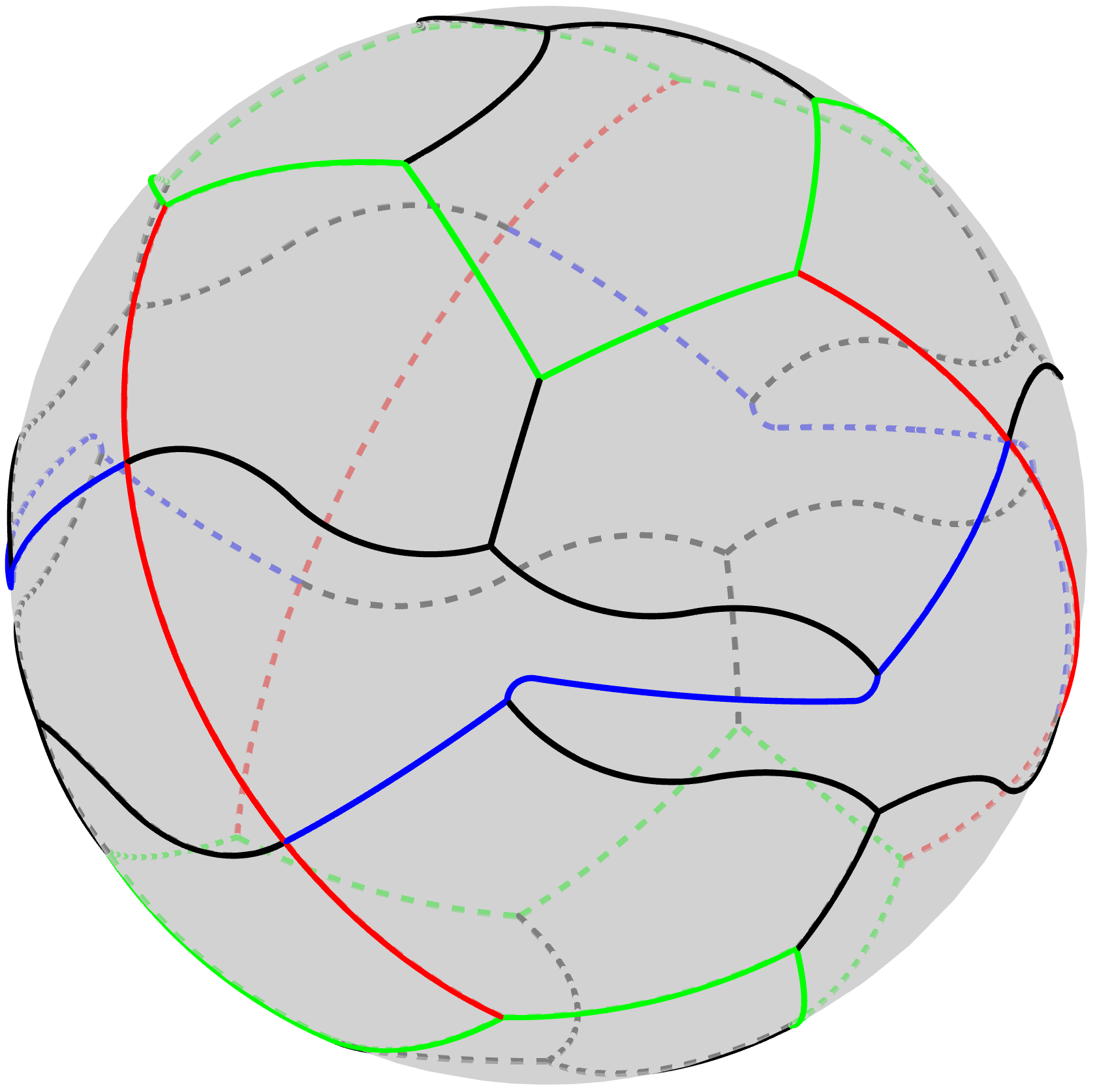}
	};

\end{tikzpicture}	
\caption{AVC(3E24)}
\label{3E24FigB}
\end{subfigure}
\begin{subfigure}[b]{0.4\linewidth}
\centering
\begin{tikzpicture}[>=latex,scale=1]

\pgftext{
	\includegraphics[scale=0.064]{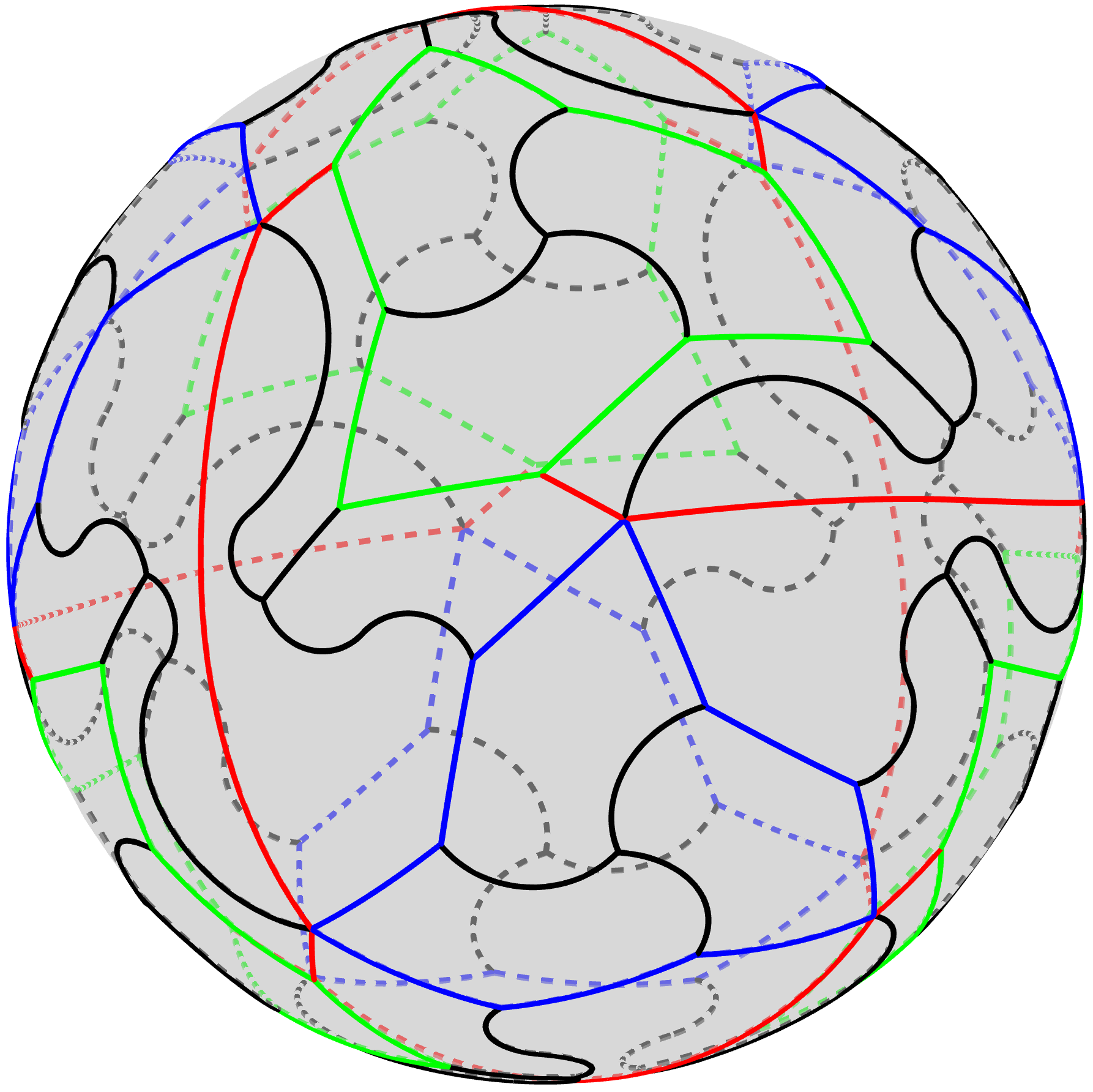}
	};

\end{tikzpicture}	
\caption{AVC(3E60)}
\label{3E60FigB}
\end{subfigure}
\caption{3d renderings of for AVC(3E24) and AVC(3E60) that are not pentagonal subdivision. The angle values are faithful.}
\label{3EFigB}
\end{figure}

Next we consider AVC(3E60). We have $\beta_2\cdots=\beta^5$ in Figure \ref{3EFigE}. Then we may determine $\circled{1}, \dots, \circled{9}$ and their counterparts under rotations in Figure \ref{3EFigE}, similar to the argument for AVC(3E24). The difference is the one $\beta$ in $\circled{\footnotesize 10}$,  between $\circled{6}, \circled{7}$. 

By $\alpha_7\alpha'_9\cdots=\alpha^2\beta$ and $\gamma_7\cdots=\gamma^3$, we determine $\circled{\footnotesize 11}$ and $\gamma_{12}$. Then $\beta'_9\beta'_{11}\cdots=\beta^5$. By the rotational symmetry, this implies $\beta'_6\cdots=\beta^5$. Then we know both $\beta$ in $\circled{\footnotesize 10}$, and one $\beta$ in $\circled{\footnotesize 13},\circled{\footnotesize 17}$. Since the other angles in $\circled{\footnotesize 10}$ are not $\beta$, we get $\beta_7\cdots=\alpha^2\beta$. This, and the two $\beta$ in $\circled{\footnotesize 10}$, and $\gamma_{12}$ determine $\circled{\footnotesize 10},\circled{\footnotesize 12}$. Then $\gamma_{10}\cdots=\gamma^3$, and the one $\beta$ in $\circled{\footnotesize 13}$ we already know, and $\alpha_{10}\alpha'_{12}\cdots=\alpha^2\beta$ determine $\circled{\footnotesize 13}, \circled{\footnotesize 14}$. Since  $\circled{\footnotesize 14}, \circled{\footnotesize 13}, \circled{\footnotesize 10}$ is of the same pattern as $\circled{1}, \circled{2}, \circled{6}$, they determine one $N(\alpha^{\gamma}|^{\gamma}\alpha)$, including $\circled{\footnotesize 15}$. 

\begin{figure}[htp]
\centering
\begin{tikzpicture}[>=latex, scale=0.8]

\foreach \a in {0,1,2}
{
\begin{scope}[rotate=120*\a, thick]

\draw
	(0,0) -- (30:0.6)
	(10:1.8) -- (50:1.8) -- (90:1.8)
	(65:2.5) -- (95:2.5) -- (125:2.5)
	(35:2.5) -- (15:3.2) -- (-5:3.2)
	(75:3.2) -- (95:2.5)
	(50:4.7) -- (20:4.7) -- (-10:4.7)
	(25:3.8) -- (20:4.7)
	(55:3.2) -- (60:4) -- (75:5.4) -- (95:4) -- (95:6.2) to[out=5, in=120] (75:5.4) to[out=30, in=65] (-25:6.2)
	(83:5.97) -- (81:6.8);
	
\draw[green]
	(30:0.6) -- (60:1.1) -- (120:1.1) -- (150:0.6)
	(-10:5.4) -- (-10:4) -- (10:4) -- (25:3.8) -- (35:3.4) -- (45:3.8) -- (50:4.7) -- (52.6:5.4) to[out=-45, in=85] (-10:5.4);

\draw[blue]
	(-85:2.5) -- (-55:2.5)  -- (-30:1.8) -- (10:1.8) -- (5:2.5) -- (-5:3.2) -- (-25:4) -- (-45:3.2) -- (-65:3.2) -- (-85:2.5)
	(95:6.2) to[out=5, in=120] (75:5.4) to[out=30, in=65] (-25:6.2);

\draw[red]
	(0:1.1) -- (10:1.8) -- (35:2.5) -- (35:3.4)
	(110:4) -- (95:4) -- (75:5.4) -- (52.6:5.4);

\draw[line width=1.5]
	(60:1.1) -- (50:1.8)
	(15:3.2) -- (10:4)
	(45:3.8) -- (60:4)
	(-25:5.4) -- (-10:5.4);

\fill 
	(0,0) circle (0.07)
	(-25:2.5) circle (0.07)
	(20:4.7) circle (0.07) 
	(81:6.8) circle (0.07);
			
\end{scope}

}

\foreach \a in {0,1,2}
\filldraw[fill=white, rotate=120*\a]
	(10:1.8) circle (0.07)
	(35:2.5) circle (0.07) 
	(-25:4) circle (0.07)
	(75:5.4) circle (0.07);

\begin{scope}[rotate=60, xscale=-1]

\foreach \a in {0,1,2}
{
\begin{scope}[rotate=120*\a, font=\tiny]

\node at (0,-0.2) {$\gamma$};
\node at (0.17,0.5) {$\beta$};
\node at (-0.17,0.5) {$\alpha$};
\node at (0.5,0.7) {$\beta$};
\node at (-0.5,0.7) {$\alpha$};

\node at (0,0.8) {$\alpha$};
\node at (0.37,1.05) {$\alpha$};
\node at (-0.37,1.05) {$\beta$};
\node at (0.42,1.5) {$\gamma$};
\node at (-0.42,1.5) {$\beta$};

\node at (0,-1.6) {$\beta$};
\node at (0.45,-1.1) {$\alpha$};
\node at (-0.45,-1.1) {$\alpha$};
\node at (0.8,-1.35) {$\gamma$};
\node at (-0.8,-1.35) {$\beta$};

\begin{scope}[rotate=-20]

\node at (0,2) {$\gamma$};
\node at (0.85,1.67) {$\alpha$};
\node at (-0.85,1.67) {$\beta$};
\node at (0.55,2.25) {$\alpha$};
\node at (-0.55,2.2) {$\beta$};

\end{scope}

\begin{scope}[rotate=-80]

\node at (-0.6,2.2) {$\gamma$};
\node at (0.45,1.9) {$\beta$};
\node at (0.45,2.25) {$\beta$};
\node at (-0.6,1.85) {$\alpha$};
\node at (-1.3,1.87) {$\alpha$};

\end{scope}

\begin{scope}[rotate=-35]

\node at (0,2.7) {$\beta$};
\node at (-0.85,2.45) {$\beta$};
\node at (0.85,2.45) {$\gamma$};
\node at (-0.45,2.95) {$\alpha$};
\node at (0.45,2.95) {$\alpha$};

\end{scope}

\node at (-0.55,1.95) {$\beta$};
\node at (-0.1,2.45) {$\beta$};
\node at (-0.95,2.35) {$\alpha$};
\node at (-1.45,2.6) {$\alpha$};
\node at (-0.85,2.85) {$\gamma$};

\begin{scope}[rotate=-80]

\node at (-0.6,2.6) {$\gamma$};
\node at (0.4,2.55) {$\alpha$};
\node at (0.15,3.1) {$\alpha$};
\node at (-0.95,3.6) {$\beta$};
\node at (-1.5,2.75) {$\beta$};

\end{scope}

\begin{scope}[rotate=-5]

\node at (0.2,2.8) {$\beta$};
\node at (0.2,3.3) {$\alpha$};
\node at (0.65,3.55) {$\alpha$};
\node at (1.35,3.5) {$\gamma$};
\node at (1,3.15) {$\beta$};

\node at (-0.2,2.8) {$\beta$};
\node at (-0.2,3.3) {$\beta$};
\node at (-0.65,3.55) {$\alpha$};
\node at (-1.25,3.5) {$\alpha$};
\node at (-1,3.15) {$\gamma$};

\node at (0,3.65) {$\alpha$};
\node at (0.5,3.85) {$\beta$};
\node at (-0.5,3.85) {$\alpha$};
\node at (0.85,4.35) {$\beta$};
\node at (-0.85,4.35) {$\gamma$};

\node at (-1,3.9) {$\beta$};
\node at (-1.3,4.25) {$\gamma$};
\node at (-1.75,3.75) {$\beta$};
\node at (-2.8,3.1) {$\alpha$};
\node at (-3,3.3) {$\alpha$};

\node at (-1.2,4.75) {$\gamma$};
\node at (1.13,4.7) {$\alpha$};
\node at (1.35,5) {$\alpha$};
\node at (-3.3,3.55) {$\beta$};
\node at (-3.55,3.8) {$\beta$};

\node at (1,3.9) {$\alpha$};
\node at (1.4,4.5) {$\alpha$};
\node at (1.65,4.9) {$\beta$}; %
\node at (1.7,3.85) {$\gamma$};
\node at (3.05,4.15) {$\beta$};

\node at (-3.05,2.3) {$\beta$};
\node at (-2.2,2.55) {$\beta$};
\node at (-1.75,3.35) {$\alpha$};
\node at (-2.6,2.75) {$\alpha$};
\node at (-1.4,3.05) {$\gamma$};

\node at (-3.5,2.3) {$\beta$};
\node at (-3.1,2.8) {$\beta$};
\node at (-3.5,3.2) {$\alpha$};
\node at (-3.8,3.5) {$\alpha$};
\node at (-4.4,2.8) {$\gamma$};

\node at (3.25,3.85) {$\beta$};
\node at (3.25,2.3) {$\beta$};
\node at (2.1,2.65) {$\alpha$};
\node at (1.45,3.05) {$\alpha$};
\node at (1.8,3.45) {$\gamma$};

\node at (-5.1,3.2) {$\beta$};
\node at (3.2,4.5) {$\beta$};
\node at (-3.95,3.95) {$\alpha$};
\node at (1.7,5.3) {$\alpha$};
\node at (-4.75,2.95) {$\gamma$};

\node at (3.65,3.95) {$\beta$};
\node at (3.65,2.3) {$\beta$};
\node at (4.35,3.8) {$\alpha$};
\node at (5.05,3.15) {$\alpha$};
\node at (4.55,2.85) {$\gamma$};

\node at (3.65,4.35) {$\beta$};
\node at (4.35,4.25) {$\beta$};
\node at (4.7,4.1) {$\alpha$};
\node at (5.55,3.2) {$\alpha$};
\node at (-1.5,6.3) {$\gamma$};

\end{scope}

\end{scope}
}

\begin{scope}[font=\tiny]

\node[inner sep=0.5,draw,shape=circle] at (150:1.25) {1};
\node[inner sep=0.5,draw,shape=circle] at (210:1.3) {2};
\node[inner sep=0.5,draw,shape=circle] at (178:2) {6};
\node[inner sep=0.5,draw,shape=circle] at (150:0.6) {3};
\node[inner sep=0.5,draw,shape=circle] at (-90:0.6) {4};
\node[inner sep=0.5,draw,shape=circle] at (-90:1.25) {5};
\node[inner sep=0.5,draw,shape=circle] at (330:1.3) {8};
\node[inner sep=0.5,draw,shape=circle] at (-62:2) {9};
\node[inner sep=0.5,draw,shape=circle] at (255:2) {7};
\node[inner sep=0,draw,shape=circle] at (-77:2.7) {11};
\node[inner sep=0,draw,shape=circle] at (222:2.5) {10};
\node[inner sep=0,draw,shape=circle] at (-95:3.2) {12};
\node[inner sep=0,draw,shape=circle] at (215:3.2) {13};
\node[inner sep=0,draw,shape=circle] at (239:3.55) {14};
\node[inner sep=0,draw,shape=circle] at (237:4.25) {15};

\end{scope}

\end{scope}

\end{tikzpicture}
\caption{Tiling for AVC(3E60), not pentagonal subdivision.}
\label{3EFigE} 
\end{figure}

We know the initial $N(\alpha^{\gamma}|^{\gamma}\alpha)$ consisting of $\circled{1}, \circled{2}, \circled{3}, \circled{6}$ determines $\circled{4}, \dots, \circled{\footnotesize 15}$ and their counterparts under rotations in Figure \ref{3EFigE}. Applying the argument to the $N(\alpha^{\gamma}|^{\gamma}\alpha)$ consisting of $\circled{\footnotesize 14}, \circled{\footnotesize 13}, \circled{\footnotesize 15},\circled{\footnotesize 10}$ in place of the initial $N(\alpha^{\gamma}|^{\gamma}\alpha)$, and repeat the process again, we get the whole tiling in Figure \ref{3EFigE}. 

A 3d rendering of the tiling for AVC(3E60) that is not the non-pentagonal subdivision is given by Figure \ref{3E60FigB}. The green triangles and blue triangles are $N(\gamma^3)$ and are congruent. The centers of the four green triangles are the vertices of a regular tetrahedron, and the centers of the four blue triangles are the vertices of the dual regular tetrahedron. Therefore the symmetry of the tiling is the same as the orientable symmetries of the tetrahedron, which is $A_4$, the alternating group over four elements.

The tiling in Figures \ref{3EFigD} and \ref{3EFigE} are constructed based on the assumption of the existence of the AAD $\alpha^{\gamma}|^{\gamma}\alpha$. Now we assume the tiling has no $\alpha^{\gamma}|^{\gamma}\alpha$. Then we get $\gamma\cdots=\gamma^3=|^{\alpha}\gamma^{\beta}|^{\alpha}\gamma^{\beta}|^{\alpha}\gamma^{\beta}|$. This further implies no $\beta^{\gamma}|^{\gamma}\beta$ in the tiling. Combined with no $\alpha\gamma\cdots$ and $\beta\gamma\cdots$ in the AVC, we know the AAD of a vertex $\beta^4 / \beta^5$ is a combination of $|^{\alpha}\beta^{\beta}|$. 
 
In Figure \ref{Subfig-3EFigF-P}, we see that $|^{\alpha}\beta^{\beta}|^{\beta}\beta^{\alpha}|$ at the bottom $\circ$ implies $\beta^2\cdots=\beta^4/\beta^5$ at the top $\circ$. Then the top $\circ$ is a combination of $|^{\alpha}\beta^{\beta}|$. Since a pentagon has only two $\beta$, this implies two $\alpha$ angles adjacent to the top $\circ$. Then we have non-adjacent $\alpha$ angles in a pentagon, a contradiction.

\begin{figure}[h!]
\centering
\begin{subfigure}[b]{0.4\linewidth}
\centering
\begin{tikzpicture}[>=latex]

\begin{scope}[shift={(-3cm,-0.15cm)}]

\draw
	(0.5,0.4) -- (-0.5,0.4) -- (-0.8,0) -- (-0.5,-0.4) -- (0.5,-0.4) -- (0.8,0) -- (0.5,0.4)
	(0,-0.8) -- (0,0.8);

\filldraw[fill=white]
	(0,-0.4) circle (0.05)
	(0,0.4) circle (0.05);

\foreach \a in {1,-1}
{
\begin{scope}[xscale=\a]

\node at (0.15,-0.25) {\scriptsize $\beta$};
\node at (0.15,0.25) {\scriptsize $\beta$};
\node at (0.45,-0.25) {\scriptsize $\alpha$};
\node at (0.45,0.25) {\scriptsize $\alpha$};

\end{scope}
}

\end{scope}
\end{tikzpicture}
\caption{}
\label{Subfig-3EFigF-P}
\end{subfigure}
\begin{subfigure}[b]{0.4\linewidth}
\centering
\begin{tikzpicture}[>=latex]

\foreach \a in {-1,1}
\draw[xscale=\a]
	(0.4,0.5) -- (0.4,0) -- (0,-0.3) -- (0,-0.8) -- (0.8,-0.8) -- (0.8,-0.3) -- (0.4,0)
	(0.8,-0.3) -- (1.2,0) -- (1.2,0.5) -- (0,0.5);

\filldraw[fill=white]
	(0,-0.8) circle (0.05)
	(0.4,0.5) circle (0.05);
	
\node at (-0.4,-0.2) {\scriptsize $\gamma$};
\node at (-0.15,-0.35) {\scriptsize $\beta$};
\node at (-0.65,-0.35) {\scriptsize $\alpha$};
\node at (-0.15,-0.65) {\scriptsize $\beta$};
\node at (-0.65,-0.65) {\scriptsize $\alpha$};

\node at (0.4,-0.2) {\scriptsize $\alpha$};
\node at (0.15,-0.35) {\scriptsize $\alpha$};
\node at (0.65,-0.35) {\scriptsize $\gamma$};
\node at (0.15,-0.65) {\scriptsize $\beta$};
\node at (0.65,-0.65) {\scriptsize $\beta$};

\node at (-0.55,0.05) {\scriptsize $\gamma$};
\node at (-1.05,0.05) {\scriptsize $\beta$};
\node at (-0.55,0.35) {\scriptsize $\alpha$};
\node at (-1.05,0.35) {\scriptsize $\alpha$};
\node at (-0.8,-0.1) {\scriptsize $\beta$};

\node at (0.55,0.05) {\scriptsize $\beta$};
\node at (1.05,0.05) {\scriptsize $\alpha$};
\node at (0.55,0.35) {\scriptsize $\beta$};
\node at (1.05,0.35) {\scriptsize $\alpha$};
\node at (0.8,-0.1) {\scriptsize $\gamma$};

\node at (-0.25,0.05) {\scriptsize $\gamma$};
\node at (0.25,0.05) {\scriptsize $\alpha$};
\node at (-0.25,0.35) {\scriptsize $\beta$};
\node at (0.25,0.35) {\scriptsize $\beta$};
\node at (0,-0.1) {\scriptsize $\alpha$};

\node at (-0.95,-0.35) {\scriptsize $\alpha$};
\node at (0.95,-0.35) {\scriptsize $\gamma$};

\node[inner sep=0.5,draw,shape=circle] at (-0.4,-0.5) {\scriptsize $1$};
\node[inner sep=0.5,draw,shape=circle] at (0.4,-0.5) {\scriptsize $2$};
\node[inner sep=0.5,draw,shape=circle] at (-0.8,0.2) {\scriptsize $3$};
\node[inner sep=0.5,draw,shape=circle] at (0,0.2) {\scriptsize $4$};
\node[inner sep=0.5,draw,shape=circle] at (0.8,0.2) {\scriptsize $5$};

\end{tikzpicture}
\caption{}
\label{Subfig-3EFigF-N}
\end{subfigure}
\caption{Tiling for AVC(3E60), in case of no $\alpha^{\gamma}|^{\gamma}\alpha$.}
\label{3EFigF} 
\end{figure}

Since $\beta^4 / \beta^5$ is a combination of $|^{\alpha}\beta^{\beta}|$, and $|^{\alpha}\beta^{\beta}|^{\beta}\beta^{\alpha}|$ is not allowed, we get $\beta^4=|^{\alpha}\beta^{\beta}|^{\alpha}\beta^{\beta}|^{\alpha}\beta^{\beta}|^{\alpha}\beta^{\beta}|$ and $\beta^5=|^{\alpha}\beta^{\beta}|^{\alpha}\beta^{\beta}|^{\alpha}\beta^{\beta}|^{\alpha}\beta^{\beta}|^{\alpha}\beta^{\beta}|$. The AAD $|^{\alpha}\beta^{\beta}|^{\alpha}\beta^{\beta}|$ determines $\circled{1}, \circled{2}$ in Figure \ref{Subfig-3EFigF-N}. Then $\gamma_1\cdots=\gamma_2\cdots=\gamma^3=|^{\alpha}\gamma^{\beta}|^{\alpha}\gamma^{\beta}|^{\alpha}\gamma^{\beta}|$ determine $\circled{3}, \circled{4}, \circled{5}$. Repeating the argument at all $|^{\alpha}\beta^{\beta}|^{\alpha}\beta^{\beta}|$ at the initial $\beta^4/\beta_5$ and new $\beta^4/\beta_5$ such as $\beta'_4\beta'_5\cdots=\beta^4/\beta^5$ gives the $3{\rm E}_1$, $3{\rm E}_2$, $3{\rm E}_3$ reductions of $PP_8$ and $PP_{20}$.
\end{proof}

Next we discuss tilings for AVC(3C24/60). Recall that the pentagons in $3$C are the two in Figure \ref{3abc_arrangement}, one (Figure \ref{Subfig-3abc_arrangement-1}) of them has adjacent $\beta, \gamma$ whereas the other (Figure \ref{Subfig-3abc_arrangement-2}) does not. 

\begin{theorem}\label{3Cthm}
If $\beta, \gamma$ are adjacent in the pentagon, then the tilings for 
\begin{align*}
\text{\rm AVC(3C24)}
&=\{
24\alpha^3\beta\gamma \colon 
24\alpha^2\beta,
8\gamma^3,
6\alpha^4 \};   \\
\text{\rm AVC(3C60)}
&=\{
60\alpha^3\beta\gamma \colon 
60\alpha^2\beta,
20\gamma^3,
12\alpha^5 \},
\end{align*}
are ${\rm 3C_1}, {\rm 3C_2}$ reductions of $PP_8$ and $PP_{20}$, and the modifications caused by independently changing the orientations of each $N(\gamma^3)$.
\end{theorem}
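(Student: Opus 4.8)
The plan is to imitate the adjacent-angle-deduction (AAD) arguments of Theorems \ref{2Dthm} and \ref{3Bthm}. Under the hypothesis the pentagon is the one in Figure \ref{Subfig-3abc_arrangement-1}, so its $\gamma$ corner is flanked by one $\alpha$ and by $\beta$, while $\beta$ is flanked by one $\alpha$ and by $\gamma$. Since $\alpha^2\beta$ is the only vertex of AVC(3C24/60) containing a $\beta$, the combinations $\beta^2\cdots$, $\alpha\gamma\cdots$ and $\beta\gamma\cdots$ are all forbidden. Now look at a vertex $\gamma^3$: across each of its three edges the two adjoining tiles contribute to the far endpoint the neighbour of their $\gamma$ corner along that edge, which is either a $\beta$ or the $\alpha$ adjacent to $\gamma$. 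Two consecutive tiles both contributing $\beta$ would create a forbidden $\beta^2\cdots$ vertex, and chasing this constraint cyclically around the three tiles forces the AAD of $\gamma^3$ to be uniform, either $|^{\alpha}\gamma^{\beta}|^{\alpha}\gamma^{\beta}|^{\alpha}\gamma^{\beta}|$ or its mirror $|^{\beta}\gamma^{\alpha}|^{\beta}\gamma^{\alpha}|^{\beta}\gamma^{\alpha}|$. Hence $N(\gamma^3)$ occurs in exactly two configurations, mirror images of one another, which are the two ``orientations'' of the statement; each has three $\alpha^2\beta$ vertices on its boundary (the feet of the three edges out of $\gamma^3$) and six further boundary vertices each showing a single $\alpha$.

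Next I would propagate. Each tile has a single $\gamma$, hence lies in a unique $N(\gamma^3)$, and the sphere is partitioned into $8$ (resp.\ $20$) copies of $N(\gamma^3)$. At each $\alpha^2\beta$ vertex on the boundary of a copy we already see its $\alpha$ and its $\beta$, so precisely one further tile sits there, belonging to an adjacent copy and contributing an $\alpha$; because $\alpha\gamma\cdots$ is not a vertex and $\gamma$ has a unique $\alpha$-neighbour in the pentagon, this $\alpha$ cannot be the $\alpha$-neighbour of a $\gamma$, which pins down how the adjacent copy is attached. Iterating this around $N(\gamma^3)$ — exactly the layer-by-layer mechanism of the $\beta^4/\beta^5$ argument in Theorem \ref{2Dthm} — forces each copy to border three others along matching triples of edges, and forces the remaining boundary vertices to close up as further $\alpha^2\beta$ vertices and as $\alpha^4$ (resp.\ $\alpha^5$) vertices. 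Thus the copies of $N(\gamma^3)$ triangulate $S^2$ with $8$ (resp.\ $20$) triangular cells whose vertices are exactly the $\alpha^4$ (resp.\ $\alpha^5$) vertices of the tiling; the requirement that every such vertex be $\alpha^4$ (resp.\ $\alpha^5$) makes the triangulation $4$-regular on $6$ vertices (resp.\ $5$-regular on $12$ vertices), hence the octahedron (resp.\ icosahedron). So the underlying combinatorics is that of $PP_8$ (resp.\ $PP_{20}$), and since the ${\rm 3C_1}$ and ${\rm 3C_2}$ relabellings are precisely the decorations of this pentagonal subdivision compatible with AVC(3C24/60), every tiling is a ${\rm 3C_1}$ or ${\rm 3C_2}$ reduction of $PP_8$/$PP_{20}$ in which the orientation of each $N(\gamma^3)$ is chosen freely.

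It then remains to confirm that the orientations are genuinely independent, i.e.\ that any assignment of orientations to the $8$ (resp.\ $20$) copies produces a valid tiling. Reflecting one copy of $N(\gamma^3)$ leaves the angle type of each of its corners unchanged (always $\alpha$ on the boundary, $\gamma$ at the centre), so no vertex alters its multiset of angles, except that at an $\alpha^2\beta$ vertex the copy supplying the lone $\beta$ may change; each $\alpha^2\beta$ vertex still receives exactly one $\beta$ and two $\alpha$, and each $\alpha^4/\alpha^5$ vertex still receives only $\alpha$, so all vertices stay admissible. Since the problem concerns angle (corner) congruence, there is no edge-length compatibility to worry about, so the reflected gluing is legitimate. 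This yields the modifications in the statement and shows there are no others.

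The delicate point is the propagation step: one must verify that the deductions around $N(\gamma^3)$ close up consistently and really do force the adjacency pattern of the $N(\gamma^3)$'s to be the octahedron (resp.\ icosahedron) $1$-skeleton — in particular ruling out ``defective'' gluings, such as two copies meeting along more than one side, or a prospective $\alpha^4$/$\alpha^5$ vertex that fails to close. This has to be carried out by hand here, since, unlike for the reductions 3A, 3B, 3D, the AVC does not reduce to AVC(2D24/60) and Theorem \ref{2Dthm} is unavailable.
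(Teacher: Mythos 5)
Your overall strategy coincides with the paper's: use $\beta^2\cdots\notin$ AVC to force the AAD of $\gamma^3$ to be $|^{\alpha}\gamma^{\beta}|^{\alpha}\gamma^{\beta}|^{\alpha}\gamma^{\beta}|$ up to mirror, so that $N(\gamma^3)$ has exactly two orientations; partition the tiling into $8$ (resp.\ $20$) copies of $N(\gamma^3)$; propagate across the three boundary vertices $\alpha^2\beta$; and observe that flipping any single $N(\gamma^3)$ preserves the angle multiset at every vertex, so the orientations are independent. The first and last of these steps are carried out correctly.

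However, there is a genuine gap at the propagation step, and it is compounded by an incorrect claim. You assert that, at a boundary vertex $\alpha^2\beta$ of $N(\gamma^3)$, excluding the possibility that the outside tile's $\alpha$ is the $\alpha$-neighbour of its $\gamma$ ``pins down how the adjacent copy is attached.'' It does not: the pentagon of Figure \ref{Subfig-3abc_arrangement-1} has \emph{two} $\alpha$ corners not adjacent to $\gamma$, so two locations for the outside tile's $\gamma$ survive, and hence two inequivalent ways for the neighbouring $N(\gamma^3)$ to be glued on (the ``thick'' and ``dashed'' gluings of Figures \ref{Subfig-3CFigA-fit1}, \ref{Subfig-3CFigA-fit2}, \ref{Subfig-3CFigB-N1}). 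Resolving this ambiguity is the actual content of the paper's proof: one must check that the three neighbours of a given $N(\gamma^3)$ can only be attached compatibly if all three use the same one of the two gluings (Figures \ref{Subfig-3CFigB-N2}, \ref{Subfig-3CFigB-N3}), that this choice then propagates to every $N(\gamma^3)$, and that both global choices yield the same family of tilings, namely the ${\rm 3C_1}$, ${\rm 3C_2}$ reductions of $PP_8$/$PP_{20}$ with free orientations. You explicitly defer exactly this verification as ``the delicate point\dots to be carried out by hand,'' so the argument as written does not rule out defective gluings and does not establish that the copies of $N(\gamma^3)$ assemble into the octahedral or icosahedral pattern. Until that compatibility analysis is supplied, the conclusion is asserted rather than proved.
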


The orientation modifications are in one-to-one correspondence with the assignment of $(+)$ and $(-)$ to the triangular faces of the octahedron and the icosahedron (treating mirror image assignments as distinct). The total numbers of assignments are $23$ for $P_8$ and $17824$ for $P_{20}$.

If $\beta$ and $\gamma$ are non-adjacent, which is the case for $3{\rm C}_3$, then the tilings are much more complicated. We will discuss the non-adjacent case after the proof.

\begin{proof} 
The AVCs imply that any tiling is a union of the $N(\gamma^3)$. We only need to find how $N(\gamma^3)$ instances are glued together. 

If $\beta, \gamma$ are adjacent in the pentagon (Figure \ref{Subfig-3abc_arrangement-1}), then $\gamma$ is adjacent to $\alpha$ and $\beta$. 
Since $\beta^2\cdots$ is not a vertex, the AAD of $\gamma^3$ must be $|^{\alpha}\gamma^{\beta}|^{\alpha}\gamma^{\beta}|^{\alpha}\gamma^{\beta}|$. This implies that $N(\gamma^3)$ is $\circled{1}, \circled{2}, \circled{3}$ in Figure \ref{Subfig-3CFigA-N1} or \ref{Subfig-3CFigA-N2}. The two differ only in their orientations. It is clear that, in a tiling for AVC(3C24/60), we may change the orientation of any one $N(\gamma^3)$ and still get a tiling. Therefore, we may write $\alpha\beta$ instead of $\alpha|\beta$ or $\beta|\alpha$ along the boundary of $N(\gamma^3)$, to indicate this property.

\begin{figure}[htp]
\centering
\begin{subfigure}[t]{0.2\linewidth}
\centering
\begin{tikzpicture}[>=latex]


\raisebox{0.5cm}{

\foreach \a in {0,1,2}{
\begin{scope}[rotate=120*\a]

\draw
	(0,0) -- (90:0.6) -- (50:1) -- (10:1) -- (-30:0.6);
	
\node at (0,-0.2) {\scriptsize $\gamma$};
\node at (0.33,-0.38) {\scriptsize $\alpha$};
\node at (-0.33,-0.38) {\scriptsize $\beta$};
\node at (0.22,-0.77) {\scriptsize $\alpha$};
\node at (-0.22,-0.77) {\scriptsize $\alpha$};

\fill
	(0,0) circle (0.05);
	
\end{scope}
}

\draw[->]
	(-60:0.33) arc (-60:240:0.33);

}
	
\end{tikzpicture}
\caption{}
\label{Subfig-3CFigA-N1}
\end{subfigure}
\begin{subfigure}[t]{0.2\linewidth}
\centering
\begin{tikzpicture}[>=latex]


\raisebox{0.5cm}{

\foreach \a in {0,1,2}{
\begin{scope}[xscale=-1, rotate=120*\a]

\draw
	(0,0) -- (90:0.6) -- (50:1) -- (10:1) -- (-30:0.6);
	
\node at (0,-0.2) {\scriptsize $\gamma$};
\node at (0.33,-0.38) {\scriptsize $\alpha$};
\node at (-0.33,-0.38) {\scriptsize $\beta$};
\node at (0.22,-0.77) {\scriptsize $\alpha$};
\node at (-0.22,-0.77) {\scriptsize $\alpha$};

\fill
	(0,0) circle (0.05);
	
\end{scope}
}

\foreach \b in {1,-1}
\draw[xscale=-1,->]
	(-60:0.33) arc (-60:240:0.33);

}

\end{tikzpicture}
\caption{}
\label{Subfig-3CFigA-N2}
\end{subfigure}
\begin{subfigure}[t]{0.25\linewidth}
\centering
\begin{tikzpicture}[>=latex]


\begin{scope}[
]

\foreach \a in {0,1,2}
{
\begin{scope}[rotate=120*\a]
	
\draw[gray!70]
	(0,0) -- (90:0.76);
	
\draw
	(10:1) -- (50:1) -- (130:1);
	
\node at (13:0.8) {\scriptsize $\alpha$};
\node at (47:0.8) {\scriptsize $\alpha$};
\node at (30:0.2) {\scriptsize $\gamma$};
\node at (90:0.56) {\scriptsize $\alpha\beta$};

\end{scope}
}

\draw[gray!70]
	(-0.64,1.96) -- (-0.64,0.76)
	(0.64,1.36) -- (-0.64,1.36);

\draw
	(0.64,0.76) -- (0.64,1.96) -- (-1.4,1.96) -- (170:1);

\fill
	(0,0) circle (0.05)
	(-0.64,1.36) circle (0.05);

\node at (0.5,0.9) {\scriptsize $\alpha$};
\node at (0.4,1.36) {\scriptsize $\alpha\beta$};
\node at (0,0.9) {\scriptsize $\alpha$};
\node at (-0.64,0.9) {\scriptsize $\alpha\beta$};
\node at (-0.5,1.2) {\scriptsize $\gamma$};

\node at (0.5,1.8) {\scriptsize $\alpha$};
\node at (0,1.8) {\scriptsize $\alpha$};
\node at (-0.64,1.8) {\scriptsize $\alpha\beta$};
\node at (-0.5,1.5) {\scriptsize $\gamma$};

\node at (-0.8,1.36) {\scriptsize $\gamma$};
\node at (-1.2,1.8) {\scriptsize $\alpha$};
\node at (-0.95,0.5) {\scriptsize $\alpha$};

\node[inner sep=0.5,draw,shape=circle] at (0,1.15) {\scriptsize $1$};

\end{scope}

\end{tikzpicture}
\caption{}
\label{Subfig-3CFigA-fit1}
\end{subfigure}
\begin{subfigure}[t]{0.25\linewidth}
\centering
\begin{tikzpicture}[>=latex]


\begin{scope}[
xscale=-1
]

\foreach \a in {0,1,2}
{
\begin{scope}[rotate=120*\a]
	
\draw[gray!70]
	(0,0) -- (90:0.76);
	
\draw
	(10:1) -- (50:1) -- (130:1);
	
\node at (13:0.8) {\scriptsize $\alpha$};
\node at (47:0.8) {\scriptsize $\alpha$};
\node at (30:0.2) {\scriptsize $\gamma$};
\node at (90:0.56) {\scriptsize $\alpha\beta$};

\end{scope}
}

\draw[gray!70]
	(-0.64,1.96) -- (-0.64,0.76)
	(0.64,1.36) -- (-0.64,1.36);

\draw
	(0.64,0.76) -- (0.64,1.96) -- (-1.4,1.96) -- (170:1);

\fill
	(0,0) circle (0.05)
	(-0.64,1.36) circle (0.05);

\node at (0.5,0.9) {\scriptsize $\alpha$};
\node at (0.4,1.36) {\scriptsize $\alpha\beta$};
\node at (0,0.9) {\scriptsize $\alpha$};
\node at (-0.64,0.9) {\scriptsize $\alpha\beta$};
\node at (-0.5,1.2) {\scriptsize $\gamma$};

\node at (0.5,1.8) {\scriptsize $\alpha$};
\node at (0,1.8) {\scriptsize $\alpha$};
\node at (-0.64,1.8) {\scriptsize $\alpha\beta$};
\node at (-0.5,1.5) {\scriptsize $\gamma$};

\node at (-0.8,1.36) {\scriptsize $\gamma$};
\node at (-1.2,1.8) {\scriptsize $\alpha$};
\node at (-0.95,0.5) {\scriptsize $\alpha$};

\node[inner sep=0.5,draw,shape=circle] at (0,1.15) {\scriptsize $1$};

\end{scope}

\end{tikzpicture}
\caption{}
\label{Subfig-3CFigA-fit2}
\end{subfigure}
\caption{Orientations of $N(\gamma^3)$, and glueing of two neighboring $N(\gamma^3)$.  }
\label{3CFigA}
\end{figure}

We start with $N(\gamma^3)$ as shown in the lower parts of Figures \ref{Subfig-3CFigA-fit1}, \ref{Subfig-3CFigA-fit2}. Then $\alpha\beta\cdots=\alpha^2\beta$ gives one $\alpha_1$. Since $\gamma\cdots=\gamma^3$, we know $\gamma_1$ is not at the boundary of $N(\gamma^3)$. Then we get two possible locations of $\gamma_1$, and further determine the new $N(\gamma^3)$ around $\gamma_1\cdots=\gamma^3$. The new $N(\gamma^3)$ is glued to the original $N(\gamma^3)$ along either the thick part or the dashed part of the boundary, as in Figure \ref{Subfig-3CFigB-N1}.

\begin{figure}[htp]
\centering
\begin{subfigure}[t]{0.2\linewidth}
\centering
\begin{tikzpicture}[>=latex]

\raisebox{1cm}{

\foreach \a in {0,1,2}
{
\begin{scope}[rotate=120*\a]
	
\draw[gray!70]
	(0,0) -- (90:0.76);
	
\draw
	(10:1) -- (50:1) -- (130:1);

\fill (0,0) circle (0.05);
	
\node at (13:0.8) {\scriptsize $\alpha$};
\node at (47:0.8) {\scriptsize $\alpha$};
\node at (30:0.2) {\scriptsize $\gamma$};
\node at (90:0.56) {\scriptsize $\alpha\beta$};

\end{scope}
}

\draw[line width=1.2]
	(50:1.1) -- (130:1.1) -- (170:1.1); 

\draw[dashed, line width=1.2]
	(14:1.2) -- (50:1.25) -- (126:1.2); 

}
	
\end{tikzpicture}
\caption{}
\label{Subfig-3CFigB-N1}
\end{subfigure}
\begin{subfigure}[t]{0.2\linewidth}
\centering
\begin{tikzpicture}[>=latex]

\raisebox{1cm}{

\foreach \a in {0,1,2}
{
\begin{scope}[rotate=120*\a]
	
\draw[gray!70]
	(0,0) -- (90:0.76);
	
\draw
	(10:1) -- (50:1) -- (130:1);

\fill (0,0) circle (0.05);
	
\node at (13:0.8) {\scriptsize $\alpha$};
\node at (47:0.8) {\scriptsize $\alpha$};
\node at (30:0.2) {\scriptsize $\gamma$};
\node at (90:0.56) {\scriptsize $\alpha\beta$};

\end{scope}
}

\foreach \a in {0,1,2}
\draw[line width=1.2, rotate=120*\a]
	(52:1.1) -- (130:1.13) -- (168:1.1); 

}

\end{tikzpicture}
\caption{}
\label{Subfig-3CFigB-N2}
\end{subfigure}
\begin{subfigure}[t]{0.45\linewidth}
\centering
\begin{tikzpicture}[>=latex]


\begin{scope}[shift={(7cm,-1cm)}]

\foreach \a in {0,1,2}
\foreach \b in {0,1,2}
{
\begin{scope}[shift={(-30+120*\b:1.2)}, rotate=120*\a]

\draw[gray!70]
	(0,0) -- (0.693,0);
	
\draw
	(90:1.2) -- (-30:1.2);
	
\fill
	(0,0) circle (0.05);

\node at (60:0.2) {\scriptsize $\gamma$};
\node at (-30:0.9) {\scriptsize $\alpha$};
\node at (60:0.5) {\scriptsize $\alpha$};
\node at (0:0.48) {\scriptsize $\alpha\beta$};

\end{scope}
}

\foreach \a in {0,1,2}
{
\begin{scope}[rotate=120*\a]

\draw[gray!70]
	(0,0) -- (-0.693,0);

\fill
	(0,0) circle (0.05);
	
\node at (0:0.2) {\scriptsize $\gamma$};
\node at (30:0.9) {\scriptsize $\alpha$};
\node at (0:0.5) {\scriptsize $\alpha$};
\node at (-60:0.48) {\scriptsize $\alpha\beta$};
\end{scope}
}

\foreach \a in {0,1,2}
\filldraw[fill=white, rotate=120*\a]
	(-90:1.2) circle (0.05)
	(90:2.4) circle (0.05);

\end{scope}

\end{tikzpicture}
\caption{}
\label{Subfig-3CFigB-N3}
\end{subfigure}
\caption{Tiling for AVC(3C24/60), $\beta, \gamma$ adjacent.}
\label{3CFigC}
\end{figure}

The two glueing options happen at each of the three $\alpha\beta$ instances along the boundary of $N(\gamma^3)$. For the three neighboring $N(\gamma^3)$ instances to be glued in compatible way, they must all follow the thick lines as shown in Figure \ref{Subfig-3CFigB-N2}, or all follow the dashed lines. The former gives Figure \ref{Subfig-3CFigB-N3}, where we note that the central $N(\gamma^3)$ is glued to the neighboring $N(\gamma^3)$ also along the thick lines. We repeat the argument and find all the $N(\gamma^3)$ are glued together along the thick lines. The tilings we get are the $3{\rm C}_1$ and $3{\rm C}_2$ reductions of $PP_8$ and $PP_{20}$, and the modifications, by independently changing the orientations of $N(\gamma^3)$. We remark that the $3{\rm C}_1$ and $3{\rm C}_2$ reductions are related by changing all the orientations.

Similarly, if one $N(\gamma^3)$ is glued onto the original one along the dashed lines, then all others are also glued along the dashed lines. Then we get the same tilings. 
\end{proof}

If $\beta, \gamma$ are non-adjacent in the pentagon (Figure \ref{Subfig-3abc_arrangement-2}), then the configuration of $N(\gamma^3)$ is not unique. The situation is not amenable to a reasonably concise analysis. We do however find all the tilings for AVC(3C24) by computer search, and illustrate them in Figure \ref{3CFigB}.

\begin{figure}[htp]
\centering
\begin{tikzpicture}[>=latex]

\foreach \a in {0,...,4}
\draw[gray!70]
	(0,-3*\a) -- ++(12,0);

\foreach \a in {4,5}
\draw[gray!70]
	(0,-3*\a) -- ++(0,-3) -- ++(9,0) -- ++(0,3);
	
\foreach \a in {0,...,3}
\draw[gray!70]
	(3*\a,0) -- ++(0,-6);

\foreach \a in {0,1,2}
\draw[gray!70]
	(6*\a,-6) -- ++(0,-6);
	
\draw[gray!70]
	(12,0) -- ++(0,-3);

\begin{scope}[shift={(10.5 cm, -15cm)}]

\foreach \a in {1,-1}
{
\draw[yshift=0.5*\a cm, line width=1.5, gray!70]
	(-0.5,0) -- (0.5,0) 
;

\draw[yshift=0.5*\a cm]
	(-0.7,0.2) -- (-0.5,0) -- (0.5,0) -- (0.7,0.2)
	(-0.7,-0.2) -- (-0.5,0) -- (0.5,0) -- (0.7,-0.2)
;
	
\begin{scope}[yscale=\a]

\node at (-0.4,0.35) {\scriptsize $\alpha$};
\node at (-0.4,0.65) {\scriptsize $\beta$};
\node at (0.4,0.65) {\scriptsize $\alpha$};
\node at (0.4,0.35) {\scriptsize $\beta$};

\end{scope}

}

\end{scope}


\begin{scope}[shift={(1.5 cm, -1.5cm)}, scale=0.5]

\foreach \a in {0,1,2}
{
\begin{scope}[rotate=120*\a]

\draw[very thin]
	(0,0) -- (30:0.5) -- (60:0.9) -- (120:0.9) -- (150:0.5)
	(120:0.9) -- (130:1.4) -- (90:1.4) -- (50:1.4) -- (10:1.4) -- (0:0.9)
	(50:1.4) -- (60:0.9)
	(90:1.4) -- (70:1.9) -- (30:1.9) -- (-10:1.9) -- (-50:1.9)
	(30:1.9) -- (10:1.4)
	(110:1.9) -- (90:2.4) -- (50:2.4) -- (10:2.4) -- (-30:2.4)
	(50:2.4) -- (30:1.9)
	(90:2.4) -- (90:2.8);

\fill
	(0,0) circle (0.06)
	(50:1.4) circle (0.06)
	(-10:1.9) circle (0.06)
	(90:2.8) circle (0.06);

\fill[gray!70]
	(92:1.5) circle (0.06)
	(115:1) circle (0.06)
	(63:0.75) circle (0.06)
	(69:1.75) circle (0.06)
	(51:2.25) circle (0.06)
	(100:2.2) circle (0.06)
	(10:2.5) circle (0.06);
	
\end{scope}
}

\foreach \a in {0,1,2}
\filldraw[fill=white, rotate=120*\a]
	(30:1.9) circle (0.06)
	(10:1.4) circle (0.06);

\end{scope}


\begin{scope}[shift={(4.5 cm, -1.5cm)}, scale=0.5]

\foreach \a in {0,1,2}
{
\begin{scope}[rotate=120*\a]

\draw[very thin]
	(0,0) -- (30:0.5) -- (60:0.9) -- (120:0.9) -- (150:0.5)
	(120:0.9) -- (130:1.4) -- (90:1.4) -- (50:1.4) -- (10:1.4) -- (0:0.9)
	(50:1.4) -- (60:0.9)
	(90:1.4) -- (70:1.9) -- (30:1.9) -- (-10:1.9) -- (-50:1.9)
	(30:1.9) -- (10:1.4)
	(110:1.9) -- (130:2.4) -- (90:2.4) -- (50:2.4) -- (10:2.4)
	(50:2.4) -- (70:1.9)
	(90:2.4) -- (90:2.8);

\fill
	(0,0) circle (0.06)
	(50:1.4) circle (0.06)
	(-10:1.9) circle (0.06)
	(90:2.8) circle (0.06);

\fill[gray!70]
	(92:1.5) circle (0.06)
	(115:1) circle (0.06)
	(63:0.75) circle (0.06)
	(31:1.75) circle (0.06)
	(49:2.27) circle (0.06)
	(90:2.27) circle (0.06)
	(10:2.5) circle (0.06);
	
\end{scope}
}

\foreach \a in {0,1,2}
\filldraw[fill=white, rotate=120*\a]
	(70:1.9) circle (0.06)
	(10:1.4) circle (0.06);

\end{scope}


\begin{scope}[shift={(7.5 cm, -1.5cm)}, scale=0.4]

\foreach \a in {1,-1}
{
\begin{scope}[scale=\a]

\foreach \b in {1,-1}
\draw[very thin, xscale=\b]
	(0,2.6) -- (0,1.1) -- (0.5,0.7) --  (1,1.1) -- (1.5,0.7) --  (2,1.1) --  (2,2.6)
	(1,1.1) -- (1,1.8)
	(0,1.8) -- (2,1.8)
	(2,1.1) to[out=-40, in=40] (2,-1.1)
	(0.5,0) -- (0.5,0.7)
	(0,0) -- (1.5,0) -- (1.5,0.7);

\draw[very thin]
	(-2,2.6) -- (2,2.6) to[out=0, in=0] (2,-1.8)
	(2,-2.6) -- (2,-3.4);
	
\fill
	(0.5,0.7) circle (0.075)
	(-1,1.1) circle (0.075)
	(2,1.1) circle (0.075)
	(0,2.6) circle (0.075);

\fill[gray!70]
	(-0.35,0.6) circle (0.075)
	(1.35,0.6) circle (0.075)
	(-0.15,1.2) circle (0.075)
	(-1.85,1.2) circle (0.075)
	(0.85,1.65) circle (0.075)
	(1.85,1.95) circle (0.075)
	(-1,1.95) circle (0.075)
	(1.15,1.2) circle (0.075)
	(1.35,-0.15) circle (0.075)
	(1.65,-0.6) circle (0.075)
	(2.15,-2.6) circle (0.075)
	(2.15,2.4) circle (0.075);

\end{scope}
}

\foreach \a in {1,-1}
\filldraw[fill=white, scale=\a]
	(0,1.8) circle (0.075)
	(0.5,0) circle (0.075)
	(2,-1.8) circle (0.075);  

\end{scope}


\begin{scope}[shift={(10.75 cm, -1.5cm)}, scale=0.4]

\foreach \a in {1,-1}
{
\begin{scope}[scale=\a]

\draw[very thin]
	(-2,1.1) -- (-1.5,0.7) -- (-1,1.1) -- (-0.5,0.7) -- (0,1.1) -- (0.5,0.7) -- (1,1.1) -- (1.5,0.7) -- (2,1.1) to[out=-40, in=40] (2,-1.1)  
	(0.5,0.7) -- (0.5,-0.7)
	(1.5,0.7) -- (1.5,-0.7)
	(0,0) -- (1.5,0)
	(0,1.1) -- (0,1.8)
	(1,1.1) -- (1,1.8)
	(-1,1.1) -- (-1,1.8)
	(2,1.1) -- (2,1.8) -- (-2,1.8) -- (-2,1.1);

\fill
	(-1,1.1) circle (0.075)
	(0.5,0.7) circle (0.075)
	(2,1.1) circle (0.075);

\fill[gray!70]
	(0.35,-0.6) circle (0.075)
	(-1.35,0.2) circle (0.075)
	(1.35,0.6) circle (0.075)
	(1.65,-0.6) circle (0.075)
	(-0.15,1.2) circle (0.075)
	(1.15,1.2) circle (0.075)
	(-1,1.95) circle (0.075);
		
\end{scope}
}

\draw[very thin]
	(-2,1.8) -- (-2,2.6) -- (-3,2.6) 
	(-3,3.4) -- (-3,-3.4) 
	(-3,1.8) -- (-2,1.1)
	(-3,-1.8) -- (-2,-1.8)
	(-2,2.6) to [out=0,in=140] (1,1.8)
	(-3,-2.6) to [out=0,in=220] (0,-1.8)
	(2,1.8) -- (2,3.4)
	(2,-1.8) -- (2,-3.4);

\fill
	(-2,2.6) circle (0.075)
	(-3,-2.6) circle (0.075);

\fill[gray!70]
	(1.85,1.95) circle (0.075)
	(0.15,1.65) circle (0.075)
	(-0.85,-1.65) circle (0.075)
	(-1.85,1.65) circle (0.075)
	(1.85,-1.2) circle (0.075)
	(-2.85,1.9) circle (0.075)
	(-2,-1.95) circle (0.075)
	(-2.85,-1.65) circle (0.075)
	(-3.15,2.6) circle (0.075)
	(2.15,-1.8) circle (0.075);
	
\foreach \a in {1,-1}
\filldraw[fill=white, yscale=\a]
	(2,3.4) circle (0.075)
	(-3,3.4) circle (0.075);

\filldraw[fill=white]
	(-2,1.1) circle (0.075)
	(1,1.8) circle (0.075)
	(0,-1.8) circle (0.075)
	(0.5,0) circle (0.075)
	(-0.5,0) circle (0.075);
	
\end{scope}


\begin{scope}[shift={(1.5 cm, -4.5cm)}, scale=0.5]

\foreach \a in {1,-1}
{
\begin{scope}[scale=\a]

\draw[very thin]
	(0.9,0) -- (-0.9,0)
	(0.9,0.6) -- (-0.9,0.6)
	(0.3,0.6) -- (-0.3,0)
	(0.9,1.8) -- (-0.3,0.6)
	(-0.3,1.2) -- (-0.9,1.8)
	(0.9,1.2) -- (0.9,-1.2) 
	(-1.5,1.8) -- (1.5,1.8) -- (1.5,1.2) -- (-1.5,1.2) -- (-1.5,2.8)
	(1.5,1.2) to[out=-90, in=60] (0.9,-0.6)
	(1.5,1.8) to[out=-60, in=60](1.5,-1.2)
	(0,1.8) -- (0,2.4) -- (-1.5,2.4) 
	(0,2.4) to[out=0, in=120] 
	(2,1.8) to[out=-60, in=60] (1.5,-1.8)
	;

\fill
	(0.3,0.6) circle (0.06)
	(-0.3,1.2) circle (0.06)
	(1.5,1.2) circle (0.06)
	(0,2.4) circle (0.06);

\fill[gray!70]
	(-0.35,0.7) circle (0.06)
	(0.95,1.7) circle (0.06)
	(0,1.7) circle (0.06)
	(-0.8,0.1) circle (0.06)
	(0.3,0.1) circle (0.06)
	(0.8,1.1) circle (0.06)
	(-1.4,1.3) circle (0.06)
	(-0.9,1.9) circle (0.06)
	(1.5,1.9) circle (0.06)
	(1,0.6) circle (0.06)
	(1,-1.1) circle (0.06)
	(1.6,-2.4) circle (0.06);

\end{scope}
}

\foreach \a in {1,-1}
\filldraw[fill=white, scale=\a]
	(0.9,-0.6) circle (0.06)
	(0.3,1.2) circle (0.06)
	(-1.5,1.8) circle (0.06);

\end{scope}


\begin{scope}[shift={(4.5 cm, -4.5cm)}, scale=0.5]

\foreach \a in {1,-1}
{
\begin{scope}[scale=\a]

\draw[very thin]
	(0,0) -- (0.3,0.3)
	(-0.3,0.3) -- (0.3,0.9)
	(-0.9,0.3) -- (1.5,0.3)
	(0.9,0.9) -- (-1.5,0.9)
	(0.9,-0.9) -- (0.9,1.5)
	(-0.3,0.9) -- (-0.3,2.1)
	(1.5,1.5) -- (-1.5,1.5) -- (-1.5,-2.1)
	(1.5,1.5) to[out=-60,in=60] (1.5,-1.5)
	(1.93,0) -- (2.4,0)
	(-2.8,-2.1) -- (0.3,-2.1) to[out=0, in=-90]
	(2.4,0) -- (2.4,2.1);

\fill
	(-0.3,0.3) circle (0.06)
	(-1.5,0.9) circle (0.06)
	(0.9,1.5) circle (0.06)
	(2.4,0) circle (0.06);

\fill[gray!70]
	(0.35,0.2) circle (0.06)
	(0.8,0.8) circle (0.06)
	(1.4,0.4) circle (0.06)
	(1,-0.3) circle (0.06)
	(1.85,0) circle (0.06)
	(1.6,2) circle (0.06)
	(-0.2,2) circle (0.06)
	(0.4,-1) circle (0.06)
	(2.4,2.2) circle (0.06)
	(0.8,-0.8) circle (0.06)
	(-0.3,-1) circle (0.06)
	(1.57,-1.57) circle (0.06);
		
\end{scope}
}

\foreach \a in {1,-1}
\filldraw[fill=white, scale=\a]
	(0.9,0.3) circle (0.06)
	(-0.3,1.5) circle (0.06)
	(1.5,1.5) circle (0.06);

\end{scope}


\begin{scope}[shift={(7.5 cm, -4.5cm)}, scale=0.5]

\foreach \a in {1,-1}
\draw[gray, very thick, scale=\a]
	(-1.5,1.1) -- (-2.1,1.1)
	(-2.1,2.3) -- (-2.8,2.3);

\foreach \a in {1,-1}
\foreach \b in {1,-1}
\draw[very thin, xscale=\a, yscale=\b]
	(0,1.1) -- (2.1,1.1) 
	(1.5,1.1) -- (1.5,0)
	(1.5,0) -- (1,0) -- (0.5,0.5) -- (0.5,1.7) -- (0,1.9) -- (0,2.3) -- (2.1,2.3) -- (2.1,0)
	(0,0) -- (0,0.3) -- (0.5,0.5)
	(0.5,1.7) -- (2.1,1.7);

\foreach \a in {1,-1}
{
\begin{scope}[very thin, scale=\a]

\draw
	(-2.1,2.3) -- (-2.8,2.3)
	(2.1,2.3) to[out=-60, in=60] (2.1,-1.7);

\fill
	(-0.5,0.5) circle (0.06)
	(-0.5,1.7) circle (0.06)
	(1.5,1.1) circle (0.06)
	(2.1,2.3) circle (0.06);

\fill[gray!70]
	(0.4,0.57) circle (0.06)
	(0.4,1.63) circle (0.06)
	(0.1,0.22) circle (0.06)
	(1.05,0.1) circle (0.06)
	(-1.4,0.1) circle (0.06)
	(-0.1,2.2) circle (0.06)
	(0.1,1.97) circle (0.06)
	(2,1.6) circle (0.06)
	(2.2,1.1) circle (0.06);

\end{scope}
}

\foreach \a in {1,-1}
\filldraw[fill=white, scale=\a]
	(0.5,1.1) circle (0.06)
	(-0.5,1.1) circle (0.06)
	(2.1,-1.7) circle (0.06);

\end{scope}


\begin{scope}[shift={(1.5 cm, -7.5cm)}, scale=0.4]

\foreach \a in {1,-1}
{

\foreach \b in {0,1}
\draw[very thin, xshift=7.5*\b cm, scale=\a]
	(0,0) -- (0,0.7) -- (0.5,1.1) -- (1,0.7) -- (1.5,1.1) -- (2,0.7) -- (2,0) -- (1,-0.7) -- (0,-0.7)
	(0,0) -- (2,0)
	(1,0.7) -- (1,0)
	(0.5,1.1) -- (0.5,1.8) 
	(1.5,1.1) -- (1,2.6)
	(2,0) -- (2,-0.7) -- (2.8,-1.1)
	(2,0.7) -- (2.8,1.1)
	(2,2.6) -- (2.8,1.1) -- (2.8,-1.1) -- (2,-2.6) -- (2,-3.4)
	(2,-0.7) -- (1,-1.8) -- (-0.5,-1.8) -- (-1,-2.6) -- (-2,-2.6) -- (-2,-3.4)
	(1,-0.7) -- (0.2,-1.8)
	(-1,2.6) -- (-1,1.8)
	(-2,2.6) -- (1.5,2.6);
	

\begin{scope}[scale=\a]

\fill
	(1,0.7) circle (0.075)
	(-1,0.7) circle (0.075)
	(-1,2.6) circle (0.075)
	(2.8,1.1) circle (0.075);

\fill[gray!70]
	(-1,0.15) circle (0.075)
	(0.15,0.6) circle (0.075)
	(1.85,0.6) circle (0.075)
	(0.65,1.75) circle (0.075)
	(0.35,1.2) circle (0.075)
	(1.65,1.2) circle (0.075)
	(-0.2,1.95) circle (0.075)
	(2.15,-0.6) circle (0.075)
	(0.92,-1.65) circle (0.075)
	(2.6,-1.17) circle (0.075)
	(1.85,2.75) circle (0.075)
	(-2.15,2.65) circle (0.075)
	;

\end{scope}


\begin{scope}[xshift=7.5cm, scale=\a]

\foreach \a in {1,-1}	
\draw[gray, very thick, scale=\a]
	(0,0.7) -- (-1,0.7)
	(1.5,1.1) -- (2,0.7)
	(1,-2.6) -- (2,-2.6);

\fill
	(1,0) circle (0.075)
	(2,-0.7) circle (0.075)
	(2,2.6) circle (0.075)
	(0.5,1.8) circle (0.075);

\fill[gray!70]
	(0.5,0.9) circle (0.075)
	(1,0.9) circle (0.075)
	(-0.48,1.65) circle (0.075)
	(-0.85,1.95) circle (0.075)
	(-2.95,1.1) circle (0.075)
	(2.65,0.85) circle (0.075)
	;   
		
\end{scope}
}

\foreach \a in {1,-1}
\foreach \b in {0,1}
\filldraw[fill=white, xshift=7.5*\b cm, scale=\a]
	(0,0) circle (0.075)
	(2,0) circle (0.075)
	(1,2.6) circle (0.075)
	(2,3.4) circle (0.075)
	(-2,3.4) circle (0.075);

\end{scope}


\begin{scope}[shift={(7.5 cm, -7.5cm)}, scale=0.4]

\foreach \c in {0,1}
{
\begin{scope}[xshift=7.5*\c cm]

\foreach \a in {1,-1}
\foreach \b in {1,-1}
\draw[very thin, yscale=\b, xscale=\a]
	(0,1.8) -- (0,1.1) -- (0.5,0.7) --  (1,1.1) -- (1.5,0.7) --  (2,1.1) --  (2,1.8)
	(1,1.1) -- (1,1.8)
	(0,1.8) -- (2,1.8)
	(2,1.1) to[out=-40, in=40] (2,-1.1)
	(0.5,0) -- (0.5,0.7)
	(0,0) -- (1.5,0) -- (1.5,0.7);

\foreach \a in {1,-1}
\draw[very thin, scale=\a]
	(3,1.8) -- (1,1.8) -- (1,2.6)
	(2,-1.8) to[out=0, in=-90]
	(3,1.8) -- (3,2.6) -- (1,2.6) to[out=180, in=40]
	(-2,1.8)
	(3,2.6) -- (3,3.4);

\end{scope}
}

\foreach \a in {1,-1}
{
\begin{scope}[scale=\a]

\fill
	(0.5,0.7) circle (0.075)
	(-1,1.1) circle (0.075)
	(2,1.1) circle (0.075)
	(1,2.6) circle (0.075);

\fill[gray!70]
	(-0.35,0.6) circle (0.075)
	(1.35,0.6) circle (0.075)
	(-0.15,1.2) circle (0.075)
	(-1.85,1.2) circle (0.075)
	(0.85,1.65) circle (0.075)
	(2,1.95) circle (0.075)
	(-1,1.95) circle (0.075)
	(1.15,1.2) circle (0.075)
	(1.35,-0.15) circle (0.075)
	(1.65,-0.6) circle (0.075)
	(3.15,2.6) circle (0.075)
	(2.85,1.65) circle (0.075);

\end{scope}
}


\foreach \a in {1,-1}	
\draw[gray, very thick, xshift=7.5cm, scale=\a]
	(0,1.8) -- (-1,1.8)
	(2,1.8) -- (3,1.8);

\foreach \a in {1,-1}
{
\begin{scope}[xshift=7.5cm, scale=\a]

\fill
	(-0.5,0.7) circle (0.075)
	(1,1.1) circle (0.075)
	(-2,1.1) circle (0.075)
	(1,2.6) circle (0.075);

\fill[gray!70]
	(0.35,0.6) circle (0.075)
	(-1.35,0.6) circle (0.075)
	(1.35,0.15) circle (0.075)
	(0.15,1.2) circle (0.075)
	(1.85,1.2) circle (0.075)
	(-1.15,1.2) circle (0.075)
	(3.15,2.6) circle (0.075);

\end{scope}
}

\foreach \a in {1,-1}
\foreach \c in {0,1}
\filldraw[fill=white, xshift=7.5*\c cm, scale=\a]
	(1,1.8) circle (0.075)
	(0.5,0) circle (0.075)
	(2,-1.8) circle (0.075);

\end{scope}


\begin{scope}[shift={(1.5 cm, -10.5cm)}, scale=0.5]


\foreach \b in {0,1,2,3}
{
\begin{scope}[xshift=6*\b cm]

\foreach \a in {0,...,8}
\draw[very thin, rotate=40*\a]
	(70:2.4) -- (110:2.4)
	(70:1.8) -- (90:1.5) -- (110:1.8)
	(90:1.5) -- (90:0.9) -- (50:0.9)
	(70:1.8) -- (70:2.4);

\foreach \a in {0,1,2}
\draw[very thin, rotate=120*\a]
	(0,0) -- (90:0.9);
			
\end{scope}
}

\foreach \b in {0,2}
{
\begin{scope}[xshift=6*\b cm]

\foreach \a in {0,...,8}
\draw[green, rotate=40*\a]
	(30:2.4) -- (-10:2.4)
	(50:0.9) -- (10:0.9);

\foreach \a in {0,1,2}
\draw[red, rotate=120*\a]
	(90:0.9) -- (90:1.5) -- (70:1.8) -- (70:2.4)
	(10:0.9) -- (10:1.5) -- (30:1.8) -- (30:2.4);
	
\end{scope}
}

\foreach \b in {0,...,3}
{
\begin{scope}[xshift=6*\b cm]

\foreach \a in {0,1,2}
\filldraw[fill=white, rotate=120*\a]
	(90:0.9) circle (0.06);
	
\fill
	(0,0) circle (0.06);
	
\end{scope}
}
	
\foreach \a in {0,1,2}
\foreach \b in {0,1}
{
\draw[very thin, xshift=6*\b cm, rotate=120*\a]
	(70:2.4) -- (70:3);
\draw[very thin, xshift=12cm+6*\b cm, rotate=120*\a]
	(30:2.4) -- (30:3);	
	
\draw[gray, very thick, xshift=6cm+12*\b cm, rotate=120*\a]
	(10:0.9) -- (50:0.9);

\draw[gray, very thick, xshift=18 cm, rotate=120*\a]
	(70:2.4) -- (110:2.4);

\fill[xshift=12*\b cm, rotate=120*\a]
	(50:1.5) circle (0.06)
	(-10:1.8) circle (0.06);

\fill[xshift=6*\b cm, rotate=120*\a]
	(70:3) circle (0.06);

\fill[xshift=12cm+6*\b cm, rotate=120*\a]
	(30:3) circle (0.06);

\fill[xshift=6cm+12*\b cm, rotate=120*\a]
	(30:1.8) circle (0.06)
	(-30:1.5) circle (0.06);
		
\fill[xshift=12*\b cm, gray!70, rotate=120*\a]
	(123:1) circle (0.06)
	(15:1.4) circle (0.06)
	(50:0.75) circle (0.06)
	(85:1.4) circle (0.06)
	(74:1.87) circle (0.06)
	(26:1.87) circle (0.06);

\fill[gray!70, rotate=120*\a]
	(33:2.25) circle (0.06);

\fill[xshift=12 cm, gray!70, rotate=120*\a]
	(67:2.25) circle (0.06);
	
\fill[xshift=12*\b cm, gray!70, rotate=120*\a]
	(-10:2.5) circle (0.06);

\fill[gray!70, xshift=6cm+12*\b cm, rotate=120*\a]
	(55:1.4) circle (0.06)
	(5:1.4) circle (0.06)
	(66:1.87) circle (0.06)
	(114:1.87) circle (0.06);

\fill[gray!70, xshift=6cm, rotate=120*\a]
	(107:2.25) circle (0.06)
	(30:2.5) circle (0.06);

\filldraw[fill=white, xshift=6*\b cm, rotate=120*\a]
	(70:2.4) circle (0.06);

\filldraw[fill=white, xshift=12cm + 6*\b cm, rotate=120*\a]
	(30:2.4) circle (0.06);
	
}

\end{scope}


\begin{scope}[shift={(1.5 cm, -13.5cm)}, scale=0.5]


\fill
	(-0.3,0) circle (0.06)
	(0.3,-0.6) circle (0.06)
	(0.9,1.2) circle (0.06)
	(-0.9,1.2) circle (0.06)
	(1.5,-1.2) circle (0.06)
	(-1.5,-1.2) circle (0.06)
	(0.3,1.8) circle (0.06)
	(-0.3,-1.8) circle (0.06);

\fill[gray!70]
	(-0.35,0.7) circle (0.06)
	(0.95,1.7) circle (0.06)
	(-0.2,1.3) circle (0.06)
	(-0.4,1.7) circle (0.06)
	(-1.6,1.2) circle (0.06)
	(1.6,1.2) circle (0.06)
	(-1.57,1.87) circle (0.06)
	(1.57,-1.87) circle (0.06)
	(2.4,1.9) circle (0.06)
	(-2.4,-1.9) circle (0.06)
	(0.3,0.7) circle (0.06)
	(-0.8,0.5) circle (0.06)
	(1.4,0.1) circle (0.06)
	(-1.4,0.1) circle (0.06)
	(1,-0.6) circle (0.06)
	(-1,-0.6) circle (0.06)
	(-0.8,-1.1) circle (0.06)
	(0.8,0.5) circle (0.06)
	(-0.35,-0.5) circle (0.06)
	(0.35,-0.1) circle (0.06)
	(-0.95,-1.7) circle (0.06)
	(0.2,-1.3) circle (0.06)
	(0.8,-1.1) circle (0.06)
	(0.4,-1.7) circle (0.06);


\foreach \a in {1,-1}
{
\begin{scope}[xshift=6cm, scale=\a]
	
\fill
	(0.3,0.6) circle (0.06)
	(0.3,1.8) circle (0.06)
	(1.5,1.2) circle (0.06)
	(-0.9,1.2) circle (0.06);

\fill[gray!70]
	(-0.35,0.7) circle (0.06)
	(-0.8,0.5) circle (0.06)
	(0.8,1.1) circle (0.06)
	(0.3,0.1) circle (0.06)
	(1,0.6) circle (0.06)
	(-0.2,1.3) circle (0.06)
	(0.95,1.7) circle (0.06)
	(1.4,-0.1) circle (0.06)
	(1.57,-1.87) circle (0.06)
	(0.4,-1.7) circle (0.06)
	(-1.6,1.2) circle (0.06)
	(2.4,1.9) circle (0.06);
	
\end{scope}
}


\foreach \a in {1,-1}
{
\begin{scope}[xshift=12cm, scale=\a]

\draw[gray, very thick]
	(-0.3,0) -- (0.3,0)
	(1.5,0) -- (1.5,1.2);
	
\fill
	(0.9,0.6) circle (0.06)
	(0.9,-0.6) circle (0.06)
	(0.9,1.8) circle (0.06)
	(-1.5,1.8) circle (0.06);

\fill[gray!70]
	(-0.05,0.7) circle (0.06)
	(0.05,0.5) circle (0.06)
	(-0.3,1.1) circle (0.06)
	(-1.4,1.1) circle (0.06)
	(-0.9,1.3) circle (0.06)
	(0.9,1.3) circle (0.06)
	(-0.2,1.7) circle (0.06)
	(0.2,1.9) circle (0.06)
	(2.4,1.9) circle (0.06);
	
\end{scope}
}


\foreach \a in {1,-1}
\foreach \b in {0,1,2}
{
\begin{scope}[xshift=6*\b cm, scale=\a]

\draw[very thin]
	(1.5,0) -- (-1.5,0)
	(0.9,0.6) -- (-0.9,0.6)
	(0.3,0.6) -- (-0.3,0)
	(0.9,1.8) -- (-0.3,0.6)
	(-0.3,1.2) -- (-0.3,1.8)
	(0.9,1.2) -- (0.9,-1.2) 
	(1.5,1.2) -- (-1.5,1.2)
	(1.5,-1.8) to[out=60, in=-60] (1.5,1.8) -- (1.5,-1.8) -- (-2.8,-1.8)
	(-0.3,-1.8) to[out=-90, in=240] 
	(2,-1.8) to[out=60, in=-90]  
	(2.4,1.8);

\foreach \a in {1,-1}
\filldraw[fill=white, scale=\a]
	(0.3,1.2) circle (0.06)
	(1.5,1.8) circle (0.06)
	(0.9,0) circle (0.06);

\end{scope}
}

\end{scope}


\begin{scope}[shift={(1.5 cm, -16.5cm)}, scale=0.5]

\foreach \a in {0,...,3}
\draw[gray, very thick, xshift=12cm, rotate=90*\a]
	(-1.4,1.4) -- (-0.5,1.4);
		
\foreach \a in {0,...,3}
{

\foreach \b in {0,1,2}
\draw[very thin, xshift=6*\b cm, rotate=90*\a]
	(0,0) -- (0.6,0) -- (0.9,0.5) -- (0.5,0.9) -- (0,0.6)
	(0.9,-0.5) -- (2,-0.5)
	(0.9,0.5) -- (1.4,0.5)
	(2,1.4) -- (-1.4,1.4)
	(-2,2) -- (2,2) -- (2.4,2.4);

\fill[rotate=90*\a]
	(0.9,0.5) circle (0.06);
	
\foreach \b in {1,2}
\fill[xshift=6*\b cm, rotate=90*\a]
	(0.5,0.9) circle (0.06);

\foreach \b in {0,1}
\fill[xshift=6*\b cm, rotate=90*\a]
	(-1.4,2) circle (0.06);

\fill[xshift=12cm, rotate=90*\a]
	(0.5,2) circle (0.06);

\foreach \b in {0,1}
\fill[gray!70, xshift=6*\b cm, rotate=90*\a]
	(1.95,2.1) circle (0.06)
	(-0.5,1.5) circle (0.06)
	(0.6,1.9) circle (0.06)
	(1.3,1.3) circle (0.06);
	
\fill[gray!70, rotate=90*\a]
	(0.1,0.53) circle (0.06)
	(0.4,0.97) circle (0.06);

\foreach \b in {1,2}
\fill[gray!70, xshift=6*\b cm, rotate=90*\a]
	(-0.1,0.53) circle (0.06)
	(-0.4,0.97) circle (0.06);

\fill[gray!70, xshift=12cm, rotate=90*\a]
	(-1.95,2.1) circle (0.06)
	(-1.5,1.9) circle (0.06);
					
}

\foreach \a in {0,...,3}
\foreach \b in {0,1,2}
\filldraw[fill=white, xshift=6*\b cm, rotate=90*\a]
	(0,0) circle (0.06)
	(0.5,1.4) circle (0.06)
	(2.4,2.4) circle (0.06);

\end{scope}	

\end{tikzpicture}
\caption{Tilings for AVC(3C24), $\beta, \gamma$ non-adjacent.}
\label{3CFigB}
\end{figure}

In Figure \ref{3CFigB}, $\bullet$ denotes $\gamma^3$, and $\circ$ denotes $\alpha^4$, and {\color{gray!70} $\bullet$} denotes $\beta$, and each gray shaded edge admits one of the angle configurations shown at the bottom right. In other words, $\alpha$ and $\beta$ along a shaded edge may be exchanged and it can be done independently across all the shaded edges. Up to such exchanges there are 21 tilings. Some tilings share the same underlying combinatorial tilings, and there are 13 such combinatorial tilings. The last three tilings in Figure \ref{3CFigB} are variations of pentagonal subdivisions. The first of the three is the $3{\rm C}_3$ reduction of $PP_8$. The other two are given by the flip modifications introduced in \cite{wy2}. 

Figure \ref{3CFigD} gives the 3d rendering of two of the 21 tilings, in which all angles are faithful. They are the first and third of the fourth row of Figure \ref{3CFigB}. The red corners are $\alpha$, and $\gamma$ are concentrated at the meeting places of three black lines. The remaining angles are $\beta=\pi$, which are easy to spot.

\begin{figure}[htp]
\centering
\begin{subfigure}[b]{0.3\linewidth}
\centering
\begin{tikzpicture}[>=latex,scale=1]

\pgftext{
	\includegraphics[scale=0.049]{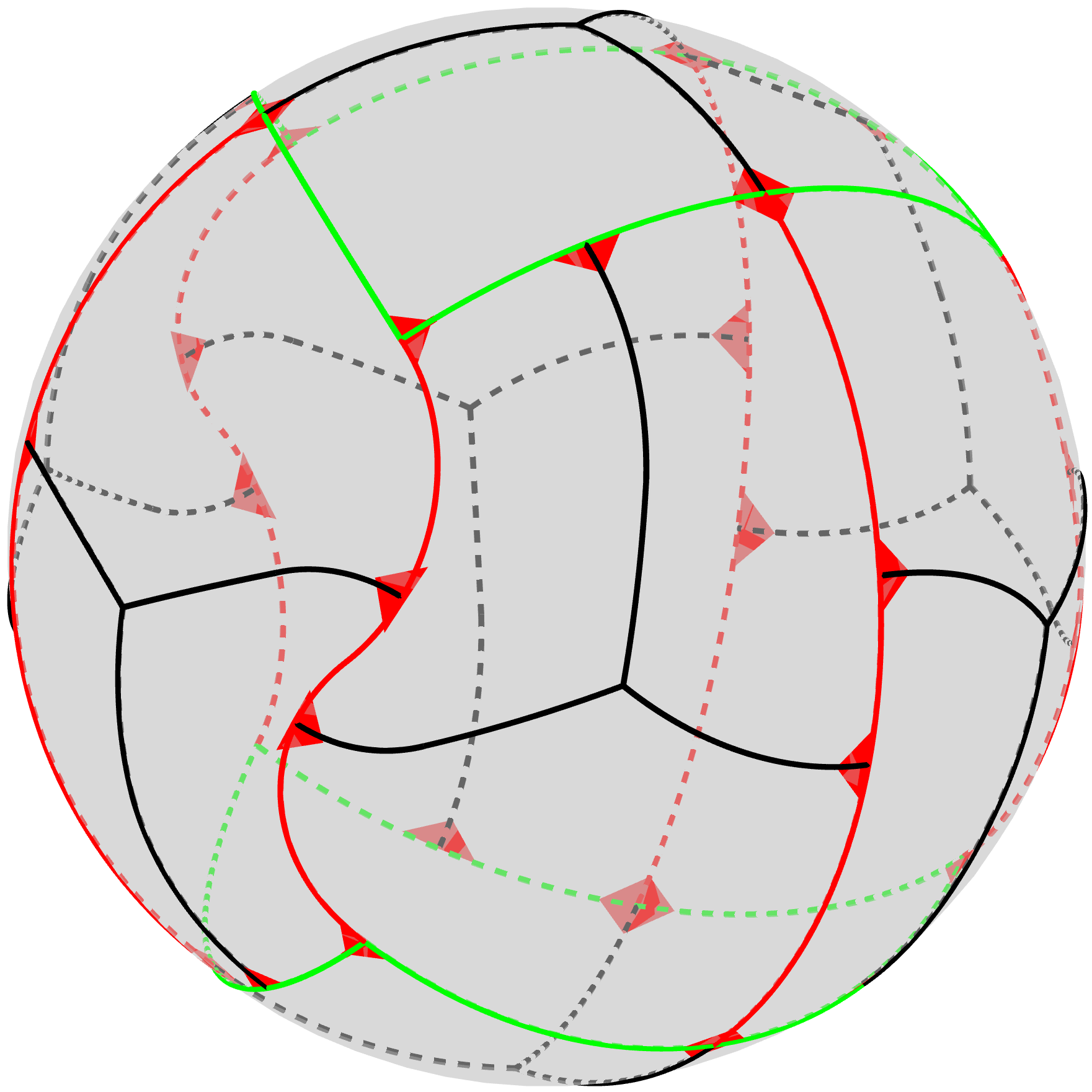}
	};
	
\end{tikzpicture}	
\caption{}
\label{3CFigDA}
\end{subfigure}
\begin{subfigure}[b]{0.3\linewidth}
\centering
\begin{tikzpicture}[>=latex,scale=1]

\pgftext{
	\includegraphics[scale=0.052]{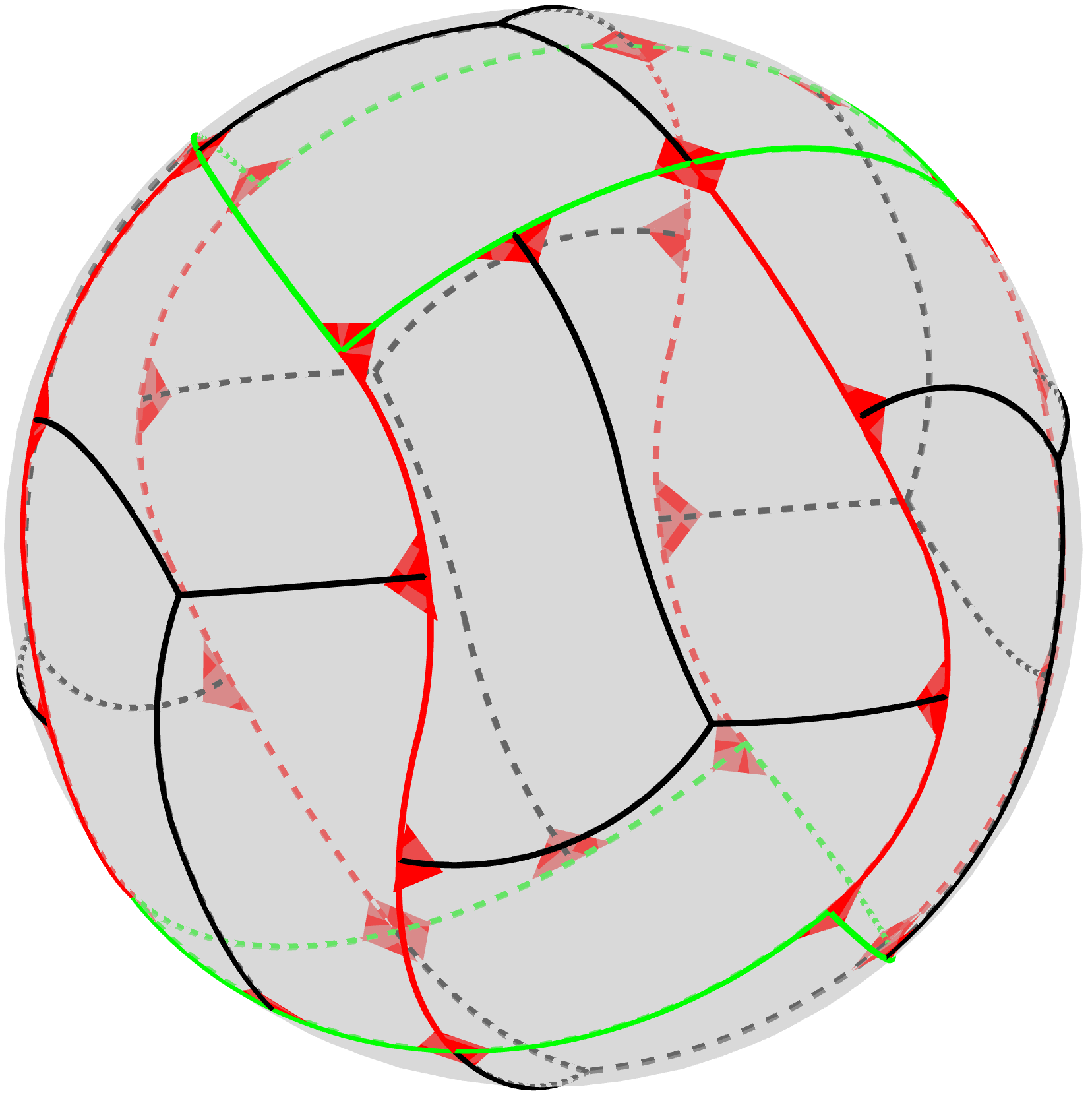}
	};
		
\end{tikzpicture}	
\caption{}
\label{3CFigDB}
\end{subfigure}
\caption{3d rendering of two tilings for AVC(3C24), $\beta, \gamma$ non-adjacent.}
\label{3CFigD}
\end{figure}

Observing that there are already $>10^7$ distinct instances of order $36$ pentagonal polyhedrons \cite{Hasheminezhad2011}, where, to date, the number of order $38$ pentagonal polyhedrons appears to be unknown, determining the number of tilings for AVC(3C60) is currently far beyond our computational capability. In the simplest case, there are five variations of the pentagonal subdivisions of the dodecahedron (icosahedron) as opposed to three (bottom three in Figure \ref{3CFigB}) for AVC(3C24). Moreover, the number of underlying combinatorial tilings is also sizeable.

\section{Tilings with Four Distinct Angles}

\begin{theorem}\label{4Athm}
The tilings for
\begin{align*}
\text{\rm AVC(4A24)}
&=\{24\alpha^2\beta\gamma\delta 
\colon 
24\alpha^2\beta,
8\gamma^3,
6\delta^4 \},   \\
\text{\rm AVC(4A60)}
&=\{
60\alpha^2\beta\gamma\delta 
\colon 
60\alpha^2\beta,
20\gamma^3,
12\delta^5 \},
\end{align*} 
are the ${\rm 4A}_1$, ${\rm 4A}_2$, ${\rm 4A}_3$ reductions of $PP_6$ and $PP_{12}$. 
\end{theorem}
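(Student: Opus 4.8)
The plan is to reduce everything to Theorem \ref{3Athm} via the reduction $\mathrm{4A}\to\mathrm{3A}$, and then check that the extra information distinguishing $\alpha$ from $\beta$ is rigid. Identifying the corners $\alpha$ and $\beta$ turns AVC(4A24/60) literally into AVC(3A24/60): the pentagon $\alpha^2\beta\gamma\delta$ becomes $\alpha^3\gamma\delta$, and relabeling $\gamma\mapsto\beta$, $\delta\mapsto\gamma$ matches the pentagon $\alpha^3\beta\gamma$, while the vertices $24\alpha^2\beta,8\gamma^3,6\delta^4$ become $24\alpha^3,8\beta^3,6\gamma^4$ (and similarly in the $60$-tile case). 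Since the proof of Theorem \ref{3Athm} -- like all the proofs in this paper -- never uses the actual angle values, the $\alpha=\beta$ reduction of any tiling for AVC(4A24/60) is the $\mathrm{3A}$ reduction of $PP_6$ or $PP_{12}$. In particular, from that proof we know the reduced pentagon must be the one in Figure \ref{Subfig-3abc_arrangement-2}, so the corners of $\alpha^2\beta\gamma\delta$ carrying $\gamma$ and $\delta$ are non-adjacent.

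First I would use this to cut Figure \ref{2abcd_arrangement} down. If $\gamma,\delta$ were adjacent, then after identifying $\alpha=\beta$ we would be in the case of Figure \ref{Subfig-3abc_arrangement-1}, which is excluded in the proof of Theorem \ref{3Athm} because it forces a vertex $\alpha\gamma\cdots$; under our correspondence this is a vertex containing $\delta$ together with $\alpha$ or $\beta$, and no such vertex lies in AVC(4A24/60). So $\gamma,\delta$ are non-adjacent, which leaves exactly three arrangements of $\alpha^2\beta\gamma\delta$: with the positions of $\gamma$ and $\delta$ fixed and non-adjacent, the only remaining choice is which of the other three corners carries $\beta$, and these three options are precisely the pentagons of $\mathrm{4A}_1$, $\mathrm{4A}_2$, $\mathrm{4A}_3$.

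Next I would fix one of the three pentagon types and reconstruct the tiling. The $\mathrm{3A}$ structure already determines, in every tile, the corner that is $\gamma$ (it occurs only at $\gamma^3$) and the corner that is $\delta$ (it occurs only at $\delta^4/\delta^5$). Once the pentagon type is fixed, the $\beta$-corner is then determined by its position relative to the already-placed $\gamma$ and $\delta$ (for $\mathrm{4A}_1$ it is the corner adjacent to both $\gamma$ and $\delta$; for $\mathrm{4A}_2$ and $\mathrm{4A}_3$ it is the analogous distinguished corner), so there is no freedom; the remaining two corners are $\alpha$. Here the key point is that AVC(4A24/60) has the vertex $\alpha^2\beta$ but no vertex $\alpha^3$, which is what prevents the orientation/configuration modifications appearing in Theorems \ref{3Bthm}, \ref{3Cthm}, \ref{3Dthm}, \ref{3Ethm}. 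Thus for each pentagon type the $\beta$-placement on the $\mathrm{3A}$ structure is unique, and since the $\mathrm{4A}_i$ reduction of $PP_6$ (resp.\ $PP_{12}$) is a legitimate tiling for AVC(4A24/60) whose $\alpha=\beta$ reduction is exactly that $\mathrm{3A}$ structure, the forced tiling must coincide with it tile-by-tile. This yields precisely the three families claimed, and they are genuinely distinct since $\mathrm{4A}_1,\mathrm{4A}_2,\mathrm{4A}_3$ use non-congruent pentagons.

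The only real content beyond invoking Theorem \ref{3Athm} is the bookkeeping in the last step: tracking the correspondence of angles through the reduction and confirming that the $\beta$-corner dictated by the pentagon type always sits at a vertex $\alpha^2\beta$. I expect this to require no separate global argument -- uniqueness of the $\beta$-placement on the (unique) $\mathrm{3A}$ tiling, combined with the fact that the $\mathrm{4A}_i$ reduction provides such a placement, forces agreement -- but making that chain of identifications precise, and doing it uniformly for all three pentagon types and for both $PP_6$ and $PP_{12}$, is where care is needed.
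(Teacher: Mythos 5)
Your proposal is correct and follows essentially the same route as the paper's proof: reduce to Theorem \ref{3Athm} by identifying $\alpha=\beta$, use the non-adjacency of $\beta,\gamma$ in the 3A reductions of $PP_6$ and $PP_{12}$ to exclude the arrangements of $\alpha^2\beta\gamma\delta$ with adjacent $\gamma,\delta$, and then note that the splitting back to each of the three remaining pentagon types is unique in every tile, which forces the $4{\rm A}_1$, $4{\rm A}_2$, $4{\rm A}_3$ reductions. The only cosmetic difference is your aside attributing the rigidity to the absence of $\alpha^3$ from the AVC; the operative fact, which you also state, is simply that the already-placed $\gamma$ and $\delta$ corners determine the $\beta$ corner once the pentagon type is fixed.
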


\begin{proof} 
The tilings can be obtained by the splittings from AVC(3A24/60) to AVC(4A24/60). As explained at the end of Section \ref{Sec-Reductions}, this means that, in the tilings in Theorem \ref{3Athm}, which are the 3A reductions of $PP_6$ and $PP_{12}$, we change $\beta, \gamma$ to $\gamma, \delta$, and change one of the three $\alpha$ angles to $\beta$. Since $\beta, \gamma$ are non-adjacent in the 3A reductions of $PP_6$ and $PP_{12}$, we know $\gamma,\delta$ are non-adjacent in the corresponding tilings for AVC(4A24/60). Therefore, among the six possible pentagons in Figure \ref{2abcd_arrangement}, only the pentagons in Figures \ref{Subfig-2abcd_arrangement-1}, \ref{Subfig-2abcd_arrangement-3}, \ref{Subfig-2abcd_arrangement-4} are suitable for AVC(4A24/60). Then we find the splittings from the pentagon in Figure \ref{Subfig-3abc_arrangement-2} to these three pentagons are unique. The uniqueness implies that the tilings for AVC(4A24/60) are the ${\rm 4A}_1$, ${\rm 4A}_2$, ${\rm 4A}_3$ reductions of $PP_6$ and $PP_{12}$. 
\end{proof}

\begin{theorem}\label{4Dthm}
The tilings for 
\begin{align*}
\text{\rm AVC(4D24)}
&=\{
24\alpha^2\beta\gamma\delta 
\colon 
24\alpha\beta\gamma,
8\alpha^3,
6\delta^4 \}, \\
\text{\rm AVC(4D60)}
&=\{
60\alpha^2\beta\gamma\delta 
\colon 
60\alpha\beta\gamma,
20\alpha^3,
12\delta^5 \},
\end{align*}
are the $4{\rm D}_1$, $4{\rm D}_2$, $4{\rm D}_3$ reductions of $PP_6$ and $PP_{12}$, and the modifications of the $4{\rm D}_2$ reduction caused by independently changing the orientations of each $N(\delta^4/\delta^5)$.
\end{theorem}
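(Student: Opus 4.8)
The plan is to obtain the AVC(4D24/60) tilings by splitting the AVC(3D24/60) tilings classified in Theorem \ref{3Dthm}, in the same spirit as the deduction of Theorem \ref{4Athm} from Theorem \ref{3Athm}. The splitting $\text{3D}\to\text{4D}$, the reverse of $\alpha,\alpha,\beta,\gamma,\delta\to\beta,\beta,\alpha,\alpha,\gamma$, requires us to change the corner $\gamma$ of each 3D tile to $\delta$, its two corners $\beta$ to two corners $\alpha$, and its two corners $\alpha$ to one corner $\beta$ and one corner $\gamma$; only the last step carries a choice, namely which of the two $\alpha$-corners of the tile becomes $\beta$.

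First I would isolate the constraints that AVC(4D24/60) imposes on this choice. In AVC(3D24/60) the angle $\alpha$ appears only at $\alpha^2\beta$, so every $\alpha$-corner of the 3D tiling lies at a unique $\alpha^2\beta$ vertex; the third corner there is a $\beta$, which becomes $\alpha$, so the new 4D vertex carries one $\alpha$ and two corners from $\{\beta,\gamma\}$. Since $\alpha\beta^2\cdots$ and $\alpha\gamma^2\cdots$ are not in AVC(4D24/60), these two corners must get different labels, and since the 4D pentagon $\alpha^2\beta\gamma\delta$ has exactly one $\beta$ and one $\gamma$, the two $\alpha$-corners of each tile must get different labels too; the vertices $\beta^3$ and $\gamma^{4}/\gamma^{5}$ become $\alpha^3$ and $\delta^{4}/\delta^{5}$ automatically. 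Hence the assignment amounts to a proper $2$-coloring, by the symbols $\beta$ and $\gamma$, of the $2$-regular graph $G$ whose vertices are the 3D $\alpha$-corners and whose edges join the two $\alpha$-corners within each tile and the two $\alpha$-corners at each $\alpha^2\beta$ vertex. The cycles of $G$ alternate the two edge types and are therefore even, so a coloring always exists; the remaining requirement is that all tiles receive the same arrangement of $\alpha^2\beta\gamma\delta$ up to reflection.

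I would then run through the five tilings of Theorem \ref{3Dthm}. For the $3{\rm D}_1$ and $3{\rm D}_3$ reductions the two $\alpha$-corners of the pentagon are geometrically distinct (one is adjacent to $\gamma$ and one is not), and the two possible choices produce arrangements of $\alpha^2\beta\gamma\delta$ that are not reflections of one another; so the choice must obey a single global rule, and applying that rule consistently through both chiralities of the tile one checks that the $\alpha^2\beta$-vertex constraint is automatically met and that the $3{\rm D}_1$ and $3{\rm D}_3$ reductions produce, respectively, the $4{\rm D}_1$ and $4{\rm D}_3$ reductions of $PP_6$ or $PP_{12}$ (the opposite global rule merely swaps $\beta\leftrightarrow\gamma$ and gives the same tiling). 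For the $3{\rm D}_2$ reduction the pentagon has a reflection interchanging its two $\alpha$-corners, so the two per-tile choices yield mirror-image pentagons and pentagon congruence places no global constraint; the 4D tilings are then exactly the proper $2$-colorings of $G$, and here I would check that $G$ breaks into one even cycle inside each $N(\delta^{4}/\delta^{5})$ (six cycles for $PP_6$, twelve for $PP_{12}$), with recoloring a cycle being precisely the orientation reversal of that neighborhood, which gives the $4{\rm D}_2$ reduction and its independently reoriented modifications. Finally, for the two extra tilings of AVC(3D24) in Figures \ref{Subfig-3DFigE-T1} and \ref{3DFigF}, whose pentagons coincide with those of $3{\rm D}_3$ and $3{\rm D}_1$ and hence again force a global rule, I would exhibit an $\alpha^2\beta$ vertex whose two $\alpha$-corners are of the same type relative to $\gamma$; every global rule then assigns them the same label, violating the vertex constraint, so these tilings do not split to AVC(4D24) tilings.

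The main obstacle is the $3{\rm D}_2$ step: proving that the corner graph $G$ decomposes exactly into the cycles localized in the individual $N(\delta^{4}/\delta^{5})$ and that flipping a cycle is exactly the orientation reversal there, together with the accompanying count ($2^{6}$ modifications for $PP_6$, $2^{12}$ for $PP_{12}$, the latter stated only qualitatively if a clean count is unwieldy). The secondary point is to pin down, in the explicit pictures of Figures \ref{Subfig-3DFigE-T1} and \ref{3DFigF}, the $\alpha^2\beta$ vertex that obstructs every global splitting; this is a finite verification.
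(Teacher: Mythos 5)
Your proposal is correct and follows essentially the same route as the paper: both obtain the 4D tilings by splitting the tilings of Theorem \ref{3Dthm}, observe that tile congruence forces the splitting (up to the $\beta\leftrightarrow\gamma$ symmetry of the AVC) for the pentagons of Figures \ref{Subfig-2a2bc_arrangement-3} and \ref{Subfig-2a2bc_arrangement-4} -- which also rules out the two extra 3D tilings by producing a forbidden $\alpha\gamma^2$ vertex -- and identify the residual freedom for the $3{\rm D}_2$ pentagon with independent orientation choices of each $N(\delta^4/\delta^5)$. Your proper $2$-coloring of the alternating corner graph $G$ is the paper's AAD argument $|^{\beta}\delta^{\gamma}|^{\beta}\delta^{\gamma}|\cdots$ around $\delta^4/\delta^5$ in different clothing, since each component of $G$ is exactly the $8$- or $10$-cycle of $\alpha$-corners surrounding one $\gamma^4/\gamma^5$.
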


Since AVC(4D24/60) is symmetric with respect to the exchange of $\beta,\gamma$, the tilings in the theorem are up to their exchange.

\begin{proof}
The tilings can be obtained by the splittings from AVC(3D24/60) to AVC(4D24/60). As explained at the end of Section \ref{Sec-Reductions}, this means that, in the  tilings in Theorem \ref{3Dthm}, we change $\gamma$ to $\delta$, change two $\beta$ to two $\alpha$, and change two $\alpha$ to one $\beta$ and one $\gamma$. 

The pentagons in the tilings for AVC(3D24/60) (in Theorem \ref{3Dthm}) are the ones in Figures \ref{Subfig-2a2bc_arrangement-1}, \ref{Subfig-2a2bc_arrangement-3}, \ref{Subfig-2a2bc_arrangement-4}. Since AVC(4D24/60) is symmetric with respect to the exchange of $\beta,\gamma$, we only need to consider pentagons in Figures \ref{Subfig-2abcd_arrangement-1}, \ref{Subfig-2abcd_arrangement-2}, \ref{Subfig-2abcd_arrangement-3}, \ref{Subfig-2abcd_arrangement-4}. Then we see the splitting takes Figures 
\ref{Subfig-2a2bc_arrangement-1} ($3{\rm D}_2$ reduction), \ref{Subfig-2a2bc_arrangement-3} ($3{\rm D}_3$ reduction), \ref{Subfig-2a2bc_arrangement-4} ($3{\rm D}_1$ reduction) to Figures 
\ref{Subfig-2abcd_arrangement-2} ($4{\rm D}_2$ reduction), \ref{Subfig-2abcd_arrangement-3} ($4{\rm D}_3$ reduction), \ref{Subfig-2abcd_arrangement-4} ($4{\rm D}_1$ reduction), respectively. 

The splittings from Figures \ref{Subfig-2a2bc_arrangement-3} and \ref{Subfig-2a2bc_arrangement-4} are unique. Applying the unique splittings to the $3{\rm D}_3$ and $3{\rm D}_1$ reductions of $PP_6$ and $PP_{12}$, we get the $4{\rm D}_3$ and $4{\rm D}_1$ reductions of $PP_6$ and $PP_{12}$. Applying the unique splittings to the tilings in  Figures \ref{Subfig-3DFigE-T1} and \ref{3DFigF}, we get tilings with vertex $\alpha\gamma^2$. Since the vertex is not in AVC(4D24), these are not tilings for the AVC.

There are two ways to split a pentagon in Figure 
\ref{Subfig-2a2bc_arrangement-1} to a pentagon in Figure 
\ref{Subfig-2abcd_arrangement-2}. We consider the splitting of $N(\gamma^4/\gamma^5)$ to $N(\delta^4/\delta^5)$. Since $\beta^2\cdots$ and $\gamma^2\cdots$ are not vertices in AVC(4D24/60), for the pentagon in Figure 
\ref{Subfig-2abcd_arrangement-2}, the AAD of $\delta^4$ and $\delta^5$ are $|^{\beta}\delta^{\gamma}|^{\beta}\delta^{\gamma}|^{\beta}\delta^{\gamma}|^{\beta}\delta^{\gamma}|$ and $|^{\beta}\delta^{\gamma}|^{\beta}\delta^{\gamma}|^{\beta}\delta^{\gamma}|^{\beta}\delta^{\gamma}|^{\beta}\delta^{\gamma}|$. This implies exactly two ways of splitting $N(\gamma^4/\gamma^5)$ to $N(\delta^4/\delta^5)$. Figure \ref{4DfigA} shows that the two ways of splitting $N(\gamma^4)$ is the same as the choice of orientation (as determined by $\beta\to\gamma$ along the boundary) of $N(\delta^4)$. 

\begin{figure}[htp]
\centering
\begin{tikzpicture}[>=latex]

\foreach \a in {0,1,2,3}
{

\foreach \x in {-1,0,1}
\draw[xshift=3*\x cm, rotate=90*\a]
	(0,0) -- (0.6,0) -- (0.9,0.5) -- (0.5,0.9) -- (0,0.6) -- (0,0);
	
\begin{scope}[rotate=90*\a]

\node at (0.5,0.15) {\scriptsize $\alpha$};
\node at (0.15,0.5) {\scriptsize $\alpha$};
\node at (0.45,0.7) {\scriptsize $\beta$};
\node at (0.7,0.45) {\scriptsize $\beta$};
\node at (0.15,0.15) {\scriptsize $\gamma$};

\end{scope}

\begin{scope}[xshift=3cm, rotate=90*\a]

\node at (0.5,0.15) {\scriptsize $\beta$};
\node at (0.15,0.5) {\scriptsize $\gamma$};
\node at (0.45,0.7) {\scriptsize $\alpha$};
\node at (0.7,0.45) {\scriptsize $\alpha$};
\node at (0.15,0.15) {\scriptsize $\delta$};

\end{scope}

\begin{scope}[xshift=-3cm, rotate=90*\a]

\node at (0.5,0.15) {\scriptsize $\gamma$};
\node at (0.15,0.5) {\scriptsize $\beta$};
\node at (0.45,0.7) {\scriptsize $\alpha$};
\node at (0.7,0.45) {\scriptsize $\alpha$};
\node at (0.15,0.15) {\scriptsize $\delta$};

\end{scope}

}
		
\draw[very thick, ->]
	(-1,0) -- ++(-1,0);
	
\draw[very thick, ->]
	(1,0) -- ++(1,0);

\draw[xshift=3cm, ->]
	(210:0.35) arc (210:-120:0.35);

\draw[xshift=-3cm, <-]
	(210:0.35) arc (210:-120:0.35);
	
\end{tikzpicture}
\caption{Two splittings from $N(\gamma^4)$ to $N(\delta^4)$.}
\label{4DfigA}
\end{figure}

In Theorem \ref{3Dthm}, the tilings for the pentagon in Figure 
\ref{Subfig-2a2bc_arrangement-1} are the $3{\rm D}_2$ reductions of $PP_6$ and $PP_{12}$. Applying the splittings in Figure \ref{4DfigA} to $N(\gamma^4/\gamma^5)$ in these tilings, we get the $4{\rm D}_2$ reductions of $PP_6$ and $PP_{12}$, and we may independently change the orientations of $N(\delta^4/\delta^5)$ in these tilings.
\end{proof}

\begin{theorem}\label{4Ethm}
The tilings for 
\begin{align*}
\text{\rm AVC(4E24)}
&=\{
24\alpha^2\beta\gamma\delta 
\colon 
24\alpha\beta\gamma,
8\delta^3,
6\alpha^4 \};  \\
\text{\rm AVC(4E60)}
&=\{
60\alpha^2\beta\gamma\delta 
\colon 
60\alpha\beta\gamma,
20\delta^3,
12\alpha^5 \},
\end{align*}
are the $4{\rm E}_1$, $4{\rm E}_2$, $4{\rm E}_3$ reductions of $PP_8$ and $PP_{20}$, and the modifications of the $4{\rm E}_3$ reduction caused by independently changing the orientations of $N(\delta^4/\delta^5)$, and the tiling in Figure \ref{4EFigA}.
\end{theorem}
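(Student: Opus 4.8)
The plan is to recover every tiling for AVC(4E24/60) by splitting the tilings for AVC(3E24/60) classified in Theorem~\ref{3Ethm}, exactly as Theorems~\ref{4Athm} and~\ref{4Dthm} recover the 4A and 4D tilings from the 3A and 3D ones. The relevant reduction is $\text{4E}\to\text{3E}$, namely $\alpha,\alpha,\beta,\gamma,\delta\to\beta,\beta,\alpha,\alpha,\gamma$; under this relabeling a vertex $\alpha\beta\gamma$, $\delta^3$, $\alpha^4/\alpha^5$ of AVC(4E24/60) becomes a vertex $\alpha^2\beta$, $\gamma^3$, $\beta^4/\beta^5$ of AVC(3E24/60). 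Hence any tiling for AVC(4E24/60) relabels to a tiling for AVC(3E24/60), so it appears in Theorem~\ref{3Ethm}, and it remains to reverse the relabeling (the \emph{split}: $\gamma\to\delta$, the two $\beta$'s to two $\alpha$'s, the two $\alpha$'s to one $\beta$ and one $\gamma$) in every way consistent with AVC(4E24/60), checking that each such splitting actually closes up into a tiling.

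First I would pin down the pentagon. Because AVC(4E24/60) is symmetric under $\beta\leftrightarrow\gamma$, it suffices to consider the first four arrangements of $\alpha^2\beta\gamma\delta$ in Figure~\ref{2abcd_arrangement}. In Figure~\ref{Subfig-2abcd_arrangement-1} the angle $\delta$ is adjacent to both $\alpha$'s, so its preimage under the reduction is the pentagon of Figure~\ref{Subfig-2a2bc_arrangement-2} ($\gamma$ adjacent to both $\beta$'s), which by Theorem~\ref{3Ethm} admits no tiling; this eliminates Figure~\ref{Subfig-2abcd_arrangement-1}. The three remaining arrangements \ref{Subfig-2abcd_arrangement-2}, \ref{Subfig-2abcd_arrangement-3}, \ref{Subfig-2abcd_arrangement-4} are exactly the splittings of the 3E pentagons \ref{Subfig-2a2bc_arrangement-1}, \ref{Subfig-2a2bc_arrangement-3}, \ref{Subfig-2a2bc_arrangement-4}, that is, of the pentagons realizing the $3{\rm E}_3$, $3{\rm E}_2$, $3{\rm E}_1$ reductions, with the last one also realizing the two special 3E tilings of Figures~\ref{3EFigD1} and~\ref{3EFigE}.

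Next I would carry out the splitting pentagon by pentagon, the governing dichotomy being whether the two neighbours of the corner $\gamma$ (which becomes $\delta$) carry the same label or different labels. For pentagons \ref{Subfig-2a2bc_arrangement-3} and \ref{Subfig-2a2bc_arrangement-4} these two neighbours are $\alpha$ and $\beta$, so the image of each neighbour is forced; together with the requirement that every $\alpha^2\beta$ vertex become $\alpha\beta\gamma$ (which forces each tile's two $\alpha$-corners to become one $\beta$ and one $\gamma$), the splitting is unique up to the global $\beta\leftrightarrow\gamma$ symmetry. Applied to the $3{\rm E}_2$ and $3{\rm E}_1$ reductions of $PP_8$ and $PP_{20}$ this produces the $4{\rm E}_2$ and $4{\rm E}_1$ reductions; applied to the two special 3E tilings it produces the tiling of Figure~\ref{4EFigA} in one case, while in the other the forced $\beta/\gamma$ assignment turns some $\alpha^2\beta$ vertex into $\alpha\beta^2$ or $\alpha\gamma^2$, neither of which lies in AVC(4E24/60), so no new tiling arises there. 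For pentagon \ref{Subfig-2a2bc_arrangement-1}, which realizes the $3{\rm E}_3$ reduction, the two neighbours of $\gamma$ are both $\alpha$, so there is no way to tell which $\alpha$-neighbour should become $\beta$ and which should become $\gamma$; analysing $N(\gamma^3)\to N(\delta^3)$ as in the proof of Theorem~\ref{3Cthm} shows that the three tiles around a $\delta^3$ vertex must share a single orientation (otherwise a vertex $\gamma^2\cdots$ or $\beta^2\cdots$ appears), whereas distinct $\delta^3$ vertices are independent because the $N(\delta^3)$'s partition the set of tiles. This yields the $4{\rm E}_3$ reduction together with the modifications obtained by independently reorienting each $N(\delta^3)$.

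The step I expect to be the main obstacle is the special-tilings case. Unlike the $PP$-subdivisions, the tilings of Figures~\ref{3EFigD1} and~\ref{3EFigE} are built from many copies of $N(\alpha^{\gamma}|^{\gamma}\alpha)$ arranged on a hexagonal (respectively truncated-tetrahedral) pattern, so deciding whether the $\beta/\gamma$ relabeling forced at the $\alpha^2\beta$ vertices closes up globally amounts to tracing that forced assignment around every vertex cycle of those two tilings -- and this is precisely what separates the single tiling of Figure~\ref{4EFigA} from a hypothetical $60$-tile counterpart. A secondary and routine point is to check that the reorientations of the $N(\delta^3)$'s carry no global parity constraint: flipping a single $N(\delta^3)$ as a unit keeps each of its tiles' two $\alpha$-corners labelled $\alpha$ and only exchanges that tile's $\beta$- and $\gamma$-corners, hence preserves the vertex type at every vertex on the boundary of $N(\delta^3)$, so the flip remains a valid tiling independently of the other $N(\delta^3)$'s.
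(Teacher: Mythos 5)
Your proposal is correct and follows essentially the same route as the paper: the paper's proof also obtains the $4{\rm E}_1$, $4{\rm E}_2$, $4{\rm E}_3$ reductions by repeating the splitting argument of Theorem \ref{4Dthm} (with the two-way splitting of the pentagon adjacent to $\delta$ giving the independent reorientations of $N(\delta^3)$), and then separately applies the splitting to the two special tilings of Theorem \ref{3Ethm}, finding that Figure \ref{3EFigD3} yields Figure \ref{4EFigA} while Figure \ref{3EFigE} forces a forbidden vertex $\alpha\beta^2$. Your identification of the orientation freedom as attached to $N(\delta^3)$ agrees with the paper's proof text (the theorem statement's ``$N(\delta^4/\delta^5)$'' appears to be carried over from Theorem \ref{4Dthm}), and the steps you flag as requiring care are exactly the ones the paper's terse proof delegates to the earlier arguments.
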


Since AVC(4E24/60) is symmetric with respect to the exchange of $\beta,\gamma$, the tilings in the theorem are up to their exchange.

\begin{proof}
The splitting from AVC(3E24/60) to AVC(4E24/60) is the same as the splitting from AVC(3D24/60) to AVC(4D24/60). The proof is the same as the proof of Theorem \ref{4Dthm}. Therefore, we get the $4{\rm E}_1$, $4{\rm E}_2$, $4{\rm E}_3$ reductions of $PP_8$ and $PP_{20}$, and we may independently change the orientations of $N(\delta^3)$ in the $4{\rm E}_3$ reduction. 

We also need to apply the splitting to the tilings in Figures \ref{3EFigD3} and \ref{3EFigE}. The tiling in Figure \ref{3EFigD3} becomes the tiling in Figure \ref{4EFigA}. Applying the splitting to the tiling in Figure \ref{3EFigE} produces a vertex $\alpha\beta^2$ that is not in AVC(4E60). Therefore, there is no additional tiling for AVC(4E60).
\end{proof}

\begin{figure}[htp]
\centering
\begin{tikzpicture}[>=latex]

\foreach \a in {0,1,2}
{
\begin{scope}[rotate=120*\a]

\draw
	(0,0) -- (30:0.5) -- (60:0.9) -- (120:0.9) -- (150:0.5)
	(120:0.9) -- (130:1.4) -- (90:1.4) -- (50:1.4) -- (10:1.4) -- (0:0.9)
	(50:1.4) -- (60:0.9)
	(90:1.4) -- (70:1.9) -- (30:1.9) -- (-10:1.9) -- (-50:1.9)
	(30:1.9) -- (10:1.4)
	(110:1.9) -- (90:2.4) -- (50:2.4) -- (10:2.4) -- (-30:2.4)
	(50:2.4) -- (30:1.9)
	(10:2.4) -- (10:2.8);

\fill
	(0,0) circle (0.06)
	(50:1.4) circle (0.06)
	(-10:1.9) circle (0.06)
	(10:2.8) circle (0.06)
	(110:1.9) circle (0.05);

\foreach \a in {0,1,2}
\filldraw[fill=white, rotate=120*\a]
	(30:1.9) circle (0.05)
	(10:1.4) circle (0.05);

\node at (90:0.2) {\tiny $\delta$};
\node at (63:0.72) {\tiny $\alpha$};
\node at (117:0.72) {\tiny $\gamma$};
\node at (47:0.43) {\tiny $\alpha$};
\node at (133:0.43) {\tiny $\beta$};

\node at (115:1) {\tiny $\gamma$};
\node at (65:1) {\tiny $\beta$};
\node at (59:1.23) {\tiny $\delta$};
\node at (121:1.23) {\tiny $\alpha$};
\node at (90:1.25) {\tiny $\alpha$};

\node at (133:1.23) {\tiny $\alpha$};
\node at (47:1.23) {\tiny $\delta$};
\node at (130:0.9) {\tiny $\alpha$};
\node at (50:0.9) {\tiny $\beta$};
\node at (30:0.65) {\tiny $\gamma$};

\node at (69:1.75) {\tiny $\gamma$};
\node at (31:1.75) {\tiny $\alpha$};
\node at (80:1.45) {\tiny $\beta$};
\node at (20:1.45) {\tiny $\alpha$};
\node at (50:1.55) {\tiny $\delta$};

\node at (92:1.54) {\tiny $\gamma$};
\node at (128:1.54) {\tiny $\alpha$};
\node at (80:1.72) {\tiny $\beta$};
\node at (140:1.72) {\tiny $\alpha$};
\node at (110:1.75) {\tiny $\delta$};

\node at (101:1.95) {\tiny $\delta$};
\node at (39:1.95) {\tiny $\alpha$};
\node at (70:2) {\tiny $\alpha$};
\node at (89:2.25) {\tiny $\beta$};
\node at (51:2.25) {\tiny $\gamma$};

\node at (100:2.2) {\tiny $\beta$};
\node at (40:2.2) {\tiny $\alpha$};
\node at (112:2.03) {\tiny $\delta$};
\node at (28:2.03) {\tiny $\alpha$};
\node at (10:2.27) {\tiny $\gamma$};

\node at (13:2.5) {\tiny $\beta$};
\node at (7:2.5) {\tiny $\alpha$};
\node at (50:2.5) {\tiny $\gamma$};
\node at (90:2.5) {\tiny $\alpha$};
\node at (70:2.6) {\tiny $\delta$};

\end{scope}
}

\end{tikzpicture}
\caption{Tiling for AVC(4E24), not pentagonal subdivision.}
\label{4EFigA} 
\end{figure}

\section{Reductions of AVC(5A36)}
\label{36tiles}

If we assume that two of $\alpha, \beta, \gamma$ are equal (i.e., not distinguish the two angles), then AVC(5A36) is reduced to 
\[
\text{AVC(4A36)}=
\{36\alpha^2\beta\delta\epsilon\colon 36\alpha^2\beta,8\delta^3,12\delta\epsilon^3\}.
\] 
If we assume that all three $\alpha, \beta, \gamma$ are equal, then AVC(5A36) is reduced to  
\[
\text{AVC(3A36)}=
\{36\alpha^3\delta\epsilon\colon 36\alpha^3,8\delta^3,12\delta\epsilon^3\}.
\]
Assuming $\alpha$ and $\beta$ equal also reduces AVC(4A36) to AVC(3A36). 

We will only consider two reductions of AVC(5A36). Therefore, we keep $\delta,\epsilon$ to see more clearly the relation between tilings for reductions. The main message here is that, although Theorem \ref{5Athm} says there is no tiling if we distinguish $\alpha, \beta, \gamma$, tilings exist if we do not distinguish $\alpha, \beta, \gamma$.

\begin{theorem}\label{3A36thm}
There are five tilings each for {\rm AVC(3A36)} and {\rm AVC(4A36)}, given by Figures \ref{3A36B} and \ref{3A36C}.
\end{theorem}

For AVC(3A36), the pentagon in the tilings is Figure \ref{Subfig-3A36A-2}, and all the unlabelled angles are $\alpha$ (see Figure \ref{Subfig-3A36B-N}). For AVC(4A36), the pentagon in the tilings is Figure \ref{Subfig-4A36A-T1}, and all the unlabelled angles are $\alpha,\beta$, with $\alpha$ bounded by one normal edge and one red edge, and $\beta$ bounded by two normal edges (see Figure \ref{Subfig-4A36A-V}). 

\begin{proof}
The pentagon has two possible angle arrangements, as in Figures \ref{Subfig-3A36A-1}, \ref{Subfig-3A36A-2}. 

For the pentagon in Figure \ref{Subfig-3A36A-1}, the AAD of $\delta^3$ is either $|^{\alpha}\delta^{\epsilon}|^{\alpha}\delta^{\epsilon}|^{\alpha}\delta^{\epsilon}|$ or $|^{\alpha}\delta^{\epsilon}|^{\alpha}\delta^{\epsilon}|^{\epsilon}\delta^{\alpha}|$. This implies that $\alpha\epsilon\cdots$ is a vertex. Since $\alpha\epsilon\cdots$ is not in AVC(3A36), the pentagon is the one in Figure \ref{Subfig-3A36A-2}. 

\begin{figure}[h!]
\centering
\begin{subfigure}[t]{0.15\linewidth}
\centering
\begin{tikzpicture}[>=latex]

\foreach \a in {0,...,4}
\draw[rotate=72*\a]
	(18:0.6) -- (90:0.6);
	
\node at (90:0.4) {\scriptsize $\alpha$};
\node at (162:0.4) {\scriptsize $\alpha$};
\node at (18:0.4) {\scriptsize $\alpha$};
\node at (234:0.4) {\scriptsize $\delta$};
\node at (-54:0.4) {\scriptsize $\epsilon$};

\end{tikzpicture}
\caption{}
\label{Subfig-3A36A-1}
\end{subfigure}
\begin{subfigure}[t]{0.15\linewidth}
\centering
\begin{tikzpicture}[>=latex]

\foreach \a in {0,...,4}
\draw[rotate=72*\a]
	(18:0.6) -- (90:0.6);

\node at (90:0.4) {\scriptsize $\alpha$};
\node at (162:0.4) {\scriptsize $\delta$};
\node at (18:0.4) {\scriptsize $\epsilon$};
\node at (234:0.4) {\scriptsize $\alpha$};
\node at (-54:0.4) {\scriptsize $\alpha$};

\end{tikzpicture}
\caption{}
\label{Subfig-3A36A-2}
\end{subfigure}
\begin{subfigure}[t]{0.15\linewidth}
\centering
\begin{tikzpicture}[>=latex]

\foreach \a in {0,...,4}
\draw[rotate=72*\a]
	(18:0.6) -- (90:0.6);

\node at (162:0.4) {\scriptsize $\delta$};
\node at (18:0.4) {\scriptsize $\epsilon$};
\node at (90:0.4) {\scriptsize $\beta$};
\node at (234:0.4) {\scriptsize $\alpha$};
\node at (-54:0.4) {\scriptsize $\alpha$};

\end{tikzpicture}
\caption{}
\label{Subfig-4A36A-T1}
\end{subfigure}
\begin{subfigure}[t]{0.15\linewidth}
\centering
\begin{tikzpicture}[>=latex]

\foreach \a in {0,...,4}
\draw[rotate=72*\a]
	(18:0.6) -- (90:0.6);

\node at (162:0.4) {\scriptsize $\delta$};
\node at (18:0.4) {\scriptsize $\epsilon$};
\node at (90:0.4) {\scriptsize $\alpha$};
\node at (234:0.4) {\scriptsize $\beta$};
\node at (-54:0.4) {\scriptsize $\alpha$};

\end{tikzpicture}
\caption{}
\label{Subfig-4A36A-T2}
\end{subfigure}
\begin{subfigure}[t]{0.15\linewidth}
\centering
\begin{tikzpicture}[>=latex]

\foreach \a in {0,...,4}
\draw[rotate=72*\a]
	(18:0.6) -- (90:0.6);

\node at (162:0.4) {\scriptsize $\delta$};
\node at (18:0.4) {\scriptsize $\epsilon$};

\node at (90:0.4) {\scriptsize $\alpha$};
\node at (234:0.4) {\scriptsize $\alpha$};
\node at (-54:0.4) {\scriptsize $\beta$};

\end{tikzpicture}
\caption{}
\label{Subfig-4A36A-T3}
\end{subfigure}
\caption{Angle arrangements for AVC(3A36) and AVC(4A36).}
\label{3A36A}
\end{figure}

Since AVC(4A36) is reduced to AVC(3A36) by $\alpha,\alpha,\beta,\delta,\epsilon\to \alpha,\alpha,\alpha,\delta,\epsilon$, and $\delta,\epsilon$ are non-adjacent in tilings for AVC(3A36), we know $\delta,\epsilon$ are also non-adjacent in tilings for AVC(4A36). Therefore the arrangements of $\alpha,\alpha,\beta,\delta,\epsilon$ in pentagon for AVC(4A36) are given by in Figures \ref{Subfig-4A36A-T1}, \ref{Subfig-4A36A-T2}, \ref{Subfig-4A36A-T3}. Then the splitting from AVC(3A36) to AVC(4A36) means changing three $\alpha$ angles in each tile to two $\alpha$ angles and one $\beta$ angle according to one of the three pentagons.

Denote by $\bullet$ the vertices $\delta^3$ and $\delta\epsilon^3$. Since $\delta^3$ and $\delta\epsilon^3$ are the only vertices involving $\delta,\epsilon$, and $\delta,\epsilon$ are non-adjacent, we know each pentagonal tile has exactly two non-adjacent $\bullet$ vertices. By \cite[Theorem 10]{yan2}, this implies the tiling is the simple pentagonal subdivision of a quadrilateral tiling, with $\bullet$ vertices as all the vertices. The simple pentagonal subdivision divides each quadrilateral tile into two pentagonal tiles in compatible way. Figure \ref{Subfig-3A36B-N} shows the subdivision for AVC(3A36) and the pentagon in Figure \ref{Subfig-3A36A-2}. Figure \ref{Subfig-4A36A-V} shows the subdivision for AVC(4A36) and the pentagon in Figure \ref{Subfig-4A36A-T1}. We remark that there are two kinds of quadrilateral tiles. We call the yellow quadrilateral {\em trapezium} because it has angle arrangement $\delta,\delta,\epsilon,\epsilon$. We call the white quadrilateral {\em rhombus} because it has angle arrangement $\delta,\epsilon,\delta,\epsilon$.

\begin{figure}[h!]
\centering
\begin{subfigure}[t]{0.25\linewidth} 
\centering
\begin{tikzpicture}[>=latex, scale=1]

\fill[yellow]
	(-1.2,-0.6) rectangle (0,0.6);
	
\draw
	(-1.2,-0.6) rectangle (1.2,0.6)
	(0,-0.6) -- (0,0.6);

\draw[red]
	(-0.6,-0.6) -- (-0.6,0.6)
	(0,0) -- (1.2,0)
	(0.6,0.6) -- ++(0,0.4)
	(0.6,-0.6) -- ++(0,-0.4)
	(-1.2,0) -- ++(-0.4,0);

\begin{scope}[font=\scriptsize]

\foreach \x/\y in  
	{-0.45/0.45, -0.75/0.45, 0.15/0.15, 1.05/0.15, 0.45/0.75, 0.75/0.75, -1.35/0.15, -0.6/0.75, 0.6/0.45}
\foreach \u in {1,-1}
\node at (\x,\u*\y) {$\alpha$};

\foreach \x in {-0.15, 1.35, -1.05}
\node at (\x,0) {$\alpha$};

\foreach \x/\y in  
	{0.15/0.45, -0.15/0.45, 1.05/-0.45, -1.05/0.45}
\node at (\x,\y) {$\delta$};

\foreach \x/\y in  
	{0.15/-0.45, -0.15/-0.45, -1.05/-0.45, 1.05/0.45}
\node at (\x,\y) {$\epsilon$};

\end{scope}

\end{tikzpicture}
\caption{}
\label{Subfig-3A36B-N}
\end{subfigure}
\begin{subfigure}[t]{0.25\linewidth} 
\centering
\begin{tikzpicture}[>=latex, scale=1]

\fill[yellow]
	(-1.2,-0.6) rectangle (0,0.6);
	
\draw
	(-1.2,-0.6) rectangle (1.2,0.6)
	(0,-0.6) -- (0,0.6);

\draw[red]
	(-0.6,-0.6) -- (-0.6,0.6)
	(0,0) -- (1.2,0)
	(0.6,0.6) -- ++(0,0.4)
	(0.6,-0.6) -- ++(0,-0.4)
	(-1.2,0) -- ++(-0.4,0);

\begin{scope}[font=\scriptsize]

\foreach \x/\y in  
	{-0.45/0.45, -0.75/0.45, 0.15/0.15, 1.05/0.15, 0.45/0.75, 0.75/0.75, -1.35/0.15}
\foreach \u in {1,-1}
\node at (\x,\u*\y) {$\alpha$};

\foreach \x/\y in  
	{-0.6/0.75, 0.6/0.45}
\foreach \u in {1,-1}
\node at (\x,\u*\y) {$\beta$};

\foreach \x in {-0.15, 1.35, -1.05}
\node at (\x,0) {$\beta$};

\foreach \x/\y in  
	{0.15/0.45, -0.15/0.45, 1.05/-0.45, -1.05/0.45}
\node at (\x,\y) {$\delta$};

\foreach \x/\y in  
	{0.15/-0.45, -0.15/-0.45, -1.05/-0.45, 1.05/0.45}
\node at (\x,\y) {$\epsilon$};

\end{scope}

\end{tikzpicture}
\caption{}
\label{Subfig-4A36A-V}
\end{subfigure}
\begin{subfigure}[t]{0.23\linewidth}
\centering
\begin{tikzpicture}[>=latex]

\foreach \b in {1,-1}
{
\begin{scope}[scale=\b]

\draw
	(-1,0.25) -- (-0.5,0.75) -- (0.5,0.75) -- (1,0.25) -- (1,-0.25) -- (-0.5,-0.25) -- (-1,0.25)
	(0.5,0.75) -- (0.5,0.25)
	(-0.5,0.75) -- (0.5,-0.25);

\fill
	(0.5,0.25) circle (0.05)	
	(0.5,-0.25) circle (0.05);

\node at (0.55,0.9) {\scriptsize 3};
\node at (-0.55,0.9) {\scriptsize 3};
\node at (1.15,0.3) {\scriptsize 2};
\node at (1.15,-0.3) {\scriptsize 4};	

\end{scope}
}

\end{tikzpicture}
\caption{}
\label{Subfig-3A36A-3}
\end{subfigure}
\begin{subfigure}[t]{0.23\linewidth}
\centering
\begin{tikzpicture}[>=latex]

\foreach \a in {0,1,2}
{
\begin{scope}[rotate=120*\a]

\draw
	(0,0) -- (0,1) -- (30:1) -- (-30:1)
	(150:0.5) -- (0,0.5) -- (30:0.5) -- (30:1) ;

\fill
	(0,0) circle (0.05)	
	(30:0.5) circle (0.05);
	
\node at (30:1.15) {\scriptsize 3};
\node at (-30:1.15) {\scriptsize 3};

\end{scope}
}

\end{tikzpicture}
\caption{}
\label{Subfig-3A36A-4}
\end{subfigure}
\caption{Quadrilateral disk tilings, and their simple pentagonal subdivisions that give tilings for AVC(3A36) and AVC(4A36).}
\label{3A36D}
\end{figure}

The quadrilateral tiling has $36\div 2=18$ tiles, and $12$ vertices $\delta\epsilon^3$ of degree $4$, and $8$ vertices $\delta^3$ of degree $3$. Since each pentagonal tile has only one $\delta$, we require that no two $\delta^3$ are connected to each other in the quadrilateral tiling. We use computer to find that there are exactly three such quadrilateral tilings. They are constructed by glueing two copies of the disk tilings in Figures \ref{Subfig-3A36A-3}, \ref{Subfig-3A36A-4}. Both disk tilings already have 4 degree $3$ vertices in the interior, indicated by $\bullet$. Since the total number of $\bullet$ vertices is 8, we glue two copies of the disk tilings together, such that all vertices along the boundary of the disk tilings have degree 4. We indicate the degrees of boundary vertices in Figures \ref{Subfig-3A36A-3} and \ref{Subfig-3A36A-4}. The gluing should match a degree $2$ vertex with a degree $4$ vertex, and match a degree $3$ vertex with another degree $3$ vertex. This means Figure \ref{Subfig-3A36A-3} glues to itself in unique way, and Figure \ref{Subfig-3A36A-4} glues to itself in two possible ways. 

The tiling obtained by glueing Figure \ref{Subfig-3A36A-3} to itself is given by the normal lines in Figure \ref{3A36B}, with $\bullet$ and $\circ$ as vertices. The simple pentagonal subdivision is constructed by adding red lines to each quadrilateral tile, such that the neighboring tiles are compatibly divided like those in Figure \ref{Subfig-3A36B-N}. 

Next, we assign $\delta, \epsilon$ at $\bullet$ and $\circ$ vertices, and all vertices at the ends of the dashed edges are $\alpha^3$ for AVC(3A36) and $\alpha^2\beta$ for AVC(4A36). For the pentagons in Figures \ref{Subfig-3A36A-2} and \ref{Subfig-4A36A-T1}, Figures \ref{Subfig-3A36B-N} and \ref{Subfig-4A36A-V} show how one $\delta$ or one $\epsilon$ determines all the angles in a pentagonal tile. In particular, $\delta$ and $\epsilon$ determine each other in any pentagonal tile.

First, we assign $\delta^3$ to all $\bullet$ vertices. Then each $\delta$ determines $\epsilon$ in the corresponding pentagonal tile. We find four $\circ$ vertices where three $\epsilon$ are already determined. Then these vertices are $\delta\epsilon^3$, and the remaining angle at each vertex is $\delta$. These $\delta$ determine more $\epsilon$. In this way, we determine all the black $\delta, \epsilon$ in Figure \ref{3A36B}.

\begin{figure}[h!]
\centering
\begin{subfigure}[t]{0.45\linewidth} 
\centering
\begin{tikzpicture}[>=latex, scale=1]

\foreach \b in {1,-1}
\fill[yellow, scale=\b]
	(0.4,0.3) -- (-0.4,0.3) -- (-0.6,0.8) -- (1.4,0.8) -- (1.4,1.8) -- (0.6,1.3) -- (0.6,0.8);
	
\foreach \b in {1,-1}
{
\begin{scope}[scale=\b]

\draw
	(-0.4,0.3) -- (0.4,0.3) -- (0.4,-0.3)
	(0.4,0.3) -- (0.6,0.8)
	(-0.4,0.3) -- (-0.6,0.8)
	(-0.6,-1.3) -- (1.4,-1.3) -- (1.4,0.8) -- (-2.2,0.8) -- (-1.4,1.8) -- (1.4,1.8) -- (2.2,0.8)  -- (2.2,-0.8) 
	(0.4,0.3) -- (1,0) -- (0.6,-0.8)
	(1.4,0.8) -- (1,0)
	(0.6,0.8) -- (0.6,1.3) -- (1.4,1.8) -- (1.4,0.8)
	;
	
\draw[red]
	(-0.4,0) -- (0.4,0)
	(0,0.3) -- (0,0.8)
	(0.5,0.55) -- (1.2,0.4)
	(0.5,-0.55) -- (0.7,0.15)
	(0.8,-0.4) -- (1.4,0)
	(1,0.8) -- (1,1.55)
	(1.4,-1.3) -- (1.4,-1.8) 
	(-1,0.8) to[out=50, in=170] (0.6,1.05)
	(1.8,-0.8) to[out=80, in=-40] (1.4,1.3)
	(2.2,0.8) -- (2.8,1);

\foreach \x/\y in {1/0, 0.4/-0.3, 0.6/1.3, 2.2/-0.8}
\fill (\x,\y) circle (0.05);

\begin{scope}[font=\tiny]

\foreach \x/\y in {-0.3/0.4, -0.51/0.25, -0.3/0.2, -1.1/0.05, 0.97/0.15, -0.85/0.05, 0.6/1.45, 0.5/1.2, 0.7/1.25, -2.3/0.85, -2.1/0.67, -2/0.9, 0.3/0.4, 1.3/1.6}
\node at (\x,\y) {$\delta$};

\foreach \x/\y in {-0.47/0.7, -0.61/0.6, 0.3/0.2, 0.5/0.15, 0.53/0.35, 0.47/0.7, 0.7/0.9, 1.5/1.9, 1.47/1.55, 1.1/1.7, 1.3/0.9, 1.33/0.5, -1.3/0.9, -1.5/0.9}
\node at (\x,\y) {$\epsilon$};

\foreach \x/\y in {0.67/0.7, -0.6/0.95, 1.52/0.8, -1.3/0.7}
\node[red] at (\x,\y) {$\delta$};

\foreach \x/\y in {0.5/0.9, -0.75/0.7, 1.25/0.7, -1.5/0.7}
\node[red] at (\x,\y) {$\epsilon$};

\end{scope}

\end{scope}
}

\foreach \x/\y in {0.4/0.3, 0.6/0.8, -0.6/0.8, 1.4/0.8, -1.4/0.8, 1.4/1.8}
\foreach \u in {1,-1}
\filldraw[fill=white] (\u*\x,\u*\y) circle (0.05);

\end{tikzpicture}
\caption{}
\label{Subfig-3A36B1}
\end{subfigure}
\begin{subfigure}[t]{0.45\linewidth} 
\centering
\begin{tikzpicture}[>=latex, scale=1]

\foreach \b in {1,-1}
\fill[yellow, scale=\b]
	(0.4,0.3) -- (-0.4,0.3) -- (-0.6,0.8) -- (-1.4,0.8) -- (-1.4,1.3) -- (0.6,1.3) -- (1.4,1.8) -- (2.2,0.8) -- (2.2,-0.8) -- (0.6,-0.8) -- (1,0);
	
\foreach \b in {1,-1}
{
\begin{scope}[scale=\b]

\draw
	(-0.4,0.3) -- (0.4,0.3) -- (0.4,-0.3)
	(0.4,0.3) -- (0.6,0.8)
	(-0.4,0.3) -- (-0.6,0.8)
	(-0.6,-1.3) -- (1.4,-1.3) -- (1.4,0.8) -- (-2.2,0.8) -- (-1.4,1.8) -- (1.4,1.8) -- (2.2,0.8)  -- (2.2,-0.8) 
	(0.4,0.3) -- (1,0) -- (0.6,-0.8)
	(1.4,0.8) -- (1,0)
	(0.6,0.8) -- (0.6,1.3) -- (1.4,1.8) -- (1.4,0.8)
	;
	
\draw[red]
	(-0.4,0) -- (0.4,0)
	(0,0.3) -- (0,0.8)
	(0.5,0.55) -- (1.2,0.4)
	(0.5,-0.55) -- (0.7,0.15)
	(0.8,-0.4) -- (1.4,0)
	(1,0.8) -- (1,1.55)
	(1.4,-1.3) -- (1.4,-1.8) 
	(-1,0.8) to[out=50, in=170] (0.6,1.05)
	(1.8,-0.8) to[out=80, in=-40] (1.4,1.3)
	(2.2,0.8) -- (2.8,1);

\foreach \x/\y in {1/0, 0.4/-0.3, 0.6/1.3, 2.2/-0.8}
\fill (\x,\y) circle (0.05);

\begin{scope}[font=\tiny]

\foreach \x/\y in {-0.3/0.4, -0.51/0.25, -0.3/0.2, -1.1/0.05, 0.97/0.15, -0.85/0.05, 0.6/1.45, 0.5/1.2, 0.7/1.25, -2.3/0.85, -2.1/0.67, -2/0.9, 0.3/0.4, 1.3/1.6}
\node at (\x,\y) {$\delta$};

\foreach \x/\y in {-0.47/0.7, -0.61/0.6, 0.3/0.2, 0.5/0.15, 0.53/0.35, 0.47/0.7, 0.7/0.9, 1.5/1.9, 1.47/1.55, 1.1/1.7, 1.3/0.9, 1.33/0.5, -1.3/0.9, -1.5/0.9}
\node at (\x,\y) {$\epsilon$};

\foreach \x/\y in {0.67/0.7, -0.6/0.95, 1.52/0.8, -1.3/0.7}
\node[red] at (\x,\y) {$\epsilon$};

\foreach \x/\y in {0.5/0.9, -0.75/0.7, 1.25/0.7, -1.5/0.7}
\node[red] at (\x,\y) {$\delta$};

\end{scope}

\end{scope}
}

\foreach \x/\y in {0.4/0.3, 0.6/0.8, -0.6/0.8, 1.4/0.8, -1.4/0.8, 1.4/1.8}
\foreach \u in {1,-1}
\filldraw[fill=white] (\u*\x,\u*\y) circle (0.05);

\end{tikzpicture}
\caption{}
\label{Subfig-3A36B2}
\end{subfigure}
\caption{Two tilings for AVC(3A36) and AVC(4A36).}
\label{3A36B}
\end{figure}

For the remaining undetermined angles at $\circ$ vertices, we assume one of them is red $\delta$. Then we use the fact that $\delta, \epsilon$ determine each other in any pentagonal tile, and $\epsilon^3\cdots=\delta\epsilon^3$, to determine all the remaining red angles. It turns out the choice of one red $\delta$ determines all the remaining red angles. Then we get two tilings in Figures \ref{Subfig-3A36B1} and \ref{Subfig-3A36B2} that are related by the exchange of all red $\delta,\epsilon$. We also use yellow color to indicate trapeziums in the quadrilateral tiling.

Figure \ref{4A36FigAx} gives another drawing of the tiling for AVC(4A36) in Figure \ref{Subfig-3A36B1}. The red corners are $\delta$ and the blue corners are $\epsilon$. For $\alpha=\frac{\pi}{2}$ and $\beta=\pi$, the white quadrilaterals are rhombi with angles $\delta$ and $\epsilon$. We are able to draw 3d rendering of the tiling, such that the eleven quadrilaterals enclosed by the green lines are faithful. Figure \ref{4A36FigAa} shows the five rhombi at the center of Figure \ref{4A36FigAx}. Figure \ref{4A36FigAb} shows the three rhombi at the top or bottom of Figure \ref{4A36FigAx}. Figure \ref{4A36FigAb} shows part of the tiling outside the green region.

\begin{figure}[htp]
\centering
\begin{subfigure}[b]{0.3\linewidth}
\centering
\begin{tikzpicture}[>=latex,scale=1]

\foreach \a in {1,-1}
\fill[yellow, scale=\a]
	(0.52,0.6) to[out=0, in=90] (1.04,-0.9) -- ++(150:0.6)
	(1.04,-1.5) to[out=30, in=-60] (1.5,1) to[out=120, in=30] (-0.52,1.8) -- (0,1.5) to[out=0, in=30] (1.04,-0.9);

\foreach \x/\y/\a/\b in {
	0/0.3/0/360, -0.52/0.6/-90/30, 0.52/0.6/0/360, 0.52/0/90/-90, 0/0.9/90/210, -0.52/1.2/0/360, 0/1.5/-90/0, -0.52/1.8/0/360, -1.04/0.9/-30/90, -1.04/1.5/210/270}
\foreach \u in {1,-1}
\fill[red!50, scale=\u, shift={(\x,\y)}]
	(0,0) -- (\a:0.12) arc (\a:\b:0.12);

\foreach \x/\y/\a/\b in {
	-0.52/0.6/30/270, 0.52/0/90/270, 0/0.9/-150/90, 0/1.5/0/270, -1.04/0.9/90/330, -1.04/1.5/-90/210}
\foreach \u in {1,-1}
\fill[blue!50, scale=\u, shift={(\x,\y)}]
	(0,0) -- (\a:0.12) arc (\a:\b:0.12);
		
\foreach \a in {1,-1}
{
\begin{scope}[scale=\a]

\draw
	(0,0.9) -- (0,0.3) -- ++(-30:0.6) -- ++(210:0.6)
	(-0.52,0.6) -- ++(90:0.6) -- ++(30:0.6) 
	(-0.52,1.2) -- ++(150:0.6)
	(0.52,0.6) to[out=0, in=90] (1.04,-0.9)
	(0,1.5) to[out=0, in=30] (1.04,-0.9)
	(1.04,-1.5) to[out=30, in=-60] (1.5,1) to[out=120, in=30] (-0.52,1.8)
	;

\draw[green, thick]
	(0.52,0) -- ++(90:0.6) -- ++(150:0.6) -- ++(90:0.6) -- ++(150:0.6) -- ++(210:0.6) -- ++(-90:0.6) -- ++(-30:0.6) -- ++(-90:0.6) 
	(0,0.9) -- ++(210:0.6);

\draw[red]
	(-0.78,0.75) to[out=-90, in=150] (-0.52,-0.3)
	(0,1.2) to[out=-20, in=60] (1.03,0)
	(1.18,0.6) -- ++(30:0.47);

\foreach \x/\y/\u in {-0.78/1.65/120, -0.78/1.35/30, -0.52/0.9/150, -0.26/0.75/90, 0/0.6/210, 0.26/0.15/90, 0.26/-0.15/150}
\draw[red]
	(\x,\y) -- ++(\u:0.6);
	
\foreach \x/\y in {0/0.3, 0.52/0.6, -0.52/1.2, -0.52/1.8}
\fill (\x,\y) circle (0.05);

\end{scope}
}

\foreach \x/\y in {0/0.9, 0/1.5, -0.52/0, -0.52/0.6, -1.04/0.9, 1.04/-1.5}
\foreach \u in {1,-1}
\filldraw[fill=white] (\u*\x,\u*\y) circle (0.05);

\end{tikzpicture}	
\caption{}
\label{4A36FigAx}
\end{subfigure}
\begin{subfigure}[b]{0.22\linewidth}
\centering
\begin{tikzpicture}[>=latex,scale=1]

\raisebox{0.7cm}{

\pgftext{
	\includegraphics[scale=0.05]{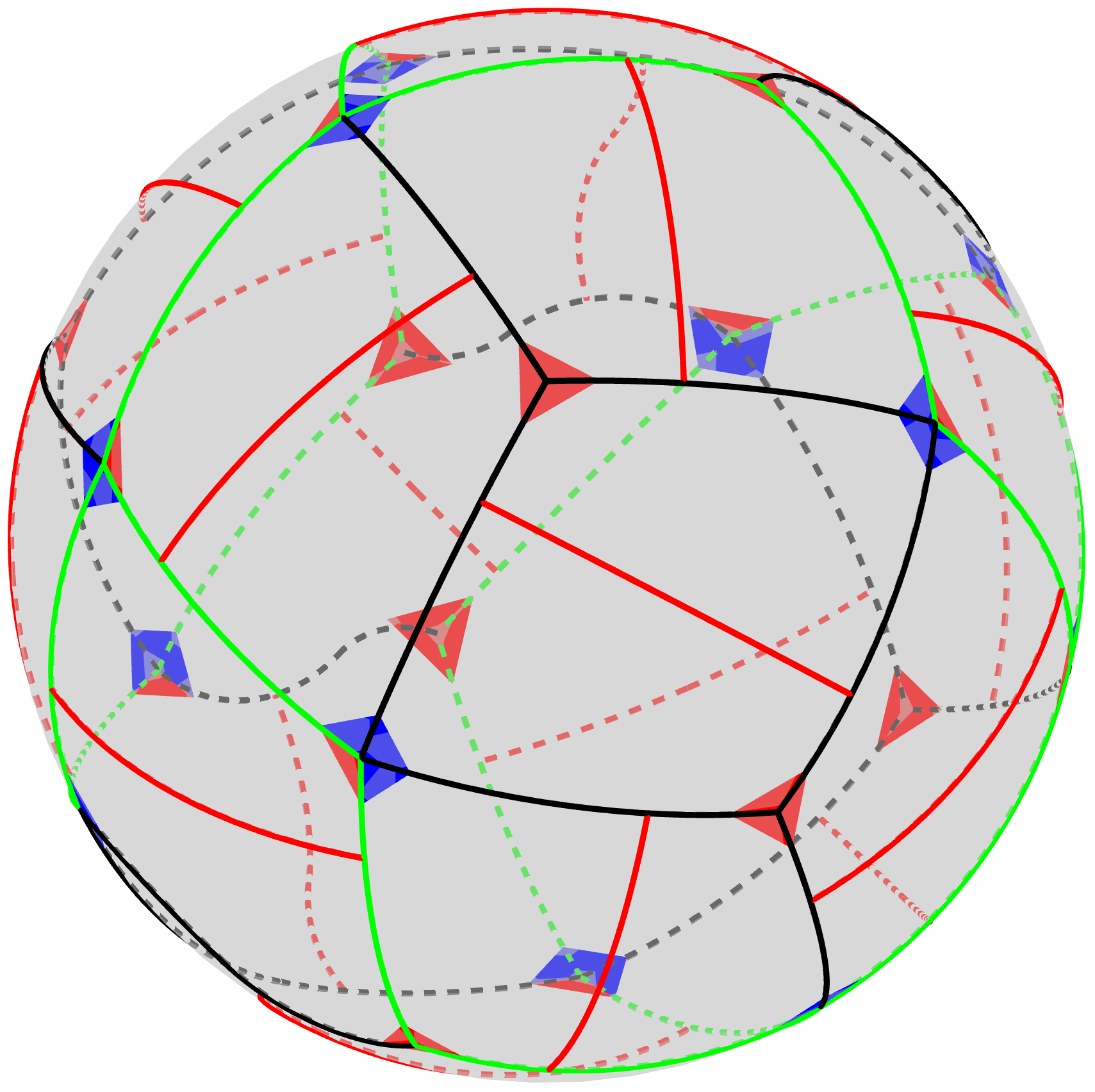}
	};

}

\end{tikzpicture}	
\caption{}
\label{4A36FigAa}
\end{subfigure}
\begin{subfigure}[b]{0.22\linewidth}
\centering
\begin{tikzpicture}[>=latex,scale=1]

\raisebox{0.7cm}{

\pgftext{
	\includegraphics[scale=0.053]{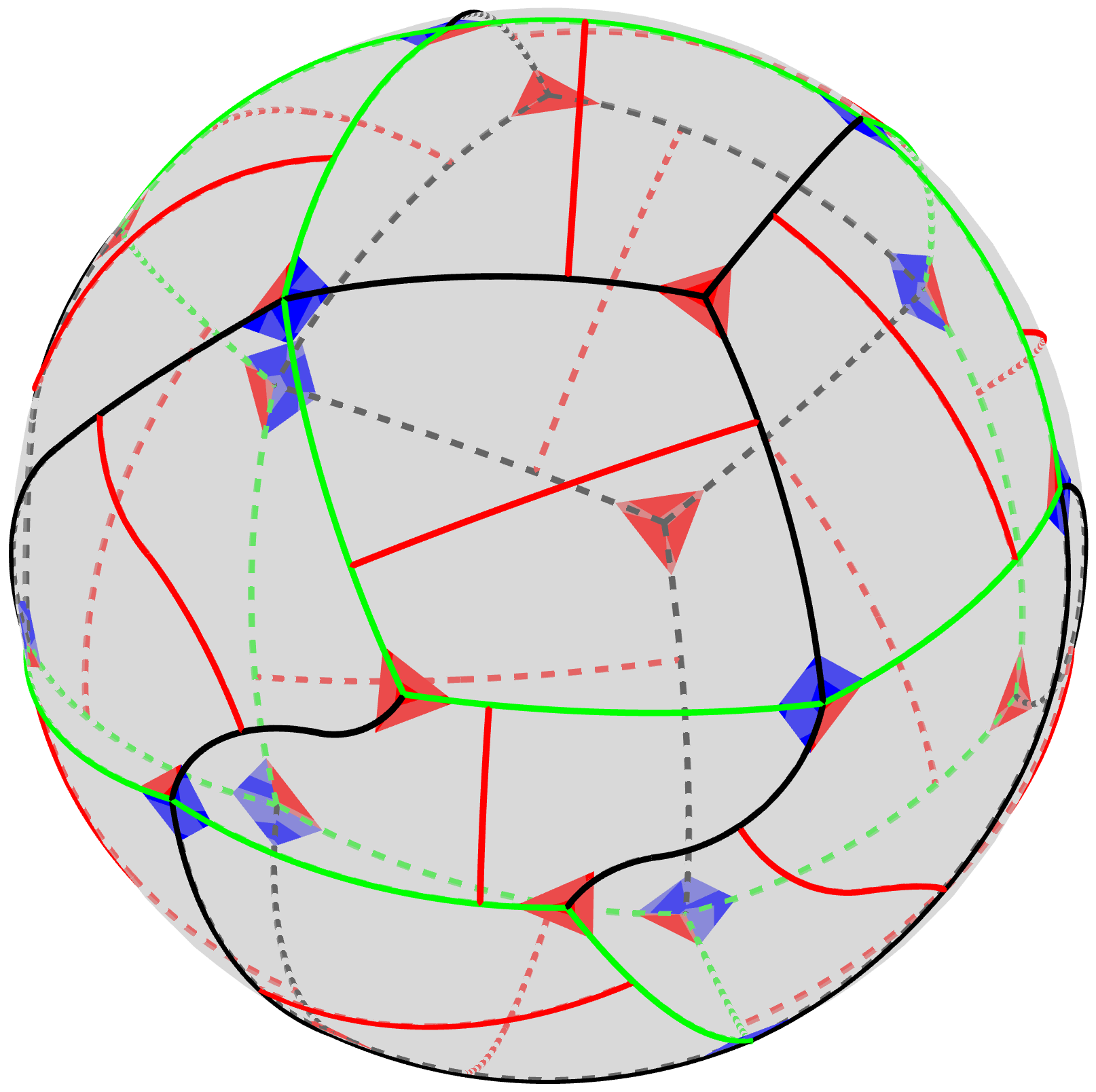}
	};
	
}
	
\end{tikzpicture}	
\caption{}
\label{4A36FigAb}
\end{subfigure}
\begin{subfigure}[b]{0.22\linewidth}
\centering
\begin{tikzpicture}[>=latex,scale=1]

\raisebox{0.7cm}{

\pgftext{
	\includegraphics[scale=0.05]{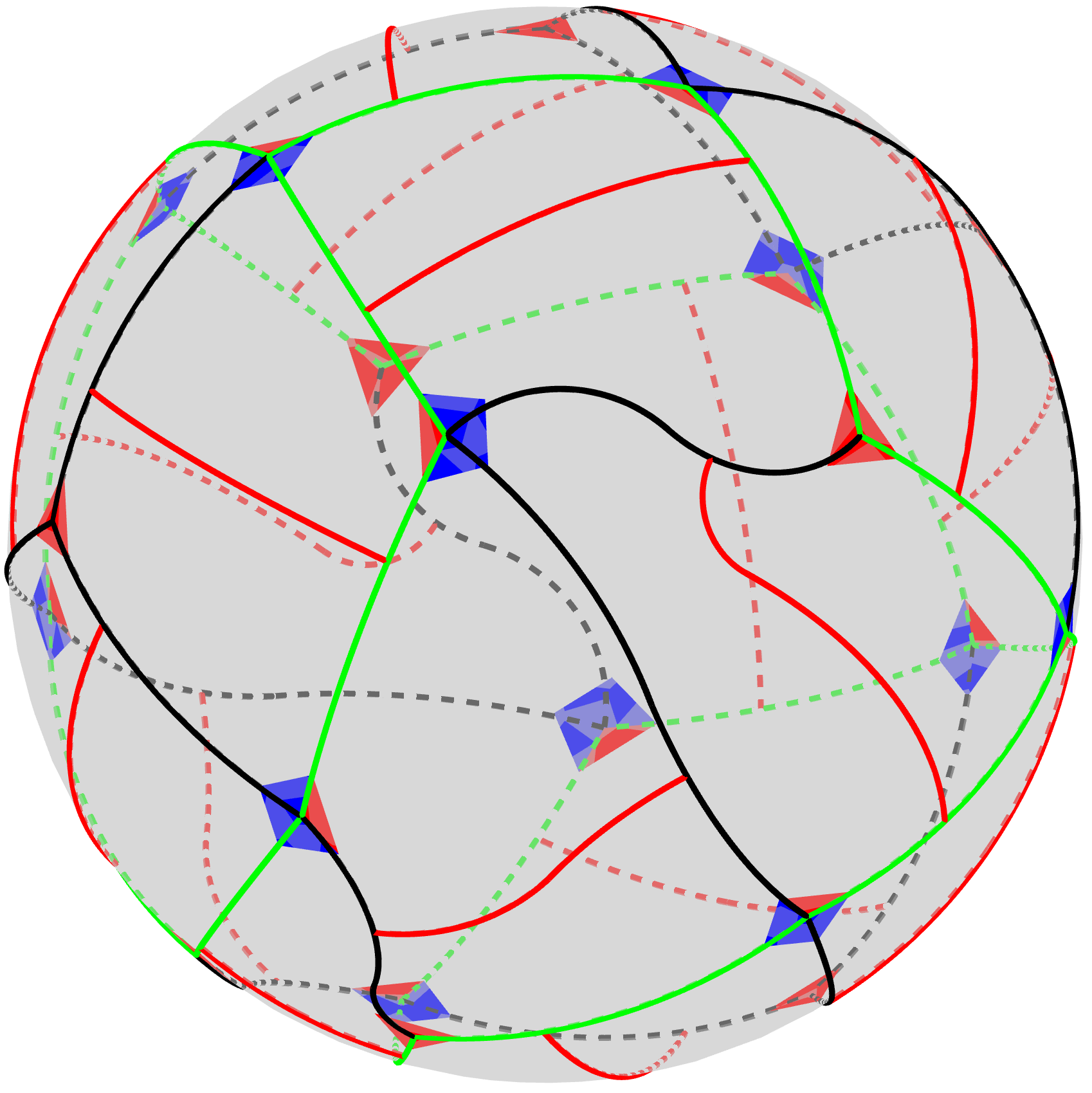}
	};

}

\end{tikzpicture}	
\caption{}
\label{4A36FigAc}
\end{subfigure}
\caption{Tiling in Figure \ref{Subfig-3A36B1} for AVC(4A36), with $\alpha=\frac{\pi}{2}$ and $\beta=\pi$. Red corners are $\delta$ and blue corners are $\epsilon$.}
\label{4A36FigA}
\end{figure} 

The tilings obtained by glueing Figure \ref{Subfig-3A36A-4} to itself are given by the normal lines in Figure \ref{3A36C}, with $\bullet$ and $\circ$ as vertices. We apply the same process of adding red subdivision lines, assigning $\delta$ and $\epsilon$ to $\bullet$ and $\circ$ vertices. We get two tilings in Figures \ref{Subfig-3A36C1} and \ref{Subfig-3A36C3}, and the exchange of all red $\delta, \epsilon$ also gives tilings. Again we use yellow color to indicate trapeziums in the quadrilateral tiling. Then it is easy to see that exchanging all red $\delta,\epsilon$ in Figure \ref{Subfig-3A36C1} gives the tiling in Figure \ref{Subfig-3A36C2}, and exchanging all red $\delta,\epsilon$ in Figure \ref{Subfig-3A36C2} gives an equivalent tiling. In fact, Figure \ref{Subfig-3A36C1} is the union of two identical half sphere tilings ${\mc X}$, and Figure \ref{Subfig-3A36C2} is the union of two identical half sphere tilings ${\mc Y}$, and Figure \ref{Subfig-3A36C3} is the union of ${\mc X}$ and ${\mc Y}$.

\begin{figure}[h!]
\begin{subfigure}[t]{0.5\linewidth} 
\centering
\begin{tikzpicture}[>=latex, scale=1]

\foreach \b in {1,2,3}
\fill[yellow, rotate=120*\b]
	(30:1.6) -- (-30:1.6) -- (-30:0.8) -- (30:0.8) -- (30:2.4) -- (90:2.4) -- (90:1.6) -- (30:1.6);

\foreach \a in {0,...,5}
{
\draw[blue, thick, rotate=60*\a]
	(30:1.6) -- (90:1.6);
\draw[green, thick, rotate=60*\a]
	(30:0.8) -- (90:0.8)
	(30:2.4) -- (90:2.4);
}

\foreach \b in {1,2,3}
{
\begin{scope}[rotate=120*\b]

\draw
	(0,0) -- (90:2.4)
	(30:0.8) -- (30:3);

\draw[red]
	(-30:0.4) -- (60:0.693)
	(30:1.2) -- (90:1.2)
	(0:0.693) -- (0:1.386)
	(30:2) -- (-30:2)
	(60:1.386) -- (60:2.078)
	(30:2.7) to[out=120, in=0] (90:2.8) to[out=180, in=90] (120:2.078);
	
\fill 
	(30:0.8) circle (0.05)
	(0,0) circle (0.05)
	(30:3) circle (0.05)
	(90:2.4) circle (0.05);

\begin{scope}[font=\tiny]

\foreach \a/\r in {30/0.15, 30/0.65, 38/0.9, 22/0.9, 87/2.2, 93/2.2, -87/2.2, 98/0.9, 
93/2.2, 87/2.2, 90/2.55, 90/3}
\node at (\a:\r) {$\delta$};

\foreach \a/\r in {26/1.45, 34/1.7, 100/0.62, 80/0.62, 82/0.9, 95/1.4, 86/1.7,
27/2.2, 27.5/2.5, 32.5/2.5
}
\node at (\a:\r) {$\epsilon$};

\foreach \a/\r in {26/1.7, 85/1.4}
\node[red] at (\a:\r) {$\delta$};

\foreach \a/\r in {34/1.45, 94/1.7}
\node[red] at (\a:\r) {$\epsilon$};

\end{scope}
	
\end{scope}
}

\foreach \a in {0,1,2}
{

\filldraw[fill=white, rotate=120*\a]
	(90:0.8) circle (0.05)
	(90:1.6) circle (0.05)
	(30:1.6) circle (0.05);
		
}	

\end{tikzpicture}
\caption{}
\label{Subfig-3A36C1}
\end{subfigure}
\begin{subfigure}[t]{0.5\linewidth} 
\centering
\begin{tikzpicture}[>=latex, scale=1]

\foreach \b in {0,...,5}
\fill[yellow, rotate=60*\b]
	(30:0.8) -- (30:2.4) -- (90:2.4) -- (90:0.8);
	
\foreach \a in {0,...,5}
{
\draw[blue, thick,  rotate=60*\a]
	(30:1.6) -- (90:1.6);
\draw[green, thick,  rotate=60*\a]
	(30:0.8) -- (90:0.8)
	(30:2.4) -- (90:2.4);
}

\foreach \b in {1,2,3}
{
\begin{scope}[rotate=120*\b]

\draw
	(0,0) -- (90:2.4)
	(30:0.8) -- (30:3);

\draw[red]
	(-30:0.4) -- (60:0.693)
	(30:1.2) -- (90:1.2)
	(0:0.693) -- (0:1.386)
	(30:2) -- (-30:2)
	(60:1.386) -- (60:2.078)
	(30:2.7) to[out=120, in=0] (90:2.8) to[out=180, in=90] (120:2.078);
	
\fill 
	(30:0.8) circle (0.05)
	(0,0) circle (0.05)
	(30:3) circle (0.05)
	(90:2.4) circle (0.05);

\begin{scope}[font=\tiny]

\foreach \a/\r in {30/0.15, 30/0.65, 38/0.9, 22/0.9, 87/2.2, 93/2.2, -87/2.2, 98/0.9, 
93/2.2, 87/2.2, 90/2.55, 90/3}
\node at (\a:\r) {$\delta$};

\foreach \a/\r in {26/1.45, 34/1.7, 100/0.62, 80/0.62, 82/0.9, 95/1.4, 86/1.7,
27/2.2, 27.5/2.5, 32.5/2.5
}
\node at (\a:\r) {$\epsilon$};

\foreach \a/\r in {26/1.7, 85/1.4}
\node[red] at (\a:\r) {$\epsilon$};

\foreach \a/\r in {34/1.45, 94/1.7}
\node[red] at (\a:\r) {$\delta$};

\end{scope}
	
\end{scope}
}

\foreach \a in {0,1,2}
{

\filldraw[fill=white, rotate=120*\a]
	(90:0.8) circle (0.05)
	(90:1.6) circle (0.05)
	(30:1.6) circle (0.05);
		
}	

\end{tikzpicture}
\caption{}
\label{Subfig-3A36C2}
\end{subfigure}
\begin{subfigure}[t]{1\linewidth} 
\centering
\begin{tikzpicture}[>=latex, scale=1]

\foreach \b in {0,1,2}
\fill[yellow, rotate=120*\b]
	(-30:0.8) -- (-30:2.4) -- (30:2.4) -- (90:2.4) -- (90:1.6) -- (30:1.6) -- (30:0.8);

\foreach \a in {0,...,5}
{
\draw[blue, thick,  rotate=60*\a]
	(30:1.6) -- (90:1.6);
\draw[green, thick,  rotate=60*\a]
	(30:0.8) -- (90:0.8)
	(30:2.4) -- (90:2.4);
}

\foreach \b in {1,2,3}
{
\begin{scope}[rotate=120*\b]

\draw
	(0,0) -- (90:2.4)
	(30:0.8) -- (30:3);

\draw[red]
	(-30:0.4) -- (60:0.693)
	(30:1.2) -- (90:1.2)
	(0:0.693) -- (0:1.386)
	(30:2) -- (-30:2)
	(60:1.386) -- (60:2.078)
	(-30:2.7) to[out=240, in=0] (-90:2.8) to[out=180, in=-90] (240:2.078);
	
\fill 
	(30:0.8) circle (0.05)
	(0,0) circle (0.05)
	(90:3) circle (0.05)
	(30:2.4) circle (0.05);

\begin{scope}[font=\tiny]

\foreach \a/\r in {30/0.15, 30/0.65, 38/0.9, 22/0.9, 87/2.2, -87/2.2, 98/0.9, 
33/2.2, 27/2.2, 30/2.55, 30/3}
\node at (\a:\r) {$\delta$};

\foreach \a/\r in {26/1.45, 34/1.7, 100/0.62, 80/0.62, 82/0.9, 95/1.4, 86/1.7,
93/2.2, 87.5/2.5, 92.5/2.5
}
\node at (\a:\r) {$\epsilon$};

\foreach \a/\r in {26/1.7, 85/1.4}
\node[red] at (\a:\r) {$\delta$};

\foreach \a/\r in {34/1.45, 94/1.7}
\node[red] at (\a:\r) {$\epsilon$};

\end{scope}
	
\end{scope}
}

\foreach \a in {0,1,2}
{

\filldraw[fill=white, rotate=120*\a]
	(90:0.8) circle (0.05)
	(90:1.6) circle (0.05)
	(30:1.6) circle (0.05);
		
}	

\end{tikzpicture}
\caption{}
\label{Subfig-3A36C3}
\end{subfigure}
\caption{Three more tilings for AVC(3A36) and AVC(4A36).}
\label{3A36C}
\end{figure}

Figure \ref{4A36FigB} gives 3d rendering of the tilings in Figure2 \ref{Subfig-3A36C1} and \ref{Subfig-3A36C2}, with $\alpha=\frac{\pi}{2}$ and $\beta=\pi$, such that the six quadrilaterals within the central green lines and outside of outer green lines are faithful. Figures \ref{4A36FigBa} and \ref{4A36FigCa} show three such rhombi. Figures \ref{4A36FigBb} and \ref{4A36FigCb} show the tiles on the two sides of the blue equator. If we glue the part of Figure \ref{4A36FigBb} above the equator and the lower part of Figure \ref{4A36FigCb} below the equator, then we get the 3d rendering of the tiling in Figure \ref{Subfig-3A36C3}.

\begin{figure}[htp]
\centering
\begin{subfigure}[b]{0.22\linewidth}
\centering
\begin{tikzpicture}[>=latex,scale=1]

\pgftext{
	\includegraphics[scale=0.055]{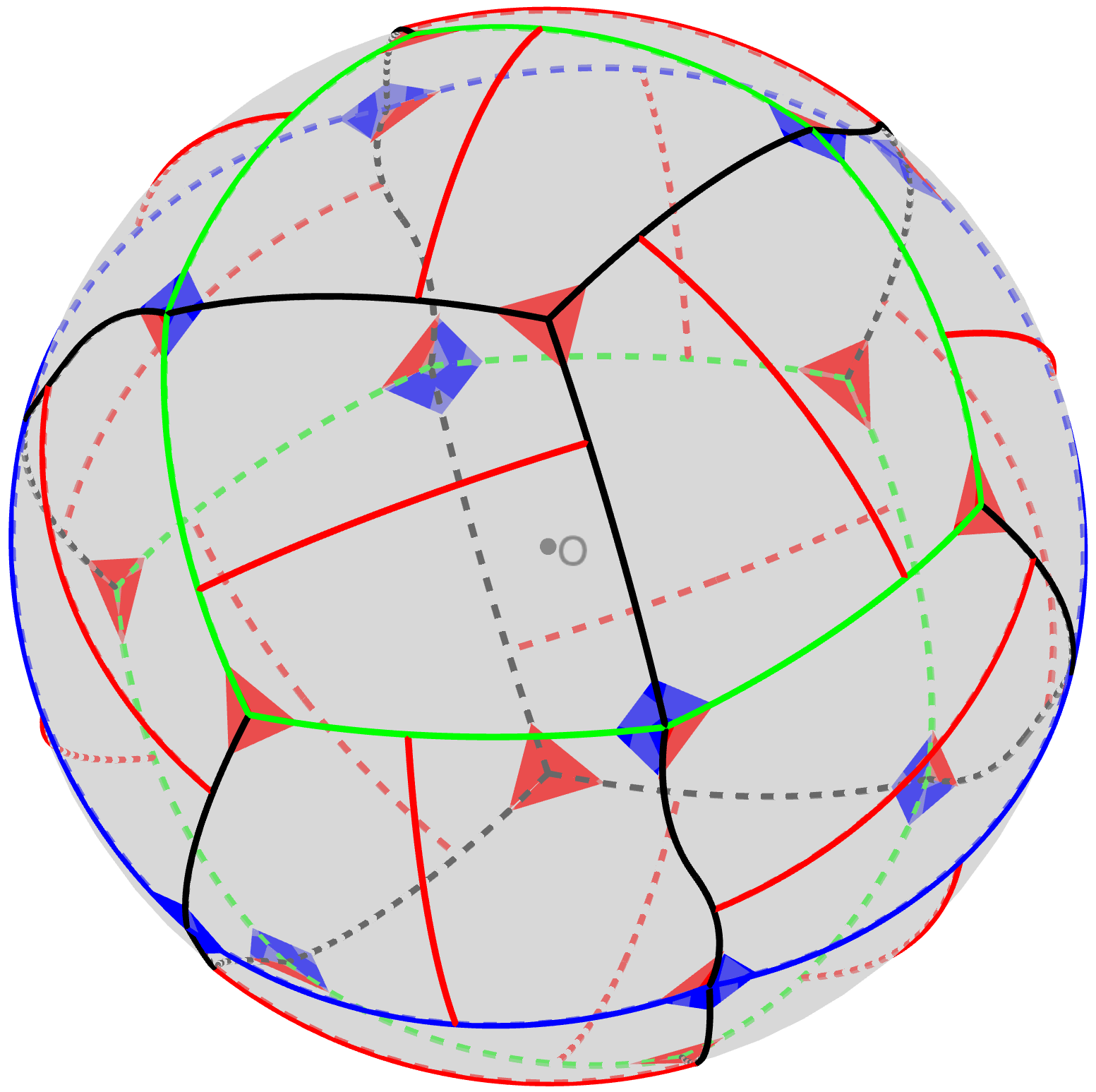}
	};

\end{tikzpicture}	
\caption{Figure \ref{Subfig-3A36C1}}
\label{4A36FigBa}
\end{subfigure}
\begin{subfigure}[b]{0.22\linewidth}
\centering
\begin{tikzpicture}[>=latex,scale=1]

\pgftext{
	\includegraphics[scale=0.055]{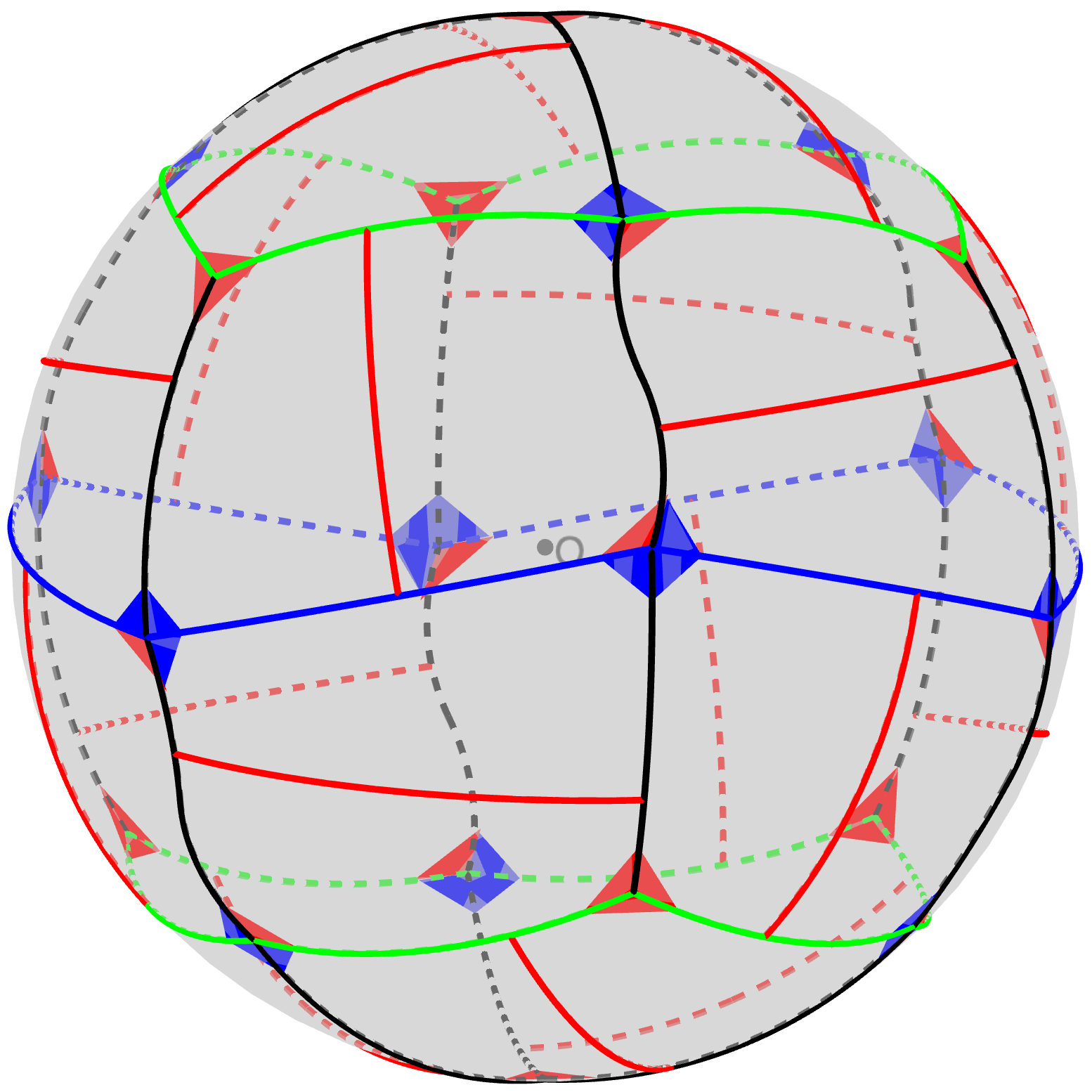}
	};

\end{tikzpicture}	
\caption{Figure \ref{Subfig-3A36C1}}
\label{4A36FigBb}
\end{subfigure}
\begin{subfigure}[b]{0.22\linewidth}
\centering
\begin{tikzpicture}[>=latex,scale=1]

\pgftext{
	\includegraphics[scale=0.057]{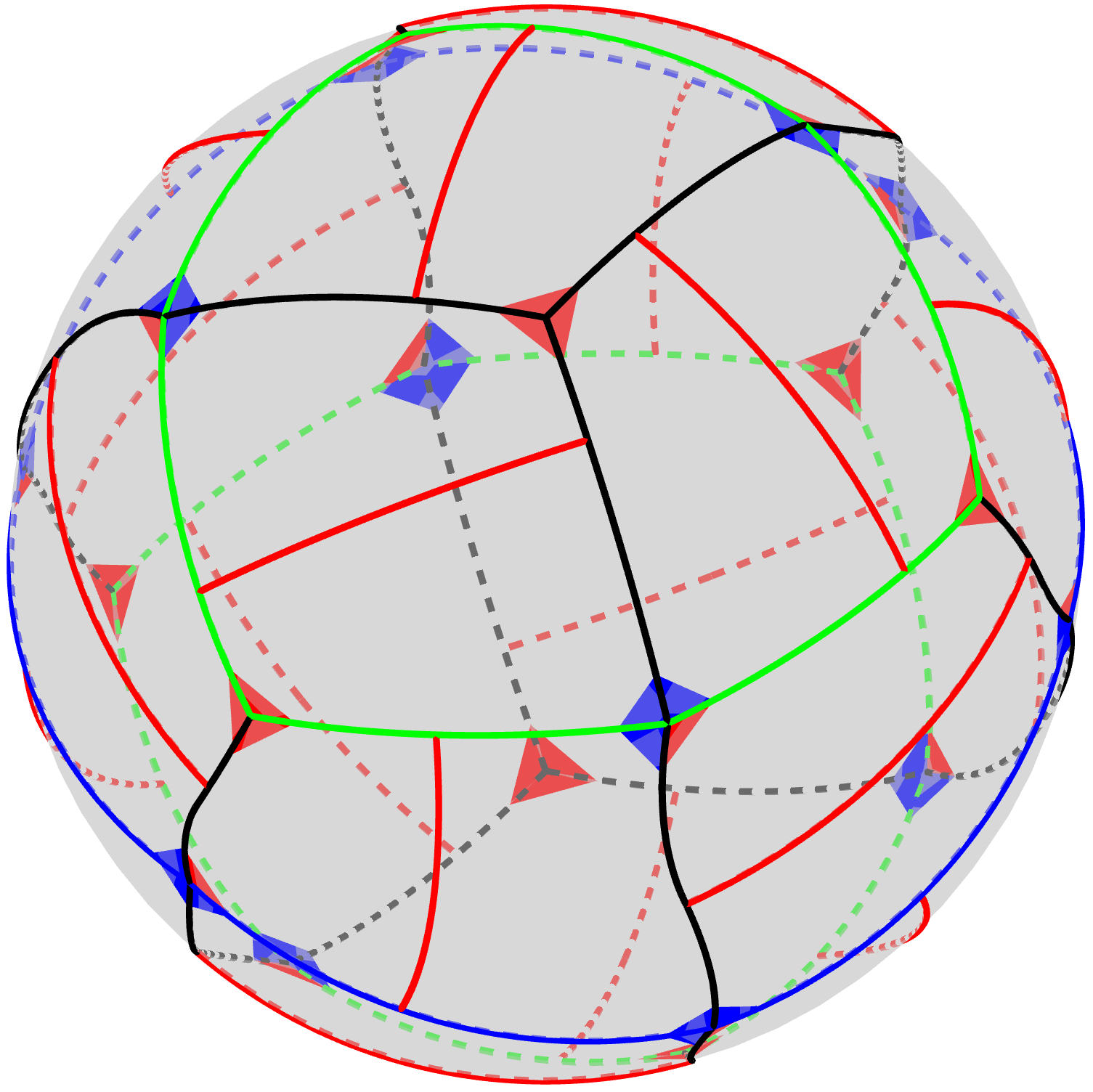}
	};
	
\end{tikzpicture}	
\caption{Figure \ref{Subfig-3A36C2}}
\label{4A36FigCa}
\end{subfigure}
\begin{subfigure}[b]{0.22\linewidth}
\centering
\begin{tikzpicture}[>=latex,scale=1]

\pgftext{
	\includegraphics[scale=0.057]{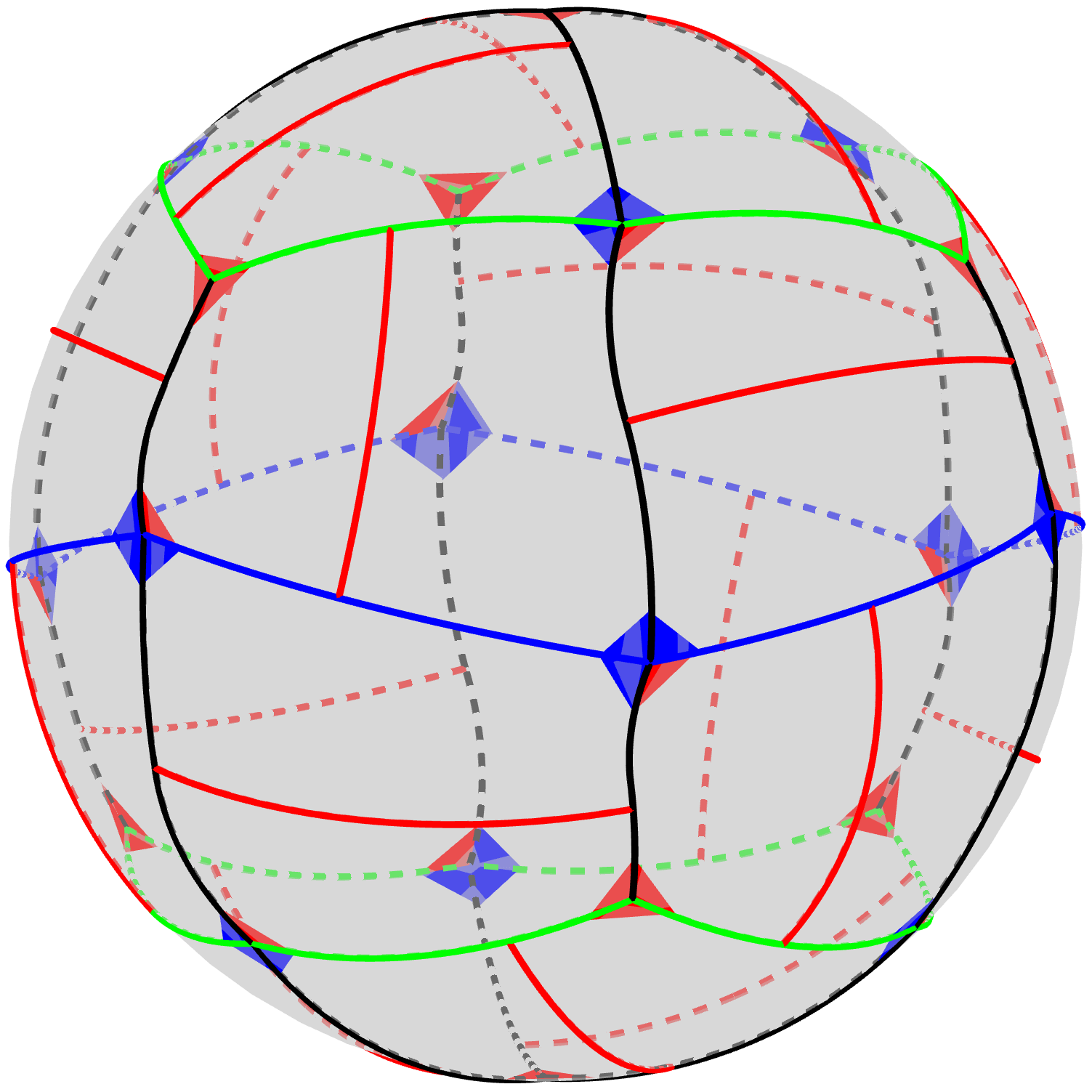}
	};
	
\end{tikzpicture}	
\caption{Figure \ref{Subfig-3A36C2}}
\label{4A36FigCb}
\end{subfigure}
\caption{Tiling in Figure \ref{3A36C} for AVC(4A36), with $\alpha=\frac{\pi}{2}$ and $\beta=\pi$. Red corners are $\delta$ and blue corners are $\epsilon$.}
\label{4A36FigB}
\end{figure}

Finally, we assign $\alpha,\beta$ to the two tilings according to Figures \ref{Subfig-3A36A-2}, \ref{Subfig-4A36A-T1}, \ref{Subfig-4A36A-T2}, \ref{Subfig-4A36A-T3}. In the later two cases, we get a vertex $\alpha\beta^2$, contradicting AVC(4A36). Therefore we get tilings only for the pentagons in Figures \ref{Subfig-3A36A-2} and \ref{Subfig-4A36A-T1}.
\end{proof}

Tilings for AVC(5A36) are obtained by applying the splitting to the tilings for AVC(4A36). Specifically, we change Figure \ref{Subfig-4A36A-T1} to the one in Figure \ref{Subfig-5AfigA-T2}, leaving $\delta$ and $\epsilon$ unchanged. Then we find that the splitting in Figure \ref{Subfig-4A36A-V} always introduces $\alpha\beta^2$. Since this is not in AVC(5A36), we get an alternative proof of no tiling for AVC(5A36) in Theorem \ref{5Athm}. 

Finally, we may consider the more extreme reduction of AVC(5A36) by assuming $\alpha=\beta=\gamma=\delta$
\[
\text{AVC(2D36)}
=\{36\alpha^4\epsilon\colon 44\alpha^3,12\alpha\epsilon^3\}.
\]
This is a further reduction of AVC(3A36) where we assume $\alpha=\delta$, and is comparable for AVC(2D24/60).

The tilings for AVC(3A36) in Figures \ref{3A36B} and \ref{3A36C} reduce to tilings for AVC(2D36). Figure \ref{2D36B1} shows a tiling for AVC(2D36) that is not a reduction. The angle around the gray edges are given Figure \ref{2D36B2}, and the inner $\alpha, \epsilon$ can be exchanged. All other angles are $\alpha$. 

\begin{figure}[h!]
\centering
\begin{subfigure}[t]{0.3\linewidth}
\centering
\begin{tikzpicture}[>=latex]

\foreach \a in {1,-1}
\draw[gray!70, line width =2, scale=\a]
	(0,0) -- (0.3,0)
	(0,0.6) -- (0,1)
	(1.3,0.5) -- (1.3,-0.5)
	(2.1,0) -- (2.5,0);

\foreach \a in {1,-1}
\foreach \b in {1,-1}
{
\begin{scope}[xscale=\a, yscale=\b]

\draw
	(0.3,0.6) -- (0.3,0)
	(0,0) -- (0.9,0) -- (0.9,0.3) -- (0.6,0.6) -- (0,0.6) -- (0,1.8) -- (2.1,1.8) -- (2.1,0)
	(0.6,0.6) -- (0.8,1)
	(0.9,0.3) -- (1.3,0.5) -- (1.7,0.5)
	(0,1) -- (1.3,1)
	(1.3,1.4) -- (1.3,0)
	(0,1.4) -- (1.7,1.4) -- (1.7,0)
	(1.7,1.4) -- (2.1,1.8)
	(1.7,0) -- (2.5,0);

\end{scope}
}

\end{tikzpicture}
\caption{}
\label{2D36B1}
\end{subfigure}
\begin{subfigure}[t]{0.3\linewidth}
\centering
\begin{tikzpicture}[>=latex]

\foreach \a in {1,-1}
{
\begin{scope}[scale=\a]

\draw[gray!70, line width =2]
	(0,0) -- (0.4,0);
	
\draw
	(0,0) -- (0.8,0)
	(0.4,0.4) -- (0.4,-0.4);

\node at (0.25,0.15) {\scriptsize $\epsilon$};
\node at (-0.25,0.15) {\scriptsize $\alpha$};
\node at (0.55,0.15) {\scriptsize $\epsilon$};
\node at (-0.55,0.15) {\scriptsize $\epsilon$};

\end{scope}
}

\end{tikzpicture}
\caption{}
\label{2D36B2}
\end{subfigure}
\begin{subfigure}[t]{0.15\linewidth}
\centering
\begin{tikzpicture}[>=latex]

\draw[gray!70, line width =2]
	(0.3,0) -- (-0.3,0);

\foreach \a in {1,-1}
\foreach \b in {1,-1}
{
\begin{scope}[xscale=\a, yscale=\b]
	
\draw[dashed]
	(0,1) -- (1,1) -- (1,0);

\draw
	(0,0) -- (1,0)
	(0.3,0) -- (0.5,1)
	(0,1) -- ++(0,0.3)
	(1,0.5) -- ++(0.3,0)
	(1,1) -- ++(0.2,0.2);

\end{scope}
}

\end{tikzpicture}
\caption{}
\label{2D36B3}
\end{subfigure}
\caption{One tiling for AVC(2D36).}
\label{2D36B} 
\end{figure}

One way to construct a subset of tilings for AVC(2D36) is by gluing six copies of the patches in Figure \ref{2D36A} together, such that either $\bar{\alpha}$ is matched with $\alpha$, or three $N_i$ meet at $\alpha\alpha\alpha$. Here $\bar{\alpha}$ is either $\alpha^2$ or $\epsilon^3$ along the boundary. We remark that this construction is not exhaustive, as it finds tilings for only $103$ non-isomorphic order $36$ combinatorial tilings corresponding to AVC(2D36). In contrast, a computer search yields that there are altogether $1396$ tilings for AVC(2D36) spread across $295$ non-isomorphic order $36$ combinatorial tilings.

\begin{figure}[h!]
\centering
\begin{tikzpicture}[>=latex,scale=1]

\draw[gray!70, line width=2]
	(-0.8,0) -- (0,0);
	
\foreach \a in {1,-1}
\foreach \x in {0,...,3}
{
\begin{scope}[xshift=3*\x cm, xscale=\a]

\draw
	(0.8,0) -- (0,0) -- (0,0.5) -- (0.4,0.8) -- (0.8,0.5) -- (0.8,-0.5) -- (0.4,-0.8) -- (0,-0.5) -- (0,0);

\node at (0.15,0.15) {\scriptsize $\epsilon$};
\node at (0.65,0.15) {\scriptsize $\alpha$};
\node at (0.65,0.45) {\scriptsize $\alpha$};
\node at (0.4,0.6) {\scriptsize $\alpha$};
\node at (0.15,0.45) {\scriptsize $\alpha$};

\end{scope}
}

\foreach \x in {0,...,3}
{
\begin{scope}[xshift=3*\x cm]

\node at (0.15,-0.15) {\scriptsize $\epsilon$};
\node at (0.65,-0.15) {\scriptsize $\alpha$};
\node at (0.65,-0.45) {\scriptsize $\alpha$};
\node at (0.4,-0.6) {\scriptsize $\alpha$};
\node at (0.15,-0.45) {\scriptsize $\alpha$};

\node[inner sep=0.5,draw,shape=circle] at (-0.4,0.3) {\scriptsize 1};
\node[inner sep=0.5,draw,shape=circle] at (0.4,0.3) {\scriptsize 2};
\node[inner sep=0.5,draw,shape=circle] at (0.4,-0.3) {\scriptsize 3};
\node[inner sep=0.5,draw,shape=circle] at (-0.4,-0.3) {\scriptsize 4};

\end{scope}
}


\draw
	(-0.8,0.5) -- (-1.2,0.8) -- (-1.6,0.5) -- (-1.6,-0.5) -- (-1.2,-0.8) -- (-0.8,-0.5)
	(-0.8,0) -- (-1.6,0);

\node at (-0.15,-0.15) {\scriptsize $\alpha$};
\node at (-0.65,-0.15) {\scriptsize $\epsilon$};
\node at (-0.65,-0.45) {\scriptsize $\alpha$};
\node at (-0.4,-0.6) {\scriptsize $\alpha$};
\node at (-0.15,-0.45) {\scriptsize $\alpha$};

\node at (-0.95,-0.15) {\scriptsize $\epsilon$};
\node at (-1.45,-0.15) {\scriptsize $\alpha$};
\node at (-1.45,-0.45) {\scriptsize $\alpha$};
\node at (-1.2,-0.6) {\scriptsize $\alpha$};
\node at (-0.95,-0.45) {\scriptsize $\alpha$};

\node at (-0.95,0.15) {\scriptsize $\epsilon$};
\node at (-1.45,0.15) {\scriptsize $\alpha$};
\node at (-1.45,0.45) {\scriptsize $\alpha$};
\node at (-1.2,0.6) {\scriptsize $\alpha$};
\node at (-0.95,0.45) {\scriptsize $\alpha$};

\node[inner sep=0.5,draw,shape=circle] at (-1.2,-0.3) {\scriptsize 5};
\node[inner sep=0.5,draw,shape=circle] at (-1.2,0.3) {\scriptsize 6};


\foreach \b in {1,2,3}
{
\begin{scope}[xshift=3*\b cm]

\node at (-0.15,-0.15) {\scriptsize $\alpha$};
\node at (-0.65,-0.15) {\scriptsize $\alpha$};
\node at (-0.65,-0.45) {\scriptsize $\epsilon$};
\node at (-0.4,-0.6) {\scriptsize $\alpha$};
\node at (-0.15,-0.45) {\scriptsize $\alpha$};

\end{scope}
}

\foreach \b in {0,1}
{
\begin{scope}[xshift=3cm+3*\b cm]

\draw
	(-0.4,-0.8) -- (-0.4,-1.3) -- (-1.2,-1.3) -- (-1.2,-0.9) -- (-0.8,-0.5);

\node at (-0.8,-0.7) {\scriptsize $\epsilon$};
\node at (-0.55,-0.85) {\scriptsize $\alpha$};
\node at (-1.05,-0.9) {\scriptsize $\alpha$};
\node at (-0.55,-1.15) {\scriptsize $\alpha$};
\node at (-1.05,-1.15) {\scriptsize $\alpha$};

\node[inner sep=0.5,draw,shape=circle] at (-0.8,-1) {\scriptsize 5};

\end{scope}

\begin{scope}[xshift=6cm+3*\b cm]

\draw
	(-0.8,-0.5) -- (-1.4,-0.5) -- (-1.4,0.5) -- (-0.8,0.5);

\node at (-1.25,0.35) {\scriptsize $\alpha$};
\node at (-1.25,-0.35) {\scriptsize $\alpha$};
\node at (-0.95,0.35) {\scriptsize $\alpha$};
\node at (-0.95,-0.35) {\scriptsize $\epsilon$};
\node at (-0.95,0) {\scriptsize $\alpha$};

\node[inner sep=0.5,draw,shape=circle] at (-1.2,0) {\scriptsize 6};

\end{scope}

}


\begin{scope}[xshift=3cm]

\draw
	(-0.8,-0.5) -- (-1.2,-0.1) -- (-1.7,-0.1) -- (-1.7,-0.9) -- (-1.2,-0.9);

\node at (-0.92,-0.2) {\scriptsize $\alpha$};

\node at (-1,-0.5) {\scriptsize $\epsilon$};
\node at (-1.25,-0.25) {\scriptsize $\alpha$};
\node at (-1.25,-0.75) {\scriptsize $\alpha$};
\node at (-1.55,-0.25) {\scriptsize $\alpha$};
\node at (-1.55,-0.75) {\scriptsize $\alpha$};

\node[inner sep=0.5,draw,shape=circle] at (-1.4,-0.5) {\scriptsize 6};

\end{scope}


\begin{scope}[xshift=6cm]

\node at (-1.1,-0.6) {\scriptsize $\alpha$};

\end{scope}


\begin{scope}[xshift=9cm]

\draw
	(-0.8,-0.5) -- (-0.8,-1.3) -- (-1.7,-1.3) -- (-1.7,-0.9) -- (-1.4,-0.5);

\node at (-0.65,-0.75) {\scriptsize $\alpha$};

\node at (-0.95,-0.65) {\scriptsize $\epsilon$};
\node at (-0.95,-1.15) {\scriptsize $\alpha$};
\node at (-1.55,-1.15) {\scriptsize $\alpha$};
\node at (-1.55,-0.9) {\scriptsize $\alpha$};
\node at (-1.35,-0.65) {\scriptsize $\alpha$};

\node[inner sep=0.5,draw,shape=circle] at (-1.2,-1) {\scriptsize 5};

\end{scope}


\begin{scope}[shift={(-0.4cm,-2.7cm)}]

\foreach \x in {1,2,3,4}
{
\draw[xshift=-3cm +3*\x cm]
	(0,0) circle (1.15);
	
\node at (-3+3*\x,0) {$N_{\x}$};
}


\foreach \a in {1,-1}
\foreach \b in {1,-1}
{
\begin{scope}[xscale=\a, yscale=\b]

\foreach \c in {1,2,4}
\node at (22.5*\c:1) {\scriptsize $\alpha$};

\foreach \c in {0,3}
\node at (22.5*\c:1) {\scriptsize $\bar{\alpha}$};

\end{scope}
}


\foreach \a in {1,-1}
\foreach \x in {1,2,3}
{
\begin{scope}[xshift=3*\x cm, xscale=\a]

\node at (22.5:1) {\scriptsize $\bar{\alpha}$};
\node at (45:1) {\scriptsize $\alpha$};
\node at (67.5:1) {\scriptsize $\alpha$};
\node at (90:1) {\scriptsize $\bar{\alpha}$};

\end{scope}
}


\begin{scope}[xshift=3 cm, rotate=180]

\node at (0:1) {\scriptsize $\alpha$};
\node at (18:1) {\scriptsize $\alpha$};
\node at (36:1) {\scriptsize $\bar{\alpha}$};
\node at (54:1) {\scriptsize $\bar{\alpha}$};
\node at (72:1) {\scriptsize $\alpha$};
\node at (90:1) {\scriptsize $\alpha$};
\node at (108:1) {\scriptsize $\alpha$};
\node at (126:1) {\scriptsize $\bar{\alpha}$};
\node at (144:1) {\scriptsize $\alpha$};
\node at (162:1) {\scriptsize $\alpha$};
\node at (180:1) {\scriptsize $\bar{\alpha}$};

\end{scope}


\begin{scope}[xshift=6 cm, rotate=180]

\node at (0:1) {\scriptsize $\alpha$};
\node at (22.5:1) {\scriptsize $\bar{\alpha}$};
\node at (45:1) {\scriptsize $\alpha$};
\node at (67.5:1) {\scriptsize $\alpha$};
\node at (90:1) {\scriptsize $\bar{\alpha}$};
\node at (112.5:1) {\scriptsize $\alpha$};
\node at (135:1) {\scriptsize $\alpha$};
\node at (157.5:1) {\scriptsize $\alpha$};
\node at (180:1) {\scriptsize $\bar{\alpha}$};

\end{scope}


\begin{scope}[xshift=9 cm, rotate=180]

\node at (0:1) {\scriptsize $\alpha$};
\node at (22.5:1) {\scriptsize $\bar{\alpha}$};
\node at (45:1) {\scriptsize $\alpha$};
\node at (67.5:1) {\scriptsize $\bar{\alpha}$};
\node at (90:1) {\scriptsize $\alpha$};
\node at (112.5:1) {\scriptsize $\alpha$};
\node at (135:1) {\scriptsize $\alpha$};
\node at (157.5:1) {\scriptsize $\bar{\alpha}$};
\node at (180:1) {\scriptsize $\alpha$};

\end{scope}

\end{scope}

\end{tikzpicture}
\caption{Four patches for AVC(2D36).}
\label{2D36A}
\end{figure}

We remark that Figure \ref{2D36B1} is the union of six copies of Figure \ref{2D36B3}, which is actually $N_1$ in Figure \ref{2D36A}.

\end{document}